\theoremstyle{plain}
\newtheorem{theorem}{Theorem}[section]
\newtheorem{claim}[theorem]{Claim}
\newtheorem{case}{Case}
\newtheorem{corollary}[theorem]{Corollary}
\newtheorem{lemma}[theorem]{Lemma}
\newtheorem{proposition}[theorem]{Proposition}
\newtheorem{theorem-definition}[theorem]{Theorem-Definition}
\theoremstyle{definition}
\newtheorem{definition}[theorem]{Definition}
\newtheorem{notation}[theorem]{Notation}
\newtheorem{convention}[theorem]{Convention}
\theoremstyle{remark}
\newtheorem{example}[theorem]{Example}
\newtheorem{remark}[theorem]{Remark}
\numberwithin{equation}{section}
\newcommand{\N}{{\mathds{N}}}
\newcommand{\Q}{{\mathds{Q}}}
\newcommand{\R}{{\mathds{R}}}
\newcommand{\C}{{\mathds{C}}}
\newcommand{\A}{{\mathfrak{A}}}
\newcommand{\B}{{\mathfrak{B}}}
\newcommand{\M}{{\mathfrak{M}}}
\newcommand{\F}{{\mathfrak{F}}}
\newcommand{\Lip}{{\mathsf{L}}}
\newcommand{\qpropinquity}[1]{{\mathsf{\Lambda}_{#1}}}
\newcommand{\Kantorovich}[1]{{\mathsf{mk}_{#1}}}
\newcommand{\StateSpace}{{\mathscr{S}}}
\newcommand{\mongekant}{{Mon\-ge-Kan\-to\-ro\-vich metric}}
\newcommand{\Lqcms}{{\JLL} quantum compact metric space}
\newcommand{\Qqcms}[1]{${#1}$-quasi-Leibniz quantum compact metric space}
\newcommand{\gQqcms}{quasi-Leibniz quantum compact metric space}
\newcommand{\unit}{1}
\newcommand{\sa}[1]{{\mathfrak{sa}\left({#1}\right)}}
\newcommand{\JLL}{Lei\-bniz}
\newcommand{\dom}[1]{{\operatorname*{dom}({#1})}}
\newcommand{\diam}[2]{{\mathrm{diam}\left({#1},{#2}\right)}}
\newcommand{\bridgelength}[2]{{\lambda\left({#1}\middle|{#2}\right)}}
\newcommand{\alg}[1]{{\mathfrak{#1}}}
\newcommand{\BaireSpace}{{\mathscr{N}}}
\newcommand{\indmor}[2]{{\underrightarrow{#1^{#2}}}}
\newcommand{\af}[1]{{\alg{AF}_{#1}}}
\renewcommand{\geq}{\geqslant}
\renewcommand{\leq}{\leqslant}
\newcommand{\CondExp}[2]{{\mathds{E}\left({#1}\middle\vert{#2}\right)}}
\newcommand{\Latremoliere}{Latr\'{e}moli\`{e}re}
\newcommand{\vast}{\bBigg@{4}}
\newcommand{\Vast}{\bBigg@{5}}
\begin{document}

\baselineskip=14pt

\title[Convergence of Quotients of AF Algebras]{Convergence of Quotients of AF Algebras in Quantum Propinquity by Convergence of Ideals}
\author{Konrad Aguilar}
\address{School of Mathematical and Statistical Sciences \\ Arizona State University \\  901 S. Palm Walk, Tempe, AZ 85287-1804}
\email{konrad.aguilar@asu.edu}
\urladdr{}

\date{\today}
\subjclass[2010]{Primary:  46L89, 46L30, 58B34.}
\keywords{Noncommutative metric geometry, Gromov-Hausdorff convergence, Monge-Kantorovich distance, Quantum Metric Spaces, Lip-norms, AF algebras, Jacobson topology, Fell topology}
\thanks{The  author  was partially supported by the Simons - Foundation grant 346300 and the Polish Government MNiSW 2015-2019 matching fund, and this work is part of the project supported by the grant H2020-MSCA-RISE-2015-691246-QUANTUM DYNAMICS and the Polish Government grant 3542/H2020/2016/2}
\thanks{Some of this work was completed during the Simons semester hosted at IMPAN during September-December 2016 titled ``Noncommutative Geometry - The Next Generation''.}

\begin{abstract}
We introduce a topology on the ideal space of inductive limits of C*-algebras built by a topological  inverse limit of the Fell topologies on the C*-algebras of the given inductive sequence and we produce conditions for when this topology agrees with the Fell topology  of the inductive limit.  With this topology,  we impart criteria for when convergence of ideals of an AF algebra can provide convergence of quotients in the quantum Gromov-Hausdorff propinquity building off previous joint work with \Latremoliere. These findings bestow a continuous map from a  class of ideals of the Boca-Mundici AF algebra equipped with various topologies including Jacobson and Fell topologies to the space of quotients equipped with the propinquity topology. 
\end{abstract}
\maketitle

\setcounter{tocdepth}{2}
\tableofcontents

\allowdisplaybreaks[2]


\section{Introduction}

\Latremoliere's quantum Gromov-Hausdorff propinquity \cite{Latremoliere13,Latremoliere13c,Latremoliere15} provides a powerful tool for studying and constructing new continuous families of compact quantum metric spaces of  Rieffel \cite{Rieffel98a, Rieffel99} as seen in  \cite{Latremoliere05,Latremoliere13c,Latremoliere15c} and \cite{Rieffel01,Rieffel10c,Rieffel15}. Compact quantum metric spaces introduced Rieffel \cite{Rieffel98a} and motivated by A. Connes \cite{Connes89, Connes} are unital C*-algebras equipped with certain metrics on states built from noncommutative analogues of the Lipschitz seminorm associated to continuous functions on metric spaces.  A key contribution that quantum propinquity produces in the study of Noncommutative Metric Geometry is that it forms a distance on certain classes of compact quantum metric spaces that preserves the C*-algebraic structure as well as the metric structure, while staying within the category of C*-algebras \cite{Latremoliere13}. Now, as ideals (norm closed and two-sided) of a fixed C*-algebra are C*-algebras themselves and there exist topologies on ideals where ideals are viewed as points (for instance, the Jacobson and Fell topologies), it is then natural to desire to compare these topologies with the quantum propinquity topology.  However, in general, ideals need not be unital C*-algebras, thus a direct comparison of these topologies on ideals with quantum propinquity can not be accomplished as the quantum propinquity is only suitable for unital C*-algebras.  Yet, given a unital C*-algebra, quotients by non-trivial ideals are unital C*-algebras. Hence, a main consequence of this paper will be to list sufficient conditions for when convergence of ideals in certain topologies provide convergence of their quotients in the quantum propinquity topology.   Thus, it is with continuity that we will establish a nontrivial connection between topologies on ideals and the topology formed by quantum propinquity.  Therefore, this paper claims to advance both the study of topologies on ideals and noncommutative metric geometry by way of the quantum propinquity topology.  

Concerning quotients, the class of C*-algebras that we focus on is the class of unital AF algebras of Bratteli \cite{Bratteli72}, and in particular, unital AF algebras with faithful tracial states.  Our work with \Latremoliere \ in \cite{AL} already provided the quantum metrics for these particular AF algebras that we will use, which will allow us to focus on continuity aspects in this paper. After a background section, in Section (\ref{metric-ideal-IL}), we develop a topology on the ideal space of any C*-inductive limit.    The main application of this topology is to provide a notion of convergence for inductive sequences that determine the quotient spaces as fusing families (Definition (\ref{coll-def}))- a notion introduced in \cite{Aguilar16a} to provide sufficient conditions for convergence in quantum propinquity of AF algebras.  But, this topology on ideals has close connections to the Fell topology on the ideal space formed by the Jacobson topology on the primitive ideal space.  The Fell topology was introduced by Fell in \cite{Fell62} as a topology on closed sets of a given topology.  Fell then applied this topology to the closed sets of the Jacobson topology in \cite{Fell61} to provide a compact Hausdorff topology on the set of all ideals of a C*-algebra.  The topology on the ideal space of C*-inductive limits introduced in this paper is always stronger than the Fell topology, and we provide conditions for when this topology agrees with the Fell topology by way of conditions on the algbraic and analyticial properties on the types of ideals themselves.  In particular, our topology will agree with the Fell topology for any AF algebra, unital or not, which case we provide an explicit metric that metrizes thie topology.  We make other comparisons including taking into consideration the restriction to primitive ideals and comparison of the Jacobson topology as well as an analysis on unital commutative AF algebras and unital C*-algebras with Hausdorff Jacobson topology.

Next, Section (\ref{conv-quotients-quantum}) provides an answer to the question of when convergence of ideals can provide convergence of quotients.   In Section (\ref{b-m-af}),  we define the Boca-Mundici AF algebra  given in \cite{Boca08, Mundici88}, which arises from the Farey tessellation.  Next, we prove some basic results pertaining to its Bratteli diagram structure and ideal structure, and then apply our criteria for quotients converging to a subclass of ideals of the Boca-Mundici AF algebra, in which each quotient is *-isomorphic to an Effros-Shen AF algebra. In \cite{Boca08}, Boca proved that this subclass of ideals with its relative Jacobson topology is homeomorphic to the irrationals in $(0,1)$ with its usual topology, which provided our initial interest in our question about convergence of quotients.    The main result of this section, Theorem (\ref{main-result}), produces a continuous function from a subclass of ideals of the Boca-Mundici AF algebra to its quotients as quantum metric spaces in the quantum propinquity topology, where the topology of the subclass ideals is homeomorphic to both the Jacobson and Fell topologies and thus the topology introduced in this paper as well. Hence, we have an explicit example of when a metric geometry on quotients is related to a metric geometry on ideals by a continuous map.

\section{Quantum Metric Geometry and AF algebras}
The purpose of this section is to discuss our progress thus far in the realm of quantum metric spaces with regard to AF algebras, and thus places more focus on the AF algebra results, but we also provide a cursory overview of the material on quantum compact metric spaces.  We refer the reader to the survey by \Latremoliere \ \cite{Latremoliere15b} for a much more detailed and insightful introduction to the study of quantum metric spaces.

\begin{notation}
When $E$ is a normed vector space, then its norm will be denoted by $\|\cdot\|_E$ by default. Let $\A$ be a unital C*-algebra. The unit of $\A$ will be denoted by $\unit_\A$. The state space of $\A$ will be denoted by $\StateSpace(\A)$ and the self-adjoint part of $\A$ will be denoted $\sa{\A}$. 
\end{notation}

\begin{definition}[\cite{Rieffel98a, Latremoliere13, Latremoliere15}]\label{quasi-Monge-Kantorovich-def}
A {\Qqcms{(C,D)}} $(\A,\Lip)$, for some $C\geq 1$ and $D\geq 0$, is an ordered pair where $\A$ is unital C*-algebra and $\Lip$ is a seminorm defined on $\sa{\A}$ such that $\dom{\Lip}=\{a \in \sa{\A}: \Lip(a) < \infty\}$ is a dense Jordan-Lie subalgebra $\dom{\Lip}$ of $\sa{\A}$ such that:
\begin{enumerate}
\item $\{ a \in \sa{\A} : \Lip(a) = 0 \} = \R\unit_\A$,
\item the seminorm $\Lip$ is a \emph{$(C,D)$-quasi-Leibniz Lip-norm}, i.e. for all $a,b \in \dom{\Lip}$:
\begin{equation*}
\max\left\{ \Lip\left(\frac{ab+ba}{2}\right), \Lip\left(\frac{ab-ba}{2i}\right) \right\} \leq C\left(\|a\|_\A \Lip(b) + \|b\|_\A\Lip(a)\right) + D \Lip(a)\Lip(b)\text{,}
\end{equation*}
\item the \emph{\mongekant} defined, for all two states $\varphi, \psi \in \StateSpace(\A)$, by:
\begin{equation*}
\Kantorovich{\Lip} (\varphi, \psi) = \sup\left\{ |\varphi(a) - \psi(a)| : a\in\dom{\Lip}, \Lip(a) \leq 1 \right\}
\end{equation*}
metrizes the weak* topology of $\StateSpace(\A)$,
\item the seminorm $\Lip$ is lower semi-continuous with respect to $\|\cdot\|_\A$.
\end{enumerate}

\end{definition}

A primary interest in developing a theory of quantum metric spaces is the introduction of various hypertopologies on classes of such spaces, thus allowing us to study the geometry of classes of C*-algebras and perform analysis on these classes as we perform in this current article as well. A classical model for these hypertopologies is given by the Gromov-Hausdorff distance \cite{Gromov81, Gromov}. While several noncommutative analogues of the Gromov-Hausdorff distance have been proposed --- most importantly Rieffel's original construction of the quantum Gromov-Hausdorff distance \cite{Rieffel00} --- we shall work with a particular metric introduced by \Latremoliere, \cite{Latremoliere13}, as we did in \cite{AL}. This metric, known as the quantum propinquity, is designed to be best suited to {\gQqcms s}, and in particular, is zero between two such spaces if and only if they are quantum isometric, which is defined in the following theorem, and is, in part, a *-isomorphism between the C*-algebras.

\begin{theorem-definition}[\cite{Latremoliere13, Latremoliere15}]\label{def-thm}
Fix $C\geq 1$ and $D \geq 0$. Let $\mathcal{QQCMS}_{C,D}$ be the class of all {\Qqcms{(C,D)}s}. There exists a class function $\qpropinquity{C,D}$ from $\mathcal{QQCMS}_{C,D}\times\mathcal{QQCMS}_{C,D}$ to $[0,\infty) \subseteq \R$ such that:
\begin{enumerate}
\item for any $(\A,\Lip_\A), (\B,\Lip_\B) \in \mathcal{QQCMS}_{C,D}$ we have:
\begin{equation*}
 \qpropinquity{C,D}((\A,\Lip_\A),(\B,\Lip_\B)) \leq \max\left\{\diam{\StateSpace(\A)}{\Kantorovich{\Lip_\A}}, \diam{\StateSpace(\B)}{\Kantorovich{\Lip_\B}}\right\}\text{,}
\end{equation*}
\item for any $(\A,\Lip_\A), (\B,\Lip_\B) \in \mathcal{QQCMS}_{C,D}$ we have:
\begin{equation*}
0\leq \qpropinquity{C,D}((\A,\Lip_\A),(\B,\Lip_\B)) = \qpropinquity{C,D}((\B,\Lip_\B),(\A,\Lip_\A))
\end{equation*}
\item for any $(\A,\Lip_\A), (\B,\Lip_\B), (\alg{C},\Lip_{\alg{C}}) \in \mathcal{QQCMS}_{C,D}$ we have:
\begin{equation*}
\qpropinquity{C,D}((\A,\Lip_\A),(\alg{C},\Lip_{\alg{C}})) \leq \qpropinquity{C,D}((\A,\Lip_\A),(\B,\Lip_\B)) + \qpropinquity{C,D}((\B,\Lip_\B),(\alg{C},\Lip_{\alg{C}}))\text{,}
\end{equation*}
\item for all  for any $(\A,\Lip_\A), (\B,\Lip_\B) \in \mathcal{QQCMS}_{C,D}$ and for any bridge $\gamma$ from $\A$ to $\B$ defined in \cite[Definition 3.6]{Latremoliere13}, we have:
\begin{equation*}
\qpropinquity{C,D}((\A,\Lip_\A), (\B,\Lip_\B)) \leq \bridgelength{\gamma}{\Lip_\A,\Lip_\B}\text{,}
\end{equation*}
where $\bridgelength{\gamma}{\Lip_\A,\Lip_\B}$ is defined in \cite[Definition 3.17]{Latremoliere13},
\item for any $(\A,\Lip_\A), (\B,\Lip_\B) \in \mathcal{QQCMS}_{C,D}$, we have:
\begin{equation*}
\qpropinquity{C,D}((\A,\Lip_\A),(\B,\Lip_\B)) = 0
\end{equation*}
if and only if $(\A,\Lip_\A)$ and $(\B,\Lip_\B)$ are quantum isometric, i.e. if and only if there exists a *-isomorphism $\pi : \A \rightarrow\B$ with $\Lip_\B\circ\pi = \Lip_\A$,
\item if $\Xi$ is a class function from $\mathcal{QQCMS}_{C,D}\times \mathcal{QQCMS}_{C,D}$ to $[0,\infty)$ which satisfies Properties (2), (3) and (4) above, then:
\begin{equation*}
\Xi((\A,\Lip_\A), (\B,\Lip_\B)) \leq \qpropinquity{C,D}((\A,\Lip_\A),(\B,\Lip_\B))
\end{equation*}
 for all $(\A,\Lip_\A)$ and $(\B,\Lip_\B)$ in $\mathcal{QQCMS}_{C,D}$
\end{enumerate}
\end{theorem-definition}
 The quantum propinquity is, in fact, a special form of the dual Gromov-Hausdorff propinquity \cite{Latremoliere13b, Latremoliere14, Latremoliere15} also introduced by \Latremoliere, which is a complete metric, up to quantum isometry, on the class of {\Lqcms s}, and which extends the topology of the Gromov-Hausdorff distance as well. Thus, as the dual propinquity is dominated by the quantum propinquity \cite{Latremoliere13b}, we conclude that \emph{all the convergence results in this paper are valid for  dual Gromov-Hausdorff propinquity as well.}

In this paper, all our quantum metrics will be {\Qqcms{(2,0)}s}. Thus, we will simplify our notation as follows:
\begin{convention}
In this paper, $\qpropinquity{}$ will be meant for $\qpropinquity{2,0}$.
\end{convention}

Now, we provide some results from \cite{AL, Aguilar16a}.
For our work in AF algebras, it turns out that our Lip-norms are $(2,0)$-quasi-Leibniz Lip-norms.   The following Theorem (\ref{AF-lip-norms-thm}) is \cite[Theorem 3.5]{AL}. But, first some notation.

\begin{notation}\label{ind-lim}
Let $\mathcal{I} = (\A_n,\alpha_n)_{n\in\N}$ be an inductive sequence, in which $\A_n$ is a C*-algebra and $\alpha_n: \A_n \rightarrow \A_{n+1}$ is a *-homomorphism for all $n \in \N$, with limit $\A=\varinjlim \mathcal{I}$. We denote the canonical *-homomorphisms $\A_n \rightarrow\A$ by $\indmor{\alpha}{n}$ for all $n\in\N$, (see {\cite[Chapter 6.1]{Murphy}}).
\end{notation}

We display our Lip-norms built from faithful tracial states in both the inductive limit and closure of union context since both will be utilized throughout the paper. 
 We saw in \cite[Proposition 4.7]{Aguilar16a} that these two definitions are compatible.  These Lip-norms were motivated by  the work of C. Antonescu and E. Christensen in \cite{Antonescu04}.
\begin{theorem}[{\cite[Theorem 3.5]{AL}}]\label{AF-lip-norms-thm}
Let $\A$ be a unital AF algebra endowed with a faithful tracial state $\mu$. Let $\mathcal{I} = (\A_n,\alpha_n)_{n\in\N}$ be an inductive sequence of finite dimensional C*-algebras with C*-inductive limit $\A$, with $\A_0 \cong \C$ and where $\alpha_n$ is unital and injective for all $n\in\N$.

Let $\pi$ be the GNS representation of $\A$ constructed from $\mu$ on the space $L^2(\A,\mu)$.

For all $n\in\N$, let:
\begin{equation*}
\CondExp{\cdot}{\indmor{\alpha}{n}(\A_n)} : \A\rightarrow\A
\end{equation*}
be the unique conditional expectation of $\A$ onto the canonical image $\indmor{\alpha}{n}\left(\A_n\right)$ of $\A_n$ in $\A$, and such that $\mu\circ\CondExp{\cdot}{\indmor{\alpha}{n}(\A_n)} = \mu$.

Let $\beta: \N\rightarrow (0,\infty)$ have limit $0$ at infinity. If, for all $a\in\sa{\cup_{n \in \N} \indmor{\alpha}{n}\left(\A_n\right)}$, we set:
\begin{equation*}
\Lip_{\mathcal{I},\mu}^\beta(a) = \sup\left\{\frac{\left\|a - \CondExp{a}{\indmor{\alpha}{n}(\A_n)}\right\|_\A}{\beta(n)} : n \in \N \right\},
\end{equation*}
then $\left(\A,\Lip_{\mathcal{I},\mu}^\beta\right)$ is a {\Qqcms{2}}. Moreover, for all $n\in\N$:
\begin{equation*}
\qpropinquity{}\left(\left(\A_n,\Lip_{\mathcal{I},\mu}^\beta\circ\indmor{\alpha}{n} \right), \left(\A,\Lip_{\mathcal{I},\mu}^\beta \right)\right) \leq \beta(n)
\end{equation*}
and thus:
\begin{equation*}
\lim_{n\rightarrow\infty} \qpropinquity{}\left(\left(\A_n,\Lip_{\mathcal{I},\mu}^\beta\circ\indmor{\alpha}{n}\right), \left(\A,\Lip_{\mathcal{I},\mu}^\beta\right)\right) = 0\text{.}
\end{equation*}
\end{theorem}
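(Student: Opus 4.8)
The plan is to verify the two assertions separately: first that $\left(\A,\Lip_{\mathcal{I},\mu}^\beta\right)$ meets the four requirements of Definition \ref{quasi-Monge-Kantorovich-def} with $C=2$ and $D=0$, and then, for each $n$, to exhibit an explicit bridge of length $\beta(n)$ and invoke Property (4) of Theorem-Definition \ref{def-thm}. Abbreviate $\Lip=\Lip_{\mathcal{I},\mu}^\beta$, write $B_n=\indmor{\alpha}{n}(\A_n)$ for the canonical finite-dimensional image and $E_n=\CondExp{\cdot}{B_n}$ for the associated $\mu$-preserving conditional expectation, so that $\Lip(a)=\sup_{n\in\N}\|a-E_n(a)\|_\A/\beta(n)$. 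I would repeatedly use two structural facts: that $(E_n)_n$ is a tower, $E_mE_n=E_nE_m=E_{\min\{m,n\}}$, because the $B_n$ are nested; and that $E_0$ is the map $a\mapsto\mu(a)\unit_\A$, since the hypothesis $\A_0\cong\C$ forces $B_0=\C\unit_\A$.

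Two of the four conditions are immediate. Lower semicontinuity (condition (4)) holds because $\Lip$ is a supremum of the $\|\cdot\|_\A$-continuous seminorms $a\mapsto\|a-E_n(a)\|_\A/\beta(n)$. For condition (1), if $\Lip(a)=0$ then in particular $a=E_0(a)=\mu(a)\unit_\A\in\R\unit_\A$, and conversely every scalar is fixed by all $E_n$. The quasi-Leibniz inequality (condition (2)) is the one place the constant $C=2$ enters: since $E_n(a)E_n(b)\in B_n$, the telescoping $ab-E_n(a)E_n(b)=(a-E_n(a))b+E_n(a)(b-E_n(b))$ together with $\|E_n(a)\|_\A\le\|a\|_\A$ gives $\|ab-E_n(a)E_n(b)\|_\A\le\|a-E_n(a)\|_\A\|b\|_\A+\|a\|_\A\|b-E_n(b)\|_\A$, and applying the contraction $E_n$ (which fixes $E_n(a)E_n(b)$) upgrades this to $\|ab-E_n(ab)\|_\A\le 2\bigl(\|a-E_n(a)\|_\A\|b\|_\A+\|a\|_\A\|b-E_n(b)\|_\A\bigr)$. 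Averaging the $ab$ and $ba$ estimates, dividing by $\beta(n)$, and taking the supremum yields $\Lip\!\left(\frac{ab+ba}{2}\right)\le 2(\|a\|_\A\Lip(b)+\|b\|_\A\Lip(a))$, and likewise for $\frac{ab-ba}{2i}$; that $\dom{\Lip}\supseteq\sa{\bigcup_n B_n}$ is a dense Jordan-Lie subalgebra falls out along the way.

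The substance is condition (3), for which I would use Rieffel's criterion: a lower semicontinuous seminorm vanishing exactly on $\R\unit_\A$ metrizes the weak* topology of $\StateSpace(\A)$ precisely when the image of $\{a\in\sa\A:\Lip(a)\le1\}$ in $\sa\A/\R\unit_\A$ is norm-totally bounded. Here $\A_0\cong\C$ does the decisive work: if $\Lip(a)\le1$ and $\mu(a)=0$, then $E_0(a)=0$, so $\|a\|_\A=\|a-E_0(a)\|_\A\le\beta(0)$, and the Lip-ball is uniformly norm-bounded once normalized by $\mu(a)=0$. Total boundedness then follows from $\beta(n)\to0$ and finite-dimensionality: given $\varepsilon>0$, choose $N$ with $\beta(N)<\varepsilon$; the set $\{E_N(a):\Lip(a)\le1,\ \mu(a)=0\}$ is a norm-bounded subset of the finite-dimensional space $\sa{B_N}$, hence totally bounded, while $\|a-E_N(a)\|_\A\le\beta(N)<\varepsilon$ shows these $E_N$-images $\varepsilon$-approximate the whole ball, so a finite $\varepsilon$-net of the former is a finite $2\varepsilon$-net of the latter. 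I expect this to be the main obstacle, since it is where all three hypotheses — the tracial conditional expectations, the trivial bottom algebra $\A_0$, and $\lim_\infty\beta=0$ — must be combined at once.

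For the propinquity estimate, first note each $\left(\A_n,\Lip\circ\indmor{\alpha}{n}\right)$ is itself a \Qqcms{(2,0)}: the seminorm is finite on the finite-dimensional $\A_n$ (for $a\in B_n$ one has $E_m(a)=a$ whenever $m\ge n$), vanishes only on scalars, and is trivially lower semicontinuous and totally bounded there. I would then take the bridge $\gamma_n$ with ambient algebra $\A$, pivot $\unit_\A$, and the two unital monomorphisms given by the inclusion $B_n\hookrightarrow\A$ and the identity of $\A$. Because the pivot is the unit, the height of $\gamma_n$ is $0$: every state of $\A$ restricts onto $\StateSpace(\A_n)$ and the identity side reproduces $\StateSpace(\A)$ exactly. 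For the reach one compares the two Lip-balls inside $\A$: the $B_n$-ball embeds into the $\A$-ball with no loss, and conversely, for $a\in\sa\A$ with $\Lip(a)\le1$ the element $E_n(a)\in\sa{B_n}$ satisfies $\|a-E_n(a)\|_\A\le\beta(n)$ and, by the tower property, $\Lip(E_n(a))\le\Lip(a)\le1$ (the terms with $m\ge n$ vanish, while those with $m<n$ obey $\|E_n(a)-E_m(a)\|_\A=\|E_n(a-E_m(a))\|_\A\le\|a-E_m(a)\|_\A$). Hence the reach is at most $\beta(n)$, so $\bridgelength{\gamma_n}{\Lip\circ\indmor{\alpha}{n},\Lip}=\max\{0,\beta(n)\}=\beta(n)$, and Property (4) of Theorem-Definition \ref{def-thm} gives $\qpropinquity{}\bigl((\A_n,\Lip\circ\indmor{\alpha}{n}),(\A,\Lip)\bigr)\le\beta(n)$, with the final limit immediate from $\lim_n\beta(n)=0$.
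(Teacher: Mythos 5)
Your proposal is correct and takes essentially the same approach as the proof of this theorem in the cited source \cite[Theorem 3.5]{AL} (the present paper only imports the statement without reproving it): the $(2,0)$-quasi-Leibniz bound via the telescoping identity for the $\mu$-preserving conditional expectations, Rieffel's total-boundedness criterion combined with $\A_0\cong\C$ and $\lim_{n\to\infty}\beta(n)=0$ to get metrization of the weak* topology, and a bridge with ambient algebra $\A$ and pivot $\unit_\A$ whose height vanishes and whose reach is controlled by $\left\|a-\CondExp{a}{\indmor{\alpha}{n}(\A_n)}\right\|_\A\leq\beta(n)$ together with the tower property $\Lip_{\mathcal{I},\mu}^\beta\left(\CondExp{a}{\indmor{\alpha}{n}(\A_n)}\right)\leq\Lip_{\mathcal{I},\mu}^\beta(a)$.
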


\begin{theorem}[{\cite[Theorem 5.1]{Aguilar16a}}]\label{AF-lip-norms-thm-union} 
Let $\A$ be a unital AF algebra  with unit $1_\A$ endowed with a faithful tracial state $\mu$. Let $\mathcal{U} = (\A_n)_{n\in\N}$ be an increasing sequence of unital finite dimensional C*-subalgebras such that $\A=\overline{\cup_{n \in \N} \A_n}^{\Vert \cdot \Vert_\A}$ with $\A_0=\C 1_\A $.

Let $\pi$ be the GNS representation of $\A$ constructed from $\mu$ on the space $L^2(\A,\mu)$.

For all $n\in\N$, let:
\begin{equation*}
\CondExp{\cdot}{\A_n} : \A\rightarrow\A
\end{equation*}
be the unique conditional expectation of $\A$ onto $\A_n$ , and such that $\mu\circ\CondExp{\cdot}{\A_n} = \mu$.

Let $\beta: \N\rightarrow (0,\infty)$ have limit $0$ at infinity. If, for all $a\in\sa{\cup_{n \in \N} \A_n }$, we set:
\begin{equation*}
\Lip_{\mathcal{U},\mu}^\beta(a) = \sup\left\{\frac{\left\|a - \CondExp{a}{\A_n}\right\|_\A}{\beta(n)} : n \in \N \right\},
\end{equation*}
then $\left(\A,\Lip_{\mathcal{U},\mu}^\beta\right)$ is a {\Qqcms{2}}. Moreover, for all $n\in\N$:
\begin{equation*}
\qpropinquity{}\left(\left(\A_n,\Lip_{\mathcal{U},\mu}^\beta \right), \left(\A,\Lip_{\mathcal{U},\mu}^\beta \right)\right) \leq \beta(n)
\end{equation*}
and thus:
\begin{equation*}
\lim_{n\rightarrow\infty} \qpropinquity{}\left(\left(\A_n,\Lip_{\mathcal{U},\mu}^\beta \right), \left(\A,\Lip_{\mathcal{U},\mu}^\beta\right)\right) = 0\text{.}
\end{equation*}
\end{theorem}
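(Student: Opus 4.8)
The plan is to reduce the statement to the already-established inductive-limit version, Theorem \ref{AF-lip-norms-thm}, by recognizing the increasing chain $\mathcal{U} = (\A_n)_{n \in \N}$ as an inductive sequence under inclusions. First I would set $\iota_n \colon \A_n \hookrightarrow \A_{n+1}$ to be the inclusion for each $n \in \N$. Since every $\A_n$ is a unital finite-dimensional C*-subalgebra of $\A$ sharing the common unit $1_\A$, each $\iota_n$ is a unital injective *-homomorphism, so $\mathcal{I} = (\A_n, \iota_n)_{n \in \N}$ is an inductive sequence of exactly the type required in Theorem \ref{AF-lip-norms-thm}, with $\A_0 = \C 1_\A \cong \C$. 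The standard identification of the inductive limit of a chain of subalgebras under inclusions (see \cite[Chapter 6.1]{Murphy}) gives $\varinjlim \mathcal{I} \cong \overline{\cup_{n \in \N} \A_n}^{\|\cdot\|_\A} = \A$, and under this identification the canonical maps $\indmor{\iota}{n} \colon \A_n \to \A$ are precisely the inclusions, so that $\indmor{\iota}{n}(\A_n) = \A_n$ inside $\A$.

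The crux is to check that the two Lip-norms then coincide on $\sa{\cup_{n \in \N} \A_n}$, i.e.\ that $\Lip_{\mathcal{I},\mu}^\beta = \Lip_{\mathcal{U},\mu}^\beta$. For this it suffices to match the conditional expectations term by term: both $\CondExp{\cdot}{\indmor{\iota}{n}(\A_n)}$ and $\CondExp{\cdot}{\A_n}$ are, by the uniqueness built into their definitions, the unique $\mu$-preserving conditional expectation onto the one subalgebra $\A_n$, hence are equal. Here existence and uniqueness come from the GNS picture: because $\mu$ is a faithful trace and $\A_n$ is finite-dimensional, the orthogonal projection of $L^2(\A,\mu)$ onto the (finite-dimensional, hence closed) image of $\A_n$ restricts to a $\mu$-preserving conditional expectation onto $\A_n$; this is precisely the compatibility recorded in \cite[Proposition 4.7]{Aguilar16a}. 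Matching the defining suprema term by term then yields $\Lip_{\mathcal{I},\mu}^\beta = \Lip_{\mathcal{U},\mu}^\beta$, and since each $\indmor{\iota}{n}$ is an inclusion, $\Lip_{\mathcal{I},\mu}^\beta \circ \indmor{\iota}{n}$ agrees with $\Lip_{\mathcal{U},\mu}^\beta$ on $\sa{\A_n}$.

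With this dictionary in hand, every conclusion transfers verbatim. Theorem \ref{AF-lip-norms-thm} states that $(\A, \Lip_{\mathcal{I},\mu}^\beta)$ is a $(2,0)$-quasi-Leibniz quantum compact metric space, giving the first assertion, and it supplies $\qpropinquity{}((\A_n, \Lip_{\mathcal{I},\mu}^\beta \circ \indmor{\iota}{n}), (\A, \Lip_{\mathcal{I},\mu}^\beta)) \leq \beta(n)$, which under the identifications above is exactly $\qpropinquity{}((\A_n, \Lip_{\mathcal{U},\mu}^\beta), (\A, \Lip_{\mathcal{U},\mu}^\beta)) \leq \beta(n)$; the limit statement is then immediate from $\lim_{n \to \infty} \beta(n) = 0$.

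I expect the only genuine obstacle to be bookkeeping rather than new analysis: one must take care that the abstract inductive-limit data of Theorem \ref{AF-lip-norms-thm} is matched faithfully to the concrete subalgebra data here, in particular that the unique trace-preserving conditional expectation is insensitive to whether $\A_n$ is viewed abstractly or as a subalgebra of $\A$. Once $\indmor{\iota}{n}(\A_n) = \A_n$ and the equality of conditional expectations are pinned down, no further estimates are required.
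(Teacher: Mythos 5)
Your proposal is correct: the paper itself gives no proof of this theorem, importing it wholesale from \cite[Theorem 5.1]{Aguilar16a}, and the reduction you carry out --- realizing $\A$ as the C*-inductive limit of the inclusions $\iota_n$ so that $\indmor{\iota}{n}(\A_n) = \A_n$ inside $\A$, then invoking uniqueness of the $\mu$-preserving conditional expectation onto $\A_n$ to get $\Lip_{\mathcal{I},\mu}^\beta = \Lip_{\mathcal{U},\mu}^\beta$ and transferring the conclusions of Theorem (\ref{AF-lip-norms-thm}) --- is exactly the compatibility between the two formulations that the paper attributes to \cite[Proposition 4.7]{Aguilar16a} in the paragraph preceding the two theorems. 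In short, your argument fills in, rather than deviates from, the route the paper endorses.
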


In \cite{AL}, the fact that the defining finite-dimensional subalgebras provide explicit approximations of the inductive limit with respect to the quantum Gromov-Hausdorff propinquity allowed us to prove that both the UHF algebras and the Effros-Shen AF algebras are continuous images of the Baire space with respect to the quantum propinquity. Our pursuit was motivated by the fact that the Effros-Shen algebras were used by Pimsner and Voiculescu to classify the irrational rotation algebras \cite{PimVoi80a} and \Latremoliere \ showed continuity of the irrational rotation algebras in propinquity with respect to their irrational parameters in \cite{Latremoliere13c}. We list the Effros-Shen algebra result here since we will utilize both the definition of the Effros-Shen algebras extensively as well as the continuity result in Section (\ref{b-m-af}).

We begin by recalling the construction of the AF C*-algebras $\alg{AF}_\theta$ constructed in \cite{Effros80b} for any irrational $\theta$ in $(0,1)$. For any $\theta \in (0,1)\setminus\Q$, let $(a_j)_{j\in\N}$ be the unique sequence in $\N$ such that:
\begin{equation}\label{continued-fraction-eq}
\theta =\lim_{n \to \infty} [a_0, a_1, \ldots,a_n]\text{,}
\end{equation}
where $[a_0, a_1, \ldots,a_n]$ denotes the standard continued fraction.
The sequence $(a_j)_{j\in\N}$ is called the continued fraction expansion of $\theta$, and we will simply denote it by writing $\theta = [a_0 , a_1 , a_2, \ldots ] = [a_j]_{j\in\N}$. We note that $a_0 = 0$ (since $\theta\in(0,1)$) and $a_n \in \N\setminus\{0\}$ for $n \geq 1 $.

We fix $\theta \in (0,1)\setminus\Q$, and let $\theta = [a_j]_{j\in\N}$ be its continued fraction decomposition. We then obtain a sequence $\left(\frac{p_n^\theta}{q_n^\theta}\right)_{n\in\N}$ with $p_n^\theta \in \N$ and $q_n^\theta \in \N\setminus\{0\}$ by setting:
\begin{equation}\label{pq-rel-eq}
\begin{cases}
\begin{pmatrix}p_1^\theta & q_1^\theta \\ p_0^\theta & q_0^\theta \end{pmatrix} = \begin{pmatrix}a_0a_1+1 & a_1 \\ a_0 & 1 \end{pmatrix}\\
\begin{pmatrix}p_{n+1}^\theta & q_{n+1}^\theta \\ p_n^\theta & q_n^\theta \end{pmatrix} = \begin{pmatrix}a_{n+1} & 1 \\ 1 & 0 \end{pmatrix}\begin{pmatrix}p_n^\theta & q_n^\theta \\ p_{n-1}^\theta & q_{n-1}^\theta \end{pmatrix}\text{ for all $n\in \N\setminus\{0\}$.}
\end{cases}
\end{equation}
We then note that $\frac{p_n^\theta}{q_n^\theta}=[a_0, a_1, \ldots,a_n]$ for all $n \in \N$, and therefore $\left(\frac{p_n^\theta}{q_n^\theta}\right)_{n\in\N}$ converges to $\theta$ (see \cite{Hardy38}).

Expression (\ref{pq-rel-eq}) contains the crux for the construction of the Effros-Shen AF algebras. 

\begin{notation}
Throughout this paper, we shall employ the notation $x\oplus y \in X\oplus Y$ to mean that $x\in X$ and $y\in Y$ for any two vector spaces $X$ and $Y$ whenever no confusion may arise, as a slight yet convenient abuse of notation.
\end{notation}

\begin{notation}\label{af-theta-notation}
Let $\theta \in (0,1)\setminus\Q$ and $\theta = [a_j]_{j\in\N}$ be the continued fraction expansion of $\theta$. Let $(p_n^\theta)_{n\in\N}$ and $(q_n^\theta)_{n\in\N}$ be defined by Expression (\ref{pq-rel-eq}). We set $\af{\theta,0} = \C$ and, for all $n\in\N\setminus\{0\}$, we set:
\begin{equation*}
\af{\theta, n} = \alg{M}(q_{n}^\theta) \oplus \alg{M}(q_{n-1}^\theta) \text{,}
\end{equation*}
and:
\begin{equation*}
\alpha_{\theta,n} : a\oplus b \in \af{\theta,n} \longmapsto \begin{pmatrix}
a & & & \\
  & \ddots & & \\
  &        & a & \\
 &        &   & b 
\end{pmatrix} \oplus a \in \af{\theta, n+1} \text{,}
\end{equation*}
where $a$ appears $a_{n+1}$ times on the diagonal of the right hand side matrix above. We also set $\alpha_0$ to be the unique unital *-morphism from $\C$ to $\af{\theta,1}$.

We thus define the Effros-Shen C*-algebra $\af{\theta}$, after \cite{Effros80b}:
\begin{equation*}
\af{\theta} = \varinjlim \left(\af{\theta,n}, \alpha_{\theta,n}\right)_{n\in\N}=\varinjlim \mathcal{I}_\theta\text{.}
\end{equation*} 
\end{notation}

We now present our continuity result for Effros-Shen AF Algebras from \cite{AL}.  We note that the Baire space is homeomorphic to the irrationals in $(0,1)$.  A proof of this can be found in \cite[Proposition 5.10]{AL}.  

\begin{theorem}[{\cite[Theorem 5.14]{AL}}]\label{af-theta-thm}
Using Notation (\ref{af-theta-notation}) , the function:
\begin{equation*}
\theta \in ((0,1)\setminus \Q, \vert \cdot \vert) \longmapsto \left(\af{\theta}, \Lip_{\mathcal{I}_\theta ,\sigma_\theta}^{\beta_\theta} \right) \in (\mathcal{QQCMS}_{2,0}, \qpropinquity{} )
\end{equation*}
is continuous from $(0,1)\setminus\Q$, with its topology as a subset of $\R$, to the class of $(2,0)$-quasi-Leibniz quantum compact metric spaces metrized by the quantum propinquity $\qpropinquity{}$, where $\sigma_\theta$ is the unique faithful tracial state, and $\beta_\theta$ is the sequence of the reciprocal of dimensions of the inductive sequence, $\mathcal{I}_\theta$.
\end{theorem}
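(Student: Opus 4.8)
The plan is to leverage the finite-dimensional approximations of Theorem~\ref{AF-lip-norms-thm} together with the \emph{local constancy} of continued fraction expansions, thereby reducing the continuity statement to a perturbation estimate for Lip-norms on a single fixed finite-dimensional algebra. Fix $\theta\in(0,1)\setminus\Q$ and $\varepsilon>0$, and write $\Lip^\theta=\Lip_{\mathcal{I}_\theta,\sigma_\theta}^{\beta_\theta}$. Since $\beta_\theta$ has limit $0$ at infinity, I first choose $N\in\N$ with $\beta_\theta(N)<\varepsilon/3$, so that by Theorem~\ref{AF-lip-norms-thm} the level-$N$ subalgebra approximates $\af{\theta}$ to within $\varepsilon/3$ in $\qpropinquity{}$.

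Next I exploit the fact that the first $N+1$ partial quotients of the continued fraction expansion are locally constant: the cylinder of all irrationals sharing $a_0,\dots,a_N$ with $\theta$ is a neighborhood of $\theta$. Hence there is a neighborhood $V$ of $\theta$ such that every $\eta\in V$ has $a_j^\eta=a_j^\theta$ for $0\le j\le N$. By Notation~\ref{af-theta-notation}, this forces $\af{\theta,m}=\af{\eta,m}$ and $\alpha_{\theta,m}=\alpha_{\eta,m}$ for all $m<N$; in particular the dimensions agree, so $\beta_\eta(m)=\beta_\theta(m)$ for $m\le N$, and the canonical images $\indmor{\alpha}{N}(\af{\theta,N})$ and $\indmor{\alpha}{N}(\af{\eta,N})$ are one and the same finite-dimensional C*-algebra, which I call $\alg{A}$. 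The triangle inequality for $\qpropinquity{}$ then gives, for $\eta\in V$,
\[
\qpropinquity{}\!\left((\af{\theta},\Lip^\theta),(\af{\eta},\Lip^\eta)\right)\le \beta_\theta(N)+\qpropinquity{}\!\left((\alg{A},\Lip^\theta\circ\indmor{\alpha}{N}),(\alg{A},\Lip^\eta\circ\indmor{\alpha}{N})\right)+\beta_\eta(N),
\]
where the first and third terms come from Theorem~\ref{AF-lip-norms-thm} and are each $<\varepsilon/3$. It remains to drive the middle term to $0$ as $\eta\to\theta$ within $V$.

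For the middle term, observe that for self-adjoint $a\in\af{\theta,N}$ the conditional expectations onto levels $m\ge N$ act as the identity, so
\[
\Lip^\theta\circ\indmor{\alpha}{N}(a)=\max_{0\le m\le N}\frac{\left\|a-\CondExp{a}{\af{\theta,m}}\right\|_{\alg{A}}}{\beta_\theta(m)},
\]
and the analogous formula holds for $\eta$, where for each $m$ the maps $\CondExp{\cdot}{\af{\theta,m}}$ and $\CondExp{\cdot}{\af{\eta,m}}$ are the $\sigma_\theta$- and $\sigma_\eta$-preserving conditional expectations onto the \emph{same} subalgebra $\alg{A}_m\subseteq\alg{A}$, and $\beta_\eta(m)=\beta_\theta(m)$. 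The decisive point is that the tracial weights defining $\sigma_\eta$ on $\alg{A}$ — given by the continued-fraction remainder quantities built from $(p_m^\eta,q_m^\eta)$ and $\eta$ — depend continuously on $\eta$ and remain bounded away from $0$ on a small enough $V$; since the conditional expectation onto a fixed block-diagonal subalgebra is an explicit function of these weights, the operator norms $\|\CondExp{\cdot}{\af{\theta,m}}-\CondExp{\cdot}{\af{\eta,m}}\|$ tend to $0$ as $\eta\to\theta$ for each of the finitely many $m\le N$. Setting $\delta(\eta)=\max_{0\le m\le N}\beta_\theta(m)^{-1}\|\CondExp{\cdot}{\af{\theta,m}}-\CondExp{\cdot}{\af{\eta,m}}\|$, I obtain $|\Lip^\theta\circ\indmor{\alpha}{N}(a)-\Lip^\eta\circ\indmor{\alpha}{N}(a)|\le\delta(\eta)\,\|a\|_{\alg{A}}$ with $\delta(\eta)\to 0$.

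Finally I bound the middle propinquity using the trivial bridge $\gamma=(\alg{A},\unit_{\alg{A}})$ whose pivot is the common unit; its height vanishes and its length reduces to its reach. Given $a$ with $\Lip^\theta\circ\indmor{\alpha}{N}(a)\le 1$, I may subtract $\sigma_\theta(a)\unit_{\alg{A}}$ to arrange $\|a\|_{\alg{A}}\le\beta_\theta(0)=1$ (this is exactly the diameter bound, which also shows all these spaces have diameter at most $2$, uniformly in $\theta$). Then $\Lip^\eta\circ\indmor{\alpha}{N}(a)\le 1+\delta(\eta)$, so $b=(1+\delta(\eta))^{-1}a$ satisfies $\Lip^\eta\circ\indmor{\alpha}{N}(b)\le 1$ with $\|a-b\|_{\alg{A}}\le\delta(\eta)$; the symmetric argument controls the reach from the other side, whence the middle term is $O(\delta(\eta))\to 0$. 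Choosing $V$ so small that this last quantity is $<\varepsilon/3$ completes the estimate. The main obstacle is the \emph{trace continuity} step of the third paragraph: one must produce explicit formulas for the Effros-Shen tracial weights on each block of $\af{\theta,N}$, verify they are continuous (indeed locally Lipschitz) in $\theta$ and uniformly faithful, and convert this into the operator-norm convergence of the conditional expectations; everything else is a packaging of the approximation theorem, the openness of continued-fraction cylinders, and a routine rescaling on the finite-dimensional Lip-balls.
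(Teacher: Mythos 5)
Your proposal is correct and takes essentially the same approach as the proof this statement rests on: the paper itself only cites \cite[Theorem 5.14]{AL} (and notes the result is subsumed by the fusing-family criterion of Theorem (\ref{af-cont})), and that proof proceeds exactly as you do --- finite-dimensional truncation via Theorem (\ref{AF-lip-norms-thm}), openness of continued-fraction cylinders so that the inductive data and the $\beta$-sequences agree up to level $N$, continuity of the explicit Effros-Shen trace weights yielding operator-norm convergence of the trace-preserving conditional expectations, and a unit-pivot bridge estimate on the common finite-dimensional algebra. The trace-weight computation you flag as the main remaining obstacle is precisely the explicit (affine-in-$\theta$) formula established in the cited source, so your outline is complete in its essentials.
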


  In \cite{Aguilar16a}, we generalized the convergence results in \cite{AL} utilizing the notion of a fusing family of inductive sequences.  We will utilize this notion and this general convergence theorem in this paper for our quotient convergence results.  We list the appropriate definitions and results here.  
  
  We now define a notion of fusing  inductive sequences together in Definition (\ref{coll-def}), which is equivalent to  convergence of ideals of an AF algebra in the Fell topology, which is seen by Lemma (\ref{metric-fusing-equiv-lemma}).

\begin{notation}
Let $\overline{\N}=\N \cup \{\infty\}$ denote the Alexandroff compactification of $\N$ with respect to the discrete topology of $\N$. For $N \in \N$, let $\N_{\geq N} = \{ k \in \N : k \geq N \}$, and similarly, for $\overline{\N}_{\geq N}$.
\end{notation} 
\begin{definition}[{\cite[Definition 3.5]{Aguilar16a}}] \label{coll-def} We consider 2 cases of inductive sequences in this definition.
\begin{case}Closure of union
\end{case}
For each $k \in \overline{\N}$, let $\A^k$ be a C*-algebras with  $\A^k = \overline{\cup_{n \in \N} \A_{k,n} }^{\Vert \cdot \Vert_{\A^k}}$ such that $\mathcal{U}^k = \left(\A_{k,n}\right)_{n \in \N}$ is a non-decreasing sequence of  C*-subalgebras of $\A^k$, then we say $\{\A^k : k \in \overline{\N}\}$ is a {\em fusing family} if: 
\begin{enumerate} 
\item There exists  $(c_n)_{n \in \N} \subseteq \N$ non-decreasing such that $\lim_{n \to \infty} c_n=\infty$, and
\item for all $N \in \N $, if $k \in \N_{\geq c_N}$, then $\A_{k,n} = \A_{\infty,n}$ for all $n \in \{0,1,\ldots, N\}.$
\end{enumerate}

\begin{case}Inductive limit
\end{case}
For each $k \in \overline{\N}$, let  $\mathcal{I}(k)=(\A_{k,n}, \alpha_{k,n})_{n \in \N}$ be an inductive sequence with inductive limit, $\A^k$.  We say that the family of $C^*$-algebras $\{ \A^k : k \in \overline{\N} \}$ is an {\em IL-fusing family} of $C^*$-algebras if:
\begin{enumerate} 
\item There exists  $(c_n)_{n \in \N} \subseteq \N$ non-decreasing such that $\lim_{n \to \infty} c_n=\infty$, and
\item for all $N \in \N $, if $k \in \N_{\geq c_N}$, then $(\A_{k,n}, \alpha_{k,n}) = (\A_{\infty,n}, \alpha_{\infty,n})$ for all $n \in \{0,1,\ldots, N\}.$
\end{enumerate}

In either case, we call the sequence  $(c_n)_{n \in \N} $ the {\em fusing sequence}.
\end{definition}

\begin{remark}
Propinquity convergence results for sequences of AF algebras are all in terms of inductive limits.  We will see the closure of union case appear when working with ideals in Sections (\ref{metric-ideal-IL} - \ref{conv-quotients-quantum}). Also, note that any IL-fusing family may be viewed as a fusing family via the canonical *-homomorphisms of Notation (\ref{ind-lim}), which is why we don't decorate the term {\em fusing family} in the closure of union case. 
\end{remark}

Next, we provide our general criteria for convergence of AF algebras in propinquity using the notion of fusing family along with suitable notions of convergence of the remaining tools used to build our faithful tracial state Lip-norms.

\begin{theorem}[{\cite[Theorem 3.10]{Aguilar16a}}]\label{af-cont}For each $k \in \overline{\N}$, let  $\mathcal{I}(k)=(\A_{k,n}, \alpha_{k,n})_{n \in \N}$ be an inductive sequence of finite dimensional $C^*$-algebras  with $C^*$-inductive limit $\A^k ,$ such that $\A_{k,0} =\A_{k',0} \cong \C$ and $\alpha_{k,n}$ is unital and injective for all $k ,k' \in \overline{\N}, n\in\N$.  If:
\begin{enumerate}
\item $\{ \A^k : k \in \overline{\N} \}$ is an IL-fusing family with fusing sequence $(c_n)_{n \in \N}$,
\item $\{\tau^k : \A^k \rightarrow \C \}_{k \in \overline{\N}}$ is a family of faithful tracial states such that for each $N \in \N$, we have that $\left(\tau^k \circ \indmor{\alpha_k}{N}\right)_{k \in \N_{\geq c_N}}$ converges to $\tau^\infty \circ \indmor{\alpha_\infty}{N}$ in the weak-* topology on $\StateSpace(\A_{\infty,N})$, and
\item $\{\beta^k : \overline{\N} \rightarrow (0,\infty) \}_{k \in \overline{\N}}$ is a family of convergent sequences such that for all $N\in \N $ if $k \in \N_{\geq c_N}$, then  $\beta^k(n)=\beta^\infty(n)$ for all $n \in \{0,1,\ldots,N\}$ and there exists $B: \overline{\N} \rightarrow (0,\infty)$ with $B(\infty)=0$ and $\beta^m(l) \leq B(l)$ for all $m,l \in \overline{\N}$
\end{enumerate}
then:
\begin{equation*}
\lim_{k \to \infty} \qpropinquity{} \left(\left(\A^k , \Lip^{\beta^k}_{\mathcal{I}(k),\tau^k}\right),\left(\A^\infty , \Lip^{\beta^\infty}_{\mathcal{I}(\infty),\tau^\infty}\right)\right)=0,
\end{equation*}
where $\Lip^{\beta^k}_{\mathcal{I}(k), \tau^k}$ is given by Theorem (\ref{AF-lip-norms-thm}).
\end{theorem}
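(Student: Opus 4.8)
The plan is to establish convergence through a three-fold triangle inequality that reduces everything to a comparison on a single fixed finite-dimensional algebra. Throughout, write $\Lip^k = \Lip^{\beta^k}_{\mathcal{I}(k),\tau^k}$ for $k \in \overline{\N}$, and for $N \in \N$ write $\Lip^k_N = \Lip^k \circ \indmor{\alpha_k}{N}$ for the induced seminorm on $\A_{k,N}$. First I would record two consequences of hypothesis (3): the domination $\beta^m(\infty) \leq B(\infty) = 0$ forces every $\beta^k$ to have limit $0$ at infinity, so Theorem (\ref{AF-lip-norms-thm}) genuinely applies to each pair $(\A^k,\Lip^k)$ and makes it a {\Qqcms{2}}; and the common envelope $\beta^k \leq B$ bounds the diameters $\diam{\StateSpace(\A^k)}{\Kantorovich{\Lip^k}}$ uniformly in $k$, which will later confine the relevant Lip-balls to a common compact set.

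Fix $\epsilon > 0$. Since $\beta^\infty$ tends to $0$, I would choose $N \in \N$ with $\beta^\infty(N) < \epsilon/3$ and dispatch the two outer terms directly from Theorem (\ref{AF-lip-norms-thm}):
\[
\qpropinquity{}\left((\A_{\infty,N},\Lip^\infty_N),(\A^\infty,\Lip^\infty)\right) \leq \beta^\infty(N) < \frac{\epsilon}{3},
\]
while for every $k \geq c_N$ the compatibility in hypothesis (3) gives $\beta^k(N) = \beta^\infty(N)$, whence
\[
\qpropinquity{}\left((\A_{k,N},\Lip^k_N),(\A^k,\Lip^k)\right) \leq \beta^k(N) < \frac{\epsilon}{3}.
\]

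The heart of the argument is the middle term. For $k \geq c_N$ the IL-fusing condition yields $(\A_{k,n},\alpha_{k,n}) = (\A_{\infty,n},\alpha_{\infty,n})$ for all $n \leq N$, so $\A_{k,N}$ and $\A_{\infty,N}$ are the very same finite-dimensional C*-algebra carrying the same chain of subalgebras, with the same weights $\beta^k(n) = \beta^\infty(n)$. For $a \in \sa{\A_{\infty,N}}$ the conditional-expectation terms vanish for $n \geq N$, and compatibility of trace-preserving expectations with restriction gives
\[
\Lip^k_N(a) = \max_{0 \leq n < N} \frac{\left\|a - \CondExp{a}{\A_{\infty,n}}\right\|}{\beta^\infty(n)},
\]
where $\CondExp{\cdot}{\A_{\infty,n}}$ is now the $(\tau^k\circ\indmor{\alpha_k}{N})$-preserving expectation on $\A_{\infty,N}$. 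Since this expectation onto a fixed finite-dimensional subalgebra is an explicit continuous function of the faithful state, hypothesis (2) and $\tau^k\circ\indmor{\alpha_k}{N} \to \tau^\infty\circ\indmor{\alpha_\infty}{N}$ force these finitely many expectations to converge in norm, hence $\Lip^k_N \to \Lip^\infty_N$ uniformly on the (compact) unit ball of $\sa{\A_{\infty,N}}$. Feeding this into the trivial bridge $\gamma_k = (\A_{\infty,N},\unit,\mathrm{id},\mathrm{id})$ from $(\A_{\infty,N},\Lip^k_N)$ to $(\A_{\infty,N},\Lip^\infty_N)$, whose height is $0$ and whose reach is governed by the operator-norm Hausdorff distance between the two unit Lip-balls, I would use property (4) of Theorem-Definition (\ref{def-thm}) to get
\[
\qpropinquity{}\left((\A_{k,N},\Lip^k_N),(\A_{\infty,N},\Lip^\infty_N)\right) \leq \bridgelength{\gamma_k}{\Lip^k_N,\Lip^\infty_N},
\]
and the right-hand side tends to $0$ as $k \to \infty$ because the Lip-balls, cut down to a common compact set by the uniform diameter bound, converge in Hausdorff distance. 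Taking $k$ large enough to make this $< \epsilon/3$ and adding the three estimates via the triangle inequality (property (3) of Theorem-Definition (\ref{def-thm})) gives $\qpropinquity{}((\A^k,\Lip^k),(\A^\infty,\Lip^\infty)) < \epsilon$.

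The hard part is exactly this middle step: converting weak-* convergence of the traces into propinquity convergence on the fixed finite-dimensional algebra. Two points need care. First, the continuity of the trace-preserving conditional expectation in the trace relies on faithfulness of $\tau^\infty$, which guarantees the limiting expectation is still well defined; second, one must check that uniform convergence of the Lip-norms really drives the length of the trivial bridge to $0$ — i.e.\ that control of the reach reduces to the Hausdorff distance of the Lip-balls — for which the uniform diameter bound extracted from the domination hypothesis is precisely what keeps those balls inside one compact set.
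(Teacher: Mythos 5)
Your proposal is correct and takes essentially the same route as the source of this statement: the paper itself states Theorem (\ref{af-cont}) without proof, importing it from \cite[Theorem 3.10]{Aguilar16a}, and the argument there is precisely your three-term triangle inequality --- the outer terms bounded by $\beta^k(N)=\beta^\infty(N)<\epsilon/3$ via Theorem (\ref{AF-lip-norms-thm}), and the middle term on the common finite-dimensional algebra $\A_{k,N}=\A_{\infty,N}$ handled by norm convergence of the trace-preserving conditional expectations (forced by hypothesis (2) and faithfulness of $\tau^\infty\circ\indmor{\alpha_\infty}{N}$) fed into a bridge with unit pivot and identity legs, whose height vanishes and whose reach is controlled by the Lip-norm comparison. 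The one point to state more carefully is that the unit Lip-balls are not compact since they contain $\R\unit_{\A_{\infty,N}}$; the reach estimate should be run after centering each element $a$ by its trace, so that one works with $a-\tau(a)\unit_{\A_{\infty,N}}$ of norm at most $\beta^\infty(0)\leq B(0)$ --- which is exactly the reduction your uniform-diameter remark is gesturing at, and it goes through without difficulty.
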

This theorem generalized the UHF and Effros-Shen algebra convergence results of \cite{AL}, in which we showed this in the Effros-Shen algebra case in the proof of \cite[Theorem 3.14]{Aguilar16a}.

\section{A topology on the Ideal space of C*-Inductive Limits}\label{metric-ideal-IL}

For a fixed C*-algebra, the ideal space may be endowed with  various natural topologies.  We may identify each ideal with a quotient, which is a C*-algebra itself.  Now, this defines a function from the ideal space, which has natural topologies, to the class of C*-algebras.  But, if each quotient has a quasi-Leibniz Lip-norm, then this function becomes much more intriguing as we may now discuss its continuity or lack thereof since we now have topology on the codomain provided by quantum propinquity. Towards this, we develop a  topology on ideals of any C*-inductive limit that is compatible with this goal. The purpose of this is to allow fusing families of ideals to provide fusing families of quotients in Proposition (\ref{ind-lim-quotients-prop}) --- a first step in providing convergence of quotients in quantum propinquity.  But, our topology is greatly motivated by the Fell topology on the ideal space and is stronger than the Fell topology in general and equal to the Fell topology in the AF case, and we provide an explicit metric to metrize the Fell topology in the AF case. In order to construct our topology, we will use the given inductive sequence of a C*-inductive limit to construct inverse topology from the Fell topologies of the given C*-algebras of the inductive sequence.  This is not only for aesthetic purposes, but also simplifies some proofs and provides a better understanding of the overall structure of the topology we introduce.  But, we first define the Fell topology on ideals and prove some basic results, which requires the Jacobson topology on the class of primitive ideals.  As the definition of the Jacobson topology  is quite involved, we do not provide a complete definition of the Jacobson topology, but we provide a reference and a characterization of the closed sets in Definition (\ref{Jacobson-topology}).  

\begin{definition}\label{Jacobson-topology}Let $\A$ be a C*-algebra.  Denote the set of norm closed two-sided ideals of $\A$ by $\mathrm{Ideal}(\A),$ in which we include the trivial ideals $\emptyset$ and $\A$. Define: 
\begin{equation*}
\begin{split}
& \mathrm{Prim}(\A) \\
&  = \left\{ J \in \mathrm{Ideal}(\A) : J=\ker \pi, \pi \ \text{is a non-zero irreducible *-representation of} \ \A \right\}.
\end{split}
\end{equation*}
Note that $\A \not\in \mathrm{Prim}(\A)$.

The {\em Jacobson topology} on $\mathrm{Prim}(\A)$, denoted  $Jacobson$ is defined in {\cite[Theorem 5.4.2 and Theorem 5.4.6]{Murphy}}.  Let $F$ be a closed set in the Jacobson topology, then there exists $I_F \in \mathrm{Ideal}(\A)$ such that $F=\{ J \in \mathrm{Prim}(\A) : J \supseteq I_F\}$  {\cite[Theorem 5.4.7]{Murphy}}.
\end{definition}
\begin{convention}
Given a C*-algebra, $\A$, and $I \in \mathrm{Ideal}(\A)$, an element of the quotient $\A/I$ will be denoted by $a+I$ for some $a \in \A$. Furthermore, the quotient norm will be denoted $\Vert a+I\Vert_{\A/I}= \inf \left\{ \Vert a+b\Vert_\A : b\in I \right\}$.  
\end{convention}
Now, we may define the Fell topology, which is a topology on all ideals of a C*-algebra.  We begin by presenting the definition of the Fell topology on closed sets of any topological space along with some facts, which will help with some later proofs. 
\begin{definition}[\cite{Fell62}]\label{topological-Fell-def}
Let $(X, \tau)$ be a topological space with topology $\tau$ (no further assumptions made).  Let $\mathcal{C}l(X)$ denote the set of closed subsets of $X$.  Let $K$ be a compact set of $X$, and let $F$ be a finite family of non-empty open subsets of $X$. Define:
\begin{equation*}
U(K,F)=\{  Y \in \mathcal{C}l(X) : Y \cap K = \emptyset \ \land \ Y\cap A\neq \emptyset \text{ for all } A \in F\}.
\end{equation*}  A basis for the Fell topology on $\mathcal{C}l(X)$ denoted by $\tau_{ \mathcal{C}l(X)}$ is given by:
\begin{equation*}
\left\{ U(K,F) \subseteq \mathcal{C}l(X): K\subseteq X \text{ is compact} \ \land \ F\subseteq \tau\setminus \{\emptyset\}, F \ \text{ is finite}\right\}. 
\end{equation*}
\end{definition}
Now, we list some facts about this topology and the striking conclusion that the Fell topology on $\mathcal{C}l(X)$ is always compact and is Hausdorff when $X$ is locally compact.
\begin{lemma}[{\cite[Lemma 1 and Theorem 1]{Fell62}}]\label{topological-Fell-lemma}

If $(X, \tau)$ is a topological space, then the topoogical space $\left(\mathcal{C}l(X),\tau_{ \mathcal{C}l(X)}\right)$ is compact.

If $(X, \tau)$ is a locally compact space, then  the topoogical space $\left(\mathcal{C}l(X),\tau_{ \mathcal{C}l(X)}\right)$ is compact Hausdorff.
\end{lemma}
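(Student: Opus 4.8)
The plan is to reduce everything to a single subbasis and then apply Alexander's subbase theorem for the compactness half and an explicit separation argument for the Hausdorff half. Since
$U(K,F) = \{Y : Y \cap K = \emptyset\} \cap \bigcap_{A \in F}\{Y : Y \cap A \neq \emptyset\}$,
it is natural to set $\mathcal{O}_K := \{Y \in \mathcal{C}l(X) : Y \cap K = \emptyset\}$ for $K \subseteq X$ compact and $\mathcal{O}^A := \{Y \in \mathcal{C}l(X) : Y \cap A \neq \emptyset\}$ for $A \in \tau \setminus \{\emptyset\}$. Each of these is itself a basic set (take $F = \emptyset$, respectively $K = \emptyset$), and every $U(K,F)$ is a finite intersection of them since $F$ is finite; hence $\{\mathcal{O}_K\} \cup \{\mathcal{O}^A\}$ is a subbasis generating $\tau_{\mathcal{C}l(X)}$.

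For compactness I would invoke Alexander's subbase theorem, so that it suffices to extract a finite subcover from an arbitrary cover by subbasic sets. Write such a cover as $\{\mathcal{O}_{K_i} : i \in I\} \cup \{\mathcal{O}^{A_j} : j \in J\}$ and put $A = \bigcup_{j \in J} A_j$. The closed set $F = X \setminus A$ lies in none of the sets $\mathcal{O}^{A_j}$, because $F \cap A_j = \emptyset$; since the family covers $\mathcal{C}l(X)$, we must have $F \in \mathcal{O}_{K_{i_0}}$ for some $i_0 \in I$, i.e. $K_{i_0} \subseteq A$. Compactness of $K_{i_0}$ then gives a finite $J_0 \subseteq J$ with $K_{i_0} \subseteq \bigcup_{j \in J_0} A_j$, and I claim $\{\mathcal{O}_{K_{i_0}}\} \cup \{\mathcal{O}^{A_j} : j \in J_0\}$ already covers $\mathcal{C}l(X)$: any closed $Y$ either misses $K_{i_0}$, so $Y \in \mathcal{O}_{K_{i_0}}$, or meets $K_{i_0}$ at a point lying in some $A_j$ with $j \in J_0$, so $Y \in \mathcal{O}^{A_j}$.

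For the Hausdorff assertion under local compactness, given distinct closed sets $Y_1 \neq Y_2$ I would pick, without loss of generality, a point $x \in Y_1 \setminus Y_2$, so that $X \setminus Y_2$ is an open neighborhood of $x$. Local compactness, taken in the sense appropriate to Fell's possibly non-Hausdorff setting (every point has a base of compact neighborhoods), furnishes an open $A$ and a compact $K$ with $x \in A \subseteq K \subseteq X \setminus Y_2$. Then $Y_1 \in \mathcal{O}^A$ and $Y_2 \in \mathcal{O}_K$, and these two subbasic open sets are disjoint: any $Z$ meeting $A$ meets $K$ as well, since $A \subseteq K$, and so cannot lie in $\mathcal{O}_K$. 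Together with the compactness already shown, this yields compact Hausdorff.

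The step I expect to be the main obstacle is the compactness argument: the bookkeeping in Alexander's subbase theorem must be organized correctly, the crucial move being to isolate $F = X \setminus \bigcup_{j} A_j$ as the closed set whose forced coverage drives one compact piece $K_{i_0}$ into $\bigcup_j A_j$, after which ordinary compactness of $K_{i_0}$ closes the argument. A secondary subtlety, flagged above, is fixing the correct meaning of \emph{locally compact} for non-Hausdorff $X$, namely that compact neighborhoods can be chosen inside any prescribed open neighborhood, as this is precisely the property consumed in the separation step.
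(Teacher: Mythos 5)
This lemma is imported verbatim from Fell's paper: the text gives no proof of its own, only the citation to \cite[Lemma 1 and Theorem 1]{Fell62}. Your argument is correct and is essentially Fell's original one — Alexander's subbase theorem applied to the subbasis $\{\mathcal{O}_K\}\cup\{\mathcal{O}^A\}$ (whose finite intersections are exactly the basic sets $U(K,F)$, since finite unions of compact sets are compact) for compactness, and the compact-neighborhood separation argument for the Hausdorff claim, where, as you correctly flag, ``locally compact'' must be taken in the sense that every point has a neighborhood base of compact neighborhoods, which is precisely the hypothesis under which Fell's Theorem 1 is stated and which holds for the primitive ideal spaces to which the paper applies it.
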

Next, we are in a position to apply this to build a topology  on the ideal space of a C*-algebra.
\begin{definition}[{\cite{Fell61}}]\label{Fell-topology}
Let $\A$ be a C*-algebra.  Let $\mathcal{C}l(\mathrm{Prim}(\A))$ be the set of closed subsets of $(\mathrm{Prim}(\A),  Jacobson)$ with compact Hausdorff topology, $\tau_{\mathcal{C}l(\mathrm{Prim}(\A))}$, given by {\cite[Theorem 2.2]{Fell61}} and more generally \cite{Fell62} and Lemma (\ref{topological-Fell-lemma}), where we note that $\check{\A}=  \mathrm{Prim}(\A)$ in \cite{Fell61}. Let $fell : \mathrm{Ideal}(\A) \rightarrow \mathcal{C}l(\mathrm{Prim}(\A))$ denote the map: \begin{equation*}
fell (I) = \left\{ J \in \mathrm{Prim}(\A) : J \supseteq I\right\},
\end{equation*}
which is a one-to-one correspondence  {\cite[Theorem 5.4.7]{Murphy}}.  The {\em Fell topology} on $\mathrm{Ideal}(\A)$, denoted $Fell$, is the initial topology on $\mathrm{Ideal}(\A)$ induced by $fell$, which is the weakest topology for which $fell$ is continuous. Equivalently, 
\begin{equation*}
 Fell=\left\{ U \subseteq \mathrm{Ideal}(\A) : U=fell^{-1} (V), V \in \tau_{\mathcal{C}l(\mathrm{Prim}(\A))}\right\}, 
\end{equation*}
and $(\mathrm{Ideal}(\A), Fell)$ is therefore compact Hausdorff since $fell$ is a bijection.
\end{definition}

The following Lemma (\ref{Fell-conv}) is stated in \cite[Section 2]{Archbold87}, where the Fell topology,  $Fell$,  is denoted $\tau_s$. We provide a proof.

\begin{lemma}\label{Fell-conv}
Let $\A$ be a C*-algebra.  Let $\left(I_\mu \right)_{\mu \in \Delta} \subseteq \mathrm{Ideal}(\A)$ be a net and $I \in \mathrm{Ideal}(\A) $.  

The net $\left(I_\mu \right)_{\mu \in \Delta}$ converges to $I$ with respect to the Fell topology if and only if the net
$\left(\left\Vert a +I_\mu \right\Vert_{\A/I_\mu}\right)_{\mu \in \Delta} \subseteq \R$ converges to $\Vert a +I \Vert_{\A/I} \in \R$ with respect to the usual topology on $\R$ for all $a \in \A$.
\end{lemma}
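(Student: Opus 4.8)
The plan is to transport the convergence through the bijection $fell$, reducing the Fell topology on $\mathrm{Ideal}(\A)$ to the Fell topology on $\mathcal{C}l(\mathrm{Prim}(\A))$, and then to bridge the purely set-theoretic convergence of closed sets with the analytic quantity $\Vert a+I\Vert_{\A/I}$ through the identity
\begin{equation*}
\Vert a + I\Vert_{\A/I} = \sup\left\{ \Vert a + J\Vert_{\A/J} : J \in fell(I)\right\}, \tag{$\star$}
\end{equation*}
valid for every $a\in\A$ (with the convention $\sup\emptyset = 0$, which also covers $I=\A$, where $fell(\A)=\emptyset$). Identity $(\star)$ holds because the norm of any C*-algebra is the supremum of $\Vert\rho(\cdot)\Vert$ over its irreducible representations, and the irreducible representations of $\A/I$ are exactly those induced by irreducible representations $\pi$ of $\A$ with $\ker\pi\supseteq I$, for which $\Vert a+\ker\pi\Vert_{\A/\ker\pi}=\Vert\pi(a)\Vert$; here $J=\ker\pi$ ranges precisely over $fell(I)$. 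Writing $n_a(J)=\Vert a+J\Vert_{\A/J}$ for $J\in\mathrm{Prim}(\A)$, I will use three classical facts about the Jacobson topology: $n_a$ is lower semicontinuous; the superlevel set $\{J:n_a(J)\geq\epsilon\}$ is compact for every $\epsilon>0$; and the sets $\{J:n_a(J)>0\}=\{J:a\notin J\}$, as $a$ runs over $\A$, form a basis of the Jacobson topology. Finally, since $fell$ is a homeomorphism onto $\mathcal{C}l(\mathrm{Prim}(\A))$, the net $(I_\mu)$ converges to $I$ in $Fell$ if and only if $(fell(I_\mu))$ converges to $fell(I)$, which by the definition of the basic sets $U(K,F)$ amounts to: (i) every open $U$ with $U\cap fell(I)\neq\emptyset$ eventually satisfies $U\cap fell(I_\mu)\neq\emptyset$; and (ii) every compact $K$ with $K\cap fell(I)=\emptyset$ eventually satisfies $K\cap fell(I_\mu)=\emptyset$.

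For the \emph{forward direction}, assume (i) and (ii), and fix $a\in\A$, $\epsilon>0$. For the lower bound, choose $J_0\in fell(I)$ with $n_a(J_0)>\Vert a+I\Vert_{\A/I}-\epsilon$; by lower semicontinuity $\{n_a>\Vert a+I\Vert_{\A/I}-\epsilon\}$ is open and meets $fell(I)$, so by (i) it eventually meets $fell(I_\mu)$, and $(\star)$ gives $\Vert a+I_\mu\Vert_{\A/I_\mu}>\Vert a+I\Vert_{\A/I}-\epsilon$ eventually. For the upper bound, $K=\{n_a\geq\Vert a+I\Vert_{\A/I}+\epsilon\}$ is compact and, by $(\star)$, disjoint from $fell(I)$; hence by (ii) it is eventually disjoint from $fell(I_\mu)$, so $n_a<\Vert a+I\Vert_{\A/I}+\epsilon$ on $fell(I_\mu)$ and $(\star)$ yields $\Vert a+I_\mu\Vert_{\A/I_\mu}\leq\Vert a+I\Vert_{\A/I}+\epsilon$ eventually. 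As $\epsilon$ is arbitrary, $\Vert a+I_\mu\Vert_{\A/I_\mu}\to\Vert a+I\Vert_{\A/I}$.

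For the \emph{reverse direction}, assume the norm convergence for every $a\in\A$ and verify (i) and (ii). For (i), given open $U$ and $J_0\in U\cap fell(I)$, choose a basic neighborhood $\{n_a>0\}\subseteq U$ of $J_0$; then $\Vert a+I\Vert_{\A/I}\geq n_a(J_0)>0$ by $(\star)$, so eventually $\Vert a+I_\mu\Vert_{\A/I_\mu}>0$, which by $(\star)$ produces $J_\mu\in fell(I_\mu)$ with $n_a(J_\mu)>0$, i.e. $J_\mu\in\{n_a>0\}\cap fell(I_\mu)\subseteq U\cap fell(I_\mu)$. For (ii), let $K$ be compact with $K\cap fell(I)=\emptyset$. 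Each $J\in K$ fails to contain $I$, so there is $a_J\in I$ with $n_{a_J}(J)>0$; since $\{n_{a_J}>0\}=\bigcup_m\{n_{a_J}>1/m\}$ is an increasing union of open sets, compactness of $K$ yields finitely many $a_1,\ldots,a_r\in I$ and an $m$ with $K\subseteq\bigcup_{i=1}^r\{n_{a_i}>1/m\}$. Put $a=\sum_{i=1}^r a_i^* a_i\in I$ and $\delta=1/m^2$. Then for $J\in K$ some $i$ gives $\Vert a_i+J\Vert_{\A/J}>1/m$, whence $\Vert a+J\Vert_{\A/J}\geq\Vert a_i+J\Vert_{\A/J}^2>\delta$ by positivity and the C*-identity, while $n_a\equiv 0$ on $fell(I)$ since $a\in I$, so $\Vert a+I\Vert_{\A/I}=0$ by $(\star)$. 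The hypothesis forces $\Vert a+I_\mu\Vert_{\A/I_\mu}<\delta$ eventually, and then $(\star)$ gives $n_a<\delta$ throughout $fell(I_\mu)$, so $K\cap fell(I_\mu)=\emptyset$.

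The heart of the argument, and the main obstacle, is matching the compact-avoidance half (ii) of Fell convergence with the analytic statement. In the forward direction this rests on the nontrivial fact that the superlevel sets $\{n_a\geq\epsilon\}$ are compact in $\mathrm{Prim}(\A)$; in the reverse direction it rests on manufacturing, out of a compact $K$ missing $fell(I)$ and the ideal structure, a single element $a\in I$ that is bounded below on $K$ yet lies in every primitive ideal containing $I$. By contrast, the open-meeting half (i), once $(\star)$ and lower semicontinuity are in hand, is comparatively routine.
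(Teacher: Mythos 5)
Your proof is correct, but it takes a genuinely different route from the paper's. The paper's proof is essentially a reduction to \cite[Theorem 2.2]{Fell61}, used as a black box: for a closed set $Y \subseteq \mathrm{Prim}(\A)$ it introduces $M_Y(a) = \sup\left\{ \Vert a + J \Vert_{\A/J} : J \in Y \right\}$, notes (from the first line of Fell's proof together with \cite[Theorem 5.4.3]{Murphy}) that $M_{fell(P)}(a) = \Vert a + P \Vert_{\A/P}$ --- which is exactly your identity $(\star)$ --- and then quotes Fell's theorem asserting that convergence of closed sets in the Fell topology is equivalent to pointwise convergence of the functions $M_Y$; the two implications then follow from continuity of $fell$ and from $fell$ being a homeomorphism (a continuous bijection between compact Hausdorff spaces). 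You, by contrast, re-prove the substance of Fell's equivalence directly from the basic open sets $U(K,F)$: you split Fell convergence into the open-meeting condition (i) and the compact-avoidance condition (ii), and you match these against the analytic statement using three classical properties of the Jacobson topology --- lower semicontinuity of $n_a$, quasi-compactness of the superlevel sets $\{ n_a \geq \epsilon \}$, and the fact that the sets $\{ n_a > 0 \}$ form a basis (these are \cite[Propositions 3.3.2 and 3.3.7]{Dixmier} and the standard hull-kernel basis fact, so citing them without proof is comparable in rigor to the paper citing Fell). The clever step that replaces Fell's argument is your reverse-direction construction of a single element $a = \sum_{i} a_i^* a_i \in I$ uniformly bounded below on $K$ via the C*-identity. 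What the paper's approach buys is brevity, with all the analysis delegated to the literature; what yours buys is a self-contained argument that makes explicit which properties of $\mathrm{Prim}(\A)$ actually drive the equivalence. One edge case you leave implicit in the forward direction: when $fell(I) = \emptyset$ (i.e. $I = \A$) or more generally when $\Vert a + I \Vert_{\A/I} = 0$, the point $J_0$ need not exist, but the lower bound is then trivial since quotient norms are nonnegative, so this is a cosmetic omission rather than a gap.
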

\begin{proof}  
By {\cite[Theorem 2.2]{Fell61}}, let $Y \in \mathcal{C}l(\mathrm{Prim}(\A))$, define: 
\begin{equation*}
M_Y : a \in \A  \longmapsto  \sup \left\{ \left\Vert a+I\right\Vert_{\A/I} : I \in Y \right\} \in \R,  
\end{equation*}
since in Fell's notation, given an ideal $S$, we have  $S_a=a+S$  according to his definition of {\em transform}  in {\cite[Section 2.1]{Fell61}} in the context of the primitive ideal space $\check{\A}=\mathrm{Prim}(\A)$. But, by the first line of the proof of {\cite[Theorem 2.2]{Fell61}}, we note that $\cap_{I \in Y} I \in \mathrm{Ideal}(\A)$  and:
\begin{equation}\label{M-fell} M_Y(a)=\left\Vert a+\cap_{I \in Y} I \right\Vert_{\A/\left(\cap_{I \in Y} I \right)},
\end{equation}
for all $a \in \A.$

Let $P \in \mathrm{Ideal}(\A)$, then $fell(P)=\{ J \in \mathrm{Prim}(\A) : J \supseteq P\} \in \mathcal{C}l(\mathrm{Prim}(\A))$ by Definition (\ref{Fell-topology}). Note that $\cap_{H \in fell(P)} H = P$ by {\cite[Theorem 5.4.3]{Murphy}}. Thus, by Expression (\ref{M-fell}):
\begin{equation}\label{norm-func}
M_{fell(P)}(a) = \Vert a+P\Vert_{\A/P}.
\end{equation}

Now, assume that $\left(I_\mu \right)_{\mu \in \Delta} \subset \mathrm{Ideal}(\A)$  converges to $I \in \mathrm{Ideal}(\A)$ with respect to the Fell topology.  Since $fell$ is continuous, the net $\left(fell\left(I_\mu\right)\right)_{\mu \in \Delta} \subseteq \mathcal{C}l(\mathrm{Prim}(\A))$ converges to $fell(I) \in \mathcal{C}l(\mathrm{Prim}(\A))$ with respect to the topology on $\mathcal{C}l(\mathrm{Prim}(\A))$.  By {\cite[Theorem 2.2]{Fell61}}, the net of functions $\left(M_{fell\left(I_\mu \right)}\right)_{\mu \in \Delta}$ converges pointwise to $M_{fell(I)}$, which completes the forward implication by Equation (\ref{norm-func}). 

For the reverse implication, assume that $\left(\left\Vert a +I_\mu \right\Vert_{\A/I_\mu} \right)_{\mu \in \Delta} \subseteq \R$ converges to $\Vert a +I \Vert_{\A/I} \in \R$ with respect to the usual topology on $\R$ for all $a \in \A$ and for some net  $\left(I_\mu \right)_{\mu \in \Delta} \subseteq \mathrm{Ideal}(\A)$ and $I \in \mathrm{Ideal}(\A)$.   But, then by Equation (\ref{norm-func}) and assumption, the net $\left(M_{fell\left(I_\mu \right)}\right)_{\mu \in \Delta}$ converges pointwise to $M_{fell(I)}$.  By {\cite[Theorem 2.2]{Fell61}}, the net $\left(fell\left(I_\mu \right)\right)_{\mu \in \Delta} \subseteq \mathcal{C}l(\mathrm{Prim}(\A))$ converges to $fell(I) \in \mathcal{C}l(\mathrm{Prim}(\A))$ with respect to the topology on $\mathcal{C}l(\mathrm{Prim}(\A))$.  However, as $fell$ is a continuous bijection between the compact Hausdorff spaces $\left(\mathrm{Ideal}(\A), Fell\right)$ and \\ $\left(\mathcal{C}l(\mathrm{Prim}(\A)), \tau_{\mathcal{C}l(\mathrm{Prim}(\A))}\right)$, the map $fell$ is a homeomorphism.  Thus, we conclude that $\left(I_\mu \right)_{\mu \in \Delta}$ converges to $I$ with respect to the Fell topology.
\end{proof}
Now, the Fell topology induces a topology on $\mathrm{Prim}(\A)$ via its relative topology. But, the set $\mathrm{Prim}(\A)$ can also be equipped with the Jacobson topology (see Definition (\ref{Jacobson-topology})).  Thus, a comparison of both topologies is in order in Proposition (\ref{Fell-Jacobson}), which can be proven using Lemma (\ref{Fell-conv}).

\begin{proposition}\label{Fell-Jacobson}
The relative topology induced by the Fell topology of Definition (\ref{Fell-topology}) on  $\mathrm{Prim}(\A)$  contains the Jacobson topology of Definition (\ref{Jacobson-topology}) on $\mathrm{Prim}(\A)$.
\end{proposition}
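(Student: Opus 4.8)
The plan is to prove the containment at the level of closed sets: I will show that every Jacobson-closed subset of $\mathrm{Prim}(\A)$ is closed in the relative topology inherited from the Fell topology on $\mathrm{Ideal}(\A)$. Passing to complements then shows that every Jacobson-open set is relatively Fell-open, which is exactly the asserted inclusion of topologies. By Definition (\ref{Jacobson-topology}), every Jacobson-closed set has the form $F_I = \{J \in \mathrm{Prim}(\A) : J \supseteq I\}$ for some $I \in \mathrm{Ideal}(\A)$, and by Definition (\ref{Fell-topology}) this set is precisely $fell(I)$. So it suffices to show that each $fell(I)$ is relatively Fell-closed.

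The key step is to show that the ambient set $G_I = \{K \in \mathrm{Ideal}(\A) : K \supseteq I\}$ is closed in $(\mathrm{Ideal}(\A), Fell)$; granting this, $fell(I) = G_I \cap \mathrm{Prim}(\A)$ is relatively closed by the definition of the subspace topology. Since closedness in an arbitrary topological space is detected by nets, I would take a net $\left(K_\mu\right)_{\mu \in \Delta}$ in $G_I$ converging in the Fell topology to some $K \in \mathrm{Ideal}(\A)$ and aim to prove $K \supseteq I$. Fixing $a \in I$, we have $a \in K_\mu$ for every $\mu$ because $K_\mu \supseteq I$, so $\Vert a + K_\mu\Vert_{\A/K_\mu} = 0$. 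By Lemma (\ref{Fell-conv}), the numerical net $\left(\Vert a + K_\mu\Vert_{\A/K_\mu}\right)_{\mu \in \Delta}$ converges to $\Vert a + K\Vert_{\A/K}$, forcing $\Vert a + K\Vert_{\A/K} = 0$. As $K$ is norm-closed, this gives $a \in K$; and since $a \in I$ was arbitrary, $I \subseteq K$, i.e. $K \in G_I$. This establishes that $G_I$ is Fell-closed.

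Combining these observations, $fell(I) = G_I \cap \mathrm{Prim}(\A)$ is relatively Fell-closed for every $I \in \mathrm{Ideal}(\A)$, so every Jacobson-closed set is relatively Fell-closed, and hence the Jacobson topology is contained in the relative Fell topology. I do not anticipate a genuine obstacle: the only point demanding care is the translation of the net-convergence statement of Lemma (\ref{Fell-conv}) into closedness of $G_I$, which rests on the standard fact that a subset of any topological space is closed exactly when it contains the limit of every convergent net drawn from it, together with the elementary observation that $\Vert a + K\Vert_{\A/K} = 0$ if and only if $a \in K$ for a norm-closed ideal $K$.
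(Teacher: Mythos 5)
Your proposal is correct and follows essentially the same route as the paper: both arguments reduce to showing that a Jacobson-closed set $\{J \in \mathrm{Prim}(\A) : J \supseteq I\}$ is relatively Fell-closed via a net argument using Lemma (\ref{Fell-conv}), observing that $\Vert a + K_\mu \Vert_{\A/K_\mu} = 0$ for $a \in I$ forces $\Vert a + K \Vert_{\A/K} = 0$ and hence $a \in K$ in the limit. The only (cosmetic) difference is that you first establish closedness of the ambient set $G_I \subseteq \mathrm{Ideal}(\A)$ and then intersect with $\mathrm{Prim}(\A)$, whereas the paper runs the identical computation directly on nets in $\mathrm{Prim}(\A)$.
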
 
\begin{proof}
Let $F \subseteq \mathrm{Prim}(\A)$ be closed in the Jacobson topology. Then, there exists a unique $I_F \in \mathrm{Ideal}(\A)$ such that  $F=\{ J \in \mathrm{Prim}(\A) : J \supseteq I_F \}$ by Definition (\ref{Fell-topology}).   

Let $J \in  \mathrm{Prim}(\A)$ such that there exists  a convergent net $(J^\mu)_{\mu \in \Delta} \subseteq F$  that converges to $J \in \mathrm{Prim}(\A)$ in the Fell topology.  Let $x \in I_F $, then $x \in J^\mu $ for all $\mu \in \Delta$.  Thus, the net $\left(\left\Vert x+J^\mu  \right\Vert_{\A/J^\mu }\right)_{\mu \in \Delta}=(0)_{\mu \in \Delta},$ which is a net that converges to $\left\Vert x+J \right\Vert_{\A/J}$ by Lemma (\ref{Fell-conv}).  Thus, the limit $  \left\Vert x+J \right\Vert_{\A/J}=0$, which implies that $x \in J$.  Hence, $J \supseteq I_F$ and since $J \in \mathrm{Prim}(\A)$, we have $J \in F $. 

Thus, $F$ is closed in the relative topology on $\mathrm{Prim}(\A)$ induced by the Fell topology, which verifies the containment of the topologies.
\end{proof}

As stated earlier, it is with the Fell topology for which we will provide a notion of convergence of quotients from ideals of AF algebras.  But, it seems that a metric notion is in order to move from fusing family of ideals to a fusing family of quotients as we will see in Proposition (\ref{ind-lim-quotients-prop}). However, in the AF case, this metric is induced by an inverse limit topology induced by the Fell topology on the given inductive sequence.  This inverse limit topology can be stated in more generality than the AF case, and thus provides a new topology on the ideal space of C*-inductive limits. This requires a lemma, which also motivates the inverse limit topology.
\begin{lemma}\label{mono-cont-fell-lemma}
If $\A$ and $\B$ are C*-algebras such that there exists a *-monomorphism $\pi: \A \longrightarrow \B$, then the map:
\begin{equation*}
\pi_i : J \in \mathrm{Ideal}(\B) \mapsto \pi^{-1}\left(J \cap \pi(A)\right) \in \mathrm{Ideal}(\A) 
\end{equation*}
 is continuous with respect to the associated Fell topologies.

In particular, if $\A, \B$ are C*-algebras such that $\A \subseteq \B$ and denote the inclusion map by $\iota: \A \rightarrow \B$, then the map:
\begin{equation*}
\iota_i : J \in \mathrm{Ideal}(\B) \mapsto J \cap \A \in \mathrm{Ideal}(\A)
\end{equation*}
 is continuous with respect to the associated Fell topologies.
\end{lemma}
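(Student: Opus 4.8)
The plan is to reduce everything to the norm characterization of Fell convergence supplied by Lemma (\ref{Fell-conv}), together with a single algebraic identity. Since both $(\mathrm{Ideal}(\A), Fell)$ and $(\mathrm{Ideal}(\B), Fell)$ are compact Hausdorff (Definition (\ref{Fell-topology})), it suffices to show that $\pi_i$ preserves convergence of nets. First I would settle well-definedness: a $*$-monomorphism between C*-algebras is isometric, so $\pi(\A)$ is a C*-subalgebra of $\B$; hence for $J \in \mathrm{Ideal}(\B)$ the set $J \cap \pi(\A)$ is a closed two-sided ideal of $\pi(\A)$, and since $\pi \colon \A \to \pi(\A)$ is a $*$-isomorphism, $\pi^{-1}(J \cap \pi(\A))$ is a closed two-sided ideal of $\A$. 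Thus $\pi_i$ does land in $\mathrm{Ideal}(\A)$ (the trivial ideals are handled by the same formulas, or directly).

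The crux is to establish, for every $a \in \A$ and every $J \in \mathrm{Ideal}(\B)$, the identity
\[
\left\Vert a + \pi_i(J)\right\Vert_{\A/\pi_i(J)} = \left\Vert \pi(a) + J\right\Vert_{\B/J}.
\]
To prove it, set $I = J \cap \pi(\A) \in \mathrm{Ideal}(\pi(\A))$. The $*$-isomorphism $\pi \colon \A \to \pi(\A)$ carries $\pi^{-1}(I)$ onto $I$, so it induces an isometric $*$-isomorphism $\A/\pi^{-1}(I) \cong \pi(\A)/I$, whence $\Vert a + \pi^{-1}(I)\Vert_{\A/\pi^{-1}(I)} = \Vert \pi(a) + I\Vert_{\pi(\A)/I}$. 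Next, by the second isomorphism theorem for C*-algebras, the sum $\pi(\A) + J$ is a C*-subalgebra of $\B$ and the inclusion $\pi(\A) \hookrightarrow \B$ induces an isometric $*$-isomorphism $\pi(\A)/(\pi(\A) \cap J) \cong (\pi(\A) + J)/J$ onto a C*-subalgebra of $\B/J$; since $J \subseteq \pi(\A) + J$, the quotient norm of $\pi(a) + (\pi(\A) \cap J)$ coincides with the norm of $\pi(a) + J$ computed in $\B/J$. Chaining these two equalities yields the identity.

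With the identity in hand the conclusion is immediate. Taking a net $(J_\mu)_{\mu \in \Delta}$ in $\mathrm{Ideal}(\B)$ converging to $J$ in the Fell topology, Lemma (\ref{Fell-conv}) gives that $\left(\Vert b + J_\mu\Vert_{\B/J_\mu}\right)_{\mu \in \Delta}$ converges to $\Vert b + J\Vert_{\B/J}$ for every $b \in \B$; specializing $b = \pi(a)$ and invoking the identity for each $J_\mu$ and for $J$ shows that $\left(\Vert a + \pi_i(J_\mu)\Vert_{\A/\pi_i(J_\mu)}\right)_{\mu \in \Delta}$ converges to $\Vert a + \pi_i(J)\Vert_{\A/\pi_i(J)}$ for every $a \in \A$. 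By the converse direction of Lemma (\ref{Fell-conv}), $\pi_i(J_\mu) \to \pi_i(J)$ in the Fell topology, so $\pi_i$ is continuous. The ``in particular'' statement is the special case $\pi = \iota$, where $\pi(\A) = \A$ and $\pi^{-1}(J \cap \A) = J \cap \A$. I expect the main obstacle to be the identity itself, specifically the two isometry claims — that a $*$-isomorphism induces an isometry on quotients, and that the second isomorphism theorem realizes the quotient isometrically as a C*-subalgebra of $\B/J$; these are standard C*-algebraic facts but carry the essential content, while the rest is bookkeeping over Lemma (\ref{Fell-conv}).
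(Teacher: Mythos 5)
Your proof is correct and takes essentially the same approach as the paper: both reduce continuity to the norm identity $\left\Vert a+\pi_i(J)\right\Vert_{\A/\pi_i(J)} = \left\Vert \pi(a)+J\right\Vert_{\B/J}$ (established via the isometry of $\pi$ together with the isometric embedding $\pi(\A)/(J\cap\pi(\A)) \hookrightarrow \B/J$) and then apply Lemma (\ref{Fell-conv}) in both directions to a convergent net. The only cosmetic difference is that where you invoke the second isomorphism theorem for C*-algebras, the paper proves the same isometry directly as Claim (\ref{quo-ind-map}), by checking that the natural map $a + J\cap\pi(\A) \mapsto a+J$ is an injective *-homomorphism and hence isometric.
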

\begin{proof}  Since $\pi(\A)$ is a C*-subalgebra of $\B$, we have that $J \cap \pi(\A)$ is an ideal of $\pi(\A)$. Thus, the map $\pi_i$ is well-defined since $\pi$ is a *-homomorphism.

For continuity, we first prove the following claim which will serve many purposes in this article.
\begin{claim}\label{quo-ind-map} Let $\A$ be a C*-algebra and let $\A_k$ be a C*-subalgebra.

If $J\in \mathrm{Ideal}(\A)$, then the map: 
\begin{equation}\phi^k_J : a + J \cap \A_k \in \A_k/(J \cap \A_k) \longmapsto a+J \in \A/J .
\end{equation}
is a  *-monomorphism and thus an isometry.
\end{claim}
\begin{proof}[Proof of claim]Assume that $a,b \in \A_k $ such that $a+J \cap \A_k= b+J \cap \A_k$, which implies that $a-b \in J \cap \A_k \subseteq J \implies a+J = b+J$, and thus, $\phi^k_J$ is well-defined.  Next, assume that $a,b \in \A_k$ such that $a+J=b+J$, which implies that $a-b \in J$.  But, we have $a-b \in \A_k \implies a-b \in J \cap \A_k $ and $a+J \cap \A_k= b+J \cap \A_k $, which provides injectivity.  Thus, for each $k \in \N$, we have $\phi^k_J$ is a well-defined injective *-homomorphism since $J$ is an ideal.  Hence, the map $\phi^k_J $ is an isometry for  any $J \in \mathrm{Ideal}(\A)$, which proves the claim.
\end{proof}

To continue with continuity, let $(J_\mu)_{\mu \in \Pi} \subseteq \mathrm{Ideal}(\B)$ be a net that converges to $J \in \mathrm{Ideal}(\B)$ with respect to the Fell topology.  Fix $a \in \A$ and $\mu \in \Pi$, then since $\pi$ is injective, isometric, and surjects onto $J_\mu \cap \pi(\A)$:
\begin{align*}
\left\| a+ \pi_i(J_\mu)\right\|_{\A/\pi_i(J_\mu)} & = \left\| a+ \pi^{-1}(J_\mu\cap \pi(\A))\right\|_{\A/\pi_i(J_\mu)} = \inf_{a' \in \pi^{-1}(J_\mu\cap \pi(\A))} \left\| a-a' \right\|_\A\\
& =  \inf_{\pi(a') \in J_\mu\cap \pi(\A)} \left\| \pi(a)-\pi(a') \right\|_\B =\inf_{b \in J_\mu\cap \pi(\A)} \left\| \pi(a)-b \right\|_\B \\
&  =\left\| \pi(a) +(J_\mu \cap \pi(\A))\right\|_{\pi(\A)/(J_\mu \cap \pi(\A))} = \left\| \pi(a) +J_\mu \right\|_{\B/J_\mu},
\end{align*}
where we used Claim (\ref{quo-ind-map}) in the last equality.  Thus, by Lemma (\ref{Fell-conv}), we have that $(\pi_i(J_\mu))_{\mu \in \Pi}\subseteq \mathrm{Ideal}(\A)$ converges to $\pi_i(J) \in \mathrm{Ideal}(\A)$ in the Fell topology, which concludes the proof.
\end{proof}

With this we are ready to define a topology which will induce a new topology on the ideal space of an inductive limit.  But, first, a remark on our change in the language of inductive limits for some of the following results. 
\begin{remark}
By {\cite[Chapter 6.1]{Murphy}}, if $\mathcal{I}=(\A_n , \alpha_n)_{n \in \N}$ is an inductive sequence with inductive limit $\A=\underrightarrow{\lim} \ \mathcal{I}$ as in Notation (\ref{ind-lim}), then  $( \indmor{\alpha}{n}(\A_n))_{n \in \N}$ is a non-decreasing sequence of C*-subalgebras of $\A$, in which $\A=\overline{\cup_{n \in \N} \indmor{\alpha}{n}(\A_n)}^{\Vert \cdot \Vert_\A}$.  Thus, in some of the following definitions and results, when we say, "Let $\A$ be a C*-algebra with a non-decreasing sequence of C*-subalgebras $\mathcal{U}= (\A_n)_{n \in \N}$ such that $\A=\overline{\cup_{n \in \N} \A_n}^{\Vert \cdot \Vert_\A}$," we are also including the case of inductive limits.  The purpose of this will be to avoid  notational confusion later on  when we  work with multiple inductive limits (see for example Proposition (\ref{ind-lim-quotients-prop})), and the purpose of this remark is to note that this does not weaken our results.
\end{remark}
Following \cite{Willard}, we define the inverse limit sequence of topoogical spaces and its limit:
\begin{definition}\label{inv-seq-lim-def}
A family $(X_n, \tau_n, f_{n+1})_{n \in \N}$ is an {\em inverse limit sequence} of topological spaces if $(X_n, \tau_n)_{n \in \N}$ is a family of topological spaces  and $(f_{n+1})_{n \in \N}$  is a family of continuous functions such that  $f_{n+1} : X_{n+1} \rightarrow X_n$   for all $n \in \N$. The {\em inverse limit space} of $(X_n, \tau_n, f_{n+1})_{n \in \N}$ denoted by  $(X_\infty, \tau_\infty)$  is the subset $X_\infty$ of $\prod_{n \in \N} X_n$ defined by:
\begin{equation*}
X_\infty = \left\{ (x_n)_{n \in \N} \in \prod_{n \in \N} X_n : f_{n+1}(x_{n+1}) = x_n \text{ for all } n \in \N \right\},
\end{equation*}
where $\tau_\infty$ is the topology on $X_\infty$ given by the relative topology induced by the product topology on $\prod_{n \in \N} X_n$ with respect to the given topologies $\tau_n$  on $X_n$ for all $n \in \N$.
\end{definition}
Our topology on the ideal space will be induced by an initial topology by the following map once our inverse limit is established.
\begin{proposition}\label{ind-lim-ideals}
Let $\A$ be a C*-algebra with a non-decreasing sequence of C*-subalgebras $\mathcal{U}= (\A_n)_{n \in \N}$ such that $\A=\overline{\cup_{n \in \N} \A_n}^{\Vert \cdot \Vert_\A}$. The map: 
\begin{equation*}
i(\cdot, \mathcal{U} ): I \in \mathrm{Ideal}(\A) \longmapsto \left(I \cap \A_n \right)_{n \in \N} \in \prod_{n \in \N} \mathrm{Ideal}(\A_n)
\end{equation*}
is a well-defined injection.
\end{proposition}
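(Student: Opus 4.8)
The plan is to establish the two assertions in turn: first that each coordinate $I \cap \A_n$ really is an ideal of $\A_n$, so that $i(\cdot,\mathcal{U})$ lands in $\prod_{n \in \N}\mathrm{Ideal}(\A_n)$, and then that the full tuple $\left(I \cap \A_n\right)_{n \in \N}$ determines $I$.

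For well-definedness I would argue directly. The set $I \cap \A_n$ is norm-closed, being the intersection of the closed set $I$ with the C*-subalgebra $\A_n$. Given $a \in \A_n$ and $x \in I \cap \A_n$, the product $ax$ lies in $\A_n$ because $\A_n$ is a subalgebra, and in $I$ because $I$ is a two-sided ideal of $\A$; hence $ax \in I \cap \A_n$, and symmetrically $xa \in I \cap \A_n$. Therefore $I \cap \A_n \in \mathrm{Ideal}(\A_n)$ for every $n$, so the map is well-defined.

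For injectivity the key observation is that the tuple $\left(I \cap \A_n\right)_{n \in \N}$ already pins down the quotient-norm function $a \mapsto \|a+I\|_{\A/I}$ on all of $\A$, and that this function recovers $I$ as its zero set. Indeed, for $a \in \A_n$, Claim~(\ref{quo-ind-map}) supplies the isometric $*$-monomorphism $\phi^n_I$, whence
\[ \|a + I\|_{\A/I} = \|a + (I \cap \A_n)\|_{\A_n/(I\cap\A_n)}. \]
Thus the restriction of $a \mapsto \|a+I\|_{\A/I}$ to each $\A_n$, and hence to $\bigcup_{n \in \N}\A_n$, depends only on the sequence $\left(I \cap \A_n\right)_{n \in \N}$. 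Now suppose $I, I' \in \mathrm{Ideal}(\A)$ satisfy $i(I,\mathcal{U}) = i(I',\mathcal{U})$, i.e. $I \cap \A_n = I' \cap \A_n$ for all $n$; by the above, $\|a + I\|_{\A/I} = \|a + I'\|_{\A/I'}$ for every $a \in \bigcup_{n \in \N}\A_n$. Since each quotient norm is $1$-Lipschitz on $\A$, because $\left| \|a+J\|_{\A/J} - \|b+J\|_{\A/J} \right| \le \|a-b\|_\A$, and since $\A = \overline{\bigcup_{n \in \N}\A_n}^{\|\cdot\|_\A}$, a limiting argument extends this equality to all $a \in \A$. Finally, as $a \in J$ if and only if $\|a+J\|_{\A/J} = 0$, the agreement of the two quotient-norm functions forces $I = I'$.

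The only step that demands any care is the passage from $\bigcup_{n \in \N}\A_n$ to $\A$, and I expect this to be the main (though mild) obstacle. It is handled purely by the $1$-Lipschitz continuity of the quotient norms together with the density hypothesis; in particular, note that this route sidesteps the more delicate question of whether $\bigcup_{n \in \N}(I \cap \A_n)$ is dense in $I$, which we never need.
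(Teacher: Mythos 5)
Your proof is correct, and while the well-definedness half matches the paper's (the paper simply asserts $I\cap\A_n\in\mathrm{Ideal}(\A_n)$, which you spell out), your injectivity argument takes a genuinely different route. The paper settles injectivity in one line by citing \cite[Lemma III.4.1]{Davidson}, which asserts precisely that $I=\overline{\cup_{n\in\N}(I\cap\A_n)}^{\Vert\cdot\Vert_\A}$ for every $I\in\mathrm{Ideal}(\A)$ --- that is, exactly the density statement you deliberately sidestep: once each ideal is recovered as the closure of the (common) union of its intersections with the $\A_n$, the two ideals coincide. You instead recover $I$ as the zero set of the quotient-norm function $a\mapsto\Vert a+I\Vert_{\A/I}$, pinning that function down on $\cup_{n\in\N}\A_n$ via the isometry of Claim (\ref{quo-ind-map}) and extending to all of $\A$ by $1$-Lipschitz continuity plus density; each step is sound, and there is no circularity since Claim (\ref{quo-ind-map}) is established before Proposition (\ref{ind-lim-ideals}) in the paper. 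What your route buys: it is self-contained modulo material already in the paper, and it is in the same spirit as the quotient-norm description of the Fell topology in Lemma (\ref{Fell-conv}); indeed the paper itself later runs exactly your combination --- Claim (\ref{quo-ind-map}) followed by a Lipschitz/density approximation estimate --- in the proof of Proposition (\ref{Fell-topology-contain}). What the paper's route buys: brevity, and the Davidson lemma is reused elsewhere anyway (e.g. in Lemma (\ref{i-surj-lemma})), so invoking it here costs nothing. At bottom the two arguments are cousins, since the standard proof of Davidson's lemma rests on the same isometry computation; the difference is that you work at the level of norm functions and never need to produce elements of $I\cap\A_n$ approximating a given element of $I$.
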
  
\begin{proof}
 Since $I \in \mathrm{Ideal}(\A) $ and $\A_n $  is a C*-subalgebra for all $n \in \N$, we have that $I \cap \A_n  \in \mathrm{Ideal}(\A_n) $ for all $n \in \N$.    Thus, the map $i(\cdot, \mathcal{U} )$ is well-defined.  

Next, for injectivity, assume that $I,J \in \mathrm{Ideal}(\A)$ such that $i(I, \mathcal{U})=i(J,\mathcal{U} ).$ Hence, the sets  $I \cap \A_n =J \cap \A_n$ for all $n \in \N$, which implies that $\cup_{n \in \N} (I\cap \A_n)= \cup_{n \in \N} (J\cap \A_n)$. Therefore, the closures $\overline{\cup_{n \in \N} (I\cap \A_n)}^{\Vert \cdot \Vert_\A} =\overline{\cup_{n \in \N} (J \cap \A_n)}^{\Vert \cdot \Vert_\A}$.  But, by {\cite[Lemma III.4.1]{Davidson}}, we conclude $I=J$.
\end{proof}
The following produces the remaining ingredients for our topology.
\begin{lemma}\label{fell-inverse-lemma}
Let $\A$ be a C*-algebra with a non-decreasing sequence of C*-subalgebras $\mathcal{U}= (\A_n)_{n \in \N}$ such that $\A=\overline{\cup_{n \in \N} \A_n}^{\Vert \cdot \Vert_\A}$.  

If for each $n \in \N$, we denote ${\iota_{n+1}}_i : J \in \mathrm{Ideal}(\A_{n+1}) \rightarrow J \cap \A_{n} \in \mathrm{Ideal}(\A_n)$, then the family $\left(\mathrm{Ideal}(\A_n), Fell, {\iota_{n+1}}_i\right)_{n \in \N}$ is an inverse limit sequence with non-empty compact Hausdorff inverse limit space $\left(\mathrm{Ideal}(\A)_\infty, Fell_\infty \right)$ such that:
\begin{equation*}
\mathrm{Ideal}(\A)_\infty=\left\{ (J_n)_{n \in \N} \in \prod_{n \in \N} \mathrm{Ideal}(\A_n) : J_{n+1}\cap \A_n = J_n \text{ for all } n \in \N\right\}
\end{equation*}
and thus, using notation from Proposition (\ref{ind-lim-ideals}):
\begin{equation*}
i\left(\mathrm{Ideal}(\A), \mathcal{U}\right) \subseteq \mathrm{Ideal}(\A)_\infty.
\end{equation*}
\end{lemma}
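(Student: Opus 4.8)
The plan is to assemble the statement from three inputs: the continuity of the connecting maps, which is exactly Lemma \ref{mono-cont-fell-lemma}; the bookkeeping in the definition of an inverse limit space, Definition \ref{inv-seq-lim-def}; and the standard fact that an inverse limit of compact Hausdorff spaces along continuous bonding maps is a closed subspace of the product. First I would confirm that $\left(\mathrm{Ideal}(\A_n), Fell, {\iota_{n+1}}_i\right)_{n \in \N}$ is genuinely an inverse limit sequence in the sense of Definition \ref{inv-seq-lim-def}. Each $\mathrm{Ideal}(\A_n)$ carries the Fell topology, and since $\A_n \subseteq \A_{n+1}$, the map ${\iota_{n+1}}_i : J \mapsto J \cap \A_n$ is precisely the map $\iota_i$ of Lemma \ref{mono-cont-fell-lemma} attached to the inclusion $\A_n \subseteq \A_{n+1}$; that lemma supplies its continuity for the associated Fell topologies. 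Hence the bonding maps $f_{n+1} = {\iota_{n+1}}_i$ are continuous and the family forms an inverse limit sequence. Substituting $f_{n+1}(J_{n+1}) = J_{n+1} \cap \A_n$ into the defining relation $f_{n+1}(x_{n+1}) = x_n$ of Definition \ref{inv-seq-lim-def} yields exactly the stated description of $\mathrm{Ideal}(\A)_\infty$.

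Next I would address the compact Hausdorff claim. By Definition \ref{Fell-topology} each $(\mathrm{Ideal}(\A_n), Fell)$ is compact Hausdorff, so Tychonoff's theorem makes $\prod_{n \in \N} \mathrm{Ideal}(\A_n)$ compact Hausdorff; it then suffices to show $\mathrm{Ideal}(\A)_\infty$ is closed in the product, since a closed subspace of a compact Hausdorff space is again compact Hausdorff. Writing $\mathrm{Ideal}(\A)_\infty = \bigcap_{n \in \N} \{(J_m)_m : {\iota_{n+1}}_i(J_{n+1}) = J_n\}$, each set in this intersection is the preimage of the diagonal of $\mathrm{Ideal}(\A_n) \times \mathrm{Ideal}(\A_n)$ under the continuous map $(J_m)_m \mapsto ({\iota_{n+1}}_i(J_{n+1}), J_n)$; as $\mathrm{Ideal}(\A_n)$ is Hausdorff, its diagonal is closed, so each such set, and hence the intersection, is closed.

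Finally I would record the inclusion and deduce non-emptiness. For $I \in \mathrm{Ideal}(\A)$ the nesting $\A_n \subseteq \A_{n+1}$ gives $(I \cap \A_{n+1}) \cap \A_n = I \cap \A_n$, so the tuple $i(I, \mathcal{U}) = (I \cap \A_n)_{n \in \N}$ of Proposition \ref{ind-lim-ideals} satisfies the defining relation and lies in $\mathrm{Ideal}(\A)_\infty$; this is the asserted inclusion $i(\mathrm{Ideal}(\A), \mathcal{U}) \subseteq \mathrm{Ideal}(\A)_\infty$. Since $\mathrm{Ideal}(\A)$ is non-empty (it contains the trivial ideals), non-emptiness of $\mathrm{Ideal}(\A)_\infty$ follows immediately. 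The only step demanding genuine care is the closedness argument underpinning compactness; the rest reduces either to a direct appeal to Lemma \ref{mono-cont-fell-lemma} and Definition \ref{inv-seq-lim-def} or to an elementary computation with intersections.
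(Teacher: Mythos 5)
Your proof is correct, and its skeleton matches the paper's: continuity of the bonding maps comes from Lemma (\ref{mono-cont-fell-lemma}), the description of $\mathrm{Ideal}(\A)_\infty$ is read off from Definition (\ref{inv-seq-lim-def}), and the inclusion uses the well-definedness in Proposition (\ref{ind-lim-ideals}). The one point of genuine divergence is the ``non-empty compact Hausdorff'' claim: the paper disposes of it in a single stroke by citing \cite[Theorem 29.11]{Willard}, whereas you prove compactness directly (Tychonoff for the product, then closedness of $\mathrm{Ideal}(\A)_\infty$ as an intersection of preimages of diagonals under continuous maps) and obtain non-emptiness as a corollary of the inclusion $i\left(\mathrm{Ideal}(\A), \mathcal{U}\right) \subseteq \mathrm{Ideal}(\A)_\infty$, since $\mathrm{Ideal}(\A)$ contains the trivial ideals. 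This is a real, if modest, improvement in self-containedness: the non-emptiness half of Willard's inverse-limit theorem is its delicate part (it needs a finite-intersection-property argument over the compact factors), while in this concrete setting non-emptiness is immediate once the inclusion is in hand --- for instance, the constant sequence of zero ideals already satisfies the compatibility relation. What the paper's citation buys is brevity; what your unpacking buys is that the lemma no longer leans on the harder half of the cited theorem, with the closed-diagonal argument being the only step requiring care, and you execute it correctly.
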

\begin{proof}
The conclusions follows immediately from Lemma (\ref{mono-cont-fell-lemma}), Definition (\ref{inv-seq-lim-def}), and Proposition (\ref{ind-lim-ideals}). The non-empty compact Hausdorff conclusion follows from \cite[Theorem 29.11]{Willard} and the fact that $\mathrm{Ideal}(\A_n)$ equipped with the Fell topology is a non-empty compact Hausdorff space for each $n \in \N$. 
\end{proof}
\begin{definition}\label{Fell-ind-top-def}
Let $\A$ be a C*-algebra with a non-decreasing sequence of C*-subalgebras $\mathcal{U}= (\A_n)_{n \in \N}$ such that $\A=\overline{\cup_{n \in \N} \A_n}^{\Vert \cdot \Vert_\A}$.  

By Lemma (\ref{fell-inverse-lemma}),  the intitial topology induced by $i(\cdot, \mathcal{U})$ and the topological space $\left(\mathrm{Ideal}(\A)_\infty, Fell_\infty \right)$  on $\mathrm{Ideal}(\A)$ exists and is a Hausdorff (by injectivity of $i(\cdot, \mathcal{U})$), which we denote by $Fell_{i(\mathcal{U})}$. 
\end{definition}
We will now provide some sufficient conditions for when $Fell_{i(\mathcal{U})}$ agrees with $Fell$, but first, we show that is always the case that $Fell \subseteq  Fell_{i(\mathcal{U})}$.
\begin{proposition}\label{Fell-topology-contain}
If $\A$ is a C*-algebra with a non-decreasing sequence of C*-subalgebras $\mathcal{U}= (\A_n)_{n \in \N}$ such that $\A=\overline{\cup_{n \in \N} \A_n}^{\Vert \cdot \Vert_\A}$, then  the Fell topology $Fell$ on $\mathrm{Ideal}(\A)$ is contained in the topology $Fell_{i(\mathcal{U})}$ of Definition (\ref{Fell-ind-top-def}). 
\end{proposition}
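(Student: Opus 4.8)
The plan is to establish the containment via the net criterion: I would prove that every net $(I_\mu)_{\mu \in \Delta} \subseteq \mathrm{Ideal}(\A)$ converging to $I$ in $Fell_{i(\mathcal{U})}$ also converges to $I$ in $Fell$, which is exactly the assertion that $Fell$ is coarser than $Fell_{i(\mathcal{U})}$. By Definition (\ref{Fell-ind-top-def}), $Fell_{i(\mathcal{U})}$ is the initial topology induced by the injection $i(\cdot, \mathcal{U})$ into $(\mathrm{Ideal}(\A)_\infty, Fell_\infty)$, so such a net converges precisely when $i(I_\mu, \mathcal{U}) \to i(I, \mathcal{U})$ in $Fell_\infty$. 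Since, by Lemma (\ref{fell-inverse-lemma}) and Definition (\ref{inv-seq-lim-def}), $Fell_\infty$ is the subspace topology inherited from the product $\prod_{n \in \N}(\mathrm{Ideal}(\A_n), Fell)$, this is in turn equivalent to coordinatewise convergence: $I_\mu \cap \A_n \to I \cap \A_n$ in the Fell topology of $\mathrm{Ideal}(\A_n)$ for every $n \in \N$.

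Next I would translate this coordinatewise convergence into statements about quotient norms. Applying Lemma (\ref{Fell-conv}) on the C*-algebra $\A_n$, the convergence $I_\mu \cap \A_n \to I \cap \A_n$ is equivalent to $\Vert a + I_\mu \cap \A_n\Vert_{\A_n/(I_\mu \cap \A_n)} \to \Vert a + I \cap \A_n\Vert_{\A_n/(I \cap \A_n)}$ for every $a \in \A_n$. The crucial simplification comes from Claim (\ref{quo-ind-map}): the map $\phi^n_J$ is an isometry, so $\Vert a + J \cap \A_n\Vert_{\A_n/(J \cap \A_n)} = \Vert a + J\Vert_{\A/J}$ for every $J \in \mathrm{Ideal}(\A)$ and $a \in \A_n$. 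Substituting both for $I_\mu$ and for $I$, the hypothesis becomes: for each $n \in \N$ and each $a \in \A_n$, $\Vert a + I_\mu\Vert_{\A/I_\mu} \to \Vert a + I\Vert_{\A/I}$. In particular this convergence holds for every $a$ in the dense subalgebra $\cup_{n \in \N}\A_n$.

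Finally I would extend the convergence to all $a \in \A$ and invoke Lemma (\ref{Fell-conv}), this time on $\A$ itself. The mechanism is that the quotient norm is $1$-Lipschitz: for any $J \in \mathrm{Ideal}(\A)$ and any $a, a' \in \A$, the reverse triangle inequality gives $\bigl|\Vert a + J\Vert_{\A/J} - \Vert a' + J\Vert_{\A/J}\bigr| \leq \Vert a - a'\Vert_\A$. Given $a \in \A$ and $\varepsilon > 0$, I would choose $a' \in \cup_{n \in \N}\A_n$ with $\Vert a - a'\Vert_\A < \varepsilon$; a three-term estimate then bounds $\bigl|\Vert a + I_\mu\Vert_{\A/I_\mu} - \Vert a + I\Vert_{\A/I}\bigr|$ by $2\varepsilon$ plus $\bigl|\Vert a' + I_\mu\Vert_{\A/I_\mu} - \Vert a' + I\Vert_{\A/I}\bigr|$, where the latter tends to $0$ by the previous paragraph. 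Hence $\Vert a + I_\mu\Vert_{\A/I_\mu} \to \Vert a + I\Vert_{\A/I}$ for all $a \in \A$, and Lemma (\ref{Fell-conv}) yields $I_\mu \to I$ in $Fell$, completing the argument.

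I expect the one genuine (if mild) obstacle to be precisely this last density passage. The inverse-limit/product structure of $Fell_\infty$ only directly controls the quotient norms of elements already lying in some $\A_n$, whereas Lemma (\ref{Fell-conv}) for $\A$ demands control over all of $\A$; the isometry of Claim (\ref{quo-ind-map}) is what lets me view the level-$n$ quotient norms as genuine $\A/J$ quotient norms, and the uniform Lipschitz bound on the quotient norm is what upgrades convergence on the dense subalgebra to convergence everywhere. Everything else is a routine unwinding of initial, product, and subspace topologies through the net criterion.
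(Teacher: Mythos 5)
Your proposal is correct and follows essentially the same route as the paper's own proof: unwinding the initial and product topologies into coordinatewise Fell convergence, using Claim (\ref{quo-ind-map}) to identify the level-$n$ quotient norms with $\A/J$ quotient norms, and then the $1$-Lipschitz/three-term density argument followed by Lemma (\ref{Fell-conv}) on $\A$. The only cosmetic difference is that you invoke Lemma (\ref{Fell-conv}) on each $\A_n$ explicitly, a step the paper leaves implicit in its appeal to continuity of the projection maps.
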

\begin{proof}
Let $(J_\mu)_{\mu \in \Pi} \subseteq \mathrm{Ideal}(\A)$ be a net that converges to $J \in \mathrm{Ideal}(\A)$ with respect to $Fell_{i(\mathcal{U})}$.  Hence, the net $(i(J_\mu, \mathcal{U}))_{\mu \in \Pi} \subseteq \mathrm{Ideal}(\A)_\infty$ is a net that converges to $i(J, \mathcal{U}) \in \mathrm{Ideal}(\A)_\infty$ with respect to $Fell_\infty$ by definition.  Again, by definition, the net $(i(J_\mu, \mathcal{U}))_{\mu \in \Pi} \subseteq \mathrm{Ideal}(\A)_\infty$ is a net that converges to $i(J, \mathcal{U}) \in \mathrm{Ideal}(\A)_\infty$ with respect to the product topology on $\prod_{n \in \N} \mathrm{Ideal}(\A_n),$ in which each $\mathrm{Ideal}(\A_n)$ is equipped with topology $Fell$.

First, fix $a \in \cup_{n \in \N} \A_n$.  So, there exists $N \in \N$ such that $a \in \A_N$.  Let $\mu \in \Pi$.  Thus, by Claim (\ref{quo-ind-map}):
\begin{equation*}
\left\| a+J_\mu\right\|_{\A/J_\mu}= \left\| a+J_\mu \cap \A_N \right\|_{\A_N/(J_\mu \cap \A_N)}.
\end{equation*}
Therefore, as the projection maps are continuous for the product topology, we conclude that the net  $\left( \left\| a+J_\mu \cap \A_N \right\|_{\A_N/(J_\mu \cap \A_N)}\right)_{\mu \in \Pi}$ converges to $ \left\| a+J \cap \A_N \right\|_{\A_N/(J \cap \A_N)}$ in the usual topology on $\R$.  

Hence, the net $\left(\left\| a+J_\mu\right\|_{\A/J_\mu}\right)_{\mu \in \Pi}$ converges to $ \left\| a+J \right\|_{\A/J }$ in the usual topology on $\R$ for all $ a \in \cup_{n \in \N} \A_n.$  Now, let $a\in \A$.  Let $\varepsilon >0$. There exists $N \in \N$ and $a_N \in \A_N \subseteq \cup_{n \in \N}\A_n$ such that $\|a-a_N\|_\A < \varepsilon/3$.  Thus, there exists $\mu_0 \in \Pi$ such that for all $\mu \geq \mu_0$, we have:
\begin{equation*}
\left| \left\|a_N + J_\mu \right\|_{\A/J_\mu} - \left\| a_N + J \right\|_{\A/J} \right| < \varepsilon/3.
\end{equation*}
Hence, if $\mu \geq \mu_0$, then:
\begin{align*}
\left| \left\|a + J_\mu \right\|_{\A/J_\mu} - \left\| a + J \right\|_{\A/J} \right|& \leq \left| \left\|a+J_\mu\right|_{\A/J_\mu} - \left\| a_N + J_\mu \right\|_{\A/J_\mu} \right| \\
& \quad + \left| \left\|a_N + J_\mu \right\|_{\A/J_\mu} - \left\| a_N + J \right\|_{\A/J} \right|  \\
& \quad  + \left| \left\|a_N+J \right\|_{\A/J} - \left\|a + J\right\|_{\A/J} \right| \\
& < \left\| a-a_N + J_\mu \right\|_{\A/J_\mu} + \varepsilon/3 + \left\| a_N-a +J \right\|_{\A/J}\\
& \leq 2\left\|a-a_N \right\|_\A +\varepsilon/3<\varepsilon,
\end{align*}
which completes the proof by Lemma (\ref{Fell-conv}).
\end{proof}
Thus, by this proposition and Lemma (\ref{fell-inverse-lemma}), if it is also the case that the topology $Fell_{i(\mathcal{U})}$ is compact, then it must agree with the topology $Fell$ by maximal compactness of Hausdorff spaces.  An obvious way that this would be true is if the map $i(\cdot, \mathcal{U})$ surjected onto $\mathrm{Ideal}(\A)_\infty$.  It turns out that this is the case for all AF algebras, and we provide a characterization of this scenario by a condition on the algebraic  ideals of a C*-algebra motivated by Bratteli's work in \cite{Bratteli72}.  This is the next lemma that follows after the following notation.
\begin{notation}\label{algideal-not}
Let $\A$ be a C*-algebra with a non-decreasing sequence of C*-subalgebras $\mathcal{U}= (\A_n)_{n \in \N}$ such that $\A=\overline{\cup_{n \in \N} \A_n}^{\Vert \cdot \Vert_\A}$.  

Let $\mathrm{algIdeal}(\cup_{n \in \N}\A_n)$ denote the set of two-sided ideals of $\cup_{n \in \N}\A_n$  that are not necessarily closed in $\A$.  

Let $\mathrm{algIdeal}(\cup_{n \in \N}\A_n)_{prod}= \{ J \in \mathrm{algIdeal}(\cup_{n \in \N}\A_n) : J \cap \A_n \in \mathrm{Ideal}(\A_n) \text{ for all } n \in \N \}.$
\end{notation}

\begin{lemma}\label{i-surj-lemma}
If $\A$ is a C*-algebra with a non-decreasing sequence of C*-subalgebras $\mathcal{U}= (\A_n)_{n \in \N}$ such that $\A=\overline{\cup_{n \in \N} \A_n}^{\Vert \cdot \Vert_\A}$, then using notation from Proposition (\ref{ind-lim-ideals}) and Lemma (\ref{fell-inverse-lemma}) and Notation (\ref{algideal-not}),  the map $J \in \mathrm{algIdeal}(\cup_{n \in \N}\A_n)_{prod} \mapsto \overline{J}^{\|\cdot \|_\A} \in \mathrm{Ideal}(\A)$ is a well-defined surjection onto $\mathrm{Ideal}(\A)$.

Furthermore, the following two statements are equivalent:
\begin{enumerate}
\item the function $J \in \mathrm{algIdeal}(\cup_{n \in \N}\A_n)_{prod} \mapsto \overline{J}^{\|\cdot \|_\A} \in \mathrm{Ideal}(\A)$ is injective, and thus a bijection onto $\mathrm{Ideal}(\A)$;
\item the function $i(\cdot, \mathcal{U})$ is surjective onto $\mathrm{Ideal}(\A)_\infty$, and thus a bijection onto $\mathrm{Ideal}(\A)_\infty$.
\end{enumerate}

In particular, if $\A$ is AF and $\mathcal{U}$ is chosen to be a family of finite-dimensional C*-algebras, then the map $i(\cdot, \mathcal{U})$ surjects onto $\mathrm{Ideal}(\A)_\infty$.
\end{lemma}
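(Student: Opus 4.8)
The plan is to prove the three assertions in turn, reducing everything to a single bijection between $\mathrm{algIdeal}(\cup_{n\in\N}\A_n)_{prod}$ and the inverse limit $\mathrm{Ideal}(\A)_\infty$ together with {\cite[Lemma III.4.1]{Davidson}}, which asserts $I=\overline{\cup_{n\in\N}(I\cap\A_n)}^{\|\cdot\|_\A}$ for every $I\in\mathrm{Ideal}(\A)$. Write $D=\cup_{n\in\N}\A_n$ and let $\Phi(J)=\overline{J}^{\|\cdot\|_\A}$. First I would check that $\Phi$ is well defined: if $J$ is a two-sided ideal of the dense $*$-subalgebra $D$, then for $a\in\A$ and $x\in\overline{J}$, choosing $a_k\in D$ and $x_k\in J$ with $a_k\to a$ and $x_k\to x$ gives $a_kx_k\in J$ and $a_kx_k\to ax$, so $ax\in\overline{J}$ (and similarly on the right); hence $\overline{J}\in\mathrm{Ideal}(\A)$. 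For surjectivity of $\Phi$, given $I\in\mathrm{Ideal}(\A)$ I would take $J=I\cap D=\cup_{n\in\N}(I\cap\A_n)$, note that $J\cap\A_n=I\cap\A_n\in\mathrm{Ideal}(\A_n)$ so $J\in\mathrm{algIdeal}(D)_{prod}$, and conclude $\Phi(J)=I$ directly from {\cite[Lemma III.4.1]{Davidson}}.

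For the equivalence I would introduce the two auxiliary maps $\Psi\colon J\mapsto(J\cap\A_n)_{n\in\N}$ and $\tilde\Phi\colon(J_n)_{n\in\N}\mapsto\overline{\cup_{n\in\N}J_n}^{\|\cdot\|_\A}$. The key observation is that $\Psi$ is a bijection from $\mathrm{algIdeal}(D)_{prod}$ onto $\mathrm{Ideal}(\A)_\infty$: it lands in $\mathrm{Ideal}(\A)_\infty$ because $\A_n\subseteq\A_{n+1}$ forces $(J\cap\A_{n+1})\cap\A_n=J\cap\A_n$; it is injective since $J=\cup_{n\in\N}(J\cap\A_n)$ for $J\subseteq D$; and it is surjective because, given a compatible $(J_n)_{n\in\N}$, the increasing union $\cup_{n\in\N}J_n$ is an algebraic ideal of $D$ whose intersection with $\A_n$ is $J_n$ (using $J_m\cap\A_n=J_n$ for $m\geq n$, obtained by iterating the compatibility relation). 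A direct check then gives $\Phi=\tilde\Phi\circ\Psi$ and, via {\cite[Lemma III.4.1]{Davidson}}, $\tilde\Phi\circ i(\cdot,\mathcal{U})=\mathrm{id}_{\mathrm{Ideal}(\A)}$, while $\tilde\Phi=\Phi\circ\Psi^{-1}$ is onto by Part~1. Since $i(\cdot,\mathcal{U})$ is injective by Proposition (\ref{ind-lim-ideals}), a short diagram chase yields: $\Phi$ injective $\iff$ $\tilde\Phi$ injective (as $\Psi$ is bijective) $\iff$ $\tilde\Phi$ bijective (it is already onto) $\iff$ $i(\cdot,\mathcal{U})=\tilde\Phi^{-1}$ is bijective $\iff$ $i(\cdot,\mathcal{U})$ is onto. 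This is precisely the equivalence of (1) and (2), and the two ``bijection'' clauses follow since $\Phi$ is already onto and $i(\cdot,\mathcal{U})$ already one-to-one.

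Finally, for the AF case I would verify statement (2) directly, which by the equivalence suffices. Given $(J_n)_{n\in\N}\in\mathrm{Ideal}(\A)_\infty$, set $I=\tilde\Phi((J_n)_{n\in\N})\in\mathrm{Ideal}(\A)$; the claim is $I\cap\A_n=J_n$, i.e.\ $i(I,\mathcal{U})=(J_n)_{n\in\N}$. The inclusion $J_n\subseteq I\cap\A_n$ is immediate. For the reverse, since $\A_n$ is finite dimensional both $J_n$ and $I\cap\A_n$ are sums of matrix blocks, so if $I\cap\A_n\not\subseteq J_n$ there is a minimal central projection $z$ of $\A_n$ with $z\in I\cap\A_n$ but $z\notin J_n$. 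Writing each $J_m=p_m\A_m$ with $p_m$ the central unit of the ideal $J_m$ of $\A_m$ (for $m\geq n$), I would use $\mathrm{dist}(z,J_m)=\|z+J_m\|_{\A_m/J_m}=\|(1-p_m)z\|$ together with $z\in I=\overline{\cup_mJ_m}$, which forces $\|(1-p_m)z\|\to 0$. The decisive point --- and the one place finite dimensionality is essential --- is that, decomposing $z=\sum_j z_j^{(m)}z$ over the minimal central projections $z_j^{(m)}$ of $\A_m$, each $z_j^{(m)}z$ is a projection lying in a single matrix block and hence has norm $0$ or $1$; therefore $\|(1-p_m)z\|\in\{0,1\}$, so $\|(1-p_m)z\|=0$ for large $m$. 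This gives $z=p_mz\in J_m$, whence $z\in J_m\cap\A_n=J_n$, contradicting $z\notin J_n$. Thus $I\cap\A_n=J_n$ for all $n$ and $i(\cdot,\mathcal{U})$ is onto. I expect this last projection argument, where norms of the relevant elements are quantized to $\{0,1\}$, to be the heart of the proof, the earlier parts being formal once {\cite[Lemma III.4.1]{Davidson}} and the bijection $\Psi$ are in place.
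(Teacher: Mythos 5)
Your proof is correct, and while the first two assertions follow essentially the paper's own route, your treatment of the AF case is genuinely different. For well-definedness, surjectivity of the closure map, and the equivalence of (1) and (2), both you and the paper lean on the same key fact \cite[Lemma III.4.1]{Davidson} ($I = \overline{\cup_n (I\cap\A_n)}^{\|\cdot\|_\A}$); the paper runs the implication in each direction by chasing elements and ideals directly, whereas you factor the closure map as $\Phi = \tilde\Phi\circ\Psi$ with $\Psi$ a bijection onto $\mathrm{Ideal}(\A)_\infty$ and observe $\tilde\Phi\circ i(\cdot,\mathcal{U}) = \mathrm{id}$, reducing the equivalence to a clean diagram chase --- same content, tidier packaging. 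The real divergence is the last assertion: the paper notes that $\mathrm{algIdeal}(\cup_n\A_n) = \mathrm{algIdeal}(\cup_n\A_n)_{prod}$ when the $\A_n$ are finite-dimensional and then invokes Bratteli's result \cite[Theorem 3.3]{Bratteli72} (the bijection between algebraic ideals of the local algebra and closed ideals of $\A$) to obtain statement (1), from which (2) follows by the established equivalence. You instead prove (2) directly and in a self-contained way: given a compatible sequence $(J_n)_n$, you show $\overline{\cup_m J_m}^{\|\cdot\|_\A}\cap\A_n = J_n$ by picking a minimal central projection $z$ witnessing a hypothetical failure, writing $J_m = p_m\A_m$ for central projections $p_m$, and using the fact that $\|(1-p_m)z\| = \|z+J_m\|_{\A_m/J_m}$ is a maximum of norms of projections and hence lies in $\{0,1\}$, so that $\|(1-p_m)z\|\to 0$ forces $z\in J_m\cap\A_n = J_n$ for large $m$. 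This argument is sound (the identification $\A_m/J_m\cong(1-p_m)\A_m$ is isometric, the distances $\mathrm{dist}(z,J_m)$ decrease to $\mathrm{dist}(z,I)=0$ since the $J_m$ increase, and $J_m\cap\A_n = J_n$ for $m\geq n$ by iterated compatibility). What the paper's route buys is brevity and the simultaneous validity of (1) for AF algebras via a standard citation; what your route buys is an elementary, citation-free proof that isolates exactly where finite-dimensionality enters --- the quantization of norms of projections --- at the cost of implicitly re-deriving a fragment of Bratteli's theorem.
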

\begin{proof}
We first show that the map $J \in \mathrm{algIdeal}(\cup_{n \in \N}\A_n)_{prod} \mapsto \overline{J}^{\|\cdot \|_\A} \in \mathrm{Ideal}(\A)$ is a well-defined surjection onto $\mathrm{Ideal}(\A)$.  This  map  is clearly well-defined.  For surjectivity, let $J \in \mathrm{Ideal}(\A)$.  Then, we have that $J \cap \A_n \in \mathrm{Ideal}(\A_n)$ and thus, if we define $I=\cup_{n \in \N} (J \cap \A_n)$, then $I \in \mathrm{algIdeal}(\cup_{n \in \N}\A_n)_{prod}$ and $\overline{I}^{\|\cdot \|_\A}=\overline{\cup_{n \in \N} (J \cap \A_n)}^{\|\cdot \|_\A}=J$ by \cite[Lemma III.4.1]{Davidson}.

Assume (1). The map $i(\cdot, \mathcal{U})$ is already a well-defined injection by Proposition (\ref{ind-lim-ideals}) and  Proposition (\ref{fell-inverse-lemma}). For surjectivity,  let $(J_n)_{n \in \N} \in \mathrm{Ideal}(\A)_\infty$.  Thus $J_n \in \mathrm{Ideal}(\A_n)$ and $J_n \subseteq J_{n+1}$ for all $n \in \N$, and so if we let $J=\cup_{n \in \N} J_n$, then $J \in \mathrm{algIdeal}(\cup_{n \in \N}\A_n)_{prod}$.  Thus $\overline{J}^{\|\cdot \|_\A} \in \mathrm{Ideal}(\A)$.  We claim that $i\left( \overline{J}^{\|\cdot \|_\A}, \mathcal{U}\right)=(J_n)_{n \in \N}$.  Indeed, define $I_n= \overline{J}^{\|\cdot \|_\A} \cap \A_n$ for each $n \in \N$.  We have $I_n \in \mathrm{Ideal}(\A_n)$ and $I_n \subseteq I_{n+1}$ for all $n \in \N$.  Again, if we let $I=\cup_{n \in \N} I_n$, then $I \in \mathrm{algIdeal}(\cup_{n \in \N}\A_n)_{prod}$ and $\overline{I}^{\|\cdot \|_\A} \in \mathrm{Ideal}(\A)$.  However:
\begin{equation*}
\overline{I}^{\|\cdot \|_\A} =\overline{\cup_{n \in \N} I_n}^{\| \cdot \|_\A} = \overline{\cup_{n \in \N} \left(\overline{J}^{\|\cdot \|_\A} \cap \A_n\right)}^{\| \cdot \|_\A} = \overline{J}^{\|\cdot \|_\A}
\end{equation*}
by \cite[Lemma III.4.1]{Davidson}.  Hence $\cup_{n \in \N} I_n=I=J=\cup_{n \in \N} J_n$ by the assumption that the map of (1) is injective, which implies that  $\overline{J}^{\|\cdot \|_\A}\cap \A_n=I_n=J_n$ for all $n \in \N$.  Thus $i\left( \overline{J}^{\|\cdot \|_\A}, \mathcal{U}\right)=(I_n)_{n \in \N}=(J_n)_{n \in \N}$, which completes this direction.

Next, assume (2).  Let $I,J \in \mathrm{algIdeal}(\cup_{n \in \N}\A_n)_{prod}$ such that $I \neq J$.  Thus there exists $N \in \N$ such that $I \cap \A_N \neq J \cap \A_N.$  Hence $\left(I \cap \A_n\right)_{n \in \N} \neq \left(J \cap \A_n\right)_{n \in \N}$ where $\left(I \cap \A_n\right)_{n \in \N},\left(J \cap \A_n\right)_{n \in \N} \in \mathrm{Ideal}(\A)_\infty.$  By assumption that the map of (2) is a surjection, there exist $K_I,  K_J \in \mathrm{Ideal}(\A)$ such that $K_I\neq  K_J$ and  $i(K_I, \mathcal{U})=\left(I \cap \A_n\right)_{n \in \N}$ and $i(K_J, \mathcal{U})=\left(J \cap \A_n\right)_{n \in \N}$ since $i(\cdot, \mathcal{U})$ is well-defined.  However, this implies that $K_I \cap \A_n = I\cap \A_n$ and $K_J \cap \A_n = J \cap \A_n$ for all $n \in \N$.  But, then:
\begin{equation*} 
K_I= \overline{\cup_{n \in \N} (K_I \cap \A_n)}^{\|\cdot \|_\A} = \overline{\cup_{n \in \N} (I \cap \A_n)}^{\|\cdot \|_\A}= \overline{I}^{\|\cdot \|_\A}
\end{equation*}
by \cite[Lemma III.4.1]{Davidson}.
Similarly, we have $K_J=\overline{J}^{\|\cdot \|_\A}$, and therefore $\overline{I}^{\|\cdot \|_\A} \neq \overline{J}^{\|\cdot \|_\A},$ which completes the proof of the equivalence between (1) and (2).

Finally, assume $\A$ is AF and $\mathcal{U}$ is a family of finite-dimensional C*-algebras.  If $J \in \mathrm{algIdeal}(\cup_{n \in \N} \A_n)$, then $J\cap \A_n$ is finite-dimensional and thus closed for all $n \in \N$.  Hence $J \cap \A_n \in \mathrm{Ideal}(\A_n)$ for all $n \in \N$.  Therefore $\mathrm{algIdeal}(\cup_{n \in \N} \A_n) = \mathrm{algIdeal}(\cup_{n \in \N} \A_n)_{prod}$. However by \cite[Theorem 3.3]{Bratteli72}, the map $J \in \mathrm{algIdeal}(\cup_{n \in \N} \A_n )\mapsto  \overline{J}^{\|\cdot \|_\A} \in \mathrm{Ideal}(\A)$ is a bijection onto $\mathrm{Ideal}(\A)$, which completes the proof by the established equivalence of (1) and (2).
\end{proof}
We will now see that Lemma (\ref{i-surj-lemma}) produces a natural sufficient conditions for our topology to agree with the Fell topology.
\begin{theorem}\label{Fell-ind-top-thm}
Let $\A$ be a C*-algebra with a non-decreasing sequence of C*-subalgebras $\mathcal{U}= (\A_n)_{n \in \N}$ such that $\A=\overline{\cup_{n \in \N} \A_n}^{\Vert \cdot \Vert_\A}$.  

Using Notation (\ref{algideal-not}), if the function $J \in \mathrm{algIdeal}(\cup_{n \in \N}\A_n)_{prod} \mapsto \overline{J}^{\|\cdot \|_\A} \in \mathrm{Ideal}(\A)$ is injective, and thus a bijection onto $\mathrm{Ideal}(\A)$, then the topology $Fell_{i(\mathcal{U})}$ of Definition (\ref{Fell-ind-top-def}) agrees with the topology $Fell$ on $\mathrm{Ideal}(\A)$.

In particular, if $\A$ is AF and $\mathcal{U}$ is chosen to be a family of finite-dimensional C*-algebras, then the topology $Fell_{i(\mathcal{U})}$ agrees with the topology $Fell$ on $\mathrm{Ideal}(\A)$.
\end{theorem}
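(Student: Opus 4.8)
The plan is to reduce the statement to the elementary topological principle that a compact topology refining a Hausdorff topology on the same underlying set must coincide with it. Since Proposition (\ref{Fell-topology-contain}) already gives the containment $Fell \subseteq Fell_{i(\mathcal{U})}$ and Definition (\ref{Fell-topology}) tells us $(\mathrm{Ideal}(\A), Fell)$ is compact Hausdorff, the only ingredient I still need to supply is the compactness of $Fell_{i(\mathcal{U})}$; this I would obtain by transporting the compactness of the inverse limit space through the map $i(\cdot, \mathcal{U})$.

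First I would feed the hypothesis into Lemma (\ref{i-surj-lemma}): injectivity of $J \mapsto \overline{J}^{\|\cdot\|_\A}$ is equivalent to $i(\cdot, \mathcal{U})$ being surjective onto $\mathrm{Ideal}(\A)_\infty$. Since $i(\cdot, \mathcal{U})$ is already injective by Proposition (\ref{ind-lim-ideals}), it is then a bijection of $\mathrm{Ideal}(\A)$ onto $\mathrm{Ideal}(\A)_\infty$. Because, by Definition (\ref{Fell-ind-top-def}), $Fell_{i(\mathcal{U})}$ is precisely the initial topology induced on $\mathrm{Ideal}(\A)$ by this bijection together with the space $(\mathrm{Ideal}(\A)_\infty, Fell_\infty)$, the map $i(\cdot, \mathcal{U})$ is a homeomorphism from $(\mathrm{Ideal}(\A), Fell_{i(\mathcal{U})})$ onto $(\mathrm{Ideal}(\A)_\infty, Fell_\infty)$. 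As the latter is compact by Lemma (\ref{fell-inverse-lemma}), it follows that $(\mathrm{Ideal}(\A), Fell_{i(\mathcal{U})})$ is compact.

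To finish I would consider the identity map $\mathrm{id}\colon (\mathrm{Ideal}(\A), Fell_{i(\mathcal{U})}) \to (\mathrm{Ideal}(\A), Fell)$. It is a bijection, and it is continuous exactly because $Fell \subseteq Fell_{i(\mathcal{U})}$; its domain is compact and its codomain is Hausdorff, so it is a homeomorphism, whence $Fell_{i(\mathcal{U})} = Fell$. The \emph{in particular} clause then requires no new work: when $\A$ is AF and $\mathcal{U}$ is chosen to be finite-dimensional, Lemma (\ref{i-surj-lemma}) guarantees that $i(\cdot, \mathcal{U})$ surjects onto $\mathrm{Ideal}(\A)_\infty$, so the hypothesis of the first part is met and the conclusion follows.

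I do not anticipate a serious obstacle here, since the substantive work has been front-loaded into the preceding lemmas (the inverse-limit construction and compactness of $\mathrm{Ideal}(\A)_\infty$, the algebraic-ideal surjectivity equivalence, and the containment of topologies). The only point demanding genuine care is the correct use of the \emph{initial} topology: one must verify that a bijection inducing the initial topology is actually a homeomorphism onto the target space, and that the identity map is continuous in the direction forced by the containment $Fell \subseteq Fell_{i(\mathcal{U})}$ rather than the reverse.
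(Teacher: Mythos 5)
Your proposal is correct and takes essentially the same route as the paper's proof: Lemma (\ref{i-surj-lemma}) upgrades $i(\cdot,\mathcal{U})$ to a bijection onto the compact Hausdorff space $\left(\mathrm{Ideal}(\A)_\infty, Fell_\infty\right)$, making $Fell_{i(\mathcal{U})}$ compact, and then Proposition (\ref{Fell-topology-contain}) together with the principle that a compact topology containing a Hausdorff topology must coincide with it completes the argument. Your identity-map step is simply the explicit form of the ``maximal compactness of Hausdorff spaces'' fact that the paper cites in one line.
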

\begin{proof}
By Lemma (\ref{i-surj-lemma}), the map $i(\cdot, \mathcal{U})$ is a bijection onto $\mathrm{Ideal}(\A)_\infty$.  Hence, since the topological space  $(\mathrm{Ideal}(\A)_\infty, Fell_\infty)$ is compact Hausdorff by Lemma (\ref{fell-inverse-lemma}), then $Fell_{i(\mathcal{U})}$ is compact Hausdorff on $\mathrm{Ideal}(\A)$ since it is the intial topology induced by a bijection onto a compact Hausdorff space.  However, by Proposition (\ref{Fell-topology-contain}) and maximal compactness of Hausdorff spaces, the proof is complete.
\end{proof}

The injection of the above proposition will allows us  to form  metrics on $\mathrm{Ideal}(\A)$  using metrics on $\prod_{n \in \N} \mathrm{Ideal}(\A_n)$.  In most cases of inductive limits, we know much more about the structure of the $\A_n$ than the inductive limit.  The main consequence we have of this will be that the metric formed on $\mathrm{Ideal}(\A)$  using a metric on $\prod_{n \in \N} \mathrm{Ideal}(\A_n)$ will metrize the Fell topology of $\mathrm{Ideal}(\A)$ in the AF case. However, the Fell topology is metrizable when $\A$ is separable (we state this fact in Lemma (\ref{fell-metrize-lemma})), yet this metric follows from a metrization theorem and will not be of use to us --- especially when considering fusing families of ideals. Thus, we introduce a possible candidate for a metric on the ideal space of a separable inductive limit built by the inductive sequence, which will metrize the Fell topology in the AF case. Thus, the following results are in more general terms than AF and initially motivated our pursuit of a metric compatible with fusing families on the Fell topology of AF algebras.

\begin{lemma}\label{fell-metrize-lemma}
If $\A$ is a separable C*-algebra, then the Fell topology on $\mathrm{Ideal}(\A)$ is compact metrizable.
\end{lemma}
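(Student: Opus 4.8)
The plan is to exploit the fact, already recorded in Definition (\ref{Fell-topology}), that $(\mathrm{Ideal}(\A),Fell)$ is compact Hausdorff; only metrizability remains to be established. The standard route to metrizing a compact Hausdorff space is to produce a \emph{countable} family of continuous real-valued functions that separates its points, and then embed the space into a countable product of compact intervals. The separability hypothesis on $\A$ is precisely what will supply the required countability, and Lemma (\ref{Fell-conv}) is what will supply both the continuity of the relevant functions and, ultimately, their separating property.

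Concretely, for each $a\in\A$ I would introduce the norm functional
\begin{equation*}
N_a : I \in \mathrm{Ideal}(\A) \longmapsto \Vert a + I\Vert_{\A/I} \in [0,\Vert a\Vert_\A].
\end{equation*}
Lemma (\ref{Fell-conv}) states exactly that a net $(I_\mu)_{\mu\in\Delta}$ converges to $I$ in the Fell topology if and only if $N_a(I_\mu)\to N_a(I)$ for every $a\in\A$; since continuity into $\R$ is characterized by nets, this shows each $N_a$ is $Fell$-continuous, and moreover that the family $\{N_a : a\in\A\}$ jointly determines the topology. I would then fix a countable dense subset $\{a_k : k\in\N\}\subseteq\A$ and argue that the subfamily $\{N_{a_k} : k\in\N\}$ already separates points. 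The key elementary estimate is that, for fixed $I$, the assignment $a\mapsto N_a(I)$ is $1$-Lipschitz (the reverse triangle inequality for the quotient norm gives $|N_a(I)-N_b(I)|\leq\Vert a-b\Vert_\A$). Hence if $I\neq J$ then, picking $x\in I\setminus J$, we have $N_x(I)=0<N_x(J)$, and approximating $x$ by some $a_k$ within $\tfrac14|N_x(I)-N_x(J)|$ forces $N_{a_k}(I)\neq N_{a_k}(J)$ by the Lipschitz bound applied at both $I$ and $J$.

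With the separating countable family in hand, I would close the argument by embedding. Define
\begin{equation*}
\Phi : I \in \mathrm{Ideal}(\A) \longmapsto \left(N_{a_k}(I)\right)_{k\in\N} \in \prod_{k\in\N} [0,\Vert a_k\Vert_\A].
\end{equation*}
The codomain is a countable product of compact metric spaces, hence itself compact and metrizable. The map $\Phi$ is continuous (each coordinate $N_{a_k}$ is) and injective (by the separation just proved). A continuous injection from a compact space into a Hausdorff space is a homeomorphism onto its image, so $\Phi$ identifies $(\mathrm{Ideal}(\A),Fell)$ with a subspace of a metrizable space; being a subspace of a metric space, that image is metrizable, and it is compact since $\mathrm{Ideal}(\A)$ is. This yields the compact metrizability claimed. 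I expect the only genuinely nontrivial step to be the point-separation of the countable subfamily $\{N_{a_k}\}$, i.e.\ upgrading the obvious separation by the full family $\{N_a : a\in\A\}$ to separation by a countable dense sample; everything else is the routine compact-Hausdorff embedding machinery, with Lemma (\ref{Fell-conv}) doing the conceptual work of translating the Fell topology into the language of the functionals $N_a$.
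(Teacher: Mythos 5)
Your proof is correct, but it takes a genuinely different route from the paper's. The paper argues via second countability: since $\A$ is separable, the Jacobson topology on $\mathrm{Prim}(\A)$ is second countable by \cite[Corollary 4.3.4]{Pedersen}; by \cite[(III) pg. 474]{Fell62} the Fell topology on the closed sets then has a countable basis; and since the Fell topology on $\mathrm{Ideal}(\A)$ is compact Hausdorff (Definition (\ref{Fell-topology})), Urysohn's metrization theorem \cite[Theorem 23.1]{Willard} finishes the job. You instead work directly from Lemma (\ref{Fell-conv}): the norm functionals $N_a$ are Fell-continuous, a countable dense set $\{a_k : k \in \N\}\subseteq \A$ yields a countable point-separating subfamily thanks to the $1$-Lipschitz dependence of $N_a(I)$ on $a$, and the resulting map $\Phi$ is a continuous injection from the compact space $\left(\mathrm{Ideal}(\A), Fell\right)$ into the compact metrizable product $\prod_{k\in\N}\left[0,\Vert a_k\Vert_\A\right]$, hence a homeomorphism onto its image. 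Both arguments are complete. Yours is more self-contained (it needs only Lemma (\ref{Fell-conv}) and elementary topology, not the second countability of the Jacobson topology nor Fell's count of basic open sets) and has the added virtue of producing a concrete metric, for instance $d(I,J)=\sum_{k\in\N} 2^{-k}\min\left\{1,\left|N_{a_k}(I)-N_{a_k}(J)\right|\right\}$; this is worth noting since the paper explicitly remarks, just after this lemma, that the metric obtained from the metrization theorem ``will not be of use.'' The paper's proof, in exchange, is shorter modulo the cited external results. One cosmetic point: when separating $I\neq J$ you should pick $x$ in the symmetric difference, i.e.\ $x\in I\setminus J$ or $x\in J\setminus I$, since $I\setminus J$ alone may be empty when $I\subsetneq J$; the argument is symmetric, so nothing else changes.
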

\begin{proof}
 The Fell topology on $\mathrm{Ideal}(\A)$ is already compact Hausdorff (Definition (\ref{Fell-topology})).  Since $\A$ is separable, the Jacobson topology on $\mathrm{Prim}(\A)$ is second countable by \cite[Corollary 4.3.4]{Pedersen}.  However, by \cite[(III) pg. 474]{Fell62}, the Fell topology has a countable basis. Thus, the Fell topology is second countable compact Hausdorff, which completes the proof by \cite[Urysohn's metrization theorem 23.1]{Willard}.
\end{proof}

\begin{proposition}\label{fell-prod-metric-prop}
If $\A$ is a separable C*-algebra with a non-decreasing sequence of C*-subalgebras $\mathcal{U}=(\A_n)_{n \in \N}$ such that $\A=\overline{\cup_{n \in \N} \A_n}^{\| \cdot \|_\A}$, then for each $n \in \N$, the Fell topology on $\mathrm{Ideal}(\A_n)$ is metrized by a  metric $d_n$ with diameter at  most $1$ and the $[0,\infty)$-valued map on $\prod_{n \in \N}\mathrm{Ideal}(\A_n) \times  \prod_{n \in \N} \mathrm{Ideal}(\A_n)$ defined by:
\begin{equation*}
d((I_n)_{n \in \N},(J_n)_{n \in \N})=\sum_{n=0}^\infty \frac{d_n(I_n, J_n)}{2^n}
\end{equation*}
is a compact metric on the product topology of $\prod_{n \in \N}\mathrm{Ideal}(\A_n)$ with respect to the Fell topology on each $\mathrm{Ideal}(\A_n)$ and induces a totally bounded metric on $\mathrm{Ideal}(\A)$ defined by:
\begin{equation*}  \mathsf{m}_{\prod(Fell), \mathcal{U}}(I,J)=\sum_{n=0}^\infty \frac{d_n(I \cap \A_n, J\cap \A_n)}{2^n},
\end{equation*}
which metrizes the topology $Fell_{i(\mathcal{U})}$ on $\mathrm{Ideal}(\A)$ of Definition (\ref{Fell-ind-top-def}).
\end{proposition}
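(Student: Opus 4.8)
The plan is to obtain the metric by assembling metrizations of the finite stages into a product metric and then transporting it to $\mathrm{Ideal}(\A)$ through the injection $i(\cdot,\mathcal{U})$ of Proposition (\ref{ind-lim-ideals}). First I would produce the $d_n$. Since $\A$ is separable, each subalgebra $\A_n$ is separable, so Lemma (\ref{fell-metrize-lemma}) furnishes a metric $\rho_n$ metrizing the compact Fell topology on $\mathrm{Ideal}(\A_n)$; replacing $\rho_n$ by $d_n = \min\{\rho_n,1\}$ gives a metric with the same topology and diameter at most $1$. This normalization is exactly what makes the series defining $d$ converge, as each term is dominated by $2^{-n}$ and $\sum_{n} 2^{-n} = 2$.

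Next I would check that $d$ is a metric metrizing the product topology. Finiteness is immediate from the bound just noted; symmetry and positivity are clear, and the triangle inequality holds termwise. That such a weighted sum metrizes the product topology of a countable product of metric spaces of diameter at most $1$ is the standard countable-product metrization fact, for which I would cite \cite{Willard}. Compactness of $\left(\prod_{n}\mathrm{Ideal}(\A_n), d\right)$ then follows from Tychonoff's theorem, since each $(\mathrm{Ideal}(\A_n),Fell)$ is compact by Definition (\ref{Fell-topology}); in particular $d$ is totally bounded.

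I would then transport $d$ to $\mathrm{Ideal}(\A)$. Because $i(\cdot,\mathcal{U})$ is injective (Proposition (\ref{ind-lim-ideals})), the pullback $\mathsf{m}_{\prod(Fell),\mathcal{U}}(I,J) = d(i(I,\mathcal{U}),i(J,\mathcal{U}))$ separates points and hence is a genuine metric --- injectivity is precisely what guarantees $\mathsf{m}_{\prod(Fell),\mathcal{U}}(I,J) = 0 \Rightarrow I = J$. By construction $i(\cdot,\mathcal{U})$ is an isometry of $(\mathrm{Ideal}(\A),\mathsf{m}_{\prod(Fell),\mathcal{U}})$ onto the subset $i(\mathrm{Ideal}(\A))$ of the product; as subsets of totally bounded metric spaces are totally bounded, $\mathsf{m}_{\prod(Fell),\mathcal{U}}$ is totally bounded.

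Finally I would identify the induced metric topology with $Fell_{i(\mathcal{U})}$. By Lemma (\ref{fell-inverse-lemma}) the image $i(\mathrm{Ideal}(\A))$ lies in $\mathrm{Ideal}(\A)_\infty$, and $Fell_\infty$ is by definition the subspace topology inherited from the product, so the restriction of $d$ metrizes $Fell_\infty$. Since $Fell_{i(\mathcal{U})}$ is, by Definition (\ref{Fell-ind-top-def}), the initial topology on $\mathrm{Ideal}(\A)$ induced by $i(\cdot,\mathcal{U})$ into $(\mathrm{Ideal}(\A)_\infty, Fell_\infty)$, and the initial topology along an injection into a metric space is precisely the topology of the pullback metric (the pullback balls $B_{\mathsf{m}}(I,r)$ equal $i(\cdot,\mathcal{U})^{-1}$ of balls in the target), the metric $\mathsf{m}_{\prod(Fell),\mathcal{U}}$ metrizes $Fell_{i(\mathcal{U})}$. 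The computations above are routine; the step I expect to require the most care, and hence the main obstacle, is this last identification --- specifically confirming that the initial topology defined via the inverse limit space agrees with the pullback metric topology, which hinges on $Fell_\infty$ being exactly the product-metric subspace topology on $\mathrm{Ideal}(\A)_\infty$.
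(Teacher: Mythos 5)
Your proposal is correct and follows essentially the same route as the paper's proof: metrize each $\mathrm{Ideal}(\A_n)$ via Lemma (\ref{fell-metrize-lemma}), normalize to diameter at most $1$ (the paper uses $\frac{d_n}{1+d_n}$ where you use $\min\{\rho_n,1\}$, an immaterial difference), conclude $d$ metrizes the compact product topology, and pull back through the injection $i(\cdot,\mathcal{U})$ of Proposition (\ref{ind-lim-ideals}). The only distinction is that you spell out the final identification of the pullback metric topology with the initial topology $Fell_{i(\mathcal{U})}$, a step the paper compresses to ``follows by construction,'' and your elaboration of it is accurate.
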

\begin{proof}
Since $\A$ is separable, the subspace $\A_n$ is separable for all $n \in \N$.  Thus, by Lemma (\ref{fell-metrize-lemma}), we have that the Fell topology of each $\mathrm{Ideal}(\A_n)$ is metrized by some metric $d_n$.  If $d_n$ has diameter more than $1$, then simply use the metric $\frac{d_n}{1+d_n}$ instead, which metrizes the same topology and has diameter at most $1$,  and thus the metric $d$ defined in the statement of the proposition metrizes the product topology. The fact that $\mathsf{m}_{\prod(Fell), \mathcal{U}}$ is a totally bounded metric follows from the fact that $\mathsf{m}_{\prod(Fell), \mathcal{U}}=d \circ (i(\cdot, \mathcal{U}) \times i(\cdot, \mathcal{U}))$ and $i(\cdot, \mathcal{U}) $ is an injection by Proposition (\ref{ind-lim-ideals}). The fact that $\mathsf{m}_{\prod(Fell), \mathcal{U}}$ metrizes $Fell_{i(\mathcal{U})}$ follows by construction.
\end{proof}
In Theorem (\ref{AF-ideal-metric-thm}), we will show that the metric $\mathsf{m}_{\prod(Fell), \mathcal{U}}$ above metrizes the Fell topology when $\A$ is AF and $ \mathcal{U}$ contains only finite-dimensional C*-algebras.  The next Corollary shows that we can simplify the metric $d$ and thus $\mathsf{m}_{\prod(Fell), \mathcal{U}}$, when $\A$ is AF.
\begin{corollary}\label{ind-lim-metric-cor}
If $\A$ is a separable C*-algebra with a non-decreasing sequence of C*-subalgebras $\mathcal{U}=(\A_n)_{n \in \N}$ such that $\A=\overline{\cup_{n \in \N} \A_n}^{\| \cdot \|_\A}$ and $\mathrm{Ideal}(\A_n)$ is finite for each $n \in \N$, then the compact metric 
product topology of $\prod_{n \in \N}\mathrm{Ideal}(\A_n)$ with respect to the Fell topology on each $\mathrm{Ideal}(\A_n)$ is metrized by the metric:
\begin{equation*}
d_{i(\mathcal{U})}((I_n)_{n \in \N}, (J_n)_{n \in \N})=\begin{cases}
0 & \text{: if }  I_n=J_n  \text{ for all } n \in \N,\\
2^{-\min \{m \in \N : I_m \neq J_m \}} & \text{: otherwise }
\end{cases}
\end{equation*}
and induces a totally bounded metric on $\mathrm{Ideal}(\A)$ defined by:
\begin{equation*}  \mathsf{m}_{i (\mathcal{U})}(I,J)=\begin{cases}
0 & \text{: if } I \cap \A_n =J \cap \A_n \text{ for all } n \in \N\\
2^{-\min \{m \in \N : I\cap \A_m \neq J\cap \A_m \}} & \text{: otherwise }
\end{cases}
\end{equation*}
that metrizes the same topology of $ \mathsf{m}_{\prod(Fell), \mathcal{U}}$ of Propsition (\ref{fell-prod-metric-prop}) on $\mathrm{Ideal}(\A)$ and the topology $Fell_{i(\mathcal{U})}$ of Definition (\ref{Fell-ind-top-def}).
\end{corollary}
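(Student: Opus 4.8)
The plan is to observe first that, because $\mathrm{Ideal}(\A_n)$ is finite while the Fell topology on it is compact Hausdorff (Definition (\ref{Fell-topology}), or Lemma (\ref{fell-metrize-lemma})), this topology must be \emph{discrete}: a finite $T_2$ space carries no nontrivial convergence. Consequently each $\mathrm{Ideal}(\A_n)$ is metrized by the discrete metric $\delta_n$ with $\delta_n(I,J)=0$ if $I=J$ and $\delta_n(I,J)=1$ otherwise, which has diameter at most $1$ and may therefore be taken as the metric $d_n$ supplied by Proposition (\ref{fell-prod-metric-prop}). With this choice the metric $d$ of Proposition (\ref{fell-prod-metric-prop}) and the candidate $d_{i(\mathcal{U})}$ become two metrics on the same product, and the task reduces to showing that $d_{i(\mathcal{U})}$ also metrizes the product topology.

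Next I would verify directly that $d_{i(\mathcal{U})}$ is a metric, in fact an ultrametric. Symmetry and the identity of indiscernibles are immediate, and for the strong triangle inequality one sets $m_{13}=\min\{k : x_k\neq z_k\}$, $m_{12}=\min\{k : x_k\neq y_k\}$, $m_{23}=\min\{k : y_k\neq z_k\}$ and notes that all three sequences agree up to index $\min\{m_{12},m_{23}\}-1$, forcing $m_{13}\geq\min\{m_{12},m_{23}\}$ and hence $d_{i(\mathcal{U})}(x,z)\leq\max\{d_{i(\mathcal{U})}(x,y),d_{i(\mathcal{U})}(y,z)\}$. To see that it metrizes the product topology I would compute its open balls: for $x=(x_n)_{n\in\N}$ and $N\in\N$, the ball of radius $2^{-N}$ about $x$ equals the cylinder $\{y : y_k=x_k \text{ for } k\leq N\}$. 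Since each factor is discrete, these cylinders form a basis of the product topology, so $d_{i(\mathcal{U})}$ and $d$ induce one and the same (product) topology; compactness is then Tychonoff applied to the finite, hence compact, factors.

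Finally, the statements about $\mathsf{m}_{i(\mathcal{U})}$ follow exactly as in the proof of Proposition (\ref{fell-prod-metric-prop}). By construction $\mathsf{m}_{i(\mathcal{U})}=d_{i(\mathcal{U})}\circ(i(\cdot,\mathcal{U})\times i(\cdot,\mathcal{U}))$, and since $i(\cdot,\mathcal{U})$ is an injection by Proposition (\ref{ind-lim-ideals}), this pullback is a metric whose value is precisely the displayed case formula (the first index $m$ at which $I\cap\A_m\neq J\cap\A_m$). Total boundedness is inherited because any subset of the totally bounded space $(\prod_{n\in\N}\mathrm{Ideal}(\A_n),d_{i(\mathcal{U})})$ is totally bounded. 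Because $d_{i(\mathcal{U})}$ and $d$ metrize the same product topology, and $Fell_{i(\mathcal{U})}$ is by Definition (\ref{Fell-ind-top-def}) the initial topology along the injection $i(\cdot,\mathcal{U})$ into that product, the pullbacks $\mathsf{m}_{i(\mathcal{U})}$ and $\mathsf{m}_{\prod(Fell), \mathcal{U}}$ metrize one and the same topology, namely $Fell_{i(\mathcal{U})}$.

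I do not expect a genuine obstacle: the entire content is the single observation that finiteness together with the Hausdorff property of the Fell topology forces discreteness, after which $d_{i(\mathcal{U})}$ is merely the standard ultrametric on a countable product of discrete spaces and everything else repeats the bookkeeping already performed for Proposition (\ref{fell-prod-metric-prop}). The only point requiring a little care is that the $\min$ defining $d_{i(\mathcal{U})}$ is well defined whenever the two sequences differ, which is exactly the nontrivial case of the formula, and that it transports correctly through the injection $i(\cdot,\mathcal{U})$.
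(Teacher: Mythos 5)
Your proposal is correct and follows essentially the same route as the paper: finiteness plus the compact Hausdorff property of the Fell topology forces each $\mathrm{Ideal}(\A_n)$ to be discrete, so the $d_n$ of Proposition (\ref{fell-prod-metric-prop}) may be taken to be discrete metrics, after which one checks that $d_{i(\mathcal{U})}$ induces the same product topology as $d$ and pulls everything back through the injection $i(\cdot,\mathcal{U})$ to conclude via the construction of $Fell_{i(\mathcal{U})}$. The only difference is that you spell out the equivalence of $d_{i(\mathcal{U})}$ and $d$ (ultrametric inequality, balls as cylinder sets) which the paper asserts without detail.
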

\begin{proof}
Since the Fell topology is always compact Hausdorff, the topology $\mathrm{Ideal}(\A_n)$ is discrete as the set is finite, and thus we may take our metrics $d_n$ from the previous proposition to be the discrete metric (that assigns $1$ to distinct points) for all $n \in \N$.   Finally, the topology given by $d_{i(\mathcal{U})}$ and $d$ of Theorem (\ref{fell-prod-metric-prop})  on $\prod_{n \in \N} \mathrm{Ideal}(\A_n)$ agree in this setting as these metrics are equivalent, which completes the proof by construction of $Fell_{i(\mathcal{U})}.$
\end{proof}
Now, we may prove a main result of this section in which the metric of the previous corrollary does in fact metrize the Fell topology for AF algebras.
\begin{theorem}\label{AF-ideal-metric-thm}
If $\A$ is an AF algebra, then for any non-decreasing sequence of finite-dimensional C*-subalgebras $\mathcal{U}=(\A_n)_{n \in \N}$ such that $\A=\overline{\cup_{n \in \N} \A_n}^{\| \cdot \|_\A}$, the metric $\mathsf{m}_{i(\mathcal{U})}$ of Corollary (\ref{ind-lim-metric-cor})  metrizes the Fell topology on $\mathrm{Ideal}(\A)$.
\end{theorem}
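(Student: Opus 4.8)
The plan is to obtain the result by chaining together two facts already established in this section, namely Corollary~(\ref{ind-lim-metric-cor}) and Theorem~(\ref{Fell-ind-top-thm}), after checking that their hypotheses are met in the AF setting. The genuine machinery — the identification of $i(\cdot,\mathcal{U})$ as a bijection onto $\mathrm{Ideal}(\A)_\infty$ and the resulting agreement of $Fell_{i(\mathcal{U})}$ with $Fell$ — has already been carried out earlier, so what remains is essentially verification and assembly.

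First I would record the two structural features of the AF case. Since $\A$ is AF, it is the norm closure of the countable increasing union $\cup_{n \in \N} \A_n$ of finite-dimensional, hence separable, C*-algebras; therefore $\A$ is separable and the separability hypotheses of the relevant results are available. Moreover, each $\A_n$ is finite-dimensional, so it is a finite direct sum of matrix algebras and its ideals are precisely the direct sums of subsets of those blocks; consequently $\mathrm{Ideal}(\A_n)$ is a \emph{finite} set for every $n \in \N$. This is exactly the finiteness hypothesis required to invoke Corollary~(\ref{ind-lim-metric-cor}).

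With these two observations in hand, Corollary~(\ref{ind-lim-metric-cor}) applies and yields that $\mathsf{m}_{i(\mathcal{U})}$ is a totally bounded metric on $\mathrm{Ideal}(\A)$ that metrizes the topology $Fell_{i(\mathcal{U})}$ of Definition~(\ref{Fell-ind-top-def}). Independently, since $\A$ is AF and $\mathcal{U}$ consists of finite-dimensional C*-subalgebras, the final assertion of Theorem~(\ref{Fell-ind-top-thm}) gives that $Fell_{i(\mathcal{U})}$ coincides with the Fell topology $Fell$ on $\mathrm{Ideal}(\A)$. Combining these two statements, $\mathsf{m}_{i(\mathcal{U})}$ metrizes $Fell$, which is the desired conclusion.

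As for the main obstacle: because the genuine difficulty has been front-loaded into Lemma~(\ref{i-surj-lemma}) (surjectivity of $i(\cdot,\mathcal{U})$ onto $\mathrm{Ideal}(\A)_\infty$, via Bratteli's bijection between algebraic ideals of $\cup_{n \in \N} \A_n$ and closed ideals of $\A$) and into Theorem~(\ref{Fell-ind-top-thm}) (where compactness of $Fell_{i(\mathcal{U})}$ forces agreement with $Fell$ by the maximal compactness of Hausdorff topologies), the present theorem carries no real remaining obstacle. The only point demanding care is to confirm that the finite-dimensionality of each $\A_n$ genuinely delivers the finiteness of $\mathrm{Ideal}(\A_n)$ needed by Corollary~(\ref{ind-lim-metric-cor}), rather than merely the weaker algebraic-ideal condition of Notation~(\ref{algideal-not}); this is immediate from the classification of finite-dimensional C*-algebras.
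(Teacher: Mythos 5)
Your proposal is correct and follows essentially the same route as the paper's own proof, which reads in its entirety: observe that finite-dimensional C*-algebras have finitely many ideals and apply Theorem (\ref{Fell-ind-top-thm}) to Corollary (\ref{ind-lim-metric-cor}). Your additional verification of separability and of the finiteness of each $\mathrm{Ideal}(\A_n)$ is exactly the (routine) checking the paper leaves implicit.
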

\begin{proof}
Observe that finite-dimensional C*-algebras have finitely many ideals and apply Theorem (\ref{Fell-ind-top-thm}) to Corollary (\ref{ind-lim-metric-cor}). 
\end{proof}
An immediate consequence of Theorem (\ref{AF-ideal-metric-thm}) is that, although the metric is built using a fixed inductive sequence, the metric topology with respect to an inductive sequence is homeomorphic to the metric topology on the same AF algebra with respect to any other inductive sequence. In particular, concerning continuity or convergence results, Corollary (\ref{ideal-metric-af-homeo}) provides that one need not worry about the possibility of choosing the wrong inductive sequence, and therefore, one may choose any inductive sequence without worry to suit the needs of the problem at hand.

\begin{corollary}\label{ideal-metric-af-homeo}
Let $\A, \B$ be AF algebras and fix any non-decreasing sequences of finite dimensional C*-subalgebras  $\mathcal{U}_\A =(\A_n)_{n \in \N}, \mathcal{U}_\B = (\B_n)_{n \in \N},$ respectively, such that $\A=\overline{\cup_{n \in \N} \A_n }^{\Vert \cdot \Vert_\A}$ and $\B= \overline{\cup_{n \in \N} \B_n }^{\Vert \cdot \Vert_\B}$.  

If $\A$ and $\B$ are *-isomorphic, then the metric spaces $\left(\mathrm{Ideal}(\A), \mathsf{m}_{i\left(\mathcal{U}_\A \right)}\right)$ and \\ $\left(\mathrm{Ideal}(\B), \mathsf{m}_{i\left(\mathcal{U}_\B\right)}\right)$ are homeomorphic.

In particular, if $\A$ is AF and $\A=\overline{\cup_{n \in \N} \A_{1,n} }^{\Vert \cdot \Vert_\A}= \overline{\cup_{n \in \N} \A_{2,n} }^{\Vert \cdot \Vert_\A}$, where $\mathcal{U}_1 = (\A_{1,n})_{n \in \N},$ $ \mathcal{U}_2 = (\A_{2,n})_{n \in \N}$ are any non-decreasing sequences of finite dimensional C*-subalgebras of $\A$, then the metric spaces $\left(\mathrm{Ideal}(\A), \mathsf{m}_{i\left(\mathcal{U}_1 \right)}\right)$ and $\left(\mathrm{Ideal}(\A), \mathsf{m}_{i\left(\mathcal{U}_2\right)}\right)$ are homeomorphic.
\end{corollary}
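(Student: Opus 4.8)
The plan is to reduce everything to Theorem (\ref{AF-ideal-metric-thm}), which asserts that for \emph{any} non-decreasing sequence of finite-dimensional C*-subalgebras with dense union, the metric $\mathsf{m}_{i(\mathcal{U})}$ metrizes the Fell topology on the ideal space. Consequently $\mathsf{m}_{i(\mathcal{U}_\A)}$ metrizes $Fell$ on $\mathrm{Ideal}(\A)$ and $\mathsf{m}_{i(\mathcal{U}_\B)}$ metrizes $Fell$ on $\mathrm{Ideal}(\B)$, so that the identity maps from $\left(\mathrm{Ideal}(\A), \mathsf{m}_{i(\mathcal{U}_\A)}\right)$ to $\left(\mathrm{Ideal}(\A), Fell\right)$ and from $\left(\mathrm{Ideal}(\B), \mathsf{m}_{i(\mathcal{U}_\B)}\right)$ to $\left(\mathrm{Ideal}(\B), Fell\right)$ are homeomorphisms. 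The statement therefore reduces to showing that $\left(\mathrm{Ideal}(\A), Fell\right)$ and $\left(\mathrm{Ideal}(\B), Fell\right)$ are homeomorphic whenever $\A$ and $\B$ are *-isomorphic; note that the dependence on the chosen inductive sequence has disappeared at this stage, which is exactly the content the corollary is after.

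To establish this, I would fix a *-isomorphism $\pi \colon \A \to \B$ and consider $f \colon I \in \mathrm{Ideal}(\A) \mapsto \pi(I) \in \mathrm{Ideal}(\B)$. Since $\pi$ carries closed two-sided ideals to closed two-sided ideals and $\pi^{-1}$ furnishes an inverse, $f$ is a bijection. The central identity is that an isometric *-isomorphism preserves quotient norms: for every $a \in \A$ and every $I \in \mathrm{Ideal}(\A)$,
$
\left\|\pi(a) + \pi(I)\right\|_{\B/\pi(I)} = \inf_{b \in I}\left\|\pi(a-b)\right\|_\B = \inf_{b \in I}\left\|a - b\right\|_\A = \left\|a + I\right\|_{\A/I},
$
where the first equality uses surjectivity of $\pi$ to rewrite the infimum over $\pi(I)$ as an infimum over $I$, and the middle equality uses that $\pi$ is isometric.

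I would then feed this identity into Lemma (\ref{Fell-conv}). If a net $(I_\mu)_\mu$ in $\mathrm{Ideal}(\A)$ converges to $I$ in the Fell topology, Lemma (\ref{Fell-conv}) gives $\left\|a + I_\mu\right\|_{\A/I_\mu} \to \left\|a + I\right\|_{\A/I}$ for all $a \in \A$; writing an arbitrary $b \in \B$ as $b = \pi(a)$ and invoking the quotient-norm identity yields $\left\|b + f(I_\mu)\right\|_{\B/f(I_\mu)} \to \left\|b + f(I)\right\|_{\B/f(I)}$ for all $b \in \B$, so $f(I_\mu) \to f(I)$ in the Fell topology, again by Lemma (\ref{Fell-conv}). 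Since the Fell topologies at hand are compact metrizable and hence first countable, preservation of convergent nets suffices to conclude that $f$ is continuous, and the symmetric argument applied to $\pi^{-1}$ shows $f^{-1}$ is continuous, so $f$ is a homeomorphism. Chaining the three homeomorphisms $\left(\mathrm{Ideal}(\A), \mathsf{m}_{i(\mathcal{U}_\A)}\right) \cong \left(\mathrm{Ideal}(\A), Fell\right) \cong \left(\mathrm{Ideal}(\B), Fell\right) \cong \left(\mathrm{Ideal}(\B), \mathsf{m}_{i(\mathcal{U}_\B)}\right)$, with the middle homeomorphism given by $f$, proves the first assertion. The ``in particular'' case is the special instance $\B = \A$ with $\pi = \mathrm{id}_\A$, $\mathcal{U}_\A = \mathcal{U}_1$, and $\mathcal{U}_\B = \mathcal{U}_2$, in which $f$ is the identity bijection of $\mathrm{Ideal}(\A)$.

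I do not anticipate a genuine obstacle here: once Theorem (\ref{AF-ideal-metric-thm}) is in hand, the only real content is the *-isomorphism invariance of the Fell topology, which is morally obvious since $Fell$ is built intrinsically from the Jacobson topology of $\mathrm{Prim}$. The single point requiring a little care is that the convenient tool available is the net characterization of Lemma (\ref{Fell-conv}) rather than a direct manipulation of the basic open sets $U(K,F)$; invoking metrizability (hence first countability) to pass from ``$f$ preserves convergent nets'' to ``$f$ is continuous'' is what legitimizes this route, and I would cite that metrizability explicitly.
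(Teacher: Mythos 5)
Your proposal is correct, and its skeleton matches the paper's: both reduce the statement, via Theorem (\ref{AF-ideal-metric-thm}), to the fact that *-isomorphic C*-algebras have homeomorphic Fell topologies on their ideal spaces, and then transport the metric topologies across that homeomorphism. Where you differ is in how the invariance of the Fell topology is established. The paper disposes of it in one sentence, ``by construction of the Fell topology and the Jacobson topology'': a *-isomorphism carries kernels of irreducible representations to kernels of irreducible representations, hence induces a Jacobson homeomorphism of $\mathrm{Prim}(\A)$ onto $\mathrm{Prim}(\B)$, hence a homeomorphism of the associated spaces of closed sets, hence of the ideal spaces with their Fell topologies. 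You instead verify it analytically, through the quotient-norm identity $\left\|\pi(a)+\pi(I)\right\|_{\B/\pi(I)}=\left\|a+I\right\|_{\A/I}$ and the net characterization of Fell convergence from Lemma (\ref{Fell-conv}). Your route is more self-contained --- it never touches $\mathrm{Prim}$ or the basic open sets of the Fell topology, only its norm characterization --- at the cost of length; the paper's is shorter but leaves the naturality of the construction to the reader. One small correction: your appeal to metrizability and first countability is unnecessary. Preservation of convergent nets characterizes continuity between arbitrary topological spaces (it is only for sequences that first countability is needed), so your argument via Lemma (\ref{Fell-conv}) closes without that extra hypothesis.
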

\begin{proof}
By construction of the Fell topology and the Jacobson topology Definition (\ref{Fell-topology}), if $\A$ and $\B$ are *-isomorphic then the Fell topologies are homeomorphic.  Thus, the conclusion follows by Theorem (\ref{AF-ideal-metric-thm}).  
\end{proof}

In the context of this paper, a main motivation for the metric of Corollary (\ref{ind-lim-metric-cor}) is to provide a fusing family of quotients via convergence of ideals. First, for a fixed ideal of an inductive limit of the form $\A=\overline{\cup_{n \in \N} \A_n}^{\Vert \cdot \Vert_\A}$ , we provide an inductive limit in the sense of Notation (\ref{ind-lim}) that is *-isomorphic to the quotient.  The reason for this is that given $I \in \mathrm{Ideal}(\A)$, then $\A/I$ has a canonical closure of union form as $\A/I=\overline{\cup_{n \in \N}((\A_n+I)/I)}^{\Vert \cdot \Vert_{\A/I}}$ (see Proposition (\ref{ind-lim-quotients-prop})), but if two ideals satisfy $I \cap \A_n =J \cap \A_n $ for some $n \in \N$, then even though this provides that $(\A_n +I)/I $ is *-isomorphic to $(\A_n +J)/J$ as they are both *-isomorphic to $(\A_n/(I \cap \A_n))$ (see Proposition (\ref{ind-lim-quotients-prop})) , the two algebras $(\A_n +J)/J$ and $(\A_n +I)/I$ are not equal in any  way if $I \neq J$, yet,  equality is a requirement for fusing families (see Definition (\ref{coll-def})). Thus, Notation (\ref{ind-lim-quotient}) will allow us to present, up to *-isomorphism, quotients as IL-fusing families from convergence of ideals in the metric of Corollary (\ref{ind-lim-metric-cor})  as we will see in Proposition (\ref{ind-lim-quotients-prop}).

Before we move to fusing families of quotients, we show that a fusing family of ideals is equivalent to convergence in the metric on ideals of Corollary (\ref{ind-lim-metric-cor}).  
\begin{lemma}\label{metric-fusing-equiv-lemma}
Let $\A$ be AF algebra and fix any non-decreasing sequence of finite dimensional C*-subalgebras $\mathcal{U}=(\A_n)_{n \in \N}$ such that $\A=\overline{\cup_{n \in \N} \A_n}^{\Vert \cdot \Vert_\A}$.

If $\left(I^k\right)_{k \in \overline{\N}} \subseteq \mathrm{Ideal}(\A)$, then the following are equivalent:
\begin{enumerate}
\item $\left\{ I^k=\overline{\cup_{n \in \N} I^k \cap \A_n}^{\Vert \cdot \Vert_\A} : k \in \overline{\N} \right\}$ is a fusing family of Definition (\ref{coll-def}),
\item $\left(I^k\right)_{k \in \N}$ converges to $I^\infty $ with respect to the metric $\mathsf{m}_{i(\mathcal{U})}$ of Corollary (\ref{ind-lim-metric-cor}), 
\item $\left(I^k\right)_{k \in \N}$ converges to $I^\infty $ in the Fell topology.
\end{enumerate}
\end{lemma}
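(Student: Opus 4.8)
The plan is to prove the three statements equivalent by splitting the problem into the pair (2) $\iff$ (3), which I regard as essentially free, and the pair (1) $\iff$ (2), which carries the only real content and is a bookkeeping argument matching the two definitions against each other.

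First I would dispose of (2) $\iff$ (3). Since $\A$ is AF and $\mathcal{U}$ consists of finite-dimensional C*-subalgebras, Theorem (\ref{AF-ideal-metric-thm}) asserts that $\mathsf{m}_{i(\mathcal{U})}$ metrizes the Fell topology on $\mathrm{Ideal}(\A)$. Convergence of a sequence with respect to a metric coincides with convergence in the topology that metric induces, so (2) and (3) are literally the same assertion and nothing further is needed.

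For the core equivalence (1) $\iff$ (2), I would first record the common reformulation both conditions share. By \cite[Lemma III.4.1]{Davidson}, each $I^k$ equals $\overline{\cup_{n \in \N}(I^k \cap \A_n)}^{\|\cdot\|_\A}$, and since each $\A_n$ is finite-dimensional, $I^k \cap \A_n$ is a closed (hence finite-dimensional) C*-subalgebra, so $(I^k \cap \A_n)_{n \in \N}$ is a non-decreasing sequence to which the closure-of-union case of Definition (\ref{coll-def}) applies with $\A_{k,n} = I^k \cap \A_n$. Because $\mathsf{m}_{i(\mathcal{U})}$ is an ultrametric valued in $\{0\} \cup \{2^{-m} : m \in \N\}$, convergence $(I^k)_{k \in \N} \to I^\infty$ in $\mathsf{m}_{i(\mathcal{U})}$ is equivalent to the statement that for every $N \in \N$ there is a threshold $K_N$ with $I^k \cap \A_n = I^\infty \cap \A_n$ for all $n \in \{0,\ldots,N\}$ whenever $k \geq K_N$. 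This is exactly condition (2) of the fusing-family definition, with the fusing sequence playing the role of a modulus of convergence.

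With this reformulation in hand both implications are short. For (1) $\Rightarrow$ (2) the fusing sequence $(c_n)$ serves directly as the family of thresholds $K_N = c_N$. For (2) $\Rightarrow$ (1), given thresholds $K_N$ I would manufacture a genuine fusing sequence by setting $c_N = \max\{K_0,\ldots,K_N,N\}$: this is non-decreasing, satisfies $c_N \geq N$ and hence $\lim_{N\to\infty} c_N = \infty$, and still forces $I^k \cap \A_n = I^\infty \cap \A_n$ for $n \leq N$ whenever $k \geq c_N \geq K_N$. The only subtlety — and the point I would flag as the main obstacle, modest as it is — is precisely the requirement $\lim c_n = \infty$ built into Definition (\ref{coll-def}): a raw modulus of convergence need not diverge (for instance if the $I^k$ are eventually equal to $I^\infty$), so one must inflate it by the $\max\{\cdots,N\}$ term to meet the definitional demand without disturbing the threshold property. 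This closes the chain (1) $\iff$ (2) $\iff$ (3).
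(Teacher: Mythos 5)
Your proposal is correct and takes essentially the same approach as the paper: both handle (2) $\iff$ (3) by citing Theorem (\ref{AF-ideal-metric-thm}), and both establish (1) $\iff$ (2) by translating directly between the fusing sequence and a modulus of convergence for $\mathsf{m}_{i(\mathcal{U})}$, using the fact that distance below $2^{-N}$ means agreement of $I^k \cap \A_n$ and $I^\infty \cap \A_n$ for all $n \leq N$. Your explicit construction $c_N = \max\{K_0,\ldots,K_N,N\}$ merely spells out the step the paper leaves implicit when it says it constructs an increasing sequence $(c_n)_{n \in \N}$ of thresholds.
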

\begin{proof}
We begin with the forward direction.  Assume that  $\left(I^k\right)_{k \in \N} \subseteq \mathrm{Ideal}(\A)$ converges to $I^\infty \in \mathrm{Ideal}(\A)$  with respect to $\mathsf{m}_{i(\mathcal{U})}$, which is equivlent to convergence in $Fell$ by Theorem (\ref{AF-ideal-metric-thm}).   Thus, we have  $\lim_{k \to \infty} \mathsf{m}_{i(\mathcal{U})}\left(I^k,I^\infty \right)=0$. From this, we construct an increasing sequence $(c_n)_{n \in \N} \subseteq \N \setminus \{0\}$ such that: 
\begin{equation*}
\mathsf{m}_{i(\mathcal{U})}\left(I^k,I^\infty \right) \leq 2^{-(n+1)}
\end{equation*}
for all $k \geq c_n $.  In particular, fix $N \in \N$, if $k\in \N_{\geq c_N}$, then  $I^k \cap \A_n =I^\infty \cap \A_n$  for all  $n \in \{0, \ldots, N\}$, which implies that $\left\{ I^k=\overline{\cup_{n \in \N} I^k \cap \A_n}^{\Vert \cdot \Vert_\A} : k \in \overline{\N} \right\}$ is a fusing family with fusing sequence $(c_n)_{n \in \N}$ by Definition (\ref{coll-def}).

For the other direction, assume that $\left\{ I^k=\overline{\cup_{n \in \N} I^k \cap \A_n}^{\Vert \cdot \Vert_\A} : k \in \overline{\N} \right\}$ is a fusing family with fusing sequence $(c_n)_{n \in \N}$.  Therefore, for all $N \in \N$, if $k \in \N_{\geq c_N}$, then  $I^k \cap \A_n =I^\infty \cap \A_n$ for all $n \in \{0, \ldots, N\}$.  Hence, 
let $\varepsilon >0.$ There exists $N \in \N$ such that $2^{-N} < \varepsilon $.  If $k \geq c_N \in \N$, then 
\begin{equation*}
\mathsf{m}_{i(\mathcal{U})}\left(I^k,I^\infty \right) \leq 2^{-(N+1)}< 2^{-N} < \varepsilon,
\end{equation*}
which completes the proof.
\end{proof}
\begin{remark}
Clearly, the metric $\mathsf{m}_{i(\mathcal{U})}$ of Corollary (\ref{ind-lim-metric-cor}) can be defined on any C*-inductive limit even without the assumption of AF or separability.  And, in general, this metric would produce an even finer topology than $Fell_{i(\mathcal{U})}$ as $\mathsf{m}_{i(\mathcal{U})}$ is given by a metric on the product topology induced by the discrete topology on the ideal space of each $\A_n$.  Furthermore, we  note the the equivalence between (1) and (2) in Lemma (\ref{metric-fusing-equiv-lemma}) would still hold for this metric in this more general setting.  This connection with fusing families was another strong motivation for the pursuit of this metric.
\end{remark}

\begin{notation}\label{ind-lim-quotient}
Let $\A$ be a C*-algebra with a non-decreasing sequence of C*-sub\-algebras $\mathcal{U}= (\A_n)_{n \in \N}$ such that $\A=\overline{\cup_{n \in \N} \A_n}^{\Vert \cdot \Vert_\A}$.  Let $I \in \mathrm{Ideal}(\A)$.  For $n \in \N$:
\begin{equation*}\gamma_{I,n} : a+I\cap \A_n  \in \A_n/( I\cap \A_n) \longmapsto a+(I\cap \A_{n+1}) \in \A_{n+1}/(I \cap \A_{n+1}) , 
\end{equation*}
 is a*-monomorphism by the same argument of Claim (\ref{quo-ind-map}) and $\mathcal{U}$ is non-decreasing.  Let $\mathcal{I} (\A/I) = \left(\A_n/( I\cap \A_n), \gamma_{I,n} \right)_{n \in \N}$, and denote the C*-inductive limit by $\underrightarrow{\lim}\ \mathcal{I}(\A/ I). $
 
 If  $\B \subseteq \A$ be a C*-subalgebra and $I\in \mathrm{Ideal}(\A)$, then denote: 
$$ \B+I=\overline{\{ b+c :b \in \B, c \in I\}}^{\Vert \cdot \Vert_\A}.$$
\end{notation}

\begin{proposition}\label{ind-lim-quotients-prop}
Let $\A$ be AF  and fix any non-decreasing sequence of finite-dimensional C*-subalgebras $\mathcal{U}= (\A_n)_{n \in \N}$  of $\A$ such that $\A=\overline{\cup_{n \in \N} \A_n}^{\Vert \cdot \Vert_\A}$. 

Using Notation (\ref{ind-lim-quotient}), if $I \in \mathrm{Ideal}(\A)$, then there exists a *-isomorphism $$\phi_I : \underrightarrow{\lim}\ \mathcal{I}(\A/ I) \rightarrow \A/I$$ such that for all $n \in \N$ the following diagram commutes:
\begin{displaymath}
\xymatrix{
 \A_n/( I\cap \A_n)   \ar[r]^{\indmor{\gamma_I}{n}} \ar[dr]_{\phi_I^n} &   \underrightarrow{\lim}\ \mathcal{I}(\A/ I) \ar[d]^{\phi_I}\\
  & \A/I  
},
\end{displaymath} where for all $n \in \N$, the maps  $\phi_I^n : a+( I\cap \A_n) \in \A_n/( I\cap \A_n) \longmapsto a+I \in (\A_n +I)/I \subseteq \A/I$ are *-monomorphisms onto  $(\A_n +I)/I$, in which  $\A_n +I =\{ a+b \in \A: a \in \A_n, b \in I\}$ is a C*-subalgebra of $\A$ containing $I$ as an ideal and $\cup_{n \in \N} ((\A_n +I)/I) $ is a dense *-subalgebra of $\A/I$ with $((\A_n +I)/I)_{n \in \N}$  non-decreasing.

Furthermore, if $(I^k)_{k \in \N} \subseteq \mathrm{Ideal}(\A)$ converges to $I^\infty \in \mathrm{Ideal}(\A)$  with respect to $\mathsf{m}_{i(\mathcal{U})}$ of Corollary (\ref{ind-lim-metric-cor}) or the Fell topololgy, then using Definition (\ref{coll-def}),  we have \\
$\left\{ I^k=\overline{\cup_{n \in \N} I^k \cap \A_n}^{\Vert \cdot \Vert_\A} : k \in \overline{\N} \right\}$ is a fusing family with respect to some fusing sequence $(c_n)_{n \in \N}$ such that $\left\{\underrightarrow{\lim}\ \mathcal{I}\left(\A/ I^k \right) : k \in \overline{\N} \right\}$ is an IL-fusing family  with fusing sequence $(c_n)_{n \in \N}$.
\end{proposition}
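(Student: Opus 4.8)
The proof divides naturally into first constructing the $*$-isomorphism $\phi_I$ and then transferring the fusing structure. I would begin with the inductive system $\mathcal{I}(\A/I) = \left(\A_n/(I\cap\A_n), \gamma_{I,n}\right)_{n\in\N}$ of Notation (\ref{ind-lim-quotient}), whose connecting maps $\gamma_{I,n}$ are already $*$-monomorphisms; since all connecting maps are injective, the canonical maps $\indmor{\gamma_I}{n}$ into $\varinjlim \mathcal{I}(\A/I)$ are isometric and $\bigcup_n \indmor{\gamma_I}{n}\left(\A_n/(I\cap\A_n)\right)$ is dense. Each candidate map $\phi_I^n$ is a $*$-monomorphism onto $(\A_n+I)/I$ by Claim (\ref{quo-ind-map}) (taking $J=I$ and the subalgebra to be $\A_n$, whose image is exactly $\{a+I:a\in\A_n\}=(\A_n+I)/I$). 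A one-line check gives $\phi_I^{n+1}\circ\gamma_{I,n}=\phi_I^n$, since both maps send $a+I\cap\A_n$ to $a+I$; hence the family $(\phi_I^n)_{n\in\N}$ is compatible with the system, and the universal property of the C*-inductive limit produces a unique $*$-homomorphism $\phi_I$ making the displayed triangle commute.

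To see that $\phi_I$ is a $*$-isomorphism and to justify the structural claims about its finite-level images, I would use the standard fact that $\A_n+I$ is a C*-subalgebra of $\A$ containing $I$ as an ideal, where closedness is automatic because $\A_n$ is finite-dimensional, so that $(\A_n+I)/I$ is genuinely a C*-subalgebra of $\A/I$, equal to the range of $\phi_I^n$, and the family $\left((\A_n+I)/I\right)_{n\in\N}$ is non-decreasing because $\A_n\subseteq\A_{n+1}$. Density of $\bigcup_n \A_n$ in $\A$, together with surjectivity and continuity of the quotient map $\A\to\A/I$, yields density of $\bigcup_n (\A_n+I)/I$ in $\A/I$. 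Finally, $\phi_I$ coincides with the isometric maps $\phi_I^n$ on the dense subalgebra $\bigcup_n\indmor{\gamma_I}{n}(\cdots)$, so it is isometric, hence injective; its range is a C*-subalgebra containing the dense set $\bigcup_n(\A_n+I)/I$, hence all of $\A/I$. Thus $\phi_I$ is a $*$-isomorphism.

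For the second assertion I would first apply Lemma (\ref{metric-fusing-equiv-lemma}): convergence of $(I^k)_{k\in\N}$ to $I^\infty$ in $\mathsf{m}_{i(\mathcal{U})}$ (equivalently in the Fell topology) is exactly the statement that $\left\{I^k=\overline{\cup_{n}I^k\cap\A_n}^{\|\cdot\|_\A}:k\in\overline{\N}\right\}$ is a fusing family for some fusing sequence $(c_n)_{n\in\N}$, i.e.\ for every $N$ and every $k\geq c_N$ one has $I^k\cap\A_n=I^\infty\cap\A_n$ for all $n\in\{0,\ldots,N\}$. The key observation is that the entire inductive datum $\left(\A_n/(I^k\cap\A_n),\gamma_{I^k,n}\right)$ is determined by the intersections $I^k\cap\A_n$: the algebra $\A_n/(I^k\cap\A_n)$ depends on $I^k\cap\A_n$, and the connecting map $\gamma_{I^k,n}$ depends only on $I^k\cap\A_n$ and $I^k\cap\A_{n+1}$. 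Consequently, agreement of these intersections on an initial segment forces agreement of the truncated quotient inductive sequences, which is precisely the IL-fusing condition of Definition (\ref{coll-def}) for $\left\{\varinjlim\mathcal{I}(\A/I^k):k\in\overline{\N}\right\}$ with the same fusing sequence $(c_n)_{n\in\N}$ (the agreement of a connecting map at a given level invokes the two consecutive intersections, which is accommodated by the standard initial-segment bookkeeping and leaves $\lim_n c_n=\infty$ intact).

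I expect the main obstacle to be the first assertion, specifically assembling the identification $\varinjlim\mathcal{I}(\A/I)\cong\A/I$ from the finite-level maps: verifying that $\A_n+I$ is a C*-subalgebra, that the images $(\A_n+I)/I$ exhaust $\A/I$ densely, and that $\phi_I$ is simultaneously isometric (injective) and surjective. Once $\phi_I$ is in hand, the second assertion is essentially bookkeeping riding on Lemma (\ref{metric-fusing-equiv-lemma}) and the observation that the connecting maps are governed by consecutive intersections.
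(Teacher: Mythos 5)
Your construction of $\phi_I$ is correct and is essentially the paper's own argument: Claim (\ref{quo-ind-map}) gives the *-monomorphisms $\phi_I^n$ onto $(\A_n+I)/I$, the compatibility $\phi_I^{n+1}\circ\gamma_{I,n}=\phi_I^n$ together with the universal property of the C*-inductive limit yields a unique isometric *-homomorphism $\phi_I$ making the triangle commute, and density of $\cup_{n\in\N}((\A_n+I)/I)$ in $\A/I$ (from continuity of the quotient map) combined with isometry on a dense subalgebra gives surjectivity, hence that $\phi_I$ is a *-isomorphism. No issues there.

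The gap is in the second part, at exactly the point your parenthetical tries to wave away. Lemma (\ref{metric-fusing-equiv-lemma}) produces a fusing sequence, say $(b_n)_{n\in\N}$, for the ideals: $k\in\N_{\geq b_N}$ implies $I^k\cap\A_n=I^\infty\cap\A_n$ for all $n\in\{0,\ldots,N\}$. But the IL-fusing condition of Definition (\ref{coll-def}) at level $N$ requires equality of the \emph{pairs}, i.e. $\gamma_{I^k,N}=\gamma_{I^\infty,N}$, and $\gamma_{I^k,N}$ has codomain $\A_{N+1}/(I^k\cap\A_{N+1})$; its equality with $\gamma_{I^\infty,N}$ therefore needs $I^k\cap\A_{N+1}=I^\infty\cap\A_{N+1}$, which is \emph{not} guaranteed for $k\in\N_{\geq b_N}$ (only for $k\in\N_{\geq b_{N+1}}$). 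So your claim that the IL-fusing condition holds ``with the same fusing sequence'' is false as stated: for $k\geq b_N$ with $k<b_{N+1}$, the top-level connecting maps can have different codomains and hence cannot be equal, and the proposition explicitly demands a single sequence serving both the ideal family and the quotient family. The paper's fix is the one-line shift you gesture at but never perform: set $c_n:=b_{n+1}$ for all $n\in\N$. Then $(c_n)_{n\in\N}$ is non-decreasing with limit $\infty$, it is still a fusing sequence for the ideals (since $c_n\geq b_n$), and $k\in\N_{\geq c_N}=\N_{\geq b_{N+1}}$ yields intersection agreement up to level $N+1$, hence equality of both the algebras and the connecting maps at all levels $n\in\{0,\ldots,N\}$. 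Your key observation---that the inductive data are determined by consecutive intersections---is exactly right; you just need to make the reindexing explicit rather than appeal to ``standard bookkeeping'' while simultaneously asserting the sequence is unchanged.
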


\begin{proof}
Let $I \in \mathrm{Ideal}(\A)$.  Fix $n \in \N$.  Note that $\A_n +I$ is a C*-subalgebra of $\A$ since $I\in \mathrm{Ideal}(\A)$, and furthermore $I \in \mathrm{Ideal}(\A_n +I)$. Now, we have $\A_n +I=\{a+b \in \A : a \in \A_n, b \in I\}$ since $\A_n$ and $I$ are both closed in $\A$ and $\A_n$ is finite dimensional.  Next, we have $\phi_I^n$ is an injective *-homomorphism by Claim (\ref{quo-ind-map}).  If   $a \in \A_n $, then $\phi_I^n (a+\A_n/( I\cap \A_n))= a+I$ and the composition $\phi_I^{n+1}(\gamma_{I,n}(a+( I\cap \A_n)))=\phi_I^{n+1}(a+( I\cap \A_{n+1}))=a+I$.  Hence, for all $n \in \N$, the following diagram commutes:

\begin{displaymath}
\xymatrix{
 \A_n/( I\cap \A_n)   \ar[r]^(.4){\gamma_{I,n}} \ar[dr]_{\phi_I^n} &   \A_{n+1}/( I\cap \A_{n+1}) \ar[d]^{\phi_I^{n+1}}\\
  & \A/I  
}.
\end{displaymath}
Thus, by \cite[Theorem 6.1.2]{Murphy}, the definition of inductive limit \cite[Chapter 6.1]{Murphy}, and the fact that each map in the above diagram is an isometry, there exists a unique isometric *-homomorphism (and thus a *-monomorphism) $\phi_I  : \underrightarrow{\lim}\ \mathcal{I}(\A/ I) \rightarrow \A/I$ such that for all $n \in \N$ the diagram in the statement of this theorem commutes.  

Next, fix $n \in \N$.  Let $x \in (\A_n +I)/I$, and so  $x=a+b+I$, where $a \in \A_n, b \in I$.  Thus, we have $a+b-a=b \in I \implies x-(a+I)=0+I\implies x=a+I$.  But, then, the image $\phi^n_I (a+(I\cap \A_n))=x$.  Hence, the map $\phi^n_I $ is onto $(\A_n +I)/I$. We thus have: \begin{equation*}\phi_I \left(\cup_{n \in \N} \indmor{\gamma_I}{n}( \A_n/( I\cap \A_n) )\right) = \cup_{n \in \N} \left((\A_n+I)/I\right),
 \end{equation*} in which the right-hand side is a dense *-subalgebra of $\A/I$ by continuity of the quotient map and the assumption that $\cup_{n \in \N} \A_n$ is dense in $\A$.  Hence, since the normed space $\underrightarrow{\lim}\ \mathcal{I}(\A/ I) $ is complete and $\phi_I$ is a linear isometry on $\underrightarrow{\lim}\ \mathcal{I}(\A/ I) $, we have $\phi_I$ surjects  onto $\A/I$.  Thus, the function $\phi_I : \underrightarrow{\lim}\ \mathcal{I}(\A/ I) \rightarrow \A/I$ is a *-isomorphism. 
 
Next, assume that  $\left(I^k \right)_{n \in \N} \subseteq \mathrm{Ideal}(\A)$ converges to $I^\infty \in \mathrm{Ideal}(\A)$  with respect to $\mathsf{m}_{i(\mathcal{U})}$, which is equivalent to convergence in $Fell$ by Theorem (\ref{AF-ideal-metric-thm}).  By Lemma (\ref{metric-fusing-equiv-lemma}), the family $\left\{ I^k=\overline{\cup_{n \in \N} I^k \cap \A_n}^{\Vert \cdot \Vert_\A} : k \in \overline{\N} \right\}$ is a fusing family with fusing sequence $(b_n)_{n \in \N}$ by Definition (\ref{coll-def}).

Let $c_n =  b_{n+ 1}$ for all $n \in \N$. Then, the sequence  $(c_n)_{n \in \N}$  is a fusing sequence for  $\left\{ I^k=\overline{\cup_{n \in \N} I^k \cap \A_n}^{\Vert \cdot \Vert_\A} : k \in \overline{\N} \right\}$.  Fix $N \in \N, n \in \{0, \ldots, N\}$, and $k \in \N_{\geq c_N}$. Then, the equality $I^k \cap \A_n =I^\infty \cap \A_n$ implies that $\A_n /(I^k \cap \A_n)=\A_n / (I^\infty \cap \A_n)$.  But, also, we gather $\gamma_{I^k,n} = \gamma_{I^\infty,n}$ since  $\A_{n+1} /(I^k \cap \A_{n+1})=\A_{n+1} / (I^\infty \cap \A_{n+1})$ as $c_n = b_{n+1} $.  Hence,  the familiy of inductive limits $\left\{\underrightarrow{\lim}\ \mathcal{I}\left(\A/ I^k \right) : k \in \overline{\N} \right\}$ is an IL-fusing family with fusing sequence $(c_n)_{n \in \N} $. 
\end{proof}

Now, that we have this identification with our metric and the Fell topology, we finish our discussion of the metric topology by considering it in the unital commutative case of AF algebras in Corollary (\ref{comm-metric}).  It will be the case that on the primitive ideals, the relative metric topology of Corollary (\ref{ind-lim-metric-cor}), the relative Fell topology, and the Jacobson topology all agree on the primitive ideals. However, we begin with a more general scenario, which we only assume that the Jacobson topology is Hausdorff on a unital C*-algebra since in this case the relative Fell topology  and the Jacobson topology all agree on the primitive ideals. First, a remark on restricitng to the unital case.

\begin{remark} 
In the following results of this section, we restrict our attention to unital C*-algebras since in this case  $\mathrm{Prim}(\A)$ is a compact subset of the Fell topology as seen in Lemma (\ref{hausdorff-fell-jacobson}). However, although the Jacobson topology is still locally compact  in the non-unital case \cite[Corollary 3.3.8]{Dixmier} and one can form the Alexandroff compactification in the Hausdorff case of the Jacobson topology, the fact that $\A \in \mathrm{Ideal}(\A)$ (note that $\A$ plays the role of the point at infinity of the Alexandroff compactification by Definition (\ref{Fell-topology}) and \cite[Corollary (1) pg. 475]{Fell62})  may not be isolated in the Fell topology in general diminshes any reasonable expectation that the relative Fell topology on $\mathrm{Prim}(\A)$ would agree with the Jacobson topology in this generality. An example of when $\A$ is not isolated in the Fell topology is when $\A=C_0(Y)$, where $Y=\{1/n \in \R: n \in \N \setminus \{0\}\}\subset (0,1]$. Indeed,  if we define for all $m \in \N$  the ideal $I_m= \{g \in \A : g(\{1/(n+2) \in \R: n \geq m \})=0 \} \subsetneq \A$, then the sequence $(I_m)_{m \in \N} \subset \mathrm{Ideal}(\A) \setminus \{\A\}$ converges to $\A$ in the Fell topology by Lemma (\ref{Fell-conv}) and definition of  $C_0(Y)$.

On the other hand, the element $\A \in \mathrm{Ideal}(\A)$ is always isolated in the Fell topology when $\A$ is unital regardless of any separation condition on the Jacobson topology. Indeed, if $J \in \mathrm{Ideal}(\A) \setminus \{\A\}$, then $\| 1_\A + J \|_{\A/J} \geq 1$ since the set $\{ a \in \A : \|a+1_\A \|_\A <1 \}$ contains only invertible elements by \cite[Corollary VII.2.3]{Conway}. Hence, no net of ideals in $\mathrm{Ideal}(\A) \setminus \{\A\}$ may converge to $\A$ by Lemma (\ref{Fell-conv}) since $\|1_\A + \A\|_{\A/\A}=0$.
\end{remark}

Before we move to the C*-algebra setting, we present a fact about the Fell topology in the context of topological spaces.  The following is mentioned in \cite{Fell62}, but we provide a detailed proof now.
\begin{lemma}\label{Fell-Jacobson-Haus-topological-lemma}
If $(X, \tau)$ is a compact Hausdorff space, then the map:
\begin{equation*}
s: x \in X \longmapsto \{x\} \in \mathcal{C}l(X) 
\end{equation*}
is a well-defined homeomorphism onto its image with respect to the relative Fell topology on $\mathcal{C}l(X)$ of Defintion (\ref{topological-Fell-def}), and moreover, the set:
\begin{equation*}
s(X)=\{ \{x\} \in \mathcal{C}l(X): x \in X\}
\end{equation*} 
is a compact and thus a closed subset of $\mathcal{C}l(X)$ with respect to the Fell topology.
\end{lemma}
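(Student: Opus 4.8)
The plan is to reduce the entire statement to a single standard topological principle: a continuous injection from a compact space into a Hausdorff space is automatically a closed map, hence a homeomorphism onto its image whose image is compact and therefore closed. By Lemma (\ref{topological-Fell-lemma}), since a compact Hausdorff space is in particular locally compact Hausdorff, the target $(\mathcal{C}l(X), \tau_{ \mathcal{C}l(X)})$ is compact Hausdorff. Thus, once I have verified that $s$ is a well-defined continuous injection, the whole conclusion follows formally.

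First I would dispose of the easy points. Because $X$ is Hausdorff, in particular $T_1$, every singleton $\{x\}$ is closed, so $s(x)=\{x\} \in \mathcal{C}l(X)$ is well-defined. Injectivity is immediate, since $\{x\}=\{y\}$ forces $x=y$.

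The one computation deserving care, and the step on which I would focus, is the continuity of $s$. Fixing a basic Fell-open set $U(K,F)$ as in Definition (\ref{topological-Fell-def}), with $K \subseteq X$ compact and $F$ a finite family of non-empty open sets, I would translate membership into pointwise conditions: the requirement $\{x\}\cap K=\emptyset$ is exactly $x \notin K$, while $\{x\}\cap A \neq \emptyset$ for each $A \in F$ is exactly $x \in A$. Hence
\[
s^{-1}(U(K,F)) = (X\setminus K)\cap \bigcap_{A \in F} A.
\]
Since $X$ is Hausdorff, the compact set $K$ is closed, so $X \setminus K$ is open; each $A\in F$ is open and $F$ is finite, so their intersection is open, making $s^{-1}(U(K,F))$ open. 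As these sets form a basis, $s$ is continuous.

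Finally I would assemble the conclusion. The map $s$ is a continuous injection from the compact space $X$ into the Hausdorff space $(\mathcal{C}l(X),\tau_{ \mathcal{C}l(X)})$. For any closed $C\subseteq X$, $C$ is compact, so $s(C)$ is compact and thus closed in $\mathcal{C}l(X)$; therefore $s$ is a closed map and a homeomorphism onto its image. Taking $C=X$ shows that $s(X)$ is compact, hence closed. The main obstacle is entirely the continuity computation above; everything else is formal once the compact Hausdorff structure of the Fell topology supplied by Lemma (\ref{topological-Fell-lemma}) is invoked.
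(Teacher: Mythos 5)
Your proposal is correct and follows essentially the same route as the paper's proof: both rest on the observation that $\{x\} \in U(K,F)$ exactly when $x \in (X\setminus K)\cap\bigcap_{A\in F}A$, which is open because compactness plus Hausdorffness makes $K$ closed, and both then invoke Lemma (\ref{topological-Fell-lemma}) together with the standard fact that a continuous injection from a compact space into a Hausdorff space is a homeomorphism onto its (compact, hence closed) image. The only difference is cosmetic: you verify continuity via preimages of basic open sets, whereas the paper runs the identical computation through a net-convergence argument.
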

\begin{proof}
Since $(X, \tau)$  is compact Hausdorff and the space $\mathcal{C}l(X)$ equipped with the Fell topology is compact Hausdorff by Lemma (\ref{topological-Fell-lemma}), we only have to check that $s$ is continuous and note that $s$ is well-defined since $(X, \tau)$  is Hausdorff.

Let $(x_\lambda)_{\lambda \in \Lambda} \subseteq X$ be a net that converges to some $x \in X$ with respect to the topology $\tau$. We claim that $(\{x_\lambda\})_{\lambda \in \Lambda} \subseteq \mathcal{C}l(X)$ converges to $\{x\} \in \mathcal{C}l(X)$ with respect to the Fell topology. 

Let $K \subseteq X$ be a compact set with respect to $\tau$ and let $n \in \N$ and $A_0, \ldots, A_n \in \tau \setminus \{\emptyset \}$ and let $F=\cup_{j=0}^n \{A_j\} \subseteq \tau$. Assume that $\{x\} \in U(K,F)=\{ Y \in \mathcal{C}l(X): Y\cap K= \emptyset \ \land \ Y\cap A_j \neq \emptyset \text{ for all } j \in \{0, \ldots,n\}\}$.  Thus:
\begin{equation*}x \in (X \setminus K) \cap \left(\bigcap_{j=0}^n A_j\right) \in \tau
\end{equation*}
since $K$ is closed as $(X,\tau)$ is Hausdorff.  Therefore there exists $\alpha \in \Lambda$ such that for all $\lambda \geq \alpha$, we have that:
\begin{equation*}x_\lambda \in (X \setminus K) \cap \left(\bigcap_{j=0}^n A_j\right) \in \tau,
\end{equation*}
which implies that $\{x_\lambda\} \in U(K,F)$ for all $\lambda \geq \alpha$, which completes that proof.
\end{proof}
\begin{lemma}\label{hausdorff-fell-jacobson}
If $\A$ is a unital C*-algebra such that $\mathrm{Prim}(\A)$ equipped with its Jacobson topology is Hausdorff, then
on $\mathrm{Prim}(\A)$, the relative Fell topology agrees with the Jacobson topology and $\mathrm{Prim}(\A)$ is a compact and thus closed subset of the Fell topology.
\end{lemma}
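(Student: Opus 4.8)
The plan is to transport everything through the bijection $fell$ of Definition (\ref{Fell-topology}) and to identify its restriction to $\mathrm{Prim}(\A)$ with the singleton map $s$ of Lemma (\ref{Fell-Jacobson-Haus-topological-lemma}). First I would record that $(\mathrm{Prim}(\A), Jacobson)$ is compact Hausdorff: it is Hausdorff by hypothesis, and it is quasi-compact because $\A$ is unital (a standard fact, cf.\ \cite{Dixmier}). This is precisely what is needed to apply Lemma (\ref{Fell-Jacobson-Haus-topological-lemma}) with $X=\mathrm{Prim}(\A)$, which then tells us that $s: J \mapsto \{J\}$ is a homeomorphism of $(\mathrm{Prim}(\A), Jacobson)$ onto its image $s(\mathrm{Prim}(\A))$ equipped with the relative topology from $\tau_{\mathcal{C}l(\mathrm{Prim}(\A))}$, and that $s(\mathrm{Prim}(\A))$ is compact and closed in $\mathcal{C}l(\mathrm{Prim}(\A))$.

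The decisive observation is that $fell(J)=\{J\}$ for every $J\in\mathrm{Prim}(\A)$. Indeed, $fell(J)=\{J'\in\mathrm{Prim}(\A):J'\supseteq J\}$ is exactly the Jacobson-closure of the singleton $\{J\}$ (the smallest Jacobson-closed set containing $J$ is the hull of the ideal $J$ itself), while the Hausdorff, hence $T_1$, hypothesis forces $\overline{\{J\}}=\{J\}$. Therefore $fell|_{\mathrm{Prim}(\A)}$ coincides with the map $s$.

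From here the two conclusions follow formally. For the compactness claim, $\mathrm{Prim}(\A)=fell^{-1}(s(\mathrm{Prim}(\A)))$ is the preimage under the homeomorphism $fell$ of a compact closed set, hence compact and closed in $(\mathrm{Ideal}(\A), Fell)$. For the agreement of topologies, I would combine two descriptions of the one bijection $s=fell|_{\mathrm{Prim}(\A)}$: by Lemma (\ref{Fell-Jacobson-Haus-topological-lemma}) it is a homeomorphism out of $(\mathrm{Prim}(\A), Jacobson)$, while by restricting the homeomorphism $fell$ of Definition (\ref{Fell-topology}) it is a homeomorphism out of $\mathrm{Prim}(\A)$ equipped with its relative Fell topology, in both cases onto $s(\mathrm{Prim}(\A))$ with the same relative $\tau_{\mathcal{C}l(\mathrm{Prim}(\A))}$ topology. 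Since the relative Fell topology and the Jacobson topology are thus both pullbacks under a single bijection of one and the same topology, they coincide.

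The main obstacle is the identification $fell(J)=\{J\}$: this is exactly where the Hausdorff hypothesis is essential, for without the $T_1$ property the hull $fell(J)$ may be strictly larger than $\{J\}$ and the restriction of $fell$ would fail to be the singleton map $s$. Together with the unital quasi-compactness of $\mathrm{Prim}(\A)$ needed to invoke Lemma (\ref{Fell-Jacobson-Haus-topological-lemma}), this is the only input requiring genuine content; the remainder is transport through the two homeomorphisms. One could alternatively derive the topology agreement from Proposition (\ref{Fell-Jacobson}) together with the compactness of $\mathrm{Prim}(\A)$ in the Fell topology, via ``a continuous bijection from a compact space to a Hausdorff space is a homeomorphism,'' but the $s$-identification yields both assertions at once.
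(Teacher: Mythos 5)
Your proposal is correct and follows essentially the same route as the paper: invoke compactness of the Jacobson topology in the unital case to apply Lemma (\ref{Fell-Jacobson-Haus-topological-lemma}), use Hausdorffness (via $T_1$, closedness of singletons) to identify $fell(J)=\{J\}$ so that $fell|_{\mathrm{Prim}(\A)}=s$, and then transport both conclusions through the homeomorphisms $fell$ and $s$. The only cosmetic difference is that you obtain $fell(J)=\{J\}$ by recognizing $fell(J)$ as the hull-kernel closure of $\{J\}$, whereas the paper argues via uniqueness of the ideal representing the closed set $\{J\}$ and \cite[Theorem 5.4.3]{Murphy}; these are the same fact.
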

\begin{proof}
By the fact that the Jacobson topology on $\mathrm{Prim}(\A)$ is compact in the unital case \cite[Proposition 3.1.8]{Dixmier} and Lemma (\ref{Fell-Jacobson-Haus-topological-lemma}), we have that the map:
\begin{equation*}s: P \in (\mathrm{Prim}(\A), Jacobson) \mapsto \{P\} \in \left(\mathcal{C}l(\mathrm{Prim}(\A)), \tau_{\mathcal{C}l(\mathrm{Prim}(\A))}\right)
\end{equation*}  is a well-defined  homeomorphism onto its image with respect to the relative topology such that  $s(\mathrm{Prim}(\A))\subset \mathcal{C}l(\mathrm{Prim}(\A)$ is compact  and thus closed in the topology $ \tau_{\mathcal{C}l(\mathrm{Prim}(\A))}$, and note that $\left(\mathcal{C}l(\mathrm{Prim}(\A)), \tau_{\mathcal{C}l(\mathrm{Prim}(\A))}\right)$ is also compact.

Next, let $P \in \mathrm{Prim}(\A)$.  Since the Jacobson topology is Hausdorff, we have that $\{P\}$ is closed in the Jacobson topology.  Hence, by Definition (\ref{Fell-topology}), there exists a unique ideal $I \in \mathrm{Ideal}(\A)$ such that $fell(I)=\{P\}$.  However,  \cite[Theorem 5.4.3]{Murphy} implies that $I=\cap_{J \in fell(I)} J= P,$ and thus $fell(P)=\{P\}$ for all $P \in \mathrm{Prim}(\A)$.
 Hence since $fell$ is a bijection, we gather that:
\begin{equation*}
fell^{-1} \left( \{ \{J\} \in \mathcal{C}l(\mathrm{Prim}(\A)): J \in \mathrm{Prim}(\A)\} \right)=\mathrm{Prim}(\A),
\end{equation*}
Hence, the map: 
\begin{equation*}
fell^{-1} \circ s : P \in (\mathrm{Prim}(\A),  Jacobson) \mapsto P \in  (\mathrm{Prim}(\A),  Fell)
\end{equation*} is a homeomorphism onto $\mathrm{Prim}(\A)$ since the map $fell$ is a homeomorphism by the end of the proof of Lemma (\ref{Fell-conv}), where $(\mathrm{Prim}(\A), Fell)$ denotes the relative Fell topology on $\mathrm{Prim}(\A)$, which completes the proof. 
\end{proof}

 Before we move provide the final result of this section, we  present a classical result with proof, in which the Jacobson topology on the primitive ideals of a unital commutative $\A$ is homeomorphic to the maximal ideal space with its weak-* topology (this is true, of course, with non-unital as well and the following proof is exactly the same in this case, but we only consider the unital case). Of course, $\mathrm{Prim}(\A)$ is  compact  on any unital C*-algebra (commutative or not)\cite[Proposition 3.1.8]{Dixmier}, so the main purpose of the following theorem is to provide Hausdorff separation in the case of commutativity.

\begin{theorem}\label{comm-Jacobson}
If $\A$ is a unital commutative C*-algebra and $M_\A$ denotes its space of non-zero multiplicative linear functionals with its weak-* topology, then the map:
\begin{equation*}
\varphi \in  M_\A \longmapsto \ker \varphi \in \mathrm{Prim}(\A).
\end{equation*}
 is a homeomorphism onto $\mathrm{Prim}(\A)$ with its Jacobson topology, and therefore $\mathrm{Prim}(\A)$ with its Jacobson topology is a compact Hausdorff space. 
\end{theorem}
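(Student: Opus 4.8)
The plan is to realize the map $k\colon \varphi \mapsto \ker\varphi$ as a continuous closed bijection from the compact Hausdorff space $M_\A$ onto $(\mathrm{Prim}(\A), Jacobson)$, from which both the homeomorphism claim and the compact Hausdorff conclusion follow at once. First I would record that since $\A$ is commutative every irreducible representation is one-dimensional: if $\pi$ is irreducible then $\pi(\A)$ is commutative and hence contained in its own commutant, which by Schur's lemma is $\C\,\mathrm{id}$, forcing the representation space to be one-dimensional. Consequently every primitive ideal is the kernel of a character, so $k$ is well-defined into and surjects onto $\mathrm{Prim}(\A)$. Injectivity is immediate from unitality: writing $a = (a-\varphi(a)\unit_\A) + \varphi(a)\unit_\A$ with $a - \varphi(a)\unit_\A \in \ker\varphi$, any two characters sharing a kernel must agree on $a$, hence coincide.

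Next I would check continuity of $k$ using the description of Jacobson-closed sets in Definition (\ref{Jacobson-topology}). Given an ideal $I$, the preimage under $k$ of the closed set $\{J \in \mathrm{Prim}(\A) : J \supseteq I\}$ is exactly $\{\varphi \in M_\A : \varphi(a) = 0 \text{ for all } a \in I\}$, an intersection over $a \in I$ of the sets $\{\varphi : \varphi(a)=0\}$, each weak-* closed because evaluation $\varphi \mapsto \varphi(a)$ is weak-* continuous. Hence $k$ is continuous. Since $\A$ is unital, $M_\A$ is weak-* compact and Hausdorff, so $\mathrm{Prim}(\A) = k(M_\A)$ is already compact.

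The crux is to upgrade this continuous bijection to a homeomorphism, for which I would show $k$ is a closed map; this is where Hausdorffness of the target is genuinely produced, since a continuous bijection out of a compact space onto a merely $T_1$ space need not be open. I would use the Gelfand transform to identify $\A$ with $C(M_\A)$ and each character with evaluation at the corresponding point of $M_\A$. Given a closed, hence compact, set $C \subseteq M_\A$, put $I = \bigcap_{\varphi \in C}\ker\varphi$; I claim $k(C) = \{J \in \mathrm{Prim}(\A) : J \supseteq I\}$, which is Jacobson-closed. One inclusion is immediate. For the reverse, suppose $\ker\psi \supseteq I$ but $\psi \notin C$; since $M_\A$ is compact Hausdorff and $C$ is closed, Urysohn's lemma supplies $a \in \A$ whose Gelfand transform vanishes on $C$ yet is nonzero at $\psi$, i.e. $a \in I$ with $a \notin \ker\psi$, a contradiction. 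Thus $k$ is closed, hence a homeomorphism, and $\mathrm{Prim}(\A)$ inherits compactness and Hausdorffness from $M_\A$.

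The main obstacle is precisely this closedness step: continuity and bijectivity are routine bookkeeping, but establishing that the inverse is continuous --- equivalently that $k$ carries closed sets to hulls $\{J : J \supseteq I\}$ --- is what forces the genuine topological identification and simultaneously delivers the Hausdorff separation of the Jacobson topology. The Gelfand picture together with Urysohn's lemma is the natural device for this, and I would be careful to invoke compactness of $M_\A$ (from unitality) to guarantee both the normality needed for Urysohn and the compactness transferred to $\mathrm{Prim}(\A)$.
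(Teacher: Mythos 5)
Your proof is correct, but it takes a genuinely different route from the paper. The paper's proof is a short assembly of cited results: it identifies $\mathrm{Prim}(\A)$ with the maximal ideal space (Murphy, Theorem 5.4.4), notes the character--maximal-ideal bijection (Davidson, Theorem I.2.5), identifies $M_\A$ with the pure state space (Murphy, Theorem 5.1.6), and then invokes Pedersen's Proposition 4.3.3 --- the general fact, valid for \emph{any} C*-algebra, that the canonical surjection from the pure state space (weak-*) onto $\mathrm{Prim}(\A)$ (Jacobson) is continuous and open --- so the map is a continuous open bijection, hence a homeomorphism. You instead build everything by hand in the commutative setting: Schur's lemma to show irreducible representations are one-dimensional (giving well-definedness and surjectivity), unitality for injectivity, the hull description of Jacobson-closed sets for continuity, and then --- the real content --- closedness of the map via the Gelfand identification $\A \cong C(M_\A)$ and Urysohn's lemma, showing that the image of a closed set $C$ is exactly the hull of $\bigcap_{\varphi \in C} \ker\varphi$. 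Your approach is self-contained and elementary, needing only Gelfand duality and point-set topology, and it correctly isolates the subtlety that the usual ``continuous bijection from compact to Hausdorff'' shortcut is unavailable because Hausdorffness of the Jacobson topology is precisely what is being proved; what it costs is length. The paper's approach buys brevity by outsourcing exactly that difficulty to Pedersen's general continuous-and-open result, which is also why the paper's argument generalizes beyond the commutative case while yours exploits commutativity essentially (via Gelfand) at the closedness step.
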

\begin{proof} (Thank you to Tristan Bice for notifying me of \cite[Proposition 4.3.3]{Pedersen} and its usefulness, which considerably shortened the proof of this theorem from a previous version of this article).
 By \cite[Theorem 5.4.4]{Murphy},  the set $\mathrm{Prim}(\A) $ is the set of maximal ideals.  However, for all $\varphi \in M_\A$, the ideal $\ker \varphi $ is maximal.  Hence, the map  $\varphi \in  M_\A \longmapsto \ker \varphi \in \mathrm{Prim}(\A)$ is a bijection by \cite[Theorem I.2.5]{Davidson}.   Furthermore, by \cite[Theorem 5.1.6]{Murphy}, the set of pure states on $\A$ is equal to $M_\A$.  Therefore, by \cite[Proposition 4.3.3]{Pedersen}, the map $\varphi \in  M_\A \longmapsto \ker \varphi \in \mathrm{Prim}(\A)$ is a homeomorphism onto $\mathrm{Prim}(\A)$ since it is a continuous and open bijection.  Since $M_\A$ is locally compact Hausdorff by \cite[Corollary I.2.6]{Davidson}, the set $\mathrm{Prim}(\A)$ with its Jacobson topology is a compact Hausdorff space.
\end{proof}

\begin{corollary}\label{comm-metric}
Let $\A$ be a unital AF algebra  and fix any non-decreasing sequence of finite-dimensional C*-subalgebras $\mathcal{U}=(\A_n)_{n \in \N}$ such that $\A=\overline{\bigcup_{n \in \N} \A_n }^{\Vert \cdot \Vert_\A}$.  Let $\left(\mathrm{Prim}(\A), \mathsf{m}_{i(\mathcal{U})}\right)$ denote $\mathrm{Prim}(\A)$ equipped with the relative topology induced by the metric topology of $ \mathsf{m}_{i(\mathcal{U})}$ of Corollary (\ref{ind-lim-metric-cor}).
\begin{enumerate}
\item If the Jacobson topology on $\mathrm{Prim}(\A)$ is Hausdorff, then  $\left(\mathrm{Prim}(\A), \mathsf{m}_{i(\mathcal{U})}\right)$  has the same topology as the Jacobson topology or the relative Fell topology on $\mathrm{Prim}(\A).$
\item If $\A$ is a unital  commutative AF algebra, then  $\left(\mathrm{Prim}(\A), \mathsf{m}_{i(\mathcal{U})}\right)$  is homeomorphic to the space of non-zero multiplicative linear functionals on $\A$ denoted $M_\A$  with its weak-* topology, in which the homeomorphism is given by:
\begin{equation*}
\varphi \in M_\A \longmapsto \ker \varphi \in \mathrm{Prim}(\A).
\end{equation*}
\end{enumerate}
\end{corollary}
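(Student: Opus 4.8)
The plan is to deduce both statements by assembling results already established in this section; no fundamentally new computation is required. The crucial input is Theorem (\ref{AF-ideal-metric-thm}), which tells us that on the \emph{whole} ideal space $\mathrm{Ideal}(\A)$ the metric $\mathsf{m}_{i(\mathcal{U})}$ metrizes the Fell topology. Since the restriction of a metric metrizes the subspace topology, the relative topology induced by $\mathsf{m}_{i(\mathcal{U})}$ on the subset $\mathrm{Prim}(\A)\subseteq\mathrm{Ideal}(\A)$ coincides with the relative Fell topology on $\mathrm{Prim}(\A)$. This observation holds for every unital AF algebra and already does most of the work for both parts.

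For (1), I would first invoke the above to replace the relative metric topology of $\mathsf{m}_{i(\mathcal{U})}$ on $\mathrm{Prim}(\A)$ by the relative Fell topology. Then, under the standing hypothesis that the Jacobson topology on $\mathrm{Prim}(\A)$ is Hausdorff, Lemma (\ref{hausdorff-fell-jacobson}) asserts that the relative Fell topology and the Jacobson topology agree on $\mathrm{Prim}(\A)$ (this is exactly where unitality enters, to guarantee the compactness of $\mathrm{Prim}(\A)$ needed in that lemma). Chaining the two identifications shows that $\left(\mathrm{Prim}(\A),\mathsf{m}_{i(\mathcal{U})}\right)$, the relative Fell topology, and the Jacobson topology are one and the same, which proves (1).

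For (2), I would observe that a unital commutative AF algebra automatically has Hausdorff Jacobson topology: by Theorem (\ref{comm-Jacobson}) the map $\varphi\mapsto\ker\varphi$ is a homeomorphism of $M_\A$ (with its weak-* topology) onto $\mathrm{Prim}(\A)$ with its Jacobson topology, and the latter is therefore compact Hausdorff. Consequently part (1) applies, so the Jacobson topology equals the relative metric topology of $\mathsf{m}_{i(\mathcal{U})}$ on $\mathrm{Prim}(\A)$. Composing the homeomorphism of Theorem (\ref{comm-Jacobson}) with this equality of topologies yields that $\varphi\mapsto\ker\varphi$ is a homeomorphism from $M_\A$ onto $\left(\mathrm{Prim}(\A),\mathsf{m}_{i(\mathcal{U})}\right)$, which is exactly (2).

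The only genuinely delicate point, and the step I would verify most carefully, is the claim that metrizing $\mathrm{Ideal}(\A)$ by $\mathsf{m}_{i(\mathcal{U})}$ and then restricting to $\mathrm{Prim}(\A)$ produces the same topology as restricting the Fell topology first. This is routine, since restricting a metric metrizes the subspace topology, but it is the linchpin that allows the global Theorem (\ref{AF-ideal-metric-thm}) to feed into the local statements about $\mathrm{Prim}(\A)$; everything else is a bookkeeping chain of the previously established homeomorphisms.
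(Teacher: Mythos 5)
Your proposal is correct and follows the same route as the paper: part (1) is Theorem (\ref{AF-ideal-metric-thm}) combined with Lemma (\ref{hausdorff-fell-jacobson}), and part (2) additionally invokes Theorem (\ref{comm-Jacobson}), exactly as you do. The ``delicate point'' you flag --- that restricting the metric $\mathsf{m}_{i(\mathcal{U})}$ to $\mathrm{Prim}(\A)$ yields the relative Fell topology --- is the routine fact that a metric metrizing a topology also metrizes every subspace topology, which the paper's terse proof leaves implicit.
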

\begin{proof}
For (1), combine Theorem (\ref{AF-ideal-metric-thm}) with  Lemma (\ref{hausdorff-fell-jacobson}). For (2), combine Theorem (\ref{AF-ideal-metric-thm}) with Lemma (\ref{hausdorff-fell-jacobson}) and Theorem (\ref{comm-Jacobson}).
\end{proof}

\begin{remark}
The metric of Corollary (\ref{ind-lim-metric-cor}) can be seen as an explicit presentation of a metric on a metrizable topology on ideals presented in \cite{Beckhoff92}, where this metrizable topology is presented only in the case of AF algebras and metrizes the Fell topology in the AF case, which we also proved for the metric of Corollary (\ref{ind-lim-metric-cor}) via a different approach in Theorem (\ref{AF-ideal-metric-thm}) by our inverse limit topology, which thus provides a suitable topology  for the ideal space of any C*-algebra formed by an inductive limit and many possibilities for future study on its own.  Also, we note that the metric of Corollary (\ref{ind-lim-metric-cor})  allows us to explicitly calculate distances between ideals in Remark (\ref{explicit-ideal-metric}), and therefore, make interesting comparisons with certain classical metrics on irrationals, and this metric also serves the purpose of providing fusing families of quotients in Proposition (\ref{ind-lim-quotients-prop}).
\end{remark}

\section{Convergence of Quotients of AF algebras in Quantum Propinquity}\label{conv-quotients-quantum}

In the case of unital AF algebras, we provide criteria for when convergence of ideals in the Fell topology  provides convergence of quotients in the quantum propinquity topology, when the quotients are equipped with faithful tracial states.  But, first, as we saw in  Proposition (\ref{ind-lim-quotients-prop}), it seems that an inductive limit is suitable for describing fusing families with regard to convergence of ideals.  Thus, in order to avoid the notational trouble of too many inductive limits, we will phrase many results in this section in terms of closure of union.   Now, when a quotient has a faithful tracial state, it turns out that the *-isomorphism provided in Proposition (\ref{ind-lim-quotients-prop}) is a quantum isometry  (Theorem-Definition (\ref{def-thm})) between the induced quantum compact metric spaces of Theorem (\ref{AF-lip-norms-thm}) and Theorem (\ref{AF-lip-norms-thm-union}), which preserves the finite-dimensional structure as well in Theorem (\ref{ind-lim-quantum-iso}).  The purpose of this is to apply Theorem (\ref{af-cont}) directly to the quotient spaces.  This utilizes our criteria for quantum isometries between AF algebras in \cite{Aguilar16a}.

\begin{theorem}\label{ind-lim-quantum-iso}
Let $\A$ be a unital AF algebra  with unit $1_\A$ such that $\mathcal{U} = (\A_n)_{n\in\N}$ is an increasing sequence of unital finite dimensional C*-subalgebras such that $\A=\overline{\cup_{n \in \N} \A_n}^{\Vert \cdot \Vert_\A}$  with $\A_0=\C 1_\A .$  Let $I \in \mathrm{Ideal}(\A) \setminus \{\A\}$. By Proposition (\ref{ind-lim-quotients-prop}), the C*-algebra $\A/I =\overline{\cup_{n \in \N}((\A_n +I)/I)}^{\Vert \cdot \Vert_{\A/I}}$ and denote $\mathcal{U}/I = ((\A_n +I)/I)_{n \in \N},$ and note that \\  $(\A_0 + I)/I = \C 1_{\A/I }$.

 If $\A/I$ is equipped with a  faithful tracial state, $\mu$, then using notation from Proposition (\ref{ind-lim-quotients-prop}), the map $\mu \circ \phi_I $ is a faithful traical state on $\underrightarrow{\lim}\mathcal{I}(\A/I)$.

Furthermore, let $\beta : \N \rightarrow (0,\infty)$ have limit $0$ at infinity.  If $\Lip^\beta_{\mathcal{I}(\A/I),\mu \circ \phi_I}$ is the $(2,0)$-quasi-Leibniz Lip norm on $ \underrightarrow{\lim} \mathcal{I}(\A/I)$ given by Theorem (\ref{AF-lip-norms-thm})  and  $\Lip^\beta_{\mathcal{U}/I ,\mu }$ is the $(2,0)$-quasi-Leibniz Lip norm on $\A/I$ given by Theorem (\ref{AF-lip-norms-thm-union}), then:
\begin{equation*} \phi_I^{-1} : \left(\A/I ,\Lip^\beta_{\mathcal{U}/I ,\mu }\right) \rightarrow \left( \underrightarrow{\lim} \mathcal{I}(\A/I) , \Lip^\beta_{\mathcal{I}(\A/I),\mu \circ \phi_I}\right)
\end{equation*} is a quantum isometry of  Theorem-Definition (\ref{def-thm}) and: 
\begin{equation*}
\qpropinquity{} \left(\left( \underrightarrow{\lim} \mathcal{I}(\A/I) , \Lip^\beta_{\mathcal{I}(\A/I),\mu \circ \phi_I}\right), \left(\A/I ,\Lip^\beta_{\mathcal{U}/I ,\mu }\right) \right)=0
\end{equation*}
Moreover, for all $n \in \N$, we have:
\begin{equation*}
\qpropinquity{} \left( \left(\A_n/(I \cap \A_n), \Lip^\beta_{\mathcal{I}(\A/I),\mu \circ \phi_I} \circ \indmor{\gamma_I}{n}\right), \left((\A_n+I)/I , \Lip^\beta_{\mathcal{U}/I ,\mu }\right)\right)=0.
\end{equation*}
\end{theorem}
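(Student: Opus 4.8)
The plan is to reduce the entire statement to the single structural fact that $\phi_I$ is a unital, isometric *-isomorphism which intertwines the relevant states and carries the $n$-th defining subalgebra of $\underrightarrow{\lim}\mathcal{I}(\A/I)$ onto $(\A_n+I)/I$, and then to read off the propinquity assertions from Theorem-Definition (\ref{def-thm})(5). First I would settle the tracial-state claim. Since $\phi_I$ is a unital *-isomorphism by Proposition (\ref{ind-lim-quotients-prop}), the functional $\mu\circ\phi_I$ is a state; it is tracial because $\mu(\phi_I(xy)) = \mu(\phi_I(x)\phi_I(y)) = \mu(\phi_I(y)\phi_I(x)) = \mu(\phi_I(yx))$, and it is faithful because $\mu(\phi_I(x^*x)) = 0$ forces $\phi_I(x) = 0$, hence $x = 0$ by injectivity of $\phi_I$. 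Noting that $I \neq \A$ forces $I\cap\A_0 = \{0\}$, so that $\A_0/(I\cap\A_0)\cong\C$, the hypotheses of Theorem (\ref{AF-lip-norms-thm}) are met on $\underrightarrow{\lim}\mathcal{I}(\A/I)$ with the state $\mu\circ\phi_I$, and those of Theorem (\ref{AF-lip-norms-thm-union}) on $\A/I$ with the state $\mu$; hence both Lip-norms in the statement are genuine $(2,0)$-quasi-Leibniz Lip-norms.

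The core of the proof is the Lip-norm identity $\Lip^\beta_{\mathcal{U}/I,\mu}\circ\phi_I = \Lip^\beta_{\mathcal{I}(\A/I),\mu\circ\phi_I}$, which is exactly the requirement that $\phi_I^{-1}$ be a quantum isometry. Writing $\mathfrak{B}_n = \indmor{\gamma_I}{n}(\A_n/(I\cap\A_n))$, both sides are suprema over $n$ of normalized quantities, the limit side using $\|x - \CondExp{x}{\mathfrak{B}_n}\|/\beta(n)$ with the expectation preserving $\mu\circ\phi_I$, and the quotient side using $\|y - \CondExp{y}{(\A_n+I)/I}\|/\beta(n)$ with the expectation preserving $\mu$. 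The key step is that, because $\phi_I$ maps $\mathfrak{B}_n$ onto $(\A_n+I)/I$ by the commuting triangle of Proposition (\ref{ind-lim-quotients-prop}) and intertwines the two states, the map $y\mapsto\phi_I(\CondExp{\phi_I^{-1}(y)}{\mathfrak{B}_n})$ is a $\mu$-preserving conditional expectation onto $(\A_n+I)/I$; by uniqueness of the trace-preserving conditional expectation it coincides with $\CondExp{\cdot}{(\A_n+I)/I}$. Since $\phi_I$ is isometric, this gives $\|\phi_I(x) - \CondExp{\phi_I(x)}{(\A_n+I)/I}\|_{\A/I} = \|x - \CondExp{x}{\mathfrak{B}_n}\|$ for every $x$, and taking suprema over $n$ yields the identity. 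This is precisely the quantum-isometry criterion for AF algebras established in \cite{Aguilar16a}, which I would cite to package the computation cleanly; Theorem-Definition (\ref{def-thm})(5) then gives the vanishing of the propinquity between the two limit spaces.

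For the finite-dimensional conclusion I would restrict the same identity. By Proposition (\ref{ind-lim-quotients-prop}), $\phi_I^n = \phi_I\circ\indmor{\gamma_I}{n}$ is a *-isomorphism of $\A_n/(I\cap\A_n)$ onto $(\A_n+I)/I$, so precomposing the identity just proved with $\indmor{\gamma_I}{n}$ gives $\Lip^\beta_{\mathcal{U}/I,\mu}\circ\phi_I^n = \Lip^\beta_{\mathcal{I}(\A/I),\mu\circ\phi_I}\circ\indmor{\gamma_I}{n}$. Since both finite-dimensional spaces occur as the finite-dimensional approximants in Theorems (\ref{AF-lip-norms-thm}) and (\ref{AF-lip-norms-thm-union}) respectively, they are already quasi-Leibniz quantum compact metric spaces, so this displays $\phi_I^n$ as a quantum isometry; a final application of Theorem-Definition (\ref{def-thm})(5) yields the last displayed equality.

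I expect the main obstacle to be the intertwining of conditional expectations in the second paragraph: the delicate point is verifying that the state-preservation hypothesis, with $\mu\circ\phi_I$ on one side and $\mu$ on the other, together with $\phi_I(\mathfrak{B}_n) = (\A_n+I)/I$, suffices via uniqueness to conclude $\phi_I\circ\CondExp{\cdot}{\mathfrak{B}_n} = \CondExp{\phi_I(\cdot)}{(\A_n+I)/I}$. Once this compatibility of the two Lip-norm constructions under $\phi_I$ is secured, the remaining steps are formal.
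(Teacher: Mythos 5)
Your proposal is correct, and it follows the same high-level strategy as the paper: reduce everything to showing that $\phi_I^{-1}$ is a quantum isometry, then invoke Theorem-Definition (\ref{def-thm})(5) for the vanishing of the propinquity, and restrict along $\indmor{\gamma_I}{n}$ for the finite-dimensional statement. The difference is in how the key step is discharged. The paper treats the quantum-isometry criterion of \cite[Theorem 5.3]{Aguilar16a} as a black box: it first forms the closure-of-union Lip-norm $\Lip^\beta_{\mathcal{U}(\A/I),\mu\circ\phi_I}$ on $\underrightarrow{\lim}\mathcal{I}(\A/I)$ built from the subalgebras $\indmor{\gamma_I}{m}\left(\A_m/(I\cap\A_m)\right)$, applies the cited criterion twice (once per finite level, once globally), chains quantum isometries using transitivity, and finally replaces the union-version Lip-norm by $\Lip^\beta_{\mathcal{I}(\A/I),\mu\circ\phi_I}$ via the compatibility result \cite[Proposition 5.2]{Aguilar16a}. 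You instead prove the criterion inline: since $y\mapsto\phi_I\left(\CondExp{\phi_I^{-1}(y)}{\mathfrak{B}_n}\right)$ is a $\mu$-preserving conditional expectation onto $\phi_I(\mathfrak{B}_n)=(\A_n+I)/I$, uniqueness of the state-preserving conditional expectation (which is exactly the uniqueness already asserted in Theorems (\ref{AF-lip-norms-thm}) and (\ref{AF-lip-norms-thm-union})) forces it to equal $\CondExp{\cdot}{(\A_n+I)/I}$, and the Lip-norm identity follows from isometry of $\phi_I$. This buys two things: the argument is self-contained rather than resting on two external citations, and by working with the conditional expectations onto $\mathfrak{B}_n$ directly you get the identity against the inductive-limit Lip-norm $\Lip^\beta_{\mathcal{I}(\A/I),\mu\circ\phi_I}$ in one stroke, so the union/IL compatibility step disappears. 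The paper's route is shorter on the page precisely because it outsources both of these points to \cite{Aguilar16a}; your route makes visible why the criterion is true.
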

\begin{proof}
Since $I \neq \A $, the AF algebra $\A/I$ is unital and $(\A_0 + I)/I = \C 1_{\A/I }$ as $\A_0 = \C 1_\A $.  Since $\mu$ is faithful on $\A/I$, we have $\mu \circ \phi_I$ is faithful on $ \underrightarrow{\lim} \mathcal{I}(\A/I)$ since $\phi_I$ is a *-isomorphism by Proposition (\ref{ind-lim-quotients-prop}). 

Using Notation (\ref{ind-lim-quotient}), define $\mathcal{U}(\A/I)=\left(\indmor{\gamma_I}{m}\left(\A_m /(I \cap \A_m)\right)\right)_{m \in \N}$.  By \cite[Chapter 6.1]{Murphy}, the sequence $\mathcal{U}(\A/I)=\left(\indmor{\gamma_I}{m}\left(\A_m /(I \cap \A_m)\right)\right)_{m \in \N}$ is an increasing sequence of unital  finite dimensional C*-subalgebras of  $ \underrightarrow{\lim} \mathcal{I}(\A/I)$ such that:
\begin{equation*} \underrightarrow{\lim} \mathcal{I}(\A/I)= \overline{\cup_{m \in \N}\indmor{\gamma_I}{m}\left(\A_m /(I \cap \A_m)\right)}^{\Vert \cdot \Vert_{\underrightarrow{\lim} \mathcal{I}(\A/I)}}
\end{equation*} and $\indmor{\gamma_I}{0}\left(\A_0 /(I \cap \A_0)\right)=\C1_{\underrightarrow{\lim} \mathcal{I}(\A/I)}.$

 Thus, we may define  $\Lip^\beta_{\mathcal{U}(\A/I),\mu \circ \phi_I}$ on $ \underrightarrow{\lim} \mathcal{I}(\A/I)$ from   Theorem (\ref{AF-lip-norms-thm-union}),    and  $\Lip^\beta_{\mathcal{U}/I ,\mu }$ on $\A/I$ from Theorem (\ref{AF-lip-norms-thm-union}).  

Now, fix $m \in \N$, since $\phi_I \circ \indmor{\gamma_I}{m} = \phi_I^m $ by Proposition (\ref{ind-lim-quotients-prop}), we thus have: 
\begin{equation*}
\indmor{\gamma_I}{m}\left(\A_m /(I \cap \A_m)\right) = \phi_I^{-1} \circ \phi_I^m \left(\A_m /(I \cap \A_m)\right)=   \phi_I^{-1} \left( (\A_m+I)/I \right). 
\end{equation*} 
Since the chosen faithful tracial state on $\underrightarrow{\lim}\mathcal{I}(\A/I)$ is $\mu \circ \phi_I$, we have by \cite[Theorem 5.3]{Aguilar16a} that 
  $\left(\indmor{\gamma_I}{m}\left(\A_m /(I \cap \A_m)\right), \Lip^\beta_{\mathcal{U}(\A/I),\mu \circ \phi_I}\right)$ is quantum isometric to 
$\left((\A_m+I)/I , \Lip^\beta_{\mathcal{U}/I ,\mu }\right)$ by the map $\phi_I^{-1}$ restricted to $(\A_m+I)/I$ for all $m \in \N$.  However, the quantum metric space $\left(\indmor{\gamma_I}{m}\left(\A_m /(I \cap \A_m)\right), \Lip^\beta_{\mathcal{U}(\A/I),\mu \circ \phi_I}\right)$ is quantum isometric to the quantum metric space $\left(\left(\A_m /(I \cap \A_m)\right), \Lip^\beta_{\mathcal{U}(\A/I),\mu \circ \phi_I} \circ \indmor{\gamma_I}{m} \right)$ by the map $\indmor{\gamma_I}{m}$.    Since quantum isometry is an equivlance relation, we conclude that:
\begin{equation*}
\qpropinquity{} \left( \left(\A_m/(I \cap \A_m), \Lip^\beta_{\mathcal{U}(\A/I),\mu \circ \phi_I} \circ \indmor{\gamma_I}{m}\right), \left((\A_m+I)/I , \Lip^\beta_{\mathcal{U}/I ,\mu }\right)\right)=0
\end{equation*} by  Theorem-Definition (\ref{def-thm}).  

Moreover,  \cite[Theorem 5.3]{Aguilar16a} also implies that:
\begin{equation*} \phi_I^{-1} : \left(\A/I ,\Lip^\beta_{\mathcal{U}/I ,\mu }\right) \rightarrow \left( \underrightarrow{\lim} \mathcal{U}(\A/I) , \Lip^\beta_{\mathcal{U}(\A/I),\mu \circ \phi_I}\right)
\end{equation*} is a quantum isometry.  Next, define  $\Lip^\beta_{\mathcal{I}(\A/I),\mu \circ \phi_I}$ from Theorem (\ref{AF-lip-norms-thm}).  By \cite[Proposition 5.2]{Aguilar16a}, we may replace $\Lip^\beta_{\mathcal{U}(\A/I),\mu \circ \phi_I}$ with $\Lip^\beta_{\mathcal{I}(\A/I),\mu \circ \phi_I}$, which completes the proof. 
\end{proof}

Thus, the quantum isometry, $\phi_I$, of Theorem (\ref{ind-lim-quantum-iso}) is in some sense the best one could hope for since it preserves the finite-dimensional approximations in the quantum propinquity.  Next, we give criteria for when a family of quotients converge in the quantum propinquity with respect to ideal convergence.

\begin{theorem}\label{quotients-converge}
Let $\A$ be a unital AF algebra  with unit $1_\A$ such that $\mathcal{U} = (\A_n)_{n\in\N}$ is an increasing sequence of unital finite dimensional C*-subalgebras such that $\A=\overline{\cup_{n \in \N} \A_n}^{\Vert \cdot \Vert_\A}$, with $\A_0=\C 1_\A .$  Let $(I^n)_{n \in \overline{\N}} \subseteq \mathrm{Ideal}(\A)\setminus \{\A\}$ such that $\{\mu_k : \A/I^k \rightarrow \C : k \in \overline{\N}\}$ is a family of faithful tracial states. Let $Q^k : \A \rightarrow \A/I^k $ denote the quotient map for all $k \in \overline{\N}$. If: 
\begin{enumerate}
\item $(I^n)_{n \in \N} \subseteq \mathrm{Ideal}(\A)$ converges to $I^\infty \in \mathrm{Ideal}(\A)$ with respect to $\mathsf{m}_{i(\mathcal{U})}$ of Corollary (\ref{ind-lim-metric-cor}) or the Fell topology (Definition (\ref{Fell-topology})) with fusing sequence $(c_n)_{n \in \N}$ for the fusing family $\left\{ I^n=\overline{\cup_{k \in \N} I^n \cap \A_k}^{\Vert \cdot \Vert_\A} : n \in \overline{\N} \right\}$,
\item for each $N\in \N$, we have that $\left( \mu_k \circ Q^k \right)_{k \in  \N_{\geq c_N}}$ converges to $\mu_\infty \circ Q^\infty$ in the weak-* topology on $\StateSpace\left(\A_N\right) $, and
\item $\{\beta^k : \overline{\N} \rightarrow (0,\infty) \}_{k \in \overline{\N}}$ is a family of convergent sequences such that for all $N\in \N $ if $k \in \N_{\geq c_N}$, then  $\beta^k(n)=\beta^\infty(n)$ for all $n \in \{0,1,\ldots,N\}$ and there exists $B: \overline{\N} \rightarrow (0,\infty)$ with $B(\infty)=0$ and $\beta^m(l) \leq B(l)$ for all $m,l \in \overline{\N}$,
\end{enumerate}
then using notation from Theorem (\ref{ind-lim-quantum-iso}): 
\begin{equation*}
\lim_{n \to \infty} \qpropinquity{} \left(\left(\A/I^n , \Lip^{\beta^n}_{\mathcal{U}/I^n , \mu_n }\right), \left(\A/I^\infty , \Lip^{\beta^\infty}_{\mathcal{U}/I^\infty , \mu_\infty }\right)\right)=0
\end{equation*}
\end{theorem}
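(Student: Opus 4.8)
The plan is to transport the problem from quotients to inductive limits, where the general convergence criterion Theorem (\ref{af-cont}) applies, and then transport back using the quantum isometries of Theorem (\ref{ind-lim-quantum-iso}). For each $k \in \overline{\N}$, set $\tau^k = \mu_k \circ \phi_{I^k}$, which by Theorem (\ref{ind-lim-quantum-iso}) is a faithful tracial state on $\varinjlim \mathcal{I}(\A/I^k)$, and recall from the same theorem that $\phi_{I^k}^{-1}$ is a quantum isometry from $\left(\A/I^k, \Lip^{\beta^k}_{\mathcal{U}/I^k, \mu_k}\right)$ onto $\left(\varinjlim \mathcal{I}(\A/I^k), \Lip^{\beta^k}_{\mathcal{I}(\A/I^k), \tau^k}\right)$, so that the propinquity between these two spaces vanishes for every $k$. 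Thus, by the triangle inequality of Theorem-Definition (\ref{def-thm}), it suffices to prove that $\qpropinquity{}$ between $\left(\varinjlim \mathcal{I}(\A/I^n), \Lip^{\beta^n}_{\mathcal{I}(\A/I^n), \tau^n}\right)$ and $\left(\varinjlim \mathcal{I}(\A/I^\infty), \Lip^{\beta^\infty}_{\mathcal{I}(\A/I^\infty), \tau^\infty}\right)$ tends to $0$. This is exactly the conclusion of Theorem (\ref{af-cont}) applied to the family of inductive sequences $\mathcal{I}(\A/I^k) = \left(\A_N/(I^k\cap\A_N), \gamma_{I^k,N}\right)_{N\in\N}$, so the remaining work is to verify its three hypotheses.

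For hypothesis (1), I would first note that the structural requirements hold: since each $I^k$ is proper and $\A_0 = \C 1_\A$, we have $I^k \cap \A_0 = \{0\}$, hence $\A_0/(I^k\cap\A_0) \cong \C$, and each connecting map $\gamma_{I^k,N}$ is unital and injective by Notation (\ref{ind-lim-quotient}) and Proposition (\ref{ind-lim-quotients-prop}). By assumption (1) and Lemma (\ref{metric-fusing-equiv-lemma}) the ideals form a fusing family, whence Proposition (\ref{ind-lim-quotients-prop}) gives that $\left\{\varinjlim \mathcal{I}(\A/I^k) : k \in \overline{\N}\right\}$ is an IL-fusing family, which is precisely hypothesis (1) of Theorem (\ref{af-cont}). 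The IL-fusing sequence furnished for the quotients is the shift $c_{n+1}$ of $(c_n)$; since $(c_n)$ is non-decreasing, convergence over the tail $\N_{\geq c_N}$ forces convergence over the later tail $\N_{\geq c_{N+1}}$, so hypotheses (2) and (3) remain available for the shifted sequence and the index shift causes no difficulty.

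The crux is hypothesis (2), the weak-* convergence of the restricted traces. Fix $N\in\N$. For $k$ beyond the fusing threshold we have $I^k\cap\A_N = I^\infty\cap\A_N$, so the $N$-th finite-dimensional algebra $\A_N/(I^k\cap\A_N)$ is literally equal to $\A_N/(I^\infty\cap\A_N)$, and the relevant functional is $\tau^k\circ\indmor{\gamma_{I^k}}{N}$. Using the identity $\phi_{I^k}\circ\indmor{\gamma_{I^k}}{N} = \phi_{I^k}^N$ of Proposition (\ref{ind-lim-quotients-prop}) together with $\phi_{I^k}^N(a + I^k\cap\A_N) = a + I^k$, I would compute that for $a\in\A_N$ one has $\left(\tau^k\circ\indmor{\gamma_{I^k}}{N}\right)(a + I^k\cap\A_N) = \mu_k(a + I^k) = (\mu_k\circ Q^k)(a)$. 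In other words, under the canonical weak-* homeomorphism identifying states of $\A_N/(I^\infty\cap\A_N)$ with states of $\A_N$ annihilating $I^\infty\cap\A_N$, the functional $\tau^k\circ\indmor{\gamma_{I^k}}{N}$ corresponds to the restriction of $\mu_k\circ Q^k$ to $\A_N$. Hence assumption (2) of the present theorem, namely weak-* convergence of $(\mu_k\circ Q^k)_{k\geq c_N}$ on $\StateSpace(\A_N)$, translates exactly into hypothesis (2) of Theorem (\ref{af-cont}). Hypothesis (3) is verbatim assumption (3).

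With all three hypotheses verified, Theorem (\ref{af-cont}) yields that $\qpropinquity{}$ between the two inductive-limit spaces tends to $0$; combining this with the vanishing propinquity between each quotient and its inductive-limit model via the triangle inequality completes the proof. The main obstacle is the bookkeeping of the previous paragraph: correctly identifying the finite-dimensional trace $\tau^k\circ\indmor{\gamma_{I^k}}{N}$ with $\mu_k\circ Q^k$ restricted to $\A_N$, and checking that weak-* convergence passes faithfully between $\StateSpace(\A_N)$ and the state space of the fixed quotient $\A_N/(I^\infty\cap\A_N)$.
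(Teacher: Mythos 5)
Your proposal is correct and follows essentially the same route as the paper's proof: identify the quotients with the inductive limits $\varinjlim \mathcal{I}(\A/I^k)$ via the quantum isometries $\phi_{I^k}$ of Theorem (\ref{ind-lim-quantum-iso}), use Lemma (\ref{metric-fusing-equiv-lemma}) and Proposition (\ref{ind-lim-quotients-prop}) to obtain the IL-fusing family, verify hypothesis (2) of Theorem (\ref{af-cont}) through the computation $\tau^k\circ\indmor{\gamma_{I^k}}{N}\circ Q^k_N = \mu_k\circ Q^k$ on $\A_N$ and the standard identification of states of $\A_N/(I^\infty\cap\A_N)$ with states of $\A_N$ annihilating the ideal (the paper cites \cite[Theorem V.2.2]{Conway} for this), and conclude by Theorem (\ref{af-cont}) plus the triangle inequality. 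Your explicit attention to the shifted fusing sequence $(c_{n+1})$ is a careful touch that the paper handles implicitly through Proposition (\ref{ind-lim-quotients-prop}), but it is not a different argument.
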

\begin{proof}
By Lemma (\ref{metric-fusing-equiv-lemma}), the assumption that $(I^n)_{n \in \N} \subseteq \mathrm{Ideal}(\A)$ converges to $I^\infty \in \mathrm{Ideal}(\A)$ with respect to $\mathsf{m}_{i(\mathcal{U})}$  or the Fell topology implies that: 
\begin{equation*}\left\{ I^n=\overline{\cup_{k \in \N} I^n \cap \A_k}^{\Vert \cdot \Vert_\A} : n \in \overline{\N} \right\}
\end{equation*} is a fusing family with some fusing sequence $(c_n)_{n \in \N}$ such that \\ $\left\{\underrightarrow{\lim}\ \mathcal{I}(\A/ I^n) : n \in \overline{\N} \right\}$ is an IL-fusing family with fusing sequence $(c_n)_{n \in \N}$.

Fix $N \in \N$ and $k \in \N_{\geq c_N}$.   Let $x \in \A_N $, and let $Q^k_N : \A_N \rightarrow \A_N/(I^k \cap \A_N)$ and $Q_N^\infty : \A_N \rightarrow \A_N/(I^\infty \cap \A_N)$ denote the quotient maps, and let     Let $\phi_{I^k}:\underrightarrow{\lim}\ \mathcal{I}(\A/ I^k) \rightarrow \A/I^k $ denote the *-isomorphism given in Proposition (\ref{ind-lim-quotients-prop}) and recall that $ \mathcal{I}(\A/ I^k)=  \left(\A_n/(I^k \cap \A_n), \gamma_{I^k,n}\right)_{n \in \N}$ from Notation (\ref{ind-lim-quotient}).  Now, by Proposition (\ref{ind-lim-quotients-prop}) and its  commuting diagram, we gather: 
\begin{equation*}
\begin{split}
\mu_k \circ \phi_{I^k} \circ \indmor{\gamma_{I^k}}{N} \circ Q^k_N (x)&= \mu_k \circ \phi^N_{I^k} \circ  Q^k_N (x)\\
& =  \mu_k \circ \phi^N_{I^k}(x+I^k\cap \A_N)  = \mu_k (x+I^k) = \mu_k \circ Q^k (x).
\end{split}
\end{equation*}
Therefore, by hypothesis (2), the sequence $\left(\mu_k \circ \phi_{I^k} \circ \indmor{\gamma_{I^k}}{N} \circ Q^k_N \right)_{k \in \N_{\geq c_N}}$ converges to $\mu_\infty \circ \phi_{I^\infty} \circ \indmor{\gamma_{I^\infty}}{N} \circ Q^\infty_N $ in the weak-* topology on $\A_N$.  

Hence, the sequence $\left(\mu_k \circ \phi_{I^k} \circ \indmor{\gamma_{I^k}}{N}\right)_{k \in \N_{\geq c_N}}$ converges to $\mu_\infty \circ \phi_{I^\infty} \circ \indmor{\gamma_{I^\infty}}{N}  $ in the weak-* topology on $\StateSpace(\A_N / (I^\infty \cap \A_N)) $ by \cite[Theorem V.2.2]{Conway}.  Thus, by hypothesis (3)  and by Theorem (\ref{af-cont}), we have that:
\begin{equation*}
\lim_{n \to \infty} \qpropinquity{} \left( \left(\underrightarrow{\lim}\ \mathcal{I}(\A/ I^n), \Lip^{\beta^n}_{ \mathcal{I}(\A/ I^n), \mu_n \circ \phi_{I^n}}\right),  \left(\underrightarrow{\lim}\ \mathcal{I}(\A/ I^\infty), \Lip^{\beta^\infty}_{ \mathcal{I}(\A/ I^\infty), \mu_\infty \circ \phi_{I^\infty}}\right)\right)=0.
\end{equation*}
But, as $\phi_{I^n}^{-1}$ is an isometric isomorphism for all $n \in \overline{\N}$ by Theorem (\ref{ind-lim-quantum-iso}), we conclude:
\begin{equation*}
\lim_{n \to \infty} \qpropinquity{} \left(\left(\A/I^n , \Lip^{\beta^n}_{\mathcal{U}/I^n , \mu_n }\right), \left(\A/I^\infty , \Lip^{\beta^\infty}_{\mathcal{U}/I^\infty , \mu_\infty }\right)\right)=0,
\end{equation*}
which completes the proof.
\end{proof}

\subsection{The Boca-Mundici AF algebra}\label{b-m-af}
The Boca-Mundici AF algebra arose in \cite{Boca08} and \cite{Mundici88} independently and is constructed from the Farey tessellation.  In both \cite{Boca08}, \cite{Mundici88}, it was shown that the all Effros-Shen AF algebras (Notation (\ref{af-theta-notation})) arise as quotients up to *-isomorphism of certain primitive ideals of the Boca-Mundici AF algebra, which is the main motivation for our convergence result due to our work with convergence of Effros-Shen algebras in \cite{AL}.  In both \cite{Boca08}, \cite{Mundici88}, it was also shown that the center of the Boca-Mundici AF algebra is *-isomorphic to C([0,1]), which  provided the framework for C. Eckhardt to introduce a noncommutative analogue to the Gauss map in \cite{Eckhardt11}.

We present the construction of this algebra as presented in the paper by F. Boca \cite{Boca08}.  We refer mostly to Boca's work as his unique results pertaining to the Jacobson topology (for example \cite[Corollary 12]{Boca08}, which is the result that led us to begin this paper) are more applicable to our work (see Proposition (\ref{f-ideal-homeo})). As in \cite{Boca08}, we define the Boca-Mundici AF algebra recursively by the following Relations (\ref{farey-relations}).   We note that the relations presented here are the same as in \cite[Section 1]{Boca08}, but instead of starting at $n=0$, these relations begin at $n=1$, so that this formulation of the  Boca-Mundici AF algebra, denoted $\F$ (for Farey), as an inductive limit begins with $\C$. 
\begin{equation}\label{farey-relations}
\begin{cases}
q(n,0)=q(n,2^{n-1})=1, \ \  p(n,0)=0, \ \ p(n,2^{n-1})=1& :  n \in \N \setminus \{0\};\\
 q(n+1,2k)=q(n,k), \ \ p(n+1, 2k)=p(n,k),  &: n \in \N \setminus \{0\},\\
 & \quad k \in \{0, \ldots, 2^{n-1} \} ; \\
 q(n+1, 2k+1)=q(n,k)+ q(n,k+1), &: n \in \N \setminus \{0\},\\
 & \quad  k \in \{0, \ldots, 2^{n-1} -1\};\\
  p(n+1, 2k+1) = p(n,k) + p(n,k+1), & : n \in \N \setminus \{0\}, \\
  & \quad k \in \{0, \ldots, 2^{n-1}-1 \};  \\
r(n,k) = \frac{p(n,k)}{q(n,k)}, & : n \in \N \setminus \{0\}, \\
& \quad k \in \{0, \ldots, 2^{n-1} -1\}.
\end{cases}
\end{equation}
We now define the finite dimensional algebras which determine the inductive limit $\F$. 
\begin{definition}\label{f-fd} For $n \in \N \setminus \{ 0 \}$, define the finite dimensional C*-algebras,
\begin{equation*}
\F_n = \bigoplus_{k=0}^{2^{n-1}} \M (q(n,k)) \text{ and } \F_0 = \C .
\end{equation*}
\end{definition}

Next, we  define *-homomorphisms to complete the inductive limit recipe.  We utilize partial multiplicity matrices.  

\begin{definition}\label{f-part-mult-mat} For $n \in \N \setminus \{0\} $, let $F_n$ be the $(2^{n}+1) \times (2^{n-1}+1)$ matrix with entries in $\{ 0,1\}$ determined entry-wise by:
\begin{equation*}
(F_n)_{h,j} = \begin{cases}
1 & \text{if} \ \left(h=2k+1, k \in \{0, \ldots, 2^{n-1}\}, \land j=k+1 \right) \\
 &  \quad \lor \left(h=2k,  k \in \{1, \ldots, 2^{n-1}\} \land (j=k \lor j=k+1)\right); \\
 0 & \text{otherwise}. 
 \end{cases}
\end{equation*}
\end{definition}
For example, \begin{equation*}
F_1 = \left[\begin{array}{cc}
1 & 0 \\
1 & 1 \\
0 & 1
\end{array}\right],
F_2 = \left[ \begin{array}{ccc}
1 & 0 & 0 \\
1 & 1 & 0 \\
0 & 1 & 0 \\
0 & 1 & 1 \\
0 & 0 & 1
\end{array}\right]
\end{equation*}
We would like these matrices to determine unital *-monomorphisms, so that our inductive limit is a unital C*-algebra, which motivates the following Lemma (\ref{f-unital}).
\begin{lemma}\label{f-unital}
Using Definition (\ref{f-part-mult-mat}) and Relations (\ref{farey-relations}), if $n \in \N \setminus \{0\}$, then: 
\begin{equation*}
F_n \begin{pmatrix} q(n,0) \\q(n,1) \\ \vdots \\ q(n , 2^{n-1})   \end{pmatrix}= \begin{pmatrix}  q(n+1,0) \\ q(n+1,1) \\ \vdots \\ q(n+1,2^{n})  \end{pmatrix}.
\end{equation*}
\end{lemma}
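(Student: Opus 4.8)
The plan is to verify this identity directly, coordinate by coordinate, by reading off the nonzero entries of $F_n$ from Definition (\ref{f-part-mult-mat}) and matching each resulting coordinate of the product against the appropriate line of Relations (\ref{farey-relations}). The one point requiring care is purely a bookkeeping matter: the matrix $F_n$ has its rows $h$ and columns $j$ indexed starting from $1$ (as the displayed examples $F_1, F_2$ confirm), whereas the two column vectors are indexed by $k$ starting from $0$. So throughout I would identify the $j$-th entry of the left-hand vector with $q(n,j-1)$ and the $h$-th entry of the right-hand vector with $q(n+1,h-1)$, and then write the $h$-th coordinate of the product as $\sum_j (F_n)_{h,j}\, q(n,j-1)$, splitting the argument into the two cases determined by the parity of $h$.

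For $h$ odd, say $h = 2k+1$ with $k \in \{0,\ldots,2^{n-1}\}$, Definition (\ref{f-part-mult-mat}) gives a single nonzero entry $(F_n)_{2k+1,k+1}=1$ in that row, so the product coordinate equals $q(n,k)$; the corresponding right-hand entry is $q(n+1,2k)$, and the relation $q(n+1,2k)=q(n,k)$ holds on exactly this range of $k$. For $h$ even, say $h = 2k$ with $k \in \{1,\ldots,2^{n-1}\}$, the row has exactly two nonzero entries, at columns $j=k$ and $j=k+1$, so the product coordinate is $q(n,k-1)+q(n,k)$; the corresponding right-hand entry is $q(n+1,2k-1)=q(n+1,2(k-1)+1)$, and the relation $q(n+1,2(k-1)+1)=q(n,k-1)+q(n,k)$ holds on this range. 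In each case the two sides agree, and I would note that the boundary values $q(n+1,0)$ and $q(n+1,2^n)$ are already subsumed in the even case (at $k=0$ and $k=2^{n-1}$), so no separate boundary check is needed.

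Finally I would observe that the odd rows $h=2k+1$ for $k=0,\ldots,2^{n-1}$ together with the even rows $h=2k$ for $k=1,\ldots,2^{n-1}$ account for all $2^n+1$ rows of $F_n$, so the entrywise matching established above proves the stated matrix identity. The main (and essentially only) obstacle is keeping the index conventions straight — reconciling the $1$-based matrix indices with the $0$-based Farey indices and confirming that the $k$-ranges appearing in Definition (\ref{f-part-mult-mat}) coincide with those in Relations (\ref{farey-relations}); once that alignment is fixed, every step is an immediate substitution.
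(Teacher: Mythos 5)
Your proposal is correct and follows essentially the same route as the paper's own proof: a row-by-row verification of the matrix product, split by the parity of the row index, matching the single nonzero entry of odd rows against the relation $q(n+1,2k)=q(n,k)$ and the two nonzero entries of even rows against $q(n+1,2k+1)=q(n,k)+q(n,k+1)$. The only difference is cosmetic (you make the $1$-based versus $0$-based index bookkeeping explicit and count the rows at the end, while the paper simply appeals to matrix multiplication), so there is nothing substantive to add.
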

\begin{proof}
Let $n \in \N \setminus \{0\}$.  Let $k \in \{1, \ldots , 2^{n-1}\}$ and consider $q(n+1, 2k-1)$.  Now, by Definition (\ref{f-part-mult-mat}), row $2k-1+1=2k$ of $F_n$ has $1$ in entry $k$ and $k+1$, and $0$ elsewhere. Thus: 
\begin{equation*}
\begin{split}
\left((F_n)_{2k,1}, \ldots , (F_n)_{2k,2^{n-1}+1}\right)\cdot \begin{pmatrix} q(n,0) \\q(n,1) \\ \vdots \\ q(n , 2^{n-1})   \end{pmatrix}&  = q(n,k-1)+q(n,k-1+1)\\
& =q(n+1, 2k-1)
\end{split}
\end{equation*}   
by Relations (\ref{farey-relations}).
Next, let $k \in \{0, \ldots, 2^{n-1}\}$ and consider $q(n+1, 2k)$.  By Definition (\ref{f-part-mult-mat}), row $2k+1$ of $F_n $ has $1$ in entry $k+1$ and $0$ elsewhere.  Thus:  
\begin{equation*}
\left((F_n)_{2k+1,1}, \ldots , (F_n)_{2k+1,2^{n-1}+1}\right)\cdot \begin{pmatrix} q(n,0) \\q(n,1) \\ \vdots \\ q(n , 2^{n-1})   \end{pmatrix}  = q(n,2k)=q(n+1, 2k)
\end{equation*}
by Relations (\ref{farey-relations}). Hence,  by matrix multiplication, the proof is complete.    
\end{proof}

\begin{definition}[{\cite{Boca08, Mundici88}}]\label{boca-mundici-af}  Define
$\varphi_0 : \F_0 \rightarrow \F_{1}$ by $\varphi_0 (a)= a \oplus a$. For $n \in \N \setminus \{0\}$, by \cite[Lemma III.2.1]{Davidson} and Lemma (\ref{f-unital}), we let $\varphi_n : \F_n \rightarrow \F_{n+1}$  be a unital *-monomorphism determined by $F_n$ of Definition (\ref{f-part-mult-mat}).  Using Definition (\ref{f-fd}), we let the unital C*-inductive limit (Notation (\ref{ind-lim})): 
\begin{equation*}
\F = \underrightarrow{\lim} (\F_n, \varphi_n )_{n \in \N}
\end{equation*}
denote the {\em Boca-Mundici AF algebra}.

Let $\F^n = \indmor{\varphi}{n}(\F_n)$ for all $n \in \N$ and $\mathcal{U}_\F = (\F^n)_{n \in \N}$, which is a non-decreasing sequence of C*-subalgebras of $\F$ such that $\F=\overline{\cup_{n \in \N} \F^n}^{\Vert \cdot \Vert_\F}$, where $\F^0 = \C1_\F $ (see \cite[Chapter 6.1]{Murphy}).
\end{definition}
We note that in \cite{Boca08}, the AF algebra $\F$ is constructed by a Bratteli diagram displayed as \cite[Figure 2]{Boca08}, so in order to utilize the results of \cite{Boca08}, we verify that we have the same Bratteli diagram up to adding one  vertex of label 1 at level $n=0$ satisfying the conditions at the beginning of \cite[Section 1]{Boca08}. But, first, we fix some notation for Bratteli diagrams and state some well-known results that will prove useful. 

\begin{definition}[{\cite{Bratteli72}}]\label{Bratteli-diagram} A {\em Bratteli diagram} is given by a directed graph $\mathcal{D}=(V^\mathcal{D}, E^{\mathcal{D}})$ with labelled vertices and multiple edges between two vertices is allowed.  The set $V^\mathcal{D} \subset \N^2 $ is the set of labeled vertices and $E^\mathcal{D} \subset \N^2 \times \N^2$ is the set of edges, which consist of ordered pairs from $V^\mathcal{D}$.  For each $n \in \N$, let $ v^\mathcal{D}_n \in \N $.

Define $V^\mathcal{D}=\cup_{n \in \N} V^{\mathcal{D}}_n$, where for $n \in \N$, we let:
\begin{equation*}
V^{\mathcal{D}}_n= \left\{ (n,k) \in \N \times \N  : k \in \{0, \ldots, v^\mathcal{D}_n\} \right\}  ,
\end{equation*}
 and we denote the label of the vertices $(n,k) \in V^\mathcal{D}$ by $[n,k]_\mathcal{D} \in \N \setminus \{0\}$.  
 
Next, let $E^\mathcal{D} \subset V^\mathcal{D} \times V^\mathcal{D}$. Now, we list some axioms for  $V^\mathcal{D}$ and $E^\mathcal{D}$. 
\begin{itemize}
\item[(i)] For all $n \in \N$, if $m \in \N \setminus \{n+1\}$, then $((n,k),(m,q)) \not\in E^\mathcal{D} $ for all $k \in \left\{0, \ldots, v^\mathcal{D}_n \right\}$ and $q \in  \left\{0, \ldots, v^\mathcal{D}_m \right\}.$
\item[(ii)] If $(n,k) \in V^\mathcal{D}$, then there exists $q \in \left\{0, \ldots,  v^\mathcal{D}_{n+1}\right\}$ such that $((n,k),(n+1,q)) \in E^\mathcal{D} .$
	\item[(iii)] If $n \in \N \setminus \{0\}$ and $(n,k) \in V^\mathcal{D}$ , then there exists $q \in \left\{0, \ldots,   v^\mathcal{D}_{n-1}\right\}$ such that $((n-1,q), (n,k)) \in E^\mathcal{D} $.
\end{itemize} 

If $\mathcal{D}$ satisfies the all of the above properties, then we call  $\mathcal{D}$ a {\em Bratteli diagram}, and we denote the set of all Bratteli diagrams  by $\mathscr{BD}$.

We also introduce the following notation.   For each $n \in \N$, let:
\begin{equation*}
E^{\mathcal{D}}_n =(V^\mathcal{D}_n \times  V^\mathcal{D}_{n+1}) \cap  E^\mathcal{D}  , 
\end{equation*}
which  by axiom (i), we have that $E^\mathcal{D} = \cup_{n \in \N} E^{\mathcal{D}}_n$. Also, for $ ((n,k),(n+1,q)) \in E^\mathcal{D}_n$, we denote $ [(n,k),(n+1,q)]_\mathcal{D} \in \N \setminus \{0\}$ as the number of edges from $(n,k)$ to $(n+1,q )$.  
Let $(n,k) \in V^\mathcal{D}$, define:
\begin{equation*}
R^\mathcal{D}_{(n,k)} = \left\{ (n+1,q) \in V^\mathcal{D}_{n+1} : ((n,k), (n+1, q)) \in E^\mathcal{D} \right\},
\end{equation*}
which is non-empty by axiom (ii).  Also, for $n \in \N$, we refer to $V^\mathcal{D}_n , E^\mathcal{D}_n ,$ and $\left( V^\mathcal{D}_n ,  E^\mathcal{D}_n\right)$ as the vertices at level $n$, edges at level $n$, and diagram at level $n$, respectively.
\end{definition}

\begin{remark}
It is easy to see that this definition coincides with Bratteli's of {\cite[Section 1.8]{Bratteli72}} in that we simply trade his arrow notation with that of edges and number of edges. That is, given a Bratteli diagram $\mathcal{D}$, the correspondence is: $(n,k) \searrow^p (n+1,q)$ if and only if $((n,k),(n+1,q)) \in E^\mathcal{D}$  and  $[(n,k),(n+1,q)]_\mathcal{D}=p.$ 
\end{remark}
\begin{definition}[{\cite{Bratteli72}}]\label{diagram-af} Let $\mathcal{I} = (\A_n,\alpha_n)_{n\in\N}$ be an inductive sequence of finite dimensional C*-algebras with C*-inductive limit $\A$, where  $\alpha_n$ is injective for all $n\in\N$.  Thus, $\A$ is an AF algebra by {\cite[Chapter 6.1]{Murphy}}.  Let $\mathcal{D}_b (\A)$ be a diagram associated to $\A$ constructed as follows.  

Fix $n \in \N$.  Since $\A_n $ is finite dimensional, $\A_n \cong \oplus_{k=0}^{a_n} \M(n(k))   $ such that $a_n \in \N $ and $n(k)\in \N \setminus \{0\}$ for $ k \in \{0, \ldots, a_{n} \}$.  Define: 
\begin{equation*}
v^{\mathcal{D}_b (\A)}_n = a_n , \ V^{\mathcal{D}_b (\A)}_n  = \left\{ (n,k) \in \N^2 : k \in \left\{0, \ldots , v^{\mathcal{D}_b (\A)}_n \right\}\right\},
\end{equation*}
and  label $[n,k]_{\mathcal{D}_b (\A)} = \sqrt{\dim(\M(n(k)))}$ for $ k \in \left\{0, \ldots, v^{\mathcal{D}_b (\A)}_n\right\}$.

Let $A_n$ be the $a_{n+1}+1 \times a_n +1$-partial multiplicity matrix assocaited to $\alpha_n : \A_n \rightarrow \A_{n+1} $ with entries $(A_n)_{i,j} \in \N, i \in \{1, \ldots, a_{n+1}+1\}, j \in \{1, \ldots, a_{n}+1\}$ given by {\cite[Lemma III.2.2]{Davidson}}.  Define: 
\begin{equation*}
E^{\mathcal{D}_b (\A)}_n  = \left\{((n,k),(n+1,q)) \in \N^2 \times \N^2 : (A_n)_{q+1, k+1} \neq 0 \right\},
\end{equation*}
and if $((n,k),(n+1,q)) \in E^{\mathcal{D}_b (\A)}_n$, then let the number of edges be $[(n,k),(n+1,q)]_{\mathcal{D}_b (\A)}$ $=(A_n)_{q+1, k+1}$.  

Let $V^{\mathcal{D}_b (\A)} = \cup_{n \in \N} V^{\mathcal{D}_b (\A)}_n , \ E^{\mathcal{D}_b (\A)} =\cup_{n \in \N} E^{\mathcal{D}_b (\A)}_n, $ and ${\mathcal{D}_b (\A)}=(V^{\mathcal{D}_b (\A)},E^{\mathcal{D}_b (\A)}).$  By {\cite[Section 1.8]{Bratteli72}}, we conclude ${\mathcal{D}_b (\A)} \in \mathscr{BD}$ is a Bratteli diagram as in Definition (\ref{Bratteli-diagram}).

If $\A$ is an AF algebra of the form $\A=\overline{\cup_{n \in \N} \A_n }^{\Vert \cdot \Vert_\A}$ where $\mathcal{U}=(\A_n)_{n \in \N}$ is a non-decreasing sequence of finite dimensional C*-subalgebras of $\A$, then the diagram $\mathcal{D}_b (\A)$ has the same vertices as the one above, and the edges are formed by the partial multiplicity matrix built from the partial multiplicities of the inclusion mappings $\iota_n : \A_n \rightarrow \A_{n+1}$ for all $n \in \N$.
\end{definition}
\begin{remark}\label{converse-diagram}
We note that the converse of the Definition (\ref{diagram-af}) is true in the sense that given a Bratteli diagram, one may construct an AF algebra associated to it.  The process is described in {\cite[Section 1.8]{Bratteli72}}, and in particular, one may construct partial multiplicity matrices from the edge set, which then provide injective *-homomorphisms to build an inductive limit.
\end{remark}
As an example, which will be used in Proposition  (\ref{f-quotient-iso}), we display the Bratteli diagram for the Effros-Shen AF algebras of Notation (\ref{af-theta-notation}).  
\begin{example}\label{af-theta-diagram}
Fix $\theta \in (0,1) \setminus \Q$ with continued fraction expansion $\theta=[a_j]_{j \in \N}$ using Expression (\ref{continued-fraction-eq}) with rational approximations $\left(\frac{p^\theta_n}{q^\theta_n}\right)_{n \in \N}$ given by Expression (\ref{pq-rel-eq}).  Let $\af{\theta}$ be the Effros-Shen AF algebra from Notation (\ref{af-theta-notation}).  Thus, $v^{\mathcal{D}_b (\af{\theta})}_0=0$ and $V^{\mathcal{D}_b (\af{\theta})}_0 = \{(0,0)\}$ with $[0,0]_{\mathcal{D}_b (\af{\theta})}=1$.  For $n \in \N \setminus \{0\}$, we have $v^{\mathcal{D}_b (\af{\theta})}_n =1$ and $V^{\mathcal{D}_b (\af{\theta})}_n=\{(n,0), (n,1)\}$ with $[n,0]_{\mathcal{D}_b (\af{\theta})}=q^\theta_{n} , [n,1]_{\mathcal{D}_b (\af{\theta})}=q^\theta_{n-1}$.   The partial multiplicity matrix for $n=0$ is: \begin{equation*}A_0 =  \begin{pmatrix}a_{1}  \\ 1 \end{pmatrix},\end{equation*} and let $n \in \N \setminus \{0\}$, then the partial multiplicity matrix is: 
\begin{equation*}A_n =\begin{pmatrix}a_{n+1} & 1 \\ 1 & 0 \end{pmatrix},
\end{equation*} by Notation (\ref{af-theta-notation}) and {\cite[Lemma III.2.1]{Davidson}}, which determines the edges.  We now provide the diagram as a graph, where the label in the edges denotes number of edges and the top row contains the vertices $(n,1)$ with their labels with $n$ increasing from left to right with the bottom row having vertices $(n,0)$ with their labels with $n$ increasing  from left to right.  Let $n \geq 4:$
\begin{displaymath}
\xymatrix{
  & q^\theta_0 \ar[dr]|(0.35)1 & q^\theta_1 \ar[dr]|(0.35)1 & q^\theta_2 \cdots \ \ \    \ar[dr]|(0.35)1 & q^\theta_{n-1} \ar[dr]|(0.35)1 & q^\theta_n \cdots\\
 1 \ar[r]|{a_1} \ar[ur]|1 & q^\theta_1 \ar[r]|(.4){a_2} \ar[ur]|(0.35)1  & q^\theta_2 \ar[r]|(.4){a_3} \ar[ur]|(0.35)1 & q^\theta_3 \cdots \ \ \  \ar[r]|{a_n}  \ar[ur]|(0.35)1  & q^\theta_{n}  \ar[r]|(.4){a_{n+1}} \ar[ur]|(0.35)1& q^\theta_{n+1} \cdots }
\end{displaymath}
\end{example}

Returning to the diagram setting, we define what an ideal of a diagram is.

\begin{definition}\label{ideal-diagrams}  Let $\mathcal{D}= (V^\mathcal{D}, E^\mathcal{D})$ be a Bratteli diagram defined in Definition (\ref{Bratteli-diagram}).  We call $\mathcal{D}(I)= (V^I , E^I)$ an {\em ideal diagram} of $\mathcal{D}$ if $V^I \subseteq V^\mathcal{D} , E^I \subseteq E^\mathcal{D} $ and: 
\begin{itemize}
\item[(i)](directed)  if $(n,k) \in V^I$ and $((n,k), (n+1,q)) \in E^\mathcal{D}$, then $(n+1,q) \in V^I$. 
\item[(ii)](hereditary) if $(n,k) \in V^\mathcal{D}$ and $R^\mathcal{D}_{(n,k)} \subseteq V^I$, then $ (n,k) \in V^I $.
\item[(iii)](edges)  If $(n,k), (n+1,q) \in V^I$ such that $((n,k),(n+1,q)) \in E^\mathcal{D}$, then $((n,k),(n+1,q)) \in E^I $.
\end{itemize}
Furthermore, if $(n,k) \in V^\mathcal{D} \cap V^I$, then $[n,k]_\mathcal{D}=[n,k]_{\mathcal{D}(I)}$.  And, if $((n,k),(n+1,q)) \in E^\mathcal{D} \cap E^I,$ then $[(n,k),(n+1,q)]_\mathcal{D}=[(n,k),(n+1,q)]_{\mathcal{D}(I)}$.

Also, for $n \in \N$, denote $V^I_n = V^\mathcal{D}_n \cap V^I $ and $E^I_n = E^\mathcal{D}_n \cap E^I $  with $I_n = (V^I_{n} , E^I_n ) $ to also include all associated labels and number of edges, and we will refer to $V^I_n$ as the vertices at level $n$ of the diagram. Let $\mathrm{Ideal}(\mathcal{D})$ denote the set of ideals of $\mathcal{D} $.
\end{definition}

\begin{notation}\label{ideal-af}Let $\A=\overline{\cup_{n \in \N} \A_n }^{\Vert \cdot \Vert_\A}$ be an AF algebra where $\mathcal{U}=(\A_n)_{n \in \N}$ is a non-decreasing sequence of finite dimensional C*-subalgebras of $\A$.  Let $\mathcal{D}_b (\A)$ be the diagram given by Definition (\ref{diagram-af}).  

Let $I \in \mathrm{Ideal}(\A)$ be a norm closed two-sided ideal of $\A$, then by \cite[Lemma 3.2]{Bratteli72}, the subset $\Lambda$ of $\mathcal{D}_b (\A)$ formed by $I$ is an ideal in the sense of Definition (\ref{ideal-diagrams}), and denote this by $\mathcal{D}_b(\A)(I) \in \mathrm{Ideal}(\mathcal{D}_b(\A))$, where  $\mathrm{Ideal}(\mathcal{D}_b(\A))$ is the set of ideals of $\mathcal{D}_b (\A)$ from Definition (\ref{ideal-diagrams}).
\end{notation}
\begin{proposition}\label{ideal-corr}{\cite[Lemma 3.2]{ Bratteli72}} Let $\A=\overline{\cup_{n \in \N} \A_n }^{\Vert \cdot \Vert_\A}$ be an AF algebra where $\mathcal{U}=(\A_n)_{n \in \N}$ is a non-decreasing sequence of finite dimensional C*-subalgebras of $\A$ and Bratteli diagram $\mathcal{D}_b(\A)$ from Definition (\ref{diagram-af}). Using Notation (\ref{ideal-af}) and Definition (\ref{ideal-diagrams}), the map: 
\begin{equation*}
i(\cdot, \mathcal{D}_b(\A)) : I \in  \mathrm{Ideal}(\A) \longmapsto \mathcal{D}_b(\A)(I) \in \mathrm{Ideal}(\mathcal{D}_b (\A))
\end{equation*}
given by {\cite[Lemma 3.2]{Bratteli72}} is a well-defined bijection, where the vertices of $V^{\mathcal{D}_b(\A)(I)}_n$ are uniquely determined by $I \cap \A_n$ for each $n \in \N$.
\end{proposition}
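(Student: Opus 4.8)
The plan is to recognize this statement as a reformulation of \cite[Lemma 3.2]{Bratteli72} in the combinatorial language of Definition (\ref{ideal-diagrams}), so the main work is to match the two notions and to supply the bijectivity argument using tools already developed in this section. First I would record the key local observation: for $I \in \mathrm{Ideal}(\A)$ and each $n \in \N$, the intersection $I \cap \A_n$ is an ideal of the finite-dimensional algebra $\A_n \cong \bigoplus_k \M(n(k))$, hence is a direct sum of a uniquely determined subcollection of the full matrix summands $\M(n(k))$. Declaring $(n,k) \in V^{\mathcal{D}_b(\A)(I)}$ precisely when $\M(n(k)) \subseteq I \cap \A_n$, and inheriting the labels and edge multiplicities from $\mathcal{D}_b(\A)$ as prescribed in Definition (\ref{ideal-diagrams}), produces the candidate diagram and makes transparent the final assertion that $V^{\mathcal{D}_b(\A)(I)}_n$ is determined by $I \cap \A_n$.

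Next I would verify that $\mathcal{D}_b(\A)(I)$ is genuinely an ideal diagram, i.e. satisfies axioms (i)--(iii) of Definition (\ref{ideal-diagrams}). Directedness (i) follows because $I$ is a two-sided ideal: if $\M(n(k)) \subseteq I$ and there is an edge into $(n+1,q)$, then the image of that summand inside $\M((n+1)(q))$ is a nonzero element of $I$, and since $\M((n+1)(q))$ is simple while $I \cap \A_{n+1}$ is a sum of full blocks, the whole block must lie in $I$, so $(n+1,q) \in V^{\mathcal{D}_b(\A)(I)}$. The hereditary condition (ii) uses injectivity of the connecting maps: if every summand in $R^{\mathcal{D}_b(\A)}_{(n,k)}$ lies in $I$, then any $a \in \M(n(k))$, viewed in $\A_{n+1}$, has all of its nonzero block-components inside those summands and hence $a \in I$, forcing $\M(n(k)) \subseteq I$. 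Condition (iii) holds by construction, since the edge set of $\mathcal{D}_b(\A)(I)$ is taken to be exactly the edges of $\mathcal{D}_b(\A)$ with both endpoints in $V^{\mathcal{D}_b(\A)(I)}$.

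For injectivity I would argue exactly as in Proposition (\ref{ind-lim-ideals}): if $\mathcal{D}_b(\A)(I) = \mathcal{D}_b(\A)(J)$, then the two diagrams share the same vertices at every level, so $I \cap \A_n = J \cap \A_n$ for all $n \in \N$, whence $\cup_{n}(I \cap \A_n) = \cup_n (J \cap \A_n)$ and, taking norm-closures, $I = J$ by \cite[Lemma III.4.1]{Davidson}. For surjectivity, given an ideal diagram $\Lambda$ I would set $I_n = \bigoplus_{(n,k) \in V^\Lambda} \M(n(k)) \subseteq \A_n$; directedness gives $I_n \subseteq I_{n+1}$, so $\cup_n I_n$ is an algebraic two-sided ideal of $\cup_n \A_n$ and its closure $I = \overline{\cup_n I_n}^{\Vert \cdot \Vert_\A}$ lies in $\mathrm{Ideal}(\A)$; it then remains to check $I \cap \A_n = I_n$, so that $\mathcal{D}_b(\A)(I) = \Lambda$.

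The main obstacle is precisely this last equality in the surjectivity step: the inclusion $I_n \subseteq I \cap \A_n$ is immediate, but the reverse inclusion --- that passing to the closure does not absorb additional matrix summands at level $n$ --- is the delicate heart of the correspondence and is exactly where the hereditary axiom (ii) is needed. I expect to handle it either by reproducing Bratteli's argument (tracking a summand not belonging to $V^\Lambda$ through the connecting maps and using the hereditary condition to derive a contradiction with membership in $I$) or, more economically, by directly invoking the bijection of \cite[Lemma 3.2]{Bratteli72} once the translation between Definition (\ref{ideal-diagrams}) and Bratteli's directed-hereditary subdiagrams has been made explicit.
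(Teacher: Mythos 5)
Your proposal is correct in outline, but note that the paper itself offers no proof of Proposition (\ref{ideal-corr}): it is stated purely as a citation of \cite[Lemma 3.2]{Bratteli72}, with the correspondence $I \mapsto \mathcal{D}_b(\A)(I)$ already packaged into Notation (\ref{ideal-af}). So your proposal necessarily takes a different route: it reconstructs the content of Bratteli's lemma inside the section's own toolkit, and the reconstruction is sound. Identifying $V^{\mathcal{D}_b(\A)(I)}_n$ with the set of blocks $\M(n(k))$ contained in $I \cap \A_n$ is the right dictionary; your directedness argument (a nonzero element of a block of $I$ has nonzero component in any block it feeds into, while $I \cap \A_{n+1}$ is a sum of full blocks) and your heredity argument (the image of $\M(n(k))$ in $\A_{n+1}$ has nonzero components precisely in the blocks of $R^{\mathcal{D}_b(\A)}_{(n,k)}$) are both correct, and your injectivity argument via \cite[Lemma III.4.1]{Davidson} is literally the argument the paper gives for Proposition (\ref{ind-lim-ideals}). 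You are also right that the crux is the reverse inclusion $I \cap \A_n \subseteq I_n$ in the surjectivity step, and the strategy you name does close it: if $(n,k) \notin V^\Lambda$, the contrapositive of heredity produces an infinite path $(n,k),(n+1,k_{n+1}),(n+2,k_{n+2}),\dots$ of vertices outside $V^\Lambda$; any nonzero projection $p \in \M(n(k))$ then has a nonzero projection as its component in the block $(m,k_m)$ for every $m \geq n$, whence $\left\Vert p + I_m \right\Vert_{\A_m/I_m} = 1$, so $\dist\left(p, \cup_m I_m\right) \geq 1$ and $p \notin \overline{\cup_m I_m}^{\Vert \cdot \Vert_\A}$; thus the closure absorbs no block outside $V^\Lambda$. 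What the two routes buy: the paper's citation keeps the section short and places the burden on Bratteli's original argument, while your version makes the proposition self-contained and makes visible exactly where heredity --- the only non-formal axiom of Definition (\ref{ideal-diagrams}) --- is actually used, at the cost of the verification above.
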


\begin{proposition}\label{f-diagram}The Bratteli diagram of $\F$, denoted 
$\mathcal{D}_b(\F)=(V^{\mathcal{D}_b(\F)}, E^{\mathcal{D}_b(\F)})$ of Definition (\ref{diagram-af}), satisfies for all $n \in \N \setminus \{0\}$: 
\begin{itemize}
\item[(i)] $V^{\mathcal{D}_b(\F)}_n = \left\{(n,k) : k \in \{0, \ldots, 2^{n-1}\}\right\} $
\item[(ii)] $((n,k), (n+1, l)) \in E^{\mathcal{D}_b(\F)}_n$ if and only if $\vert 2k-l \vert \leq 1 $. And, there exists only one edge between any two vertices for which there is an edge.
\end{itemize}
\end{proposition}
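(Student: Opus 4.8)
The plan is to read off both claims directly from the construction of $\F$ in Definition (\ref{boca-mundici-af}) together with the recipe for building a Bratteli diagram from an inductive sequence in Definition (\ref{diagram-af}). Part (i) should be essentially immediate, while part (ii) amounts to identifying the partial multiplicity matrix of $\varphi_n$ with the matrix $F_n$ of Definition (\ref{f-part-mult-mat}) and then unwinding the entry-wise formula for $F_n$ into the distance condition $\vert 2k-l\vert \leq 1$. Since the $\varphi_n$ are *-monomorphisms, hence injective, the hypotheses of Definition (\ref{diagram-af}) apply and $\mathcal{D}_b(\F)$ is well-defined.

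For part (i), I would observe that by Definition (\ref{f-fd}) we have $\F_n = \bigoplus_{k=0}^{2^{n-1}} \M(q(n,k))$, a direct sum of exactly $2^{n-1}+1$ full matrix algebras for each $n \in \N \setminus \{0\}$. Hence, in the notation of Definition (\ref{diagram-af}), the top index $a_n$ in the matrix decomposition of $\F_n$ equals $2^{n-1}$, so $v^{\mathcal{D}_b(\F)}_n = 2^{n-1}$ and $V^{\mathcal{D}_b(\F)}_n = \{(n,k) : k \in \{0,\ldots,2^{n-1}\}\}$, as claimed.

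For part (ii), the key point is that by Definition (\ref{boca-mundici-af}) the connecting map $\varphi_n$ is the unital *-monomorphism determined by $F_n$, so by \cite[Lemmas III.2.1 and III.2.2]{Davidson} the partial multiplicity matrix $A_n$ of $\varphi_n$ appearing in Definition (\ref{diagram-af}) is exactly $F_n$; the dimension compatibility needed for this is precisely the content of Lemma (\ref{f-unital}), and the shapes agree since $F_n$ is $(2^n+1)\times(2^{n-1}+1)$ while $a_{n+1}+1 = 2^n+1$ and $a_n+1 = 2^{n-1}+1$. By Definition (\ref{diagram-af}), an edge $((n,k),(n+1,l))$ lies in $E^{\mathcal{D}_b(\F)}_n$ if and only if $(F_n)_{l+1,k+1} \neq 0$, where the shift by one reflects the passage from the $0$-based vertex labels to the $1$-based matrix indices. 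I would then substitute $h = l+1$ and $j = k+1$ into the two clauses of Definition (\ref{f-part-mult-mat}) and run the case analysis: the odd-row clause ($h = 2m+1$, $j = m+1$) forces $l = 2k$; the even-row clause ($h = 2m$, $j \in \{m, m+1\}$) forces $l = 2k-1$ or $l = 2k+1$. Thus a nonzero entry occurs exactly when $l \in \{2k-1,2k,2k+1\}$, i.e. $\vert 2k-l\vert \leq 1$, and since every nonzero entry of $F_n$ equals $1$ there is exactly one edge in each such case.

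The only delicate point, and the step I would write most carefully, is keeping the two indexing conventions straight and checking the boundary vertices: at $k = 0$ the candidate neighbour $l = 2k-1 = -1$ is absent, and at $k = 2^{n-1}$ the candidate neighbour $l = 2k+1 = 2^n+1$ is absent. I must confirm that the range restrictions on $m$ in Definition (\ref{f-part-mult-mat}) (namely $m \in \{0,\ldots,2^{n-1}\}$ in the odd clause and $m \in \{1,\ldots,2^{n-1}\}$ in the even clause) exactly excise these two phantom edges while retaining every admissible pair with $\vert 2k-l\vert \leq 1$. Once this bookkeeping is verified the equivalence follows, completing both parts.
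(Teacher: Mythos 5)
Your proposal is correct and follows essentially the same route as the paper's proof: part (i) read off from Definition (\ref{f-fd}), and part (ii) obtained by identifying edges with the nonzero entries $(F_n)_{l+1,k+1}$ and running the odd/even row case analysis to match these with the condition $\vert 2k-l\vert\leq 1$, with multiplicity one because every nonzero entry of $F_n$ equals $1$. Your extra care with the index shift and the boundary vertices $k=0$, $k=2^{n-1}$ is sound bookkeeping that the paper handles implicitly through the range restrictions in Definition (\ref{f-part-mult-mat}).
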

\begin{proof}
Property (i) is clear by Definition (\ref{f-fd}).  By \cite[Section III.2 Definition \textbf{Bratteli diagram}]{Davidson}, an edge exists from $(n,s)$ to $(n+1, t)$ if and only if its associated entry in the partial multiplicity matrix $(F_n)_{t+1,s+1}$ is non-zero.    

Now, assume that $\vert 2s-t \vert \leq 1$.  Assume $t= 2k+1$ for some $ k \in \{0, \ldots , 2^{n-1}-1 \} $.  We thus have $\vert 2s-t \vert \leq 1  \iff k \leq s \leq k+1 \iff s \in \{k,k+1 \} $, since $s \in \N$.

Next, assume that $ t= 2k$ for some $ k \in \{0, \ldots , 2^{n-1} \}$. We thus have  \\
$\vert 2s-t \vert \leq 1 \iff  -1/2 +k \leq s \leq 1/2 +k \iff \vert s-k \vert \leq 1/2 \iff s=k $ since $s \in \N$.  But, considering both $t$ odd and even, these equivalences are equivalent to the conditions for $(F_n)_{t+1,s+1}$ to be non-zero by Definition (\ref{f-part-mult-mat}), which determine the edges of $\mathcal{D}_b(\F)$.  Furthermore, since the non-zero entries of $F_n$ are all $1$, only one edge exists between vertices for which there is an edge.
\end{proof}

Next, we describe the ideals of $\F$, whose quotients are *-isomorphic to the Effros-Shen AF algebras.  

\begin{definition}[{\cite{Boca08}}]\label{f-ideals}
Let $\theta \in (0,1) \setminus \Q$. We define the ideal $I_\theta \in \mathrm{Ideal}(\F) $ diagrammatically.

By the proof of \cite[ Proposition 4.i]{Boca08}, for each $n \in \N \setminus \{0\}$, there exists a unique $j_n (\theta) \in \{0, \ldots , 2^{n-1}-1 \}$ such that $r(n,j_n (\theta) ) < \theta < r(n,j_n (\theta)+1 )$ of Relations (\ref{farey-relations}).   The set of vertices is defined by:
\begin{equation*}  V^{\mathcal{D}_b (\F)} \setminus \left(\{(n,j_n (\theta)),  (n,j_n (\theta)+1) : n \in \N \setminus \{0\} \cup \{(0,0)\}\right)
\end{equation*}
 and we denote this set by $V^{\mathcal{D}(I_\theta )} $.  Let $E^{\mathcal{D}(I_\theta )} $ be the set of edges of $\mathcal{D}_b(\F)$, which are between the vertices in $V^{\mathcal{D}(I_\theta )} $ and let $\mathcal{D}(I_\theta )=\left(V^{\mathcal{D}(I_\theta )} ,E^{\mathcal{D}(I_\theta )} \right)$.  By \cite[ Proposition 4.i]{Boca08}, the diagram $\mathcal{D}(I_\theta ) \in \mathrm{Ideal}(\mathcal{D}_b (\F))$ is an ideal diagram of Definition (\ref{ideal-diagrams}). 
 
Using Proposition (\ref{ideal-corr}), define: 
\begin{equation*}I_\theta=i(\cdot, \mathcal{D}_b (\A))^{-1}\left(\mathcal{D}(I_\theta)\right) \in \mathrm{Ideal}(\A).
\end{equation*}    By \cite[ Proposition 4.i]{Boca08}, if $n \in \N \setminus \{0,1\}$ and $1 \leq j_n (\theta) \leq 2^{n-1} -2 ,$ then:
 \begin{equation*}
 I_\theta \cap \F^n=\indmor{\varphi}{n}\left( \left(\oplus_{k=0}^{j_n(\theta)-1}\M(q(n,k)) \right) \oplus \{0\} \oplus \{0\} \oplus \left( \oplus_{k=j_n(\theta) +2}^{2^{n-1}}  \M(q(n,k)) \right)\right).
\end{equation*}
If $j_n(\theta) = 0$, then: 
 \begin{equation*}
 I_\theta \cap \F^n =\indmor{\varphi}{n}\left(  \{0\} \oplus \{0\} \oplus \left( \oplus_{k=j_n(\theta) +2}^{2^{n-1}}  \M(q(n,k)) \right)\right).
\end{equation*}
If $j_n (\theta) = 2^{n-1} -1$, then:
 \begin{equation*}
 I_\theta \cap \F^n =\indmor{\varphi}{n}\left( \left(\oplus_{k=0}^{j_n(\theta)-1}\M(q(n,k)) \right) \oplus \{0\} \oplus \{0\} \right),
\end{equation*}
and if $n \in \{0,1\}$, then  $I_\theta \cap \F^n = \{0\}$.
We note that $I_\theta \in \mathrm{Prim}(\F)$  by \cite[ Proposition 4.i]{Boca08}.
\end{definition}

Before we move on to describing the quantum metric structure of quotients of the ideals of Definition (\ref{f-ideals}), let's first capture more properties of the structure of the ideals introduced in Definition (\ref{f-ideals}), which are sufficient for later results.

\begin{lemma}\label{f-ideal-vert}
Using notation from Definition (\ref{f-ideals}), if $n \in \N \setminus \{0\}, \theta \in (0,1)\setminus \Q $, then $j_{n+1}(\theta) \in \{2j_n (\theta), 2j_n (\theta)+1 \}$. 
\end{lemma}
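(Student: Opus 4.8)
The plan is to exploit the Stern--Brocot (mediant) structure encoded in Relations (\ref{farey-relations}): passing from level $n$ to level $n+1$ leaves the existing fractions in place at even indices and inserts the mediant at the intervening odd index. Fix $n \in \N \setminus \{0\}$ and $\theta \in (0,1) \setminus \Q$, and write $j = j_n(\theta)$ for brevity. By the defining property of $j_n(\theta)$ recorded in Definition (\ref{f-ideals}), we have $r(n,j) < \theta < r(n,j+1)$, and in particular $r(n,j) < r(n,j+1)$. Since $j \in \{0,\ldots,2^{n-1}-1\}$, the indices $2j$, $2j+1$, $2j+2$ all lie in $\{0,\ldots,2^{n}\}$, so the level-$(n+1)$ fractions $r(n+1,2j)$, $r(n+1,2j+1)$, $r(n+1,2j+2)$ are defined, and moreover $2j$ and $2j+1$ are admissible values for $j_{n+1}(\theta) \in \{0,\ldots,2^{n}-1\}$.

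First I would compute these three fractions from Relations (\ref{farey-relations}). The even-index relations $q(n+1,2k)=q(n,k)$ and $p(n+1,2k)=p(n,k)$ give
\begin{equation*}
r(n+1,2j) = r(n,j) \quad\text{and}\quad r(n+1,2j+2) = r(n+1,2(j+1)) = r(n,j+1),
\end{equation*}
while the odd-index relations give $r(n+1,2j+1) = \frac{p(n,j)+p(n,j+1)}{q(n,j)+q(n,j+1)}$. Next I would invoke the elementary mediant inequality: for positive denominators, $\frac{a}{b} < \frac{c}{d}$ forces $\frac{a}{b} < \frac{a+c}{b+d} < \frac{c}{d}$, which follows from $\frac{a+c}{b+d}-\frac{a}{b} = \frac{bc-ad}{b(b+d)}$ together with the symmetric identity, since $\frac{a}{b}<\frac{c}{d}$ is equivalent to $ad<bc$. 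Applying this with $a=p(n,j)$, $b=q(n,j)$, $c=p(n,j+1)$, $d=q(n,j+1)$, whose denominators are strictly positive by Relations (\ref{farey-relations}), and using $r(n,j)<r(n,j+1)$, yields
\begin{equation*}
r(n+1,2j) = r(n,j) < r(n+1,2j+1) < r(n,j+1) = r(n+1,2j+2).
\end{equation*}

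Finally I would split into two cases according to the position of $\theta$. Since $\theta$ is irrational while $r(n+1,2j+1)$ is rational, $\theta \neq r(n+1,2j+1)$. If $\theta < r(n+1,2j+1)$, then $r(n+1,2j) < \theta < r(n+1,2j+1)$, so the uniqueness in Definition (\ref{f-ideals}) forces $j_{n+1}(\theta) = 2j$; if instead $\theta > r(n+1,2j+1)$, then $r(n+1,2j+1) < \theta < r(n+1,2j+2)$, forcing $j_{n+1}(\theta) = 2j+1$. In either case $j_{n+1}(\theta) \in \{2j_n(\theta), 2j_n(\theta)+1\}$, as claimed. There is no serious obstacle; the only points needing care are the index bookkeeping that guarantees the three level-$(n+1)$ fractions are defined and that $2j, 2j+1$ are legitimate values of $j_{n+1}(\theta)$, and the appeal to irrationality of $\theta$ to discard the endpoint case.
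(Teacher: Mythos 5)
Your proof is correct, but it follows a genuinely different route from the paper's. The paper argues diagrammatically: by \cite[Proposition 3.7]{Bratteli72}, the vertices complementary to $\mathcal{D}(I_\theta)$, namely $(n,j_n(\theta))$ and $(n,j_n(\theta)+1)$ at each level, form the Bratteli diagram of the quotient $\F/I_\theta$, so they must satisfy axiom (ii) of Definition (\ref{Bratteli-diagram}) (every vertex emits an edge to the next level); combining this with Proposition (\ref{f-diagram}) (an edge from $(n,k)$ to $(n+1,l)$ exists if and only if $\vert 2k-l\vert \leq 1$), the paper rules out $j_{n+1}(\theta)\leq 2j_n(\theta)-1$ and $j_{n+1}(\theta)\geq 2j_n(\theta)+2$ by contradiction, since either inequality would leave one of the two level-$n$ quotient vertices with no edge into level $n+1$. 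You instead work directly from the defining inequality $r(n,j_n(\theta))<\theta<r(n,j_n(\theta)+1)$ of Definition (\ref{f-ideals}) and the mediant structure of Relations (\ref{farey-relations}): even indices at level $n+1$ reproduce the level-$n$ fractions, the odd index in between carries the mediant, the mediant inequality traps $\theta$ in exactly one of the two resulting subintervals (irrationality of $\theta$ excluding the endpoint), and uniqueness of $j_{n+1}(\theta)$ then forces $j_{n+1}(\theta)=2j_n(\theta)$ or $2j_n(\theta)+1$. Your argument is more elementary and self-contained---it needs neither Proposition (\ref{f-diagram}) nor Bratteli's quotient-diagram theorem---and it yields finer information, namely that $j_{n+1}(\theta)=2j_n(\theta)$ precisely when $\theta$ lies below the mediant, which is the same dichotomy the paper exploits later (e.g.\ in Lemma (\ref{f-ideal-trace-coeff}) and in the claim inside Proposition (\ref{f-j-irr-ideal-homeo}), whose computations are close in spirit to yours). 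What the paper's approach buys is thematic economy: it stays entirely within the Bratteli-diagram formalism on which the rest of Section (\ref{b-m-af}) runs, reusing machinery already established. One caveat you share with Definition (\ref{f-ideals}) itself: you evaluate $r$ at indices up to $2^{n}$ (e.g.\ $r(n+1,2j+2)$ when $j=2^{n-1}-1$), while the displayed range for $r$ in Relations (\ref{farey-relations}) stops one short; this is harmless, since $p(n,2^{n-1})=q(n,2^{n-1})=1$ gives $r(n,2^{n-1})=1$.
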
 
\begin{proof}
We first note that the vertices $V^{\mathcal{D}_b(\A)}\setminus V^{\mathcal{D}(I_\theta)}$ determine a Bratteli diagram associated to the AF algebra $\F/I_\theta$, which we will denote $\mathcal{D}_b (\A/I_\theta)$, as in Definition (\ref{diagram-af}) by \cite[Proposition 3.7]{Bratteli72} up to shifting the vertices in $\N^2$ uniformly, in which the edges for $\mathcal{D}_b (\A/I_\theta)$ are given by all the edges from $E^{\mathcal{D}_b(\A)}$ between all vertices in $V^{\mathcal{D}_b(\A)}\setminus V^{\mathcal{D}(I_\theta)}$.  Thus, by Defintion (\ref{f-ideals}), the vertex set for  $\mathcal{D}_b (\A/I_\theta)$ is: 
\begin{equation}\label{f-quotient-vert}
V^{\mathcal{D}_b(\A)}\setminus V^{\mathcal{D}(I_\theta)}=\left\{ (n,j_n (\theta)), (n,j_n (\theta)+1) \in \N^2 : n \in \N \setminus \{0\}\right\} \cup \{ (0,0)\},
\end{equation}
and in particular, this vertex set along with the edges between the vertices satisfy axioms (i),(ii), (iii) of Definition (\ref{Bratteli-diagram}).

Consider $n=1 $.  Since there are only 3 vertices at level $n=2$, the conclusion is satisfied since $j_{2}(\theta), j_2(\theta)+1 \in \{0,1,2\}$ and $j_1 (\theta) =0$ since there are only 2 vertices at level $n=1$. Furthemore, note by definition, we have $j_n (\theta) \leq 2^{n-1}-1$ since $j_{n}(\theta) + 1 \in \{0, \ldots, 2^{n-1} \}$. \setcounter{case}{0}
\begin{case}For $n \geq 2$, we show that $j_{n+1}(\theta) \geq 2j_n (\theta)$. \end{case} We note that if $j_n(\theta)=0$, then clearly $j_{n+1}(\theta) \geq 0=2j_n (\theta)$.  Thus, we may assume that $j_n (\theta) \geq 1$. Hence, we may assume by way of contradiciton that $j_{n+1}(\theta) \leq 2j_n( \theta)-1$.  Consider $j_n (\theta) +1 $.  By Expression (\ref{f-quotient-vert}), the only vertices at level $n+1$ of the diagram of $\F/I_\theta$ are $(n+1,j_{n+1}(\theta))$ and $(n+1, j_{n+1}(\theta)+1)$. Consider $j_{n+1}(\theta)+1$.  Now: 
\begin{equation*}\vert 2(j_n (\theta)+1) - (j_{n+1}(\theta)+1) \vert = \vert 2j_n (\theta)-j_{n+1}(\theta) +1 \vert .
\end{equation*}  But,  by our contradiction assumption, we have $2j_n (\theta)-j_{n+1}(\theta) +1 \geq 2j_n (\theta)+1-2j_n (\theta) +1 = 2 $.  Thus, by Proposition  (\ref{f-diagram}), there is no edge from $(n,j_n (\theta)+1)$ to $(n+1,j_{n+1}(\theta)+1)$. Next, consider  $j_{n+1}(\theta)$.  Similarly, we have $\vert 2(j_n (\theta)+1) - j_{n+1}(\theta) \vert =\vert 2j_n (\theta) - j_{n+1}(\theta) +2 \vert$.  However, the indices $2j_n (\theta) - j_{n+1}(\theta) +2 \geq 2j_n (\theta)+1-2j_n (\theta) +2 = 3 $.  And, again by Proposition  (\ref{f-diagram}), there is no edge from $(n,j_n (\theta)+1)$ to $(n+1,j_{n+1}(\theta))$.  But, by Expression (\ref{f-quotient-vert}), this implies that $(n, j_{n+1}(\theta)+1)$ is a vertex in the quotient diagram $\F/I_\theta$ in which there does not exist a vertex $(n+1, l)$ in the diagram of $\F/I_\theta$ such that $((n, j_{n+1}(\theta)+1),(n+1, l))$ is an edge in the diagram of $\F/I_\theta$, which is a contradiction  since the quotient diagram is a Bratteli diagram that would not satisfy axiom (ii) of Definition (\ref{Bratteli-diagram}). Therefore, we conclude $j_{n+1}(\theta) \geq 2j_n (\theta)$. 
\begin{case}
For $n \geq 2$, we show that $j_{n+1}(\theta) \leq 2j_n (\theta) +1 $.\end{case} Now, if $j_n(\theta) = 2^{n-1}-1 $, then $j_{n+1}(\theta)+1 \leq 2^n=2(2^{n-1}-1) +2$ and thus $j_{n+1}(\theta) \leq 2(2^{n-1}-1) +1=2j_n (\theta) +1$ and we would be done.  Thus, we may assume that $j_n(\theta) \leq 2^{n-1}-2 $ and we note that this can only occur in the case that $n \geq 3$, which implies that the case of $n=2$ is complete.  Thus, we may assume by way of contradiction that $j_{n+1}(\theta) \geq 2j_n (\theta) +2 $. Consider $j_n (\theta)$. As in Case 1, we provide a contradiction via a diagram approach.  Consider $j_{n+1}(\theta)+1$.  Now, we have $\vert 2j_n(\theta) - (j_{n+1}(\theta)+1)\vert = \vert 2j_n(\theta) - j_{n+1}(\theta)-1 \vert$.  But, by our contradiction assumption, we gather that $2j_n(\theta) - j_{n+1}(\theta)-1 \leq 2j_n(\theta)  -2j_n (\theta)  -2 -1= -3$ and $\vert 2j_n(\theta) - (j_{n+1}(\theta)+1)\vert \geq 3$.  Thus, by Proposition  (\ref{f-diagram}), there is no edge from $(n,j_n (\theta))$ to $(n+1,j_{n+1}(\theta)+1)$.  Next, consider $j_{n+1}(\theta)$.  Similarly, we have $2j_n(\theta) - j_{n+1}(\theta) \leq 2j_n(\theta)-2j_n(\theta)-2 = -2$ and $\vert 2j_n(\theta) - j_{n+1}(\theta)\vert \geq 2$. Thus, by Proposition  (\ref{f-diagram}), there is no edge from $(n,j_n (\theta))$ to $(n+1,j_{n+1}(\theta))$.  Thus, by Expression (\ref{f-quotient-vert}) and the same diagram argument of Case 1, we have reached a contradiction.  Hence, $j_{n+1}(\theta) \leq 2j_n (\theta) +1.$

Hence, combining Case 1 with the coments immediately preceding Case 1 and Case 2, the proof is complete. 
\end{proof}

The following proposition and remark make use of the Baire space and some of its properties, so we define the Baire space now.
\begin{definition}[\cite{Miller95}]\label{Baire-Space-def}
The \emph{Baire space} $\BaireSpace$ is the set $(\N\setminus\{0\})^\N$ endowed with the metric $\mathsf{d}$ defined, for any two $(x(n))_{n\in\N}$, $(y(n))_{n\in\N}$ in $\BaireSpace$, by:
\begin{equation*}
\mathsf{d}\left((x(n))_{n\in\N}, (y(n))_{n\in\N}\right) = \begin{cases}
0 &:\text{if $x(n) = y(n)$ for all $n\in\N$},\\
2^{-\min\left\{ n \in \N : x(n) \not= y(n) \right\}} &:\text{otherwise}\text{.}
\end{cases}
\end{equation*}
\end{definition}

In the next two results, on the subset of ideals of Definition (\ref{f-ideals}), we provide a useful topological result about the metric on ideals of Corollary (\ref{ind-lim-metric-cor}), in which the equivalence of (1) and (3) is a consequence of \cite[Corollary 12]{Boca08}, which is unique to Boca's work on the AF algebra, $\F$. Furthermore, Boca showed that $\mathrm{Prim}(\F)$ with the Jacobson topology is not $T_1$ and therefore not Hausdorff in \cite[Remark 8(ii)]{Boca08}, and thus the following proposition does not immediately follow from Corollary (\ref{comm-metric}).  However, in the next proposition, it is the metric of Corollary (\ref{ind-lim-metric-cor}) that allows us to recover the Jacobson topology from the Fell topology on the subset of ideals of Definition (\ref{f-ideals}), which displays a direct advantage of this metric.

\begin{proposition}\label{f-ideal-homeo}
If $(\theta_n)_{n \in \overline{\N}} \subseteq (0,1)\setminus \Q$, then using notation from Definition (\ref{boca-mundici-af}) and Definition (\ref{f-ideals}), the following are equivalent:
\begin{enumerate}
\item $(\theta_n)_{n \in \N}$ converges to $\theta_\infty$ with respect to the usual topology on $\R$;
\item  $(\mathsf{cf}(\theta_n))_{n \in \N}$ converges to $\mathsf{cf}(\theta_\infty)$ with respect to the Baire space, $\BaireSpace$ and its metric from Definition (\ref{Baire-Space-def}), where $\mathsf{cf}$ denotes the unique continued fraction expansion of an irrational;
\item $(I_{\theta_n})_{n \in \N}$ converges to $I_{\theta_\infty}$ with respect to the Jacobson topology (Definition (\ref{Jacobson-topology})) on $\mathrm{Prim}(\F)$;
\item $(I_{\theta_n})_{n \in \N}$ converges to $I_{\theta_\infty}$ with respect to the metric topology of $\mathsf{m}_{i(\mathcal{U}_\F)}$ of Corollary (\ref{AF-ideal-metric-thm}) or the Fell topology of Definition (\ref{Fell-topology}).
\end{enumerate}
\end{proposition}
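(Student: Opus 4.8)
The plan is to prove the four statements equivalent through a short cycle of implications, drawing the two homeomorphism results from the literature and reserving the genuinely new work for a single implication. First I would record that $(1)\iff(2)$ is immediate from \cite[Proposition 5.10]{AL}: the continued fraction map $\mathsf{cf}$ is a homeomorphism from $((0,1)\setminus\Q,|\cdot|)$ onto the Baire space $(\BaireSpace,\mathsf{d})$, so convergence of the $\theta_n$ in $\R$ is by definition the same as convergence of the expansions $\mathsf{cf}(\theta_n)$ in $\BaireSpace$. Likewise $(1)\iff(3)$ is exactly Boca's homeomorphism $\theta\mapsto I_\theta$ from the irrationals onto their image in $(\mathrm{Prim}(\F),Jacobson)$, \cite[Corollary 12]{Boca08}. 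It then remains only to splice $(4)$ into this chain, which I would do by proving $(1)\Rightarrow(4)$ and $(4)\Rightarrow(3)$, closing the loop $(1)\Rightarrow(4)\Rightarrow(3)\Rightarrow(1)$ alongside $(1)\iff(2)$.

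For $(4)\Rightarrow(3)$ the work is essentially already done: convergence in $\mathsf{m}_{i(\mathcal{U}_\F)}$ is convergence in the Fell topology by Theorem (\ref{AF-ideal-metric-thm}), each $I_{\theta_n}$ and the limit $I_{\theta_\infty}$ lie in $\mathrm{Prim}(\F)$ (Definition (\ref{f-ideals})), and by Proposition (\ref{Fell-Jacobson}) the relative Fell topology on $\mathrm{Prim}(\F)$ is finer than the Jacobson topology. Since for a subspace a sequence of points of the subspace converges in the relative topology exactly when it converges in the ambient space to a point of the subspace, a Fell-convergent sequence of primitive ideals with primitive limit converges in the Jacobson topology as well.

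The heart of the argument is $(1)\Rightarrow(4)$, where I would exploit the explicit description of $I_\theta\cap\F^n$ from Definition (\ref{f-ideals}). The key observation is that for each fixed $n$ the ideal $I_\theta\cap\F^n$ depends only on the integer $j_n(\theta)$ — in each of the three displayed cases the location of the two vanishing summands is governed entirely by $j_n(\theta)$, and for $n\in\{0,1\}$ the intersection is $\{0\}$ irrespective of $\theta$ — so $j_m(\theta)=j_m(\theta')$ for all $m\le N$ forces $I_\theta\cap\F^m=I_{\theta'}\cap\F^m$ for all $m\le N$, whence $\mathsf{m}_{i(\mathcal{U}_\F)}(I_\theta,I_{\theta'})\le 2^{-(N+1)}$ by Corollary (\ref{ind-lim-metric-cor}). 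I would then translate closeness of irrationals into agreement of the $j_m$: since $j_n(\theta)$ is by definition the index with $r(n,j_n(\theta))<\theta<r(n,j_n(\theta)+1)$, and since the recursion (\ref{farey-relations}) makes the level-$(n+1)$ subdivision $\{r(n+1,k)\}$ a refinement of the level-$n$ subdivision $\{r(n,k)\}$ (the even nodes repeat the previous fractions and the odd nodes are their mediants), the interval $\left(r(N,j_N(\theta_\infty)),r(N,j_N(\theta_\infty)+1)\right)$ is contained in the level-$m$ interval about $\theta_\infty$ for every $m\le N$. As $\theta_\infty$ is irrational it lies strictly inside this open interval with rational endpoints, so once $|\theta_n-\theta_\infty|$ is small enough that $\theta_n$ falls in it we get $j_m(\theta_n)=j_m(\theta_\infty)$ for all $m\le N$, hence $\mathsf{m}_{i(\mathcal{U}_\F)}(I_{\theta_n},I_{\theta_\infty})\le 2^{-(N+1)}$; letting $N\to\infty$ yields $(4)$.

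I expect the main obstacle to be the bookkeeping in $(1)\Rightarrow(4)$: verifying carefully that $I_\theta\cap\F^n$ is a function of $j_n(\theta)$ across all boundary cases ($j_n(\theta)=0$, $j_n(\theta)=2^{n-1}-1$, and the trivial levels $n\in\{0,1\}$), and confirming the refinement/nesting property of the Farey subdivisions from Relations (\ref{farey-relations}) so that a single level-$N$ interval controls all coarser levels simultaneously. Everything else reduces to the cited homeomorphisms and the general comparison of the Fell and Jacobson topologies already established.
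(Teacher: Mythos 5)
Your proposal is correct, and three of its four pieces coincide with the paper's proof: both cite \cite[Proposition 5.10]{AL} for $(1)\Leftrightarrow(2)$, \cite[Corollary 12]{Boca08} for $(1)\Leftrightarrow(3)$, and both get $(4)\Rightarrow(3)$ from Proposition (\ref{Fell-Jacobson}) together with Theorem (\ref{AF-ideal-metric-thm}). Where you genuinely diverge is the remaining implication. The paper proves $(3)\Rightarrow(4)$ by passing to $(2)$ and working with continued fraction digits: if the first $N$ digits of $\mathsf{cf}(\theta_n)$ and $\mathsf{cf}(\theta_\infty)$ agree, then the words $L_{a_1-1}\circ R_{a_2}\circ\cdots$ from \cite[Figure 5]{Boca08} agree, hence the quotient-diagram vertices (and so the ideal intersections $I_{\theta_n}\cap\F^j$) agree up to level $\max\left\{1,\,a_1^\infty-1+\sum_{k=2}^N a_k^N\right\}\geq N$, giving the metric bound. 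You instead prove $(1)\Rightarrow(4)$ directly from the Farey structure: $I_\theta\cap\F^m$ is a function of $j_m(\theta)$ alone by Definition (\ref{f-ideals}), the level-$(n+1)$ subdivision $\{r(n+1,k)\}$ refines the level-$n$ subdivision (even indices repeat, odd indices are mediants, by Relations (\ref{farey-relations})), so the open level-$N$ interval around the irrational $\theta_\infty$ is nested inside all coarser intervals, and any $\theta_n$ falling into it satisfies $j_m(\theta_n)=j_m(\theta_\infty)$ for all $m\leq N$ by uniqueness of $j_m$; Corollary (\ref{ind-lim-metric-cor}) then yields $\mathsf{m}_{i(\mathcal{U}_\F)}(I_{\theta_n},I_{\theta_\infty})\leq 2^{-(N+1)}$. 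Your route is more elementary and self-contained: it needs no facts about continued fractions beyond the cited equivalence $(1)\Leftrightarrow(2)$ (which in your logical cycle is actually only used to splice $(2)$ in, not to prove $(4)$), and it bypasses Boca's $L/R$ word bookkeeping entirely. What the paper's route buys in exchange is a quantitative digit-to-level correspondence --- matching the first $N$ continued fraction digits forces diagram agreement to depth at least $a_1-1+\sum_{k=2}^N a_k$, typically much greater than $N$ --- and that sharper bookkeeping is exactly what the paper reuses later in Remark (\ref{explicit-ideal-metric}) to compare $\mathsf{m}_{i(\mathcal{U}_\F)}$ with the Baire metric; your argument, being tied to interval lengths rather than digits, does not produce that comparison for free.
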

\begin{proof}
The equivalence between (1) and (2) is a classic result, in which a proof can be found in \cite[Proposition 5.10]{AL}.  The equivalence between (1) and (3) is immediate from \cite[Corollary 12]{Boca08}.  And, therefore, (2) is equivalent to (3). Thus, it remains to prove that (3) is equivalent to (4).  

(4) implies (3) is an immediate consequence of Proposition (\ref{Fell-Jacobson}) and Theorem (\ref{AF-ideal-metric-thm}) as the Fell topology is stronger.  Hence, assume (3), then since we have already established (3) implies (2), we may assume (2).  For each $n \in \overline{\N}$, let $\mathsf{cf}(\theta_n)=[a^n_j]_{j \in \N}$.  By assumption, the coordinates $a^n_0=0$ for all $n \in \overline{\N}$.  Now, assume that there exists $N \in \N \setminus \{0\}$ such that $a^n_j=a^\infty_j$ for all $n \in \N$ and $j \in \{0, \ldots, N\}$. Assume without loss of generality that $N$ is odd. Thus, using \cite[Figure 5]{Boca08}, we have that: 
\begin{equation}\label{vert-eq}
L_{a^n_1-1} \circ R_{a^n_2} \circ \cdots \circ L_{a^n_N} = L_{a^\infty_1-1} \circ R_{a^\infty_2} \circ \cdots \circ L_{a^\infty_N}
\end{equation}
for all $n \in \N$.  But, Equation (\ref{vert-eq}) determines the vertices for the diagram of the quotient $\F/I_{\theta_n}$ for all $n \in \overline{\N}$ by the proof of \cite[Proposition 4.i]{Boca08}.  But, the vertices of the diagram of the quotient $\F/I_{\theta_n}$ are simply the complement of the vertices of the diagram of $I_{\theta_n}$ by \cite[Theorem III.4.4]{Davidson}.  Now, primitive ideals must have the same vertices at level $0$ of the diagram since they cannot equal $\A$ by Definition (\ref{Jacobson-topology}) and are thus non-unital.  But, for any $\eta \in (0,1) \setminus \Q$,  the ideals $I_\eta$ must always have the same vertices at level $1$ of the diagram as well since the only two vertices are $(1,0), (1,1)$ and $r(1,0)=0 < \theta <1=r(1,1)$ by Relations (\ref{farey-relations}) for all $\theta \in (0,1) \setminus \Q$.   Thus, Equation (\ref{vert-eq}), we gather that $I_{\theta_n} \cap \F^j = I_{\theta_\infty} \cap \F^j$ for all $n \in \N$ and: 
\begin{equation*}j \in \left\{0, \ldots, \max \left\{ 1, a_1^\infty-1 +\left(\sum_{k=2}^N a_k^N\right)\right\} \right\},
\end{equation*} where $\max \left\{ 1, a_1^\infty-1 +\left(\sum_{k=2}^N a_k^N\right)\right\}   \geq N$ as the terms of the continued fraction expansion are all positive integers for coordinates greater than $0$. Thus, by the definition of the metric on the Baire Space and the metric $\mathsf{m}_{i(\mathcal{U}_\F)}$, we conclude that convergence in the the Baire space metric of $(\mathsf{cf}(\theta_n))_{n \in \N}$ to $\mathsf{cf}(\theta_\infty)$ implies convergence of $(I_{\theta_n})_{n \in \N}$  to $I_{\theta_\infty}$ with respect to the metric  $\mathsf{m}_{i(\mathcal{U}_\F)}$  or the Fell topology by Theorem (\ref{AF-ideal-metric-thm}).
\end{proof}
The next result follows  from Proposition (\ref{f-ideal-homeo}) and the proofs of \cite[Proposition 4.i and Lemma 11]{Boca08}, but we provide a proof.
\begin{proposition}\label{f-j-irr-ideal-homeo}
The map:
\begin{equation*}
\theta \in (0,1)\setminus \Q \longmapsto I_\theta \in \mathrm{Prim}(\A)
\end{equation*}
is a homeomorphism onto its image when $(0,1)\setminus \Q$ is equipped with the topology induced by the usual topology on $\R$ and $\mathrm{Prim}(\A)$ is equipped with either the Jacobson topology, Fell topology, or the metric topology of $\mathsf{m}_{i(\mathcal{U}_\F)}$ of Corollary (\ref{AF-ideal-metric-thm}).
\end{proposition}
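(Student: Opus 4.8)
The plan is to deduce everything from the sequential equivalences already assembled in Proposition (\ref{f-ideal-homeo}), after first pinning down well-definedness and injectivity of the map and reducing the statement to sequences. Well-definedness into $\mathrm{Prim}(\F)$ is immediate from Definition (\ref{f-ideals}), where $I_\theta$ is built as $i(\cdot,\mathcal{D}_b(\F))^{-1}(\mathcal{D}(I_\theta))$ and noted to be primitive via \cite[Proposition 4.i]{Boca08}. For injectivity I would argue directly: by Definition (\ref{f-ideals}) the section $I_\theta \cap \F^n$ is completely determined by the index $j_n(\theta)$ (the surviving blocks are all positions except $j_n(\theta)$ and $j_n(\theta)+1$), so via the correspondence of Proposition (\ref{ideal-corr}) the ideal $I_\theta$ determines, and is determined by, the sequence $(j_n(\theta))_{n\in\N}$. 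Since $r(n,j_n(\theta)) < \theta < r(n,j_n(\theta)+1)$ and these nested Farey intervals shrink to the single point $\theta$, distinct irrationals yield distinct sequences $(j_n(\theta))_n$, hence distinct ideals; this gives injectivity, so the map is a bijection onto its image.

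Next I would reduce the homeomorphism claim to sequential convergence. The domain $(0,1)\setminus\Q$ is metric, hence first countable. On the codomain, the Fell topology and the metric topology of $\mathsf{m}_{i(\mathcal{U}_\F)}$ are metrizable (Lemma (\ref{fell-metrize-lemma}), using that $\F$ is separable, together with Corollary (\ref{ind-lim-metric-cor}) and Theorem (\ref{AF-ideal-metric-thm})), and the Jacobson topology on $\mathrm{Prim}(\F)$ is second countable, hence first countable, by \cite[Corollary 4.3.4]{Pedersen}. Each of these properties passes to the subspace carried by the image of the map. A bijection between first-countable spaces is a homeomorphism onto its image precisely when it preserves and reflects limits of sequences, so it suffices to check the two sequential implications for an arbitrary sequence $(\theta_n)_{n\in\N}$ with prescribed limit $\theta_\infty$.

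Both implications are then read off Proposition (\ref{f-ideal-homeo}), which is stated exactly in the required form $(\theta_n)_{n\in\overline{\N}}$. For continuity, condition (1) (that $\theta_n \to \theta_\infty$ in the usual topology) implies condition (3) (convergence of $(I_{\theta_n})$ to $I_{\theta_\infty}$ in the Jacobson topology) and condition (4) (convergence in the Fell topology and in the metric $\mathsf{m}_{i(\mathcal{U}_\F)}$); thus the map is sequentially continuous into all three topologies, hence continuous. For continuity of the inverse on the image, conditions (3) and (4) each imply condition (1), so whenever $(I_{\theta_n})$ converges to $I_{\theta_\infty}$ in any of the three topologies we recover $\theta_n \to \theta_\infty$; this is exactly sequential continuity of the inverse. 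Combining both directions with the bijectivity onto the image yields the homeomorphism onto the image for each of the three topologies simultaneously.

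The main obstacle, and the reason the result does not follow from the compact-Hausdorff machinery of Corollary (\ref{comm-metric}), is that the Jacobson topology on $\mathrm{Prim}(\F)$ is not even $T_1$ by \cite[Remark 8(ii)]{Boca08}; consequently one cannot invoke maximal compactness of Hausdorff topologies to upgrade a continuous bijection to a homeomorphism. Instead the entire weight rests on first countability together with the bidirectional sequential statements of Proposition (\ref{f-ideal-homeo}), whose non-trivial content in the inverse direction is precisely Boca's \cite[Corollary 12]{Boca08} identifying Jacobson convergence of the $I_\theta$ with convergence of the parameters $\theta$. A secondary point to handle with care is that ``homeomorphism onto its image'' requires the reflecting implication only for sequences lying in the image; since every convergent sequence in the image has the form $(I_{\theta_n})$ with a candidate limit $I_{\theta_\infty}$ in the image, this matches the hypotheses of Proposition (\ref{f-ideal-homeo}) and no additional argument is needed.
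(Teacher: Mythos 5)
Your overall route coincides with the paper's: reduce the homeomorphism claim to sequential continuity (metrizability of the Fell and $\mathsf{m}_{i(\mathcal{U}_\F)}$ topologies, second countability of the Jacobson topology, first countability of the domain), feed both directions into the equivalences of Proposition (\ref{f-ideal-homeo}), and observe that what remains is well-definedness and injectivity of $\theta \mapsto I_\theta$. All of that matches the paper's proof.

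The genuine gap is in your injectivity step. You correctly reduce injectivity to the statement that distinct irrationals give distinct sequences $(j_n(\theta))_{n\in\N}$, but you justify this by asserting that ``these nested Farey intervals shrink to the single point $\theta$.'' That assertion is exactly the crux, and it is what the paper spends most of its proof establishing: the Claim that $\lim_{n\to\infty} r(n,j_n(\theta)) = \theta$. It is not a definition-chase in this formalism, since the $r(n,k)$ are defined by the mediant recursion (\ref{farey-relations}) and $j_n(\theta)$ only through \cite[Proposition 4.i]{Boca08}; without the shrinking, two distinct irrationals could a priori lie in the same nested intervals $\left(r(n,j_n),r(n,j_n+1)\right)$ for every $n$, since an intersection of nested open intervals need not be a single point. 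The paper proves the Claim by extracting, via the proofs of \cite[Proposition 4.i and Lemma 11]{Boca08}, a subsequence of levels at which the interval endpoints are consecutive continued-fraction convergents of $\theta$, and combining this with the nesting estimates coming from Lemma (\ref{f-ideal-vert}). Alternatively, one could argue from the unimodularity relation $p(n,k+1)q(n,k)-p(n,k)q(n,k+1)=1$ of \cite[Section 1]{Boca08}, which gives the interval length as $1/\left(q(n,j_n(\theta))q(n,j_n(\theta)+1)\right)$, together with the denominator growth $\max\left\{q(n,j_n(\theta)), q(n,j_n(\theta)+1)\right\}\geq n$ proved in Lemma (\ref{f-beta}), bounding the length by $1/n$. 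Either way, some such argument must be supplied; as written, your proof assumes the one fact that required work.
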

\begin{proof}
By Proposition (\ref{f-ideal-homeo}), the fact that the Jacobson topology of a separable C*-algebra is second countable (see \cite[Corollary 4.3.4]{Pedersen}), and the Fell topology of an AF algebra is metrizable (see Theorem (\ref{AF-ideal-metric-thm}) or more generally Lemma (\ref{fell-metrize-lemma})), sequential continuity suffices and thus we only need to to verify that the map defined in this proposition is a well-defined bijection onto its image.  However, it is well-defined by Definition (\ref{f-ideals}).  Thus, injectivity remains, which will follow from the next claim. 
\begin{claim}
If $\theta \in (0,1)\setminus \Q$, then:
\begin{equation*}
\lim_{n \to \infty} r(n,j_n(\theta))=\theta,
\end{equation*}
where for all $n \in \N \setminus \{0\}$, the quantity $r(n,j_n(\theta))$ is defined in Relations (\ref{farey-relations}) and Definition (\ref{f-ideals}).
\end{claim}
\begin{proof}[Proof of claim]
Fix $\theta \in (0,1) \setminus \Q$.  Let $\left(\frac{p_n^\theta}{q_n^\theta}\right)_{n \in \N}$ denote the standard rational approximations of $\theta$ that converge to $\theta$ from Expression (\ref{pq-rel-eq}).  Now, by the proofs of \cite[Proposition 4.i and Lemma 11]{Boca08}, there exists an increasing sequence $(k_n)_{n \in \N} \subseteq \N \setminus \{0\}$ such that the ordered  pair: 
\begin{equation}\label{farey-cfrac-eq}\left(r(k_n, j_{k_n}(\theta)),r(k_n, j_{k_n}(\theta)+1)\right)\in \left\{ \left(\frac{p_n^\theta}{q_n^\theta},\frac{p_{n-1}^\theta}{q_{n-1}^\theta}\right), \left( \frac{p_{n-1}^\theta}{q_{n-1}^\theta},\frac{p_{n}^\theta}{q_{n}^\theta}\right)\right\} \text{ for all } n \in \N \setminus \{0\}.
\end{equation}
Next, fix $n \in \N \setminus \{0\}$.  Consider $r(n,j_n (\theta))$.  By Lemma (\ref{f-ideal-vert}), first assume that $j_{n+1}(\theta)=2j_n(\theta)$.  Then, we have:
\begin{equation*}
r(n+1,j_{n+1}(\theta))=\frac{p(n+1,2j_n (\theta))}{q(n+1,2j_n(\theta))} =r(n,j_n(\theta))
\end{equation*}
by Relations (\ref{farey-relations}). Also, we have:
\begin{equation*}
\begin{split}
r(n+1, j_{n+1}(\theta)+1)&=\frac{p(n+1,2j_n (\theta)+1)}{q(n+1,2j_n (\theta)+1)}\\
&=\frac{p(n,j_n(\theta))+p(n,j_n(\theta)+1)}{p(n,j_n(\theta))+p(n,j_n(\theta)+1)}\leq r(n,j_{n}(\theta)+1)
\end{split}
\end{equation*}
by Relations (\ref{farey-relations}) and the fact that $p(n,j_n(\theta)+1)q(n,j_n(\theta))-p(n,j_n(\theta))q(n,j_n(\theta)+1)=1> 0$ from \cite[Section 1]{Boca08}. For the case $j_{n+1}(\theta)=2j_n (\theta)+1$, a similar argument shows that $r(n+1, j_{n+1}(\theta))\geq r(n,j_n(\theta))$ and $r(n+1, j_{n+1}(\theta)+1)=r(n,j_n(\theta)+1)$.  Hence, for all $n \in \N\setminus \{0\}$, we gather that:
\begin{equation}\label{shrink-part-eq}
r(n+1, j_{n+1}(\theta)+1)-r(n+1,j_{n+1}(\theta)) \leq r(n,j_n(\theta)+1)-r(n,j_n(\theta)).
\end{equation}
For all $n \in \N \setminus \{0\}$ such that $n \geq k_1$, define $N_n =\max \{k_m : k_m \leq n\}$. Note that since $(k_n)_{n \in \N}$ is increasing, we have that $(N_n)_{n \geq k_1}$ is non-decreasing and  $\lim_{n \to \infty} N_n = \infty$.     Now, fix $n \in \N \setminus \{0\}$ such that $n \geq k_1$, combining Expression (\ref{farey-cfrac-eq}) and (\ref{shrink-part-eq}), we have by  Definition (\ref{f-ideals}):
\begin{align*}
0< \theta -r(n,j_n(\theta))&<r(n,j_n(\theta)+1)-r(n,j_n(\theta)) \\
& \leq r(N_n, j_{N_n}(\theta)+1)-r(N_n, j_{N_n}(\theta))=\left\vert \frac{p^\theta_{N_n}}{q^\theta_{N_n}} -  \frac{p^\theta_{N_n-1}}{q^\theta_{N_n-1}} \right\vert, 
\end{align*}
and therefore $\lim_{n \to \infty} r(n,j_n (\theta))=\theta$ since $\lim_{n \to  \infty} \frac{p^\theta_{n}}{q^\theta_{n}}=\theta$ and $(N_n)_{n \geq k_1}$ is non-decreasing with  $\lim_{n \to \infty} N_n = \infty.$
\end{proof}
Next, let  $\theta, \eta \in (0,1) \setminus \Q$.  Assume that $I_\theta = I_\eta$ and thus their diagrams agree  \cite[Theorem 3.3]{Bratteli72}.  Hence, we have that $j_n(\theta)=j_n(\eta)$ for all $n \in \N $, and thus  $r(n,j_n(\theta))=r(n,j_n(\eta))$ for all $n \in \N \setminus \{0\}$.  Therefore, by the claim:
\begin{equation*}
\theta=\lim_{n \to \infty} r(n, j_n(\theta))=\lim_{n \to \infty} r(n,j_n (\eta))=\eta,
\end{equation*}
which completes the proof.
\end{proof}
\begin{remark}\label{explicit-ideal-metric}An immediate consequence of Proposition (\ref{f-j-irr-ideal-homeo}) is that  if: $(0,1)\setminus \Q$ is equipped with its relative topology from the usual topology on $\R$, the set $\{I_\theta \in \mathrm{Prim}(\A) : \theta \in (0,1)\setminus \Q\}$ is equipped with its relative topology induced by the Jacobson topology,  and the set $\{I_\theta \in \mathrm{Prim}(\A) : \theta \in (0,1)\setminus \Q\}$ is equipped with its relative topology induced by the metric topology of $\mathsf{m}_{i(\mathcal{U}_\F)}$ of Corollary (\ref{ind-lim-metric-cor}) or the Fell topology of Definition (\ref{Fell-topology}), then all these spaces are homeomorphic to the Baire space $\BaireSpace$ with its metric topology from Definition (\ref{Baire-Space-def}).  In particular, the totally bounded metric $\mathsf{m}_{i \left(\mathcal{U}_\F \right)}$ topology on the set of ideals  $\left\{I_\theta \in \mathrm{Prim}(\A) :  \theta \in (0,1)\setminus \Q\right\}$ is homeomorphic to $ (0,1)\setminus \Q$ with its totally  bounded  metric topology inherited from the usual topology on $\R$.  Hence, in some sense, the metric $\mathsf{m}_{i \left(\mathcal{U}_\F \right)}$ topology shares more metric information with $ (0,1)\setminus \Q$ and its metric than the Baire space metric topology as the Baire space complete and not totally bounded \cite[Theorem 6.5]{AL} (since the Baire space is complete, if it were totally bounded, then it would be compact, which would therefore contradict the fact that is is homeomorphic to the irrationals). This can also be displayed in metric calculations.

 Indeed, consider $\theta, \mu \in (0,1)\setminus \Q$ with continued fraction expansions $\theta=[a_j]_{j \in \N}$ and  $\mu=[b_j]_{j \in \N }$, in which $a_0=0, a_1=1000, a_j=1 \forall j \geq 2$ and $b_0, b_1 = 1, b_j=1 \forall j \geq 2$, and thus $\theta \approx 0.001, \mu \approx 0.618, \vert \theta -\mu\vert \approx 0.617.$  In the Baire metric $\mathsf{d}(\mathsf{cf}(\theta), \mathsf{cf}(\mu)) = 0.5 $, and, in the ideal metric $\mathsf{m}_{i\left(\mathcal{U}_\F\right)}(I_\theta , I_\mu)=0.25 $ since at level $n=1$ the diagram for $\F/I_\theta$ begins with $L_{999}$ and for $\F/I_\mu$ begins with $R_{b_2}$ by \cite[Proposition 4.i]{Boca08}, so the ideal diagrams differ first at $n=2$. Now, assume that for $\mu$ we have instead $b_1= 999,b_j=1 \forall j \geq 2$, and thus $\vert \theta - \mu \vert \approx 0.000000998$, but in the Baire metric, we still have that $\mathsf{d}(\mathsf{cf}(\theta), \mathsf{cf}(\mu)) = 0.5 $, while $\mathsf{m}_{i\left(\mathcal{U}_\F\right)}(I_\theta , I_\mu)=2^{-1000} $  since at level $n=1$ the diagram for $\F/I_\theta$ begins with $L_{999}$ and for $\F/I_\mu$ begins with $L_{998}$ and then transitions to $R_{b_2}$  by \cite[Proposition 4.i]{Boca08}, so the ideal diagrams differ first at $n=1000$.  In conclusion, in this example, the absolute value metric $\vert \cdot \vert$ behaves much more like the metric $\mathsf{m}_{i\left(\mathcal{U}_\F\right)}$  than the Baire metric.
\end{remark}

Fix $\theta \in (0,1) \setminus \Q $, we present a *-isomorphism from $\F/I_\theta$  to $\af{\theta}$ (Notation (\ref{af-theta-notation})) as a proposition to highlight a useful property for our purposes.  Of course, \cite[Proposition 4.i]{Boca08} already established that $\F/I_\theta$  and $\af{\theta}$ are *-isomorphic, but here we simply provide an explicit detail of such a *-isomorphism, which will serve us in the results pertaining to tracial states in Lemma (\ref{trace-level-1}).

\begin{proposition}\label{f-quotient-iso}
If $\theta \in (0,1) \setminus \Q $ with continued fraction expansion $\theta=[a_j]_{j \in \N}$ as in Expression (\ref{continued-fraction-eq}), then using Notation (\ref{af-theta-notation}) and Definition (\ref{f-ideals}), there exists a *-isomorphism $\mathfrak{af}_\theta : \F/I_\theta \rightarrow \af{\theta}$ such that 
if $x = x_0 \oplus \cdots \oplus x_{2^{a_1-1}} \in \F_{a_1}$, then:
\begin{equation*}
\mathfrak{af}_\theta \left( \indmor{\varphi}{a_1}(x) + I_\theta\right) = \indmor{\alpha_\theta}{1} \left(x_{j_{a_1}(\theta)+1} \oplus x_{j_{a_1}(\theta)} \right) \in  \indmor{\alpha_\theta}{1}\left(\af{\theta,1}\right).
\end{equation*}
\end{proposition}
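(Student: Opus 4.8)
The plan is to realize $\mathfrak{af}_\theta$ as the $*$-isomorphism induced by an isomorphism of the Bratteli diagrams of $\F/I_\theta$ and $\af{\theta}$, and then to read off its action on the first nontrivial level. First I would record the quotient diagram: by Notation (\ref{ideal-af}), Proposition (\ref{ideal-corr}), and Expression (\ref{f-quotient-vert}), the AF algebra $\F/I_\theta$ has Bratteli diagram whose vertices at each level $n \geq 1$ are exactly $(n, j_n(\theta))$ and $(n, j_n(\theta)+1)$ with labels $q(n, j_n(\theta))$ and $q(n, j_n(\theta)+1)$, the edges being inherited from $\mathcal{D}_b(\F)$ through Proposition (\ref{f-diagram}); correspondingly $\F/I_\theta \cong \varinjlim (\F^n/(I_\theta \cap \F^n), \gamma_{I_\theta,n})$ with $\F^n/(I_\theta \cap \F^n) \cong \M(q(n,j_n(\theta))) \oplus \M(q(n, j_n(\theta)+1))$ for $n \geq 1$ by Notation (\ref{ind-lim-quotient}). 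By \cite[Proposition 4.i]{Boca08} this diagram is isomorphic, after telescoping at the levels $a_1 + \cdots + a_n$, to the Effros-Shen diagram of Example (\ref{af-theta-diagram}), the $n$-th telescoped level matching $\af{\theta,n}$; any such diagram isomorphism induces a $*$-isomorphism $\F/I_\theta \to \af{\theta}$ via the universal property \cite[Theorem 6.1.2]{Murphy}.

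The key step is the explicit computation at the first telescoping level $a_1$. Since $\theta = [a_j]_{j \in \N}$ with $a_0 = 0$ and $a_1 \geq 1$, Expression (\ref{pq-rel-eq}) gives the first convergents $\frac{p_0^\theta}{q_0^\theta} = \frac{0}{1}$ and $\frac{p_1^\theta}{q_1^\theta} = \frac{1}{a_1}$, and $0 < \theta < \frac{1}{a_1}$. An induction on Relations (\ref{farey-relations}) shows $r(n,1) = \frac{1}{n}$ with $q(n,1) = n$ for all $n \geq 1$ (each $r(n+1,1)$ is the mediant of $\frac{0}{1}$ and $r(n,1)$), so $r(a_1,0) = 0 < \theta < \frac{1}{a_1} = r(a_1,1)$; by the uniqueness in Definition (\ref{f-ideals}) this forces $j_{a_1}(\theta) = 0$. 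Hence the two surviving summands at level $a_1$ have dimensions $q(a_1,0) = q_0^\theta = 1$ and $q(a_1,1) = a_1 = q_1^\theta$, giving $\F^{a_1}/(I_\theta \cap \F^{a_1}) \cong \M(q_0^\theta) \oplus \M(q_1^\theta)$, which matches $\af{\theta,1} = \M(q_1^\theta) \oplus \M(q_0^\theta)$ with the vertex $(a_1, j_{a_1}(\theta)+1)$ of label $q_1^\theta$ sent to the first summand of $\af{\theta,1}$ and $(a_1, j_{a_1}(\theta))$ of label $q_0^\theta$ to the second. A count of the paths from $(0,0)$ to these two vertices returns the partial multiplicity matrix $\begin{pmatrix} a_1 \\ 1 \end{pmatrix}$ of Example (\ref{af-theta-diagram}), so the connecting maps match as well.

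Finally I would normalize the isomorphism so that the stated formula holds. The assignment $\indmor{\varphi}{a_1}(x) + I_\theta \mapsto \indmor{\alpha_\theta}{1}(x_{j_{a_1}(\theta)+1} \oplus x_{j_{a_1}(\theta)})$ is well-defined and a $*$-monomorphism because, by Definition (\ref{f-ideals}), $I_\theta \cap \F^{a_1}$ is the image under $\indmor{\varphi}{a_1}$ of the sum of all matrix summands of $\F_{a_1}$ other than those indexed by $j_{a_1}(\theta)$ and $j_{a_1}(\theta)+1$, so the right-hand side depends only on the class $\indmor{\varphi}{a_1}(x)+I_\theta$; thus it realizes the level-$a_1$ component of the diagram isomorphism found above. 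Since any two unital embeddings of a finite-dimensional C*-algebra with the same multiplicity data are unitarily equivalent (\cite[Lemma III.2.1]{Davidson}), the standard AF intertwining argument lets me extend this prescribed map, level by level through the telescoped diagram, to compatible $*$-isomorphisms of the finite-dimensional algebras, which glue by \cite[Theorem 6.1.2]{Murphy} into the desired $*$-isomorphism $\mathfrak{af}_\theta$, the commuting triangle of the universal property being exactly the displayed identity. The hard part will be the telescoping bookkeeping — checking that counting Farey paths over the $a_{n+1}$ levels between $a_1 + \cdots + a_n$ and $a_1 + \cdots + a_{n+1}$ reproduces the Effros-Shen multiplicity matrix $\left(\begin{smallmatrix} a_{n+1} & 1 \\ 1 & 0 \end{smallmatrix}\right)$ and that the labels $q_n^\theta, q_{n-1}^\theta$ appear in the correct order at each stage — but for the present statement this is absorbed into \cite[Proposition 4.i]{Boca08}, and only the level-$a_1$ computation needs to be done by hand.
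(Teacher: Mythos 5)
Your proposal is correct and follows essentially the same route as the paper's proof: both define the explicit map $f$ at level $a_1$ (well-defined precisely because $I_\theta \cap \F^{a_1}$ consists of the images of the matrix summands other than those indexed by $j_{a_1}(\theta)$ and $j_{a_1}(\theta)+1$), and both extend it to the full *-isomorphism by matching the telescoped quotient diagram with the Effros--Shen diagram via Boca's Proposition 4.i together with standard Bratteli-diagram/AF machinery. The only cosmetic differences are that you verify $j_{a_1}(\theta)=0$ and the level-$a_1$ labels and multiplicities by direct computation where the paper reads them off Boca's Figures 5 and 6, and you invoke the Elliott-style intertwining argument (Davidson's Lemma III.2.1 plus Murphy's Theorem 6.1.2) where the paper cites Bhat's diagram-equivalence theorem together with Davidson's Proposition III.2.7.
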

\begin{proof}
By the proof of \cite[Proposition 4.i]{Boca08}, the Bratteli diagram of $\F/I_\theta$ begins with the diagram $L_{a_1 -1}$ of \cite[Figure 5]{Boca08} at level $n=1$.  Now, the diagram $C_a \circ C_b$ of \cite[Figure 6]{Boca08} is a section of the diagram of Example (\ref{af-theta-diagram}), in which the left column of $C_{a_1-1} \circ C_{a_2}$ is the bottom row of the first two levels from left to right after level $n=0$ of Example (\ref{af-theta-diagram}).  Therefore, by the placement of $\circledast$ at level $a_1$ in \cite[Figure 6]{Boca08}, define a map $f : (\F^{a_1}+ I_\theta)/I_\theta  \rightarrow \indmor{\alpha_\theta}{1}\left(\af{\theta,1}\right)$ by:
\begin{equation*}f: 
\left( \indmor{\varphi}{a_1}(x) + I_\theta\right) \mapsto \indmor{\alpha_\theta}{1} \left(x_{j_{a_1}(\theta)+1} \oplus x_{j_{a_1}(\theta)} \right),
\end{equation*}
where $x = x_0 \oplus \cdots \oplus x_{2^{a_1-1}} \in \F_{a_1}$. We show that $f$ is a *-isomorphism from  $(\F^{a_1}+ I_\theta)/I_\theta$ onto $ \indmor{\alpha_\theta}{1}(\af{\theta,1}).$

 We first show that $f$ is well-defined.  Let $c,e \in (\F^{a_1}+ I_\theta)/I_\theta$ such that $c=e$.  Now, we have $c=\indmor{\varphi}{a_1}(c') + I_\theta, e=\indmor{\varphi}{a_1}(e') + I_\theta$ where  $c'=c_0' \oplus \cdots \oplus c_{2^{a_1-1}}'\in \F_{a_1}$ and $  e'=e_0' \oplus \cdots \oplus e_{2^{a_1-1}}' \in \F_{a_1}$.  But, the assumption $c=e$ implies that  $\indmor{\varphi}{a_1}(c'-e') \in I_\theta \cap \F^{a_1} $.  Thus, by Definition (\ref{f-ideals}) of $I_\theta$, we have that  $c_{j_{a_1}(\theta)+1}' \oplus c_{j_{a_1}(\theta)}' =e_{j_{a_1}(\theta)+1}' \oplus e_{j_{a_1}(\theta)}'$, and since $j_{a_1}(\theta)=q^\theta_0$ and $j_{a_1}(\theta)+1=q^\theta_1 $ by \cite[Proposition 4.i]{Boca08} and the discussion at the start of the proof, we gather that $f$ is a well-defined  *-homomorphism since the canonical maps  $\indmor{\alpha_\theta}{1}$ and $\indmor{\varphi}{a_1}$ are *-homomorphisms.  

For surjectivity of $f$, let  $x= \indmor{\alpha_\theta}{1} \left(x_{q^\theta_1} \oplus x_{q^\theta_0} \right)$, where $x_{q^\theta_1} \oplus x_{q^\theta_0}  \in \af{\theta,1}$.  Define $y= y_0 \oplus \cdots y_{2^{a_1-1}} \in \F_{a_1}$ such that $y_{j_{a_1}(\theta)}=x_{q^\theta_0}$ and $ y_{j_{a_1}(\theta)+1}=x_{q^\theta_1}$ with $y_k = 0$ for all $k \in \{0, \ldots, 2^{a_1-1}\}\setminus \{j_{a_1}(\theta), j_{a_1}(\theta)+1\}.$  Hence, the image $f\left( \indmor{\varphi}{a_1}(y) + I_\theta\right)=x$.  

For injectivity of $f$, let $x=x_0 \oplus \cdots \oplus x_{2^{a_1-1}} \in \F_{a_1}$ and  $y=y_0 \oplus \cdots \oplus y_{2^{a_1-1}} \in \F_{a_1}$ such that $f\left( \indmor{\varphi}{a_1}(x) + I_\theta\right)=f\left( \indmor{\varphi}{a_1}(y) + I_\theta\right).$  Thus, since $ \indmor{\alpha_\theta}{1}$ is injective, we have that $x_{j_{a_1}(\theta)+1} \oplus x_{j_{a_1}(\theta)} =y_{j_{a_1}(\theta)+1} \oplus y_{j_{a_1}(\theta)}.$  But, this then implies that  $\indmor{\varphi}{a_1}(x-y) \in I_\theta \cap \F^{a_1} \subseteq I_\theta$ by Definition (\ref{f-ideals}), and therefore, the terms  $\indmor{\varphi}{a_1}(x)+I_\theta = \indmor{\varphi}{a_1}(y)+I_\theta$, which completes the argument that $f$ is a *-isomorphism from  $(\F^{a_1}+ I_\theta)/I_\theta$ onto $ \indmor{\alpha_\theta}{1}(\af{\theta,1}).$  

Lastly, using Definition (\ref{diagram-af}), consider the Bratteli diagram of $\F/I_\theta$ given by the sequence of unital C*-subalgebras $\left( (\F^{x_{j+1}}+I_\theta)/I_\theta\right)_{j \in \N},$ where $x_{j+1}=\sum_{k=1}^{j+1} a_k$ for all $j \in \N$. Hence, the proof of  \cite[Proposition 4.i]{Boca08} and  \cite[Figure 6]{Boca08} provide that this diagram of  $\F/I_\theta$ is equivalent to the Bratteli diagram of $\af{\theta}$ beginning at $\af{\theta,1}$ given by Example (\ref{af-theta-diagram}), where this equivalence of Bratteli diagrams is given by \cite[Section 23.3 and Theorem 23.3.7]{Bhat}. Therefore,  combining the equivalence relation of \cite[Section 23.3 and Theorem 23.3.7]{Bhat} and the construction of the *-isomorphism in \cite[Proposition III.2.7]{Davidson}, we conclude that there exists a *-isomorphism $\mathfrak{af}_\theta : \F/I_\theta \rightarrow \af{\theta}$ such that $\mathfrak{af}_\theta (z)=f(z)$ for all $z \in  (\F^{a_1}+ I_\theta)/I_\theta ,$ which completes the proof.   
\end{proof}

From the *-isomorphism of Proposition (\ref{f-quotient-iso}), we may provide a faithful tracial state for the quotient $\F/I_\theta$ from the unique faithful tracial state of $\af{\theta}$. Indeed:

\begin{notation}\label{f-quotient-trace}
Fix $\theta \in (0,1) \setminus \Q $.  There is a unique faithful tracial state on $\af{\theta}$ denoted $\sigma_\theta$ (see \cite[Lemma 5.3 and Lemma 5.5]{AL}).  Thus, 
\begin{equation*}
\tau_\theta=\sigma_\theta \circ \mathfrak{af}_\theta
\end{equation*} is a unique faithful tracial state on $\F/I_\theta$ with $\mathfrak{af}_\theta$ from Proposition (\ref{f-quotient-iso}).

Let $Q_\theta : \F \rightarrow \F/I_\theta $ denote the quotient map. Thus, by \cite[Theorem V.2.2]{Conway}, there exists a unique linear functional on $\F$ denoted, $\rho_\theta$, such that $\ker \rho_\theta \supseteq I_\theta$ and $\tau_\theta \circ Q_\theta (x)=\rho_\theta (x)$ for all $x \in \F$.  Since $\tau_\theta $ is a tracial state and:
\begin{equation*}
\tau_\theta \circ Q_\theta (x)=\rho_\theta (x)
\end{equation*}
for all $x \in \F$, we conclude that  $\rho_\theta$ is also a tracial state that vanishes on $I_\theta$.  Furthermore, $\rho_\theta$ is faithful on $\F \setminus I_\theta $ since $\tau_\theta$ is faithful on $\F/I_\theta .$
\end{notation}

One more ingredient remains before we define the quantum metric structure for the quotient spaces $\F/I_\theta$.

\begin{lemma}\label{f-beta}
Let $\theta \in (0,1) \setminus \Q $. Using notation from Definition (\ref{boca-mundici-af}) and Definition (\ref{f-ideals}), if we define:
\begin{equation*}
\beta^{\theta}: n \in \N \longmapsto \frac{1}{\dim((\F^n +I_\theta)/I_\theta)} \in (0, \infty),
\end{equation*}
then $\beta^\theta (n) =  \frac{1}{q(n,j_n(\theta))^2 + q(n, j_n (\theta)+1)^2} \leq \frac{1}{n^2}$ for all $n \in \N\setminus \{0\}$ and $\beta^\theta (0)=1 $. 
\end{lemma}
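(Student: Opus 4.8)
The plan is to first identify the finite dimensional algebra $(\F^n + I_\theta)/I_\theta$ explicitly and then estimate its dimension. By Claim (\ref{quo-ind-map}) (equivalently, via the map $\phi^n_{I_\theta}$ of Proposition (\ref{ind-lim-quotients-prop})), the quotient $(\F^n+I_\theta)/I_\theta$ is $*$-isomorphic to $\F^n/(I_\theta\cap\F^n)$; and since $\indmor{\varphi}{n}$ is a $*$-isomorphism of $\F_n$ onto $\F^n$ carrying $I_\theta\cap\F^n$ back to an ideal of $\F_n=\bigoplus_{k=0}^{2^{n-1}}\M(q(n,k))$, it suffices to read off the latter from Definition (\ref{f-ideals}). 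In each of the three sub-cases there ($1\leq j_n(\theta)\leq 2^{n-1}-2$, $j_n(\theta)=0$, and $j_n(\theta)=2^{n-1}-1$) the ideal is precisely the sum of all summands of $\F_n$ except the two at positions $j_n(\theta)$ and $j_n(\theta)+1$, so for $n\geq 2$ the quotient is $\M(q(n,j_n(\theta)))\oplus\M(q(n,j_n(\theta)+1))$. For $n=1$ one has $I_\theta\cap\F^1=\{0\}$ and $j_1(\theta)=0$, so the quotient is all of $\F_1=\M(q(1,0))\oplus\M(q(1,1))=\M(1)\oplus\M(1)$, matching the same formula. Taking dimensions (with $\dim\M(q)=q^2$) gives $\dim((\F^n+I_\theta)/I_\theta)=q(n,j_n(\theta))^2+q(n,j_n(\theta)+1)^2$ for all $n\geq 1$; and for $n=0$ we have $\F_0=\C$, $I_\theta\cap\F^0=\{0\}$, hence $\dim=1$ and $\beta^\theta(0)=1$.

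It remains to prove $q(n,j_n(\theta))^2+q(n,j_n(\theta)+1)^2\geq n^2$ for $n\geq 1$, which is where the real content lies. Writing $a_n=q(n,j_n(\theta))$ and $b_n=q(n,j_n(\theta)+1)$, I would set up a recursion from Lemma (\ref{f-ideal-vert}), which gives $j_{n+1}(\theta)\in\{2j_n(\theta),2j_n(\theta)+1\}$, together with the denominator relations $q(n+1,2k)=q(n,k)$ and $q(n+1,2k+1)=q(n,k)+q(n,k+1)$ of (\ref{farey-relations}) (applicable since $j_n(\theta)\leq 2^{n-1}-1$). In the case $j_{n+1}(\theta)=2j_n(\theta)$ this yields $(a_{n+1},b_{n+1})=(a_n,a_n+b_n)$, and in the case $j_{n+1}(\theta)=2j_n(\theta)+1$ it yields $(a_{n+1},b_{n+1})=(a_n+b_n,b_n)$; the base case is $a_1=q(1,0)=1$ and $b_1=q(1,1)=1$.

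The key observation, and the step I expect to be the crux, is that in both recursion cases $\max\{a_{n+1},b_{n+1}\}=a_n+b_n$, and since $\min\{a_n,b_n\}\geq 1$ this forces $\max\{a_{n+1},b_{n+1}\}\geq\max\{a_n,b_n\}+1$. Starting from $\max\{a_1,b_1\}=1$, an immediate induction gives $\max\{a_n,b_n\}\geq n$ for all $n\geq 1$, whence $a_n^2+b_n^2\geq\max\{a_n,b_n\}^2\geq n^2$ and therefore $\beta^\theta(n)=\tfrac{1}{a_n^2+b_n^2}\leq\tfrac{1}{n^2}$. The subtlety to get right is to track $\max\{a_n,b_n\}$ rather than the sum: bounding the sum alone only gives $a_n+b_n\geq n+1$, which is too weak for a quadratic estimate, whereas the fact that the maximum itself grows by at least one at each level is exactly what produces the clean $n^2$ lower bound.
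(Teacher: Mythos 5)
Your proposal is correct and follows essentially the same route as the paper: the same identification of $\dim((\F^n+I_\theta)/I_\theta)$ with $q(n,j_n(\theta))^2+q(n,j_n(\theta)+1)^2$ via Definition (\ref{f-ideals}), and the same induction using Lemma (\ref{f-ideal-vert}) and Relations (\ref{farey-relations}) — your statement $\max\{a_n,b_n\}\geq n$ is exactly the paper's disjunctive claim that $q(n,j_n(\theta))\geq n$ or $q(n,j_n(\theta)+1)\geq n$, proved by the identical mechanism that the summed denominator at level $n+1$ exceeds the previous maximum by at least one.
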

\begin{proof}
First, the quotient $(\F^0 + I_\theta)/I_\theta = \C1_{\F/I_\theta}$.  Hence, the term $\beta^\theta (0)=1$.  

Fix $n \in \N \setminus \{0\}$.  Since $(\F^n +I_\theta)/I_\theta$ is *-isomorphic to $\F^n/(I_\theta \cap \F^n)$ (see Proposition (\ref{ind-lim-quotients-prop})), we have that
\begin{equation*}\dim((\F^n +I_\theta)/I_\theta)=\dim(\F^n/(I_\theta \cap \F^n)) =q(n,j_n(\theta))^2 + q(n, j_n (\theta)+1)^2 
\end{equation*} by Definition (\ref{f-ideals}) and the dimension of the quotient is the difference in dimensions of $\F^n$ and $I_\theta \cap \F^n$ . Therefore, the term  $\beta^\theta(n) = \frac{1}{ q(n,j_n(\theta))^2 + q(n, j_n (\theta)+1)^2 }.$

To prove the inequality of the Lemma, we claim that for all $n \in \N \setminus \{0\}$, we have $q(n,j_n(\theta)) \geq n$ or $q(n, j_n (\theta)+1) \geq n$. We proceed by induction.  If $n=1$, then $q(1,j_1(\theta))=1$ and $q(1, j_1 (\theta)+1) =1$ by Relations (\ref{farey-relations}). Next assume the statement of the claim is true for $n=m$. Thus, we have that $q(m,j_m(\theta)) \geq m$ or $q(m, j_m (\theta)+1) \geq m$.  First, assume that $q(m,j_m(\theta)) \geq m$.  By Lemma (\ref{f-ideal-vert}), assume that $j_{m+1}(\theta)= 2j_m (\theta) $.  Thus, we gather $q(m+1, j_{m+1}(\theta)+1)=q(m+1, 2j_m (\theta)+1)= q(m,j_m(\theta))+q(m,j_m(\theta)+1) \geq m+1$ by Relations (\ref{farey-relations}) and since  $q(m,j_m(\theta)+1) \in \N \setminus \{0\}$.   The case when $j_{m+1}(\theta)= 2j_m (\theta) +1$ follows similarly as well as the case when $q(m, j_m (\theta)+1) \geq m$, which completes the induction argument .  

In particular, for all $n \in \N \setminus \{0\}$, we have $q(n,j_n(\theta)) \geq n$ or $q(n, j_n (\theta)+1) \geq n,$ which  implies that $q(n,j_n(\theta))^2 \geq n^2$ or $q(n, j_n (\theta)+1)^2 \geq n^2$. And thus, the term:
\begin{equation*} 
\frac{1}{q(n,j_n(\theta))^2 + q(n, j_n (\theta)+1)^2} \leq \frac{1}{n^2}
\end{equation*} for all $n \in \N\setminus \{0\}.$
\end{proof}

Hence, we have all the ingredients to define the quotient quantum metric spaces of the ideals of Definition (\ref{f-ideals}).

\begin{notation}\label{f-quotient-qcms}
Fix $\theta \in (0,1) \setminus \Q $. Using  Definition (\ref{boca-mundici-af}), Definition (\ref{f-ideals}), Notation (\ref{f-quotient-trace}), and Lemma (\ref{f-beta}),  let: \begin{equation*}
\left(\F/I_\theta, \Lip_{\mathcal{U}_\F/I_\theta , \tau_\theta}^{\beta^\theta}\right)
\end{equation*}
denote the $(2,0)$-quasi-Leibniz quantum compact metric space given by Theorem (\ref{ind-lim-quantum-iso}) associated to the ideal $I_\theta$, faithful tracial state $\tau_\theta$, and $\beta^\theta : \N \rightarrow (0, \infty)$ having limit $0$ at infinity by Lemma (\ref{f-beta}).
\end{notation}

\begin{remark}\label{theta-q-iso}
Fix $\theta \in (0,1) \setminus \Q$.  Although $\F/I_\theta $ and $\af{\theta}$ are *-isomorphic, it is unlikely that $\left(\F/I_\theta, \Lip_{\mathcal{U}_\F/I_\theta , \tau_\theta}^{\beta^\theta}\right)$ is quantum isometric to $\left(\af{\theta}, \Lip_{\mathcal{I}_\theta ,\sigma_\theta}^{\beta_\theta} \right)$ of Theorem (\ref{af-theta-thm})  based on the Lip-norm constructions. Thus, one could not simply apply Proposition (\ref{f-ideal-homeo}) to Theorem (\ref{af-theta-thm}) to achieve Theorem (\ref{main-result}).
\end{remark}

In order to provide our continuity results, we describe the faithful tracial states on the quotients in sufficient detail through Lemma (\ref{f-ideal-trace-coeff}) and Lemma (\ref{trace-level-1}).

\begin{lemma}\label{f-ideal-trace-coeff}Fix $\theta \in (0,1) \setminus \Q $. Let $\mathsf{tr}_d$ be the unique tracial state of $\M(d)$. Using notation from Definitions (\ref{boca-mundici-af}, \ref{f-ideals}), if  $n \in \N \setminus \{0\}$ and $a= a_0 \oplus \cdots \oplus a_{2^{n-1}} \in \F_n$, then using  Notation (\ref{f-quotient-trace}):
\begin{equation*} \rho_\theta \circ \indmor{\varphi}{n} ( a ) =  c(n,\theta) \mathsf{tr}_{q(n,j_n (\theta))}\left(a_{j_n (\theta)}\right) + (1-c(n,\theta)) \mathsf{tr}_{q(n,j_n (\theta)+1)}\left(a_{j_n (\theta)+1}\right), 
\end{equation*} where $c(n, \theta)  \in (0,1) $ and $\rho_\theta \circ \indmor{\varphi}{0}(a)=a$ for all $a \in \F_0$.

Furthermore, let $n \in \N \setminus \{0\}$, then:
\begin{equation*}
c(n+1, \theta) =
\begin{cases}
\frac{(q(n,j_n(\theta))+q(n,j_n(\theta)+1))c(n, \theta)-q(n,j_n(\theta))}{q(n,j_n(\theta)+1)} &: \text{if } j_{n+1} (\theta) = 2j_n (\theta)\\
& \\
\left(1+ \frac{q(n,j_n(\theta)+1)}{q(n,j_n(\theta))}\right)c(n,\theta) &: \text{if }j_{n+1}(\theta)=2j_n (\theta)+1\end{cases}.
\end{equation*}
\end{lemma}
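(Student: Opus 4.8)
The plan is to establish the displayed convex‑combination formula first, and then derive the recursion by exploiting the fact that $\rho_\theta$ is compatible with the connecting maps $\varphi_n$ of the inductive limit.

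For the first assertion I would argue that $\rho_\theta\circ\indmor{\varphi}{n}$ is a tracial state on $\F_n=\bigoplus_{k=0}^{2^{n-1}}\M(q(n,k))$ (Definition (\ref{f-fd})), and every tracial state on a finite direct sum of full matrix algebras is a convex combination $\sum_k\lambda_k\,\mathsf{tr}_{q(n,k)}$ with $\lambda_k\geq 0$ and $\sum_k\lambda_k=1$ (the normalization coming from evaluating at $1_{\F_n}$ and using $\mathsf{tr}_d(1)=1$). Since $\rho_\theta$ vanishes on $I_\theta$, hence on $I_\theta\cap\F^n$, Definition (\ref{f-ideals}) forces $\lambda_k=0$ for all $k\notin\{j_n(\theta),j_n(\theta)+1\}$; indeed these two are exactly the surviving blocks in every sub‑case of Definition (\ref{f-ideals}), including the boundary values $j_n(\theta)=0$, $j_n(\theta)=2^{n-1}-1$, and the case $n=1$ where $I_\theta\cap\F^1=\{0\}$. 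Putting $c(n,\theta)=\lambda_{j_n(\theta)}$ and $1-c(n,\theta)=\lambda_{j_n(\theta)+1}$ gives the formula. Faithfulness of the induced state on $\F/I_\theta$ (Notation (\ref{f-quotient-trace})) then forces $c(n,\theta)\in(0,1)$: were $c(n,\theta)=0$ (resp. $1$), a nonzero positive element of $\F_n$ supported only in the $j_n(\theta)$ (resp. $j_n(\theta)+1$) block would map to a nonzero positive element of $\F/I_\theta$ annihilated by $\rho_\theta$, a contradiction. The case $n=0$ is immediate since $\indmor{\varphi}{0}$ sends $\C$ to $\C1_\F$ and $\rho_\theta$ is unital.

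For the recursion the key relation is $\indmor{\varphi}{n}=\indmor{\varphi}{n+1}\circ\varphi_n$ from the inductive‑limit structure (\cite[Chapter 6.1]{Murphy}), giving $\rho_\theta\circ\indmor{\varphi}{n}(a)=\rho_\theta\circ\indmor{\varphi}{n+1}(\varphi_n(a))$ for all $a\in\F_n$. I would expand the right-hand side via the level‑$(n+1)$ formula applied to $b:=\varphi_n(a)$. By Proposition (\ref{f-diagram}) (an edge exists exactly when $|2k-l|\leq 1$, each of multiplicity one) the block $b_l$ is the block‑diagonal matrix $\bigoplus_{k:\,|2k-l|\leq 1}a_k$; concretely $b_{2k}=a_k$ and $b_{2k+1}=a_k\oplus a_{k+1}$, consistent with the dimension Relations (\ref{farey-relations}). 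The remaining ingredient is the elementary identity $\mathsf{tr}_{d_i+d_j}(a_i\oplus a_j)=\tfrac{d_i}{d_i+d_j}\mathsf{tr}_{d_i}(a_i)+\tfrac{d_j}{d_i+d_j}\mathsf{tr}_{d_j}(a_j)$. Splitting according to Lemma (\ref{f-ideal-vert}), with $Q_0=q(n,j_n(\theta))$ and $Q_1=q(n,j_n(\theta)+1)$: when $j_{n+1}(\theta)=2j_n(\theta)$ the surviving blocks are $b_{2j_n}=a_{j_n}$ and $b_{2j_n+1}=a_{j_n}\oplus a_{j_n+1}$, whereas when $j_{n+1}(\theta)=2j_n(\theta)+1$ they are $b_{2j_n+1}=a_{j_n}\oplus a_{j_n+1}$ and $b_{2j_n+2}=a_{j_n+1}$. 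Substituting the block‑diagonal trace identity and matching the coefficients of $\mathsf{tr}_{Q_0}(a_{j_n})$ and $\mathsf{tr}_{Q_1}(a_{j_n+1})$ — independent because $a_{j_n},a_{j_n+1}$ range freely — yields $c(n+1,\theta)=\big((Q_0+Q_1)c(n,\theta)-Q_0\big)/Q_1$ in the first case and $c(n+1,\theta)=(1+Q_1/Q_0)c(n,\theta)$ in the second, exactly the asserted formulas.

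The main obstacle will be the bookkeeping in that middle step: correctly identifying, in each of the two cases, which level‑$n$ blocks $a_{j_n(\theta)},a_{j_n(\theta)+1}$ are absorbed into each surviving level‑$(n+1)$ block, and verifying that no blocks outside $\{j_n(\theta),j_n(\theta)+1\}$ contribute. This is governed entirely by the edge description of Proposition (\ref{f-diagram}) together with Lemma (\ref{f-ideal-vert}); once the block assignments are pinned down, the coefficient matching is a short linear computation.
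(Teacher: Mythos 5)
Your proposal is correct and follows essentially the same route as the paper: the convex-combination form of $\rho_\theta\circ\indmor{\varphi}{n}$ via the structure of tracial states on finite-dimensional C*-algebras, elimination of all blocks outside $\{j_n(\theta),j_n(\theta)+1\}$ because $\rho_\theta$ vanishes on $I_\theta$, strict positivity of $c(n,\theta)$ from faithfulness, and the recursion from $\indmor{\varphi}{n}=\indmor{\varphi}{n+1}\circ\varphi_n$ combined with the edge structure of Proposition (\ref{f-diagram}) and Lemma (\ref{f-ideal-vert}). The only cosmetic difference is that the paper extracts the recursion by evaluating at the single test element with $a_{j_n(\theta)}=1_{\M(q(n,j_n(\theta)))}$ and $a_{j_n(\theta)+1}=0$ (with the conjugating unitaries of the embedding written out explicitly), whereas you match coefficients of the two block traces for general elements; the underlying computation is identical.
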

\begin{proof}
Fix $\theta \in (0,1) \setminus \Q $. If $n=0$, then $\rho_\theta \circ \indmor{\varphi}{0}(a)=a$ for all $a \in \F_0$ since $\F_0 =\C.$ Let $n \in \N \setminus \{0\}$ and $a= a_0 \oplus \cdots \oplus a_{2^{n-1}} \in \F_n$.   Now, $\rho_\theta $ is a tracial state on $\F$, and thus, the composition $\rho_\theta \circ \indmor{\varphi}{n}$ is a tracial state on $\F_n $.  Hence, by \cite[ Example IV.5.4]{Davidson}:  
\begin{equation*}
\rho_\theta \circ \indmor{\varphi}{n}(a) = \sum_{k=0}^{2^{n-1}} c_k \mathsf{tr}_{q(n,k)}(a_k), 
\end{equation*}
where $\sum_{k=0}^{2^{n-1}} c_k =1$ and $c_k \in [0,1]$ for all $k \in \{0, \ldots , 2^{n-1}\}$.  But, since $\rho_\theta$ vanishes on $I_\theta$ by definition of $\rho_\theta$ in Notation (\ref{f-quotient-trace}),  we conclude that $c_k = 0$ for all $k \in \{0, \ldots , 2^{n-1}\} \setminus \{j_n (\theta), j_n (\theta)+1 \}$.  Also,  the fact that $\rho_\theta $ is faithful on $\F \setminus I_\theta$ implies that $c_{j_n (\theta)}, c_{j_n (\theta)+1} \in (0,1)$ and $c_{j_n (\theta)}+ c_{j_n (\theta)+1}=1$.  Define $c(n, \theta) = c_{j_n (\theta)}$ and clearly $c_{j_n (\theta)+1}=1-c(n, \theta) $.  

Next, let $n \in \N \setminus \{0\}$ and let $j_{n+1} (\theta) = 2j_n (\theta)$.   Combining Lemma (\ref{f-ideal-vert}) and Proposition (\ref{f-diagram}), there is one edge from $(n,j_n(\theta))$ to $(n+1,j_{n+1}(\theta)) $ and one edge from $(n,j_n(\theta))$ to $(n+1, j_{n+1}(\theta) +1) $ with no other edges from $(n, j_n(\theta))$ to either $(n,j_n(\theta))$ or $(n+1, j_{n+1}(\theta) +1) $.  Also, there is one edge from $(n, j_n(\theta)+1)$ to $(n+1,j_{n+1}(\theta) +1 )$ with no other edges from $(n, j_n(\theta)+1)$ to either $(n,j_n(\theta))$ or $(n+1, j_{n+1}(\theta) +1) $. 

 Hence, consider an element $ a= a_0 \oplus \cdots \oplus a_{2^{n-1}} \in \F_n$ such that $a_k =0$ for all $k \in \{0, \ldots , 2^{n-1}\} \setminus \{j_n (\theta), j_n (\theta)+1 \}.$ Since the edges determine the partial multiplicities of $\varphi_n $, we have that $\varphi_n (a) = b_0 \oplus \cdots \oplus b_{2^n}$ such that
 \begin{equation}\label{embed-up} b_{j_{n+1}(\theta) }= Ua_{j_n(\theta)}U^* \text{ and } b_{j_{n+1}(\theta) +1} =V \left[\begin{array}{cc} a_{j_n(\theta)} & 0 \\
0 &  a_{j_n(\theta)+1}\end{array}\right]V^*,
\end{equation} for some unitaries $U \in \M(q(n+1,j_{n+1}(\theta) )), V \in \M(q(n+1,j_{n+1}(\theta) +1))$ by \cite[Lemma III.2.1]{Davidson}. Also, the terms $b_k = 0$ for all $k \in  \{0, \ldots , 2^{n-1}\} \setminus \{j_{n+1} (\theta), j_{n+1} (\theta)+1 \}$. But, by definition of the canonical *-homomorphisms $\indmor{\varphi}{n},\indmor{\varphi}{n+1}$, we have that $\indmor{\varphi}{n}(a) = \indmor{\varphi}{n+1}(\varphi_n (a) )$ \cite[Chapter 6.1]{Murphy}.  Now, assume that $a_{j_n (\theta)}=1_{\M(q(n,j_n(\theta)))}$ and $ a_{j_n (\theta)+1} = 0$.  Therefore, by Expression (\ref{embed-up}): 
\begin{equation}\label{coeff-match}
\end{equation}
\begin{align*}
c(n,\theta) &= \rho_\theta \circ \indmor{\varphi}{n}(a) = \rho_\theta \circ  \indmor{\varphi}{n+1}(\varphi_n (a) )\\&= c(n+1,\theta) \mathsf{tr}_{q(n+1,j_{n+1} (\theta))}\left(Ua_{j_n (\theta)}U^*\right)\\
& \quad  + (1-c(n+1,\theta)) \mathsf{tr}_{q(n+1,j_{n+1} (\theta)+1)}\left(V \left[\begin{array}{cc} a_{j_n(\theta)} & 0 \\
0 &  0\end{array}\right]V^*\right)\\
& =c(n+1, \theta)\cdot 1 + (1-c(n+1, \theta))\mathsf{tr}_{q(n+1,j_{n+1} (\theta)+1)}\left( \left[\begin{array}{cc} 1_{\M(q(n,j_n(\theta)))} & 0 \\
0 &  0\end{array}\right]\right)\\
& = c(n+1, \theta)+ (1-c(n+1, \theta))\frac{1}{q(n+1,j_{n+1} (\theta)+1)}q(n,j_n(\theta)).
\end{align*}
Thus, since $q(n+1, 2j_n (\theta)+1 )=q(n,j_n(\theta))+q(n,j_n(\theta)+1)$ from Relations (\ref{farey-relations}) and $j_{n+1}(\theta)+1 =2j_n (\theta)+1 $,  we conclude that: 
\begin{equation*}
c(n+1, \theta) = \frac{(q(n,j_n(\theta))+q(n,j_n(\theta)+1))c(n, \theta)-q(n,j_n(\theta))}{q(n,j_n(\theta)+1)}.
\end{equation*}

Lastly, assume that $j_{n+1}(\theta)= 2j_n (\theta) + 1$.  Let $ a= a_0 \oplus \cdots \oplus a_{2^{n-1}} \in \F_n$ such that $a_k =0$ for all $k \in \{0, \ldots , 2^{n-1}\} \setminus \{j_n (\theta), j_n (\theta)+1 \}$.  A similiar argument shows that $\varphi_n (a) = b_0 \oplus \cdots \oplus b_{2^n}$ such that:
\begin{equation*} b_{j_{n+1}(\theta) }= Y \left[\begin{array}{cc} a_{j_n(\theta)} & 0 \\
0 &  a_{j_n(\theta)+1}\end{array}\right]Y^* \text{ and } b_{j_{n+1}(\theta) +1} =Z  a_{j_n(\theta)+1}Z^*, \end{equation*} where $Y \in \M(q(n+1,j_{n+1}(\theta) )), Z \in \M(q(n+1,j_{n+1}(\theta) +1))$ are unitary.  Now, assume that $a_{j_n (\theta)}=1_{\M(q(n,j_n(\theta)))}$ and $ a_{j_n (\theta)+1} = 0$. Therefore, similarly to Expression (\ref{coeff-match}): 
\begin{equation*}
c(n, \theta) = c(n+1, \theta) \frac{1}{q(n+1,j_{n+1}(\theta))}q(n,j_n(\theta)), 
\end{equation*}
and therefore: 
\begin{equation*}
c(n+1, \theta) =\left(1+ \frac{q(n,j_n(\theta)+1)}{q(n,j_n(\theta))}\right)c(n,\theta) 
\end{equation*}
by Relations (\ref{farey-relations}). And, by Lemma (\ref{f-ideal-vert}), this exhausts all possibilities for $c(n+1, \theta)$, and the proof is complete.
\end{proof}

\begin{lemma}\label{trace-level-1}  Using notation from Lemma (\ref{f-ideal-trace-coeff}), if $\theta \in (0,1) \setminus \Q $, then: 
\begin{equation*}c(1, \theta) = 1-\theta.
\end{equation*}
Moreover, using notation from Definition (\ref{f-ideals}), if $\theta, \mu \in (0,1) \setminus \Q $ such that there exists $N \in \N \setminus \{0\}$ with $I_\theta \cap \F^N = I_\mu \cap \F^N$, then there exists $a, b \in \R, a \neq 0$ such that:
\begin{equation*} c(N, \theta)=a\theta +b, \ c(N, \mu)=a\mu +b. 
\end{equation*}  
\end{lemma}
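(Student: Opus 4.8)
The plan is to treat the two assertions separately: first secure the base value $c(1,\theta)=1-\theta$, and then exploit the fact that the recursion of Lemma~(\ref{f-ideal-trace-coeff}) is affine in $c(n,\theta)$ to deduce the second claim.

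For $c(1,\theta)=1-\theta$, my strategy is to anchor the computation at level $a_1$, where Proposition~(\ref{f-quotient-iso}) provides an explicit isomorphism, and then descend. First I would record the unique faithful trace $\sigma_\theta$ on $\af{\theta,1}=\M(q_1^\theta)\oplus\M(q_0^\theta)=\M(a_1)\oplus\M(1)$: by the standard dimension-group computation for the Effros--Shen algebras (equivalently \cite[Lemma 5.3 and Lemma 5.5]{AL}), the quantities $(-1)^n(q_n^\theta\theta-p_n^\theta)$ solve the trace-compatibility relations across levels, and at the first level this gives $\sigma_\theta(\indmor{\alpha_\theta}{1}(a\oplus b))=a_1\theta\,\mathsf{tr}_{a_1}(a)+(1-a_1\theta)\,\mathsf{tr}_1(b)$, which one checks is a state and is consistent with the connecting map $\alpha_{\theta,1}$. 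Composing with the isomorphism $\mathfrak{af}_\theta$ of Proposition~(\ref{f-quotient-iso}), whose level-$a_1$ formula sends the block $x_{j_{a_1}(\theta)+1}$ into the $\M(a_1)$ summand and $x_{j_{a_1}(\theta)}$ into the $\M(1)$ summand, and comparing with the level-$a_1$ expansion of $\rho_\theta\circ\indmor{\varphi}{a_1}$ supplied by Lemma~(\ref{f-ideal-trace-coeff}), I read off $c(a_1,\theta)=1-a_1\theta$.

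Next I would descend from level $a_1$ to level $1$. Since $\theta=[0,a_1,a_2,\dots]\in(\tfrac{1}{a_1+1},\tfrac{1}{a_1})$, we have $\theta<1/a_1\le 1/n=r(n,1)$ for every $n\le a_1$, so the Farey path keeps $0=r(n,0)$ as its left endpoint, i.e. $j_n(\theta)=0$, $q(n,j_n(\theta))=q(n,0)=1$ and $q(n,j_n(\theta)+1)=q(n,1)=n$; consequently each step $n\to n+1$ for $n\le a_1-1$ is of the type $j_{n+1}=2j_n$. Hence the first branch of the recursion in Lemma~(\ref{f-ideal-trace-coeff}) reads $c(n+1,\theta)=\frac{(1+n)c(n,\theta)-1}{n}$. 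A direct substitution shows that $c(n,\theta)=1-n\theta$ satisfies this recursion; since each step is an invertible affine map and the candidate sequence matches the true value $1-a_1\theta$ at $n=a_1$, uniqueness forces $c(n,\theta)=1-n\theta$ for all $1\le n\le a_1$, so $c(1,\theta)=1-\theta$. (When $a_1=1$ there is no descent, and $c(1,\theta)=c(a_1,\theta)=1-\theta$ directly.)

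For the second assertion the key observation is that both branches of the recursion in Lemma~(\ref{f-ideal-trace-coeff}) have the affine shape $c(n+1,\theta)=A_n\,c(n,\theta)+B_n$, with $A_n=\frac{q(n,j_n(\theta))+q(n,j_n(\theta)+1)}{q(n,j_n(\theta)+1)}>0$ in the left case and $A_n=1+\frac{q(n,j_n(\theta)+1)}{q(n,j_n(\theta))}>0$ in the right case, so $A_n\neq 0$ and both $A_n,B_n$ depend on $\theta$ only through the indices $j_n(\theta),j_{n+1}(\theta)$ together with the universal numbers $q(n,\cdot)$. Now $I_\theta\cap\F^N=I_\mu\cap\F^N$ forces, through the ideal-diagram correspondence of Proposition~(\ref{ideal-corr}) together with Definition~(\ref{f-ideals}), the equalities $j_n(\theta)=j_n(\mu)$ for all $n\le N$, whence $A_n,B_n$ coincide for $\theta$ and $\mu$ for every $n\le N-1$. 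Starting from the base affine function $c(1,\cdot)=1-(\cdot)$, of slope $-1\neq 0$, and composing $N-1$ such affine maps, I obtain one affine function $t\mapsto at+b$ with $a=(-1)\prod_{n=1}^{N-1}A_n\neq 0$ satisfying $c(N,\theta)=a\theta+b$ and $c(N,\mu)=a\mu+b$, which is exactly the claim. The main obstacle is the first assertion, and within it the bookkeeping of the trace: pinning down $\sigma_\theta$ on $\af{\theta,1}$ with the correct weights $a_1\theta$ and $1-a_1\theta$, in the correct order relative to the summand swap in $\mathfrak{af}_\theta$, is where an error would most easily arise; once the base value $c(1,\theta)=1-\theta$ is secured, the descent and the affineness argument are routine.
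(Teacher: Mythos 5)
Your proposal is correct and follows essentially the same route as the paper's proof: anchor the trace at level $a_1$ via Proposition (\ref{f-quotient-iso}) and the unique trace of $\af{\theta}$ to get $c(a_1,\theta)=1-a_1\theta$, propagate down to level $1$ through the left-branch recursion of Lemma (\ref{f-ideal-trace-coeff}) (the paper runs the recursion forward with $c(1,\theta)$ as an unknown and solves, which is the same invertible-affine observation you use backward), and for the second claim exploit that equality of ideals at level $N$ forces equal indices $j_n$, hence identical nonzero-slope affine recursion steps, which the paper packages as an induction on $N$ rather than as a composition of affine maps.
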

\begin{proof}
Let $\theta \in (0,1) \setminus \Q $, and denote its continued fraction expansion by $\theta=[a_j]_{j \in \N}$.     Recall, by Proposition (\ref{f-quotient-iso}), we have for all $x = x_0 \oplus \cdots \oplus x_{2^{a_1-1}} \in \F_{a_1}$:
\begin{equation}\label{first-iso}
\mathfrak{af}_\theta \left( \indmor{\varphi}{a_1}(x) + I_\theta\right) = \indmor{\alpha_\theta}{1} \left(x_{j_{a_1}(\theta)+1} \oplus x_{j_{a_1}(\theta)} \right).
\end{equation}
Next, by Notation (\ref{f-quotient-trace}), we note that: \begin{equation}\label{trace-iso}
\rho_\sigma \circ \indmor{\varphi}{a_1}= \tau_\theta \circ Q_\theta  \circ \indmor{\varphi}{a_1}= \sigma_\theta \circ \mathfrak{af}_\theta  \circ Q_\theta  \circ \indmor{\varphi}{a_1}
\end{equation}  Now, consider $x= x_0 \oplus \cdots \oplus x_{2^{a_1-1}} \in \F_{a_1}$ such that $x_{j_{a_1}(\theta)+1} = 1_{q^\theta_1}$ and $x_k = 0$ for all $k \in \{0, \ldots , 2^{a_1-1}\} \setminus \{j_{a_1}(\theta)\}$. Then, by Lemma (\ref{f-ideal-trace-coeff}) and Expressions (\ref{first-iso},\ref{trace-iso}), we have that $(1-c(a_1, \theta))=\rho_\theta \circ \indmor{\varphi}{a_1}(x) = \sigma_\theta \circ \indmor{\alpha_\theta}{1} \left(1_{q^\theta_1} \oplus 0 \right)= a_1 \theta$ by \cite[Lemma 5.5]{AL}.  And, thus:
\begin{equation}\label{coeff-mid}
 c(a_1, \theta) = 1- a_1 \theta.
 \end{equation} Thus, if $a_1 =1$, then we would be done.

Assume that $a_1 \geq 2$. By the proof of \cite[Proposition 4.i]{Boca08}, the Bratteli diagram of $\F/I_\theta$ begins with the diagram $L_{a_1 -1}$ of \cite[Figure 5]{Boca08} at level $n=1$.  Thus, the term $j_m (\theta)= 0$ for all $m \in \{1, \ldots, a_1 \} $.  Hence, if $m \in \{1, \ldots, a_1 -1 \}$, then $j_{m+1}(\theta) = 2j_m(\theta) $.  

We claim that for all $m \in  \{1, \ldots, a_1 \}$ we have that: 
\begin{equation}\label{middle-coeff}c(m, \theta)= m c(1, \theta) - (m -1) .
\end{equation}  We proceed by induction.  The case $m=1$ is clear.  Assume true for $m \in \{1, \ldots, a_1 -1 \}$.  Consider $m+1 $.  Since $j_{m+1} (\theta)=2j_m(\theta)$, by Lemma (\ref{f-ideal-trace-coeff}), we have that: 
\begin{equation}\label{coeff-start}
\begin{split}c(m+1, \theta) &= \frac{(q(m,0)+q(m,1))c(m, \theta)-q(m,0)}{q(m,1)}\\
&=\frac{c(m,\theta)+q(m,1)c(m,\theta) - 1}{q(m,1)}.
\end{split}
\end{equation}   By Relations (\ref{farey-relations}), we gather that $q(m,1)=m$.  Hence, by induction hypothesis and Expression (\ref{coeff-start}), we have:
\begin{equation*}
\begin{split} c(m+1, \theta) &= \frac{m c(1, \theta)-(m-1)+m (mc(1, \theta) - (m-1)) -1}{m} \\
&= c(1, \theta) - 1+1/m +m c(1, \theta) -(m-1)-1/m \\
&= (m+1)c(1, \theta) - ((m+1)-1),
\end{split}
\end{equation*} which completes the induction argument. Hence, by Expression (\ref{middle-coeff}), we conclude $c(a_1, \theta)= a_1c(1, \theta) - (a_1 -1),$ which implies that:
\begin{equation}\label{first-coeff} 
c(1, \theta)=1-\theta 
\end{equation} by Equation (\ref{coeff-mid}).

Lastly,  let $\theta, \mu \in (0,1) \setminus \Q $.  We prove the remaining claim in the Lemma by induction.  Assume $N=1$.  Then, by Equation (\ref{first-coeff}), the coefficients  $c(1, \mu)=1-\mu$ and $c(1,\theta)=1-\theta$, which completes the base case. 

Assume true for $N \in \N \setminus \{0,1\}$. Assume that $I_\mu \cap \F^{N+1}=I_\theta \cap \F^{N+1}$. Now, since $\F^N \subseteq \F^{N+1}$, we thus have $I_\mu \cap \F^{N}=I_\theta \cap \F^{N}$.  Hence, by induction hyposthesis, there exists $a,b \in \R , a \neq 0$ such that $c(N, \mu)=a\mu+b$ and $c(N, \theta)=a\theta +b$. But, as   $I_\mu \cap \F^{N+1}=I_\theta \cap \F^{N+1}$, the vertices a level $N+1$ agree in the ideal diagrams by Proposition (\ref{ideal-corr}).  In particular, by Definition (\ref{f-ideals}), we have $j_{N+1}(\theta)=j_{N+1}(\mu)$, and similarly, the term $j_N (\theta)=j_N (\mu)$ by $I_\mu \cap \F^{N}=I_\theta \cap \F^{N}$.  The  conclusion follows by Lemma (\ref{f-ideal-trace-coeff}).
\end{proof}

We can now prove the main result of this section.

\begin{theorem}\label{main-result}
Using Definition (\ref{f-ideals}) and Notation (\ref{f-quotient-qcms}), the map:
\begin{equation*}
I_\theta \in \left(\mathrm{Prim}(\F), \tau \right) \longmapsto \left(\F/I_\theta, \Lip_{\mathcal{U}_\F/I_\theta , \tau_\theta}^{\beta^\theta}\right) \in (\mathcal{QQCMS}_{2,0}, \qpropinquity{})
\end{equation*}
is continuous to the class of $(2,0)$-quasi-Leibniz quantum compact metric spaces metrized by the quantum propinquity $\qpropinquity{}$, where $\tau$ is either the Jacobson topology, the relative metric topology of $\mathsf{m}_{i\left(\mathcal{U}_\F \right)}$  of Corollary (\ref{ind-lim-metric-cor}), or the relative Fell topology  of Definition (\ref{Fell-topology}).  
\end{theorem}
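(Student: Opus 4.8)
The plan is to reduce the statement to sequential continuity and then exploit the metrizability, or at least first countability, of all three topologies on the relevant subset of $\mathrm{Prim}(\F)$. The Fell topology and the metric topology of $\mathsf{m}_{i(\mathcal{U}_\F)}$ are metrizable by Theorem (\ref{AF-ideal-metric-thm}), while the Jacobson topology on $\mathrm{Prim}(\F)$ is second countable by \cite[Corollary 4.3.4]{Pedersen}; since $\qpropinquity{}$ is itself (pseudo)metric, it suffices to show the map carries convergent sequences to convergent sequences. Moreover, by Proposition (\ref{f-ideal-homeo}) a sequence $(I_{\theta_k})_{k\in\N}$ converges to $I_{\theta_\infty}$ in any one of the three topologies if and only if it does so in all of them, and precisely when $(\theta_k)_{k\in\N}$ converges to $\theta_\infty$ in $(0,1)\setminus\Q$. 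So it is enough to fix such a convergent sequence $(\theta_k)_{k\in\overline{\N}}$ and prove
\begin{equation*}
\lim_{k\to\infty}\qpropinquity{}\left(\left(\F/I_{\theta_k},\Lip^{\beta^{\theta_k}}_{\mathcal{U}_\F/I_{\theta_k},\tau_{\theta_k}}\right),\left(\F/I_{\theta_\infty},\Lip^{\beta^{\theta_\infty}}_{\mathcal{U}_\F/I_{\theta_\infty},\tau_{\theta_\infty}}\right)\right)=0,
\end{equation*}
which I would do by verifying the three hypotheses of Theorem (\ref{quotients-converge}) for the family $(I_{\theta_k})_{k\in\overline{\N}}$ equipped with the faithful tracial states $\tau_{\theta_k}$ of Notation (\ref{f-quotient-trace}).

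Hypothesis (1) is immediate: convergence of $(I_{\theta_k})_{k\in\N}$ to $I_{\theta_\infty}$ in $\mathsf{m}_{i(\mathcal{U}_\F)}$ (equivalently in the Fell topology) makes $\{I_{\theta_k}:k\in\overline{\N}\}$ a fusing family with some fusing sequence $(c_n)_{n\in\N}$ by Lemma (\ref{metric-fusing-equiv-lemma}). For hypothesis (3) I would take the dominating function given by $B(0)=1$, $B(n)=1/n^2$ for $n\geq 1$, and $B(\infty)=0$; Lemma (\ref{f-beta}) shows $\beta^{\theta_k}(n)\leq B(n)$ for every $k$ and $n$, and that $B(\infty)=0$. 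Furthermore, whenever $k\geq c_N$ we have $I_{\theta_k}\cap\F^n=I_{\theta_\infty}\cap\F^n$ for all $n\in\{0,\dots,N\}$, and the block description of $I\cap\F^n$ in Definition (\ref{f-ideals}) forces $j_n(\theta_k)=j_n(\theta_\infty)$, whence $\beta^{\theta_k}(n)=\beta^{\theta_\infty}(n)$ by the explicit formula of Lemma (\ref{f-beta}); this is exactly the fusing condition required of the $\beta$'s.

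The crux, and the step I expect to be the main obstacle, is hypothesis (2): the weak-* convergence of the tracial states at each finite level. Writing $Q_{\theta_k}:\F\to\F/I_{\theta_k}$ for the quotient maps, we have $\tau_{\theta_k}\circ Q_{\theta_k}=\rho_{\theta_k}$ by Notation (\ref{f-quotient-trace}), so I must show that for each fixed $N$ the restrictions $(\rho_{\theta_k})_{k\geq c_N}$ converge to $\rho_{\theta_\infty}$ weak-* on $\StateSpace(\F^N)$. Since $\F^N$ is finite dimensional, Lemma (\ref{f-ideal-trace-coeff}) reduces this to a single scalar: for $a=a_0\oplus\cdots\oplus a_{2^{N-1}}\in\F_N$,
\begin{equation*}
\rho_{\theta_k}\circ\indmor{\varphi}{N}(a)=c(N,\theta_k)\,\mathsf{tr}_{q(N,j_N(\theta_k))}\!\left(a_{j_N(\theta_k)}\right)+\bigl(1-c(N,\theta_k)\bigr)\mathsf{tr}_{q(N,j_N(\theta_k)+1)}\!\left(a_{j_N(\theta_k)+1}\right),
\end{equation*}
and for $k\geq c_N$ the block indices satisfy $j_N(\theta_k)=j_N(\theta_\infty)$. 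Thus weak-* convergence on $\StateSpace(\F^N)$ amounts to the single convergence $c(N,\theta_k)\to c(N,\theta_\infty)$. Here I would invoke the affine dependence supplied by Lemma (\ref{trace-level-1}): because $I_{\theta_k}\cap\F^N=I_{\theta_\infty}\cap\F^N$ for all $k\geq c_N$, there are constants $a\neq 0$ and $b$ — determined solely by this common level-$N$ ideal data and hence independent of $k$ — with $c(N,\theta_k)=a\theta_k+b$ and $c(N,\theta_\infty)=a\theta_\infty+b$. Since $\theta_k\to\theta_\infty$, we get $c(N,\theta_k)-c(N,\theta_\infty)=a(\theta_k-\theta_\infty)\to 0$, establishing hypothesis (2).

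With all three hypotheses in hand, Theorem (\ref{quotients-converge}) delivers the desired propinquity convergence of the quotients, so the map is sequentially continuous; by the first countability of each of the three topologies noted above, it is continuous. I single out hypothesis (2) as the principal difficulty because it is the only place where the fine arithmetic of the Farey and continued-fraction construction enters: the linear behaviour of the trace coefficient $c(N,\cdot)$ in $\theta$ furnished by Lemma (\ref{trace-level-1}) is precisely what converts metric closeness of the irrational parameters into weak-* closeness of the states, whereas hypotheses (1) and (3) follow formally from the fusing-family structure of Lemma (\ref{metric-fusing-equiv-lemma}) and the uniform bound $\beta^{\theta}(n)\leq 1/n^2$ of Lemma (\ref{f-beta}).
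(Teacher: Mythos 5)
Your proposal is correct and follows essentially the same route as the paper's proof: reduce to sequential convergence in $\mathsf{m}_{i(\mathcal{U}_\F)}$ via Proposition (\ref{f-ideal-homeo}), then verify the three hypotheses of Theorem (\ref{quotients-converge}) using Lemma (\ref{metric-fusing-equiv-lemma}) for the fusing family, the affine formula $c(N,\theta)=a\theta+b$ of Lemma (\ref{trace-level-1}) for weak-* convergence of the traces, and Lemma (\ref{f-beta}) with the uniform bound $1/n^2$ for the $\beta$'s. The only cosmetic difference is that the paper invokes \cite[Lemma 3.3]{Aguilar16a} to pass from convergence of the coefficient $c(N,\theta_k)$ to weak-* convergence of the states, whereas you argue this directly from finite dimensionality and the agreement of the block indices $j_N(\theta_k)=j_N(\theta_\infty)$, which is equally valid.
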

\begin{proof}
By Proposition (\ref{f-ideal-homeo}) and Proposition (\ref{f-j-irr-ideal-homeo}), we only need to show continuity with respect to the metric $\mathsf{m}_{i\left(\mathcal{U}_\F \right)}$ with sequential continuity.  Thus, let $(I_{\theta_n} )_{n \in \overline{\N}} \subset \mathrm{Prim}(\F)$ be a sequence such that $(I_{\theta_n} )_{n \in \N}$ converges to $I_{\theta_\infty}$ with respect to $\mathsf{m}_{i\left(\mathcal{U}_\F \right)}$.  Therefore, by Lemma (\ref{metric-fusing-equiv-lemma}),  this implies that: 
\begin{equation*}\left\{ I_{\theta_n} =\overline{\cup_{k \in \N} I_{\theta_n}  \cap \F^k}^{\Vert \cdot \Vert_\F} : n \in \overline{\N} \right\}
\end{equation*} is a fusing family with some fusing sequence $(c_n)_{n \in \N}$. Thus, condition (1) of Theorem (\ref{quotients-converge}) is satisfied.

For condition (2) of Theorem (\ref{quotients-converge}), let $N \in \N$, then by definition of fusing sequence, if $k \in \N_{\geq c_N}$, then $I_{\theta_k}  \cap \F^N=I_{\theta_\infty} \cap \F^N$.  Now, let $k \in \overline{\N}_{\geq c_N}$. Consider $\rho_{\theta_k}$  on $\F^N$. By Lemma (\ref{trace-level-1}),  there exists $a ,b \in \R, a \neq 0$,  such that   $c(N,\theta_k)=a \theta_k +b $  for all $k \in \overline{\N}_{\geq c_N}$.  But, by Proposition (\ref{f-ideal-homeo}), we obtain $(\theta_n)_{n \in \N}$ converges to $\theta_\infty $ with respect to the usual topology on $\R$.  Hence, the sequence $\left(c(N,\theta_k)\right)_{k \in \N_{\geq c_N}}$ converges to $c(N,\theta_\infty)$ with respect to the usual topology on $\R$ and the same applies to $(1-c(N,\theta_k))_{k \in  \N_{\geq c_N}}$.  However, by Lemma (\ref{f-ideal-trace-coeff}), the coefficient $c(N,\theta_k)$ determines $\rho_k$ for all $k \in \overline{\N}_{\geq c_N}$.  Hence, \cite[Lemma 3.3]{Aguilar16a} provides that $\left(\rho_{\theta_k}\right)_{k \in  \N_{\geq c_N}}$ converges to $ \rho_{\theta_\infty}$ in the weak-* topology on $\StateSpace\left(\F^N\right)$.

Condition (3) of Theorem (\ref{quotients-converge}) follows a similar argument as in the proof of condition (2) since the sequences $\beta^\theta$ of Lemma (\ref{f-beta}) are determined by the terms $j_n (\theta)$.  Also, all $\beta^\theta$ are uniformly bounded by the sequence $(1/n^2)_{n \in \N}$ which converges to $0$.  Therefore, the proof is complete.
\end{proof}

As an aside to Remark (\ref{theta-q-iso}), we obtain the following analogue to Theorem (\ref{af-theta-thm}) in terms of quotients.
\begin{corollary}
Using Notation (\ref{f-quotient-qcms}), the map: 
\begin{equation*}
\theta \in \left((0,1) \setminus \Q, \vert \cdot \vert \right) \longmapsto  \left(\F/I_\theta, \Lip_{\mathcal{U}_\F/I_\theta , \tau_\theta}^{\beta^\theta}\right) \in (\mathcal{QQCMS}_{2,0}, \qpropinquity{})
\end{equation*}
is continuous from $(0,1) \setminus \Q$, with its topology as a subset of $\R$ to the class of $(2,0)$-quasi-Leibniz quantum compact metric spaces metrized by the quantum propinquity $\qpropinquity{}$.
\end{corollary}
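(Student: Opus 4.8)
The plan is to recognize the map in this corollary as the composition of two maps whose continuity is already in hand, so that no new analysis is required. First I would observe that the assignment factors through $\mathrm{Prim}(\F)$ as
\begin{equation*}
\theta \longmapsto I_\theta \longmapsto \left(\F/I_\theta, \Lip_{\mathcal{U}_\F/I_\theta , \tau_\theta}^{\beta^\theta}\right),
\end{equation*}
where $I_\theta$ is the primitive ideal of Definition (\ref{f-ideals}) and the quantum compact metric space structure is that of Notation (\ref{f-quotient-qcms}).

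By Proposition (\ref{f-j-irr-ideal-homeo}), the first map $\theta \in ((0,1)\setminus\Q, |\cdot|) \longmapsto I_\theta \in \mathrm{Prim}(\F)$ is a homeomorphism onto its image, hence in particular continuous, whether $\mathrm{Prim}(\F)$ is equipped with the Jacobson topology, the Fell topology, or the relative metric topology of $\mathsf{m}_{i(\mathcal{U}_\F)}$ of Corollary (\ref{ind-lim-metric-cor}). By Theorem (\ref{main-result}), the second map $I_\theta \in (\mathrm{Prim}(\F), \tau) \longmapsto \left(\F/I_\theta, \Lip_{\mathcal{U}_\F/I_\theta , \tau_\theta}^{\beta^\theta}\right)$ is continuous into $(\mathcal{QQCMS}_{2,0}, \qpropinquity{})$ for each of these same three choices of $\tau$.

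Composing the two and using that a composition of continuous maps is continuous would then yield the claim. The only bookkeeping step is to make a single consistent choice of topology on $\mathrm{Prim}(\F)$ --- say the Jacobson topology --- so that the codomain of the first map coincides with the domain of the second; since both results above are stated uniformly for all three topologies, this presents no difficulty. Consequently I expect no genuine obstacle here: the statement is a direct corollary obtained by chaining together Proposition (\ref{f-j-irr-ideal-homeo}) with the main Theorem (\ref{main-result}) of this section, the substantive work having already been carried out in establishing those two results.
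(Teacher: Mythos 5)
Your proposal is correct and is exactly the paper's own argument: the paper proves this corollary in one line by applying Proposition (\ref{f-j-irr-ideal-homeo}) to Theorem (\ref{main-result}), i.e., composing the continuous map $\theta \mapsto I_\theta$ with the continuous map $I_\theta \mapsto \left(\F/I_\theta, \Lip_{\mathcal{U}_\F/I_\theta , \tau_\theta}^{\beta^\theta}\right)$. Your write-up merely spells out the composition and the bookkeeping about the choice of topology on $\mathrm{Prim}(\F)$, which the paper leaves implicit.
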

\begin{proof}
Apply  Proposition (\ref{f-j-irr-ideal-homeo}) to Theorem (\ref{main-result}).
\end{proof}

\section*{Acknowledgements}

I am grateful to my Ph.D. thesis advisor, Dr. Fr\'{e}d\'{e}ric Latr\'{e}moli\`{e}re, for all his beneficial comments and support.

\providecommand{\bysame}{\leavevmode\hbox to3em{\hrulefill}\thinspace}
\providecommand{\MR}{\relax\ifhmode\unskip\space\fi MR }
\providecommand{\MRhref}[2]{%
  \href{http://www.ams.org/mathscinet-getitem?mr=#1}{#2}
}
\providecommand{\href}[2]{#2}

\vfill

\end{document}